\newtheorem{thm}{Theorem}
\newtheorem{cor}[thm]{Corollary}
\newtheorem{lem}[thm]{Lemma}
\newtheorem{prop}[thm]{Proposition}
\theoremstyle{definition}
\newtheorem{defn}[thm]{Definition}
\newtheorem*{defn*}{Definition}
\newtheorem{assump}[thm]{Assumption}
\theoremstyle{remark}
\newtheorem{rem}[thm]{Remark}
\newtheorem{nrem}[thm]{Notational Remark}
\newcommand{\deq}{\mathrel{\mathop:}=}
\newcommand{\e}[1]{\mathrm{e}^{#1}}
\newcommand{\R} {\mathbb{R}}
\newcommand{\C} {\mathbb{C}}
\newcommand{\N} {\mathbb{N}}
\newcommand{\E} {\mathbb{E}}
\newcommand{\braN} {\llbracket 1,N\rrbracket}
\newcommand{\adj}{^{*}} %hermitean adjoint
\newcommand{\tp}{^{\intercal}}
\newcommand{\angi}{\langle i\rangle}
\newcommand{\mr}[1]{\mathring{#1}}
\DeclareMathOperator{\diag}{diag}
\DeclareMathOperator{\tr}{tr}
\DeclareMathOperator{\Tr}{Tr}
\DeclareMathOperator{\supp}{supp}
\DeclareMathOperator{\re}{\mathrm{Re}}
\DeclareMathOperator{\im}{\mathrm{Im}}
\newcommand{\caD}{{\mathcal D}}
\newcommand{\caG}{{\mathcal G}}
\newcommand{\caH}{{\mathcal H}}
\newcommand{\caK}{{\mathcal K}}
\newcommand{\caL}{{\mathcal L}}
\newcommand{\caN}{{\mathcal N}}
\newcommand{\caO}{{\mathcal O}}
\newcommand{\caQ}{{\mathcal Q}}
\newcommand{\caS}{{\mathcal S}}
\newcommand{\caT}{{\mathcal T}}
\newcommand{\caU}{{\mathcal U}}
\newcommand{\caW}{{\mathcal W}}
\newcommand{\caY}{{\mathcal Y}}
\newcommand{\caZ}{{\mathcal Z}}
\newcommand{\bbC}{{\mathbb C}}
\newcommand{\bbE}{{\mathbb E}}
\newcommand{\bbN}{{\mathbb N}}
\newcommand{\bbP}{{\mathbb P}}
\newcommand{\bbR}{{\mathbb R}}
\newcommand{\bbS}{{\mathbb S}}
\newcommand{\bbT}{{\mathbb T}}
\newcommand{\bfu}{{\mathbf u}}
\newcommand{\bfv}{{\mathbf v}}
\newcommand{\bfw}{{\mathbf w}}
\newcommand{\bfx}{{\mathbf x}}
\newcommand{\bfy}{{\mathbf y}}
\newcommand{\fra}{{\mathfrak a}}
\newcommand{\frb}{{\mathfrak b}}
\newcommand{\frd}{{\mathfrak d}}
\newcommand{\frj}{{\mathfrak j}}
\newcommand{\frl}{{\mathfrak l}}
\newcommand{\frm}{{\mathfrak m}}
\newcommand{\frn}{{\mathfrak n}}
\newcommand{\frD}{{\mathfrak D}}
\newcommand{\frL}{{\mathfrak L}}
\newcommand{\frX}{{\mathfrak X}}
\newcommand{\frZ}{{\mathfrak Z}}
\newcommand{\bsd}{{\boldsymbol d}}
\newcommand{\bse}{{\boldsymbol e}}
\newcommand{\bsg}{{\boldsymbol g}}
\newcommand{\bsh}{{\boldsymbol h}}
\newcommand{\bsr}{{\boldsymbol r}}
\newcommand{\bsv}{{\boldsymbol v}}
\newcommand{\bsw}{{\boldsymbol w}}
\newcommand{\bsx}{{\boldsymbol x}}
\newcommand{\rme}{\mathrm{e}}
\newcommand{\rmk}{\mathrm{k}}
\newcommand{\rmm}{\mathrm{m}}
\newcommand{\rmn}{\mathrm{n}}
\newcommand{\rmx}{\mathrm{x}}
\newcommand{\anga}{\langle a\rangle}
\newcommand{\wt}{\widetilde}
\newcommand{\ol}{\overline}
\newcommand{\wh}{\widehat}
\newcommand{\beq}{ \begin{equation} }
\newcommand{\eeq}{ \end{equation} }
\newcommand{\beqs}{\begin{equation*}}
\newcommand{\eeqs}{\end{equation*}}
\newcommand{\lone}{\mathbbm{1}} 
\newcommand{\dd}{\mathrm{d}}
\newcommand{\ii}{\mathrm{i}}
\renewcommand{\P}{\mathbb{P}}
\newcommand{\SC}{\mathrm{sc}}
\newcommand{\AND}{\quad\text{and}\quad}
\newcommand{\llbra}{\llbracket}
\newcommand{\rrbra}{\rrbracket}
\newcommand\norm[1]{\Vert#1\Vert}
\newcommand\expct[1]{\mathbb{E}[#1]}
\newcommand\Expct[1]{\mathbb{E}\left[#1\right]}
\newcommand\prob[1]{\mathbf{P}\left[#1\right]}
\newcommand\Absv[1]{\left\vert#1\right\vert}
\newcommand\absv[1]{\vert#1\vert}
\numberwithin{equation}{section} %\numberwithin{lem}{section}
\numberwithin{thm}{section}
\title{Tracy-Widom limit for free sum of random matrices}
\date{\today}
\author{Hong Chang Ji \thanks{Institute of Science and Technology Austria} \thanks{Email:hongchang.ji@ist.ac.at} \and Jaewhi Park \thanks{Department of Mathematical Sciences, KAIST} \thanks{Email:jw-park@kaist.ac.kr}
}
\begin{document}

\maketitle

\begin{abstract}
	We consider fluctuations of the largest eigenvalues of the random matrix model $A+UBU\adj$ where $A$ and $B$ are $N \times N$ deterministic Hermitian or symmetric matrices and $U$ is a Haar-distributed unitary or orthogonal matrix. We prove that the largest eigenvalue weakly converges to the Tracy-Widom distribution, under mild assumptions on $A$ and $B$ to guarantee that the density of states of the model decays as square root around the upper edge. Our proof is based on the comparison of the Green function along the Dyson Brownian motion starting from the matrix $A + UBU^*$ and ending at time $N^{-1/3+\chi}$. As a byproduct of our proof, we also prove an optimal local law for the Dyson Brownian motion up to the constant time scale.
\end{abstract}

{\footnotesize\textit{AMS Subject Classification (2020)}: 60B20, 46L54

\textit{Keywords}: Random matrices, Edge universality, Free additive convolution
}
\section{Introduction}

In this paper, we consider the sum of two Hermitian random matrices,
\beq\label{eq:themodel}
H=A+UBU\adj\,,
\eeq
where $A$ and $B$ are $N\times N$ deterministic Hermitian matrices and $U$ is Haar distributed on the unitary group $\caU(N)$ of degree $N$. We prove that the law of the largest eigenvalue of the random matrix $H$ in \eqref{eq:themodel} converges to the GUE Tracy-Widom distribution $F_{2}$ with the scale $N^{-2/3}$, under mild assumptions on $A$ and $B$. Our result extends to the case when $U$ is Haar distributed on the orthogonal group $\caO(N)$ of degree $N$, in which case the limit is the GOE Tracy-Widom distrbution $F_{1}$.

The model in \eqref{eq:themodel} is one of the most fundamental examples that show the connection between free probability and Hermitian random matrices. The eigenvectors of $A$ and $UBU\adj$ are in general position, and thus the two matrices are asymptotically free as the matrix size grows to infinity, as proved by Voiculescu in his influential work \cite{Voiculescu1991}. The empirical spectral distribution (ESD) of the ensemble \eqref{eq:themodel} converges to the free additive convolution $\mu_{\alpha}\boxplus\mu_{\beta}$ where $\mu_{\alpha}$ and $\mu_{\beta}$ are limiting eigenvalue distributions of $A$ and $B$, respectively. The convergence of ESD also holds on local scales \cite{Bao-Erdos-Schnelli2016,Bao-Erdos-Schnelli2017Adv,Bao-Erdos-Schnelli2017CMP,Bao-Erdos-Schnelli2020JAM,Bao-Erdos-Schnelli2020,Kargin2012PTRF,Kargin2013}.

The two major assumptions on $A$ and $B$ are (i) that the averaged eigenvalue distributions $\mu_{A}$ and $\mu_{B}$ respectively of $A$ and $B$ converge sufficiently fast to their limits $\mu_{\alpha}$ and $\mu_{\beta}$ and (ii) that the densities of $\mu_{\alpha}$ and $\mu_{\beta}$ have power-law decay around the upper edge with exponents between $-1$ and $1$. The second condition ensures that the free convolution $\mu_{\alpha}\boxplus\mu_{\beta}$ has a  regular upper edge, that is, its density has square-root decay at the upper edge. The first condition guarantees that the $N$-dependent convolution $\mu_{A}\boxplus\mu_{B}$ inherits the same property.

The most important aspect of our result is that the law of the largest eigenvalue of $H$ not only is independent of $A$ and $B$ but also coincides with that of a GUE. For Gaussian unitary and orthogonal ensembles (GUE and GOE, respectively) Tracy and Widom identified the distribution in \cite{Tracy-Widom1994,Tracy-Widom1996}; let $\lambda_{1}$ be the largest eigenvalue of an  $(N\times N)$ GUE or GOE. Then
\beq\label{eq:Tracy-Widom}
\lim_{N\to\infty}\prob{N^{-2/3}(\lambda_{1}-2)<x}=F_{\beta}(x),
\eeq
where $F_{\beta}(x)$ is a distribution function determined by the parameter $\beta=1$ or $2$ corresponding respectively to the orthogonal or unitary ensembles. For many random matrix models, the limiting distribution of the largest eigenvalue matches with that of GOE or GUE after proper normalization when the matrix is real symmetric or complex Hermitian, respectively, which is referred to as \emph{edge universality.} Edge universality for Wigner matrices was proved in \cite{Erdos-Knowles-Yau-Yin2012,Lee-Yin2014,Soshnikov1999,Tao-Vu2010}, and it was extended to deformed Wigner matrices in \cite{Landon-Yau2017,Lee-Schnelli2015,Lee-Schnelli-Stetler-Yau2016}.

Our proof roughly follows the strategy of \cite{Lee-Schnelli2015} where edge universality for deformed Wigner matrices, $H^{\mathrm{dWig}}\deq A+W^{\mathrm{Wig}}$ for a Wigner matrix $W^{\mathrm{Wig}}$, was proved. To be specific, we consider a continuous flow $H_{t}$ of random matrices starting from $H_{0}=H$ and for this flow the proof can be divided into three components:
\begin{itemize}
	\item[(i)] Optimal, uniform (over $t$), entrywise local law for $H_{t}$ around the edge,
	\item[(ii)] edge universality at the endpoint of the flow $H_{t_{0}}$ for a suitable $t_{0}>0$, and
	\item[(iii)] comparison of the distribution of $\lambda_{1}(H_{t})$ as $t$ increases from $0$ to $t_{0}$.
\end{itemize}
Our choice of $H_{t}$ is the Dyson Brownian motion (DBM), or Dyson matrix flow,
\beq\label{eq:DBM}
H_{t}\deq H+\sqrt{t}W, \quad t\geq 0,
\eeq
where $W$ is a GUE/GOE (depending on the symmetry class of $H$) independent of $H$. The main reason for choosing \eqref{eq:DBM} is to directly apply the result of Landon and Yau \cite{Landon-Yau2017}, where it was proved that the DBM whose initial matrix has a regular edge reaches its equilibrium at the edge after time $t_{0}=N^{-1/3+\chi}$. In other words, the edge statistics of $H_{t_{0}}$ matches that of GUE/GOE, which directly establishes (ii). Here the results of \cite{Bao-Erdos-Schnelli2020} guarantee that the initial matrix $H$ has a regular upper edge. The shortness of time scale $t_{0}$ greatly simplifies the last step, compared to \cite{Lee-Schnelli2015} where logarithmic time scale was used with Ornstein--Uhlenbeck version of \eqref{eq:DBM}.

The bulk of the proof is devoted to (i) and (iii), local law for $H_{t}$ and comparison over the flow. To prove the local law, we adapt the argument in \cite{Bao-Erdos-Schnelli2020} with some modification to the subordinate system. To summarize, we prove that the resolvent of $H_{t}$ is subordinate to that of $A$, that is, $G_{H_{t}}(z)\approx G_{A}(\omega_{A,t}(z))$ for a complex analytic self-map $\omega_{A,t}$ of the upper half-plane where $G_{H_{t}}$ and $G_{A}$ are the resolvents of $H_{t}$ and $A$, respectively. Here we emphasize that the function $\omega_{A,t}$ captures the dependency on time $t$, and $G_{A}$ is independent of $t$.

For (iii), we approximate the (unnormalized) eigenvalue density of $H_{t}$ with the Green function $\E\im\Tr G_{H_{t}}(z)$, for spectral parameters $z$ with $\im z$ on a smaller scale than the typical size $N^{-2/3}$ of fluctuations of $\lambda_{1}$. Then we prove a \emph{Green function comparison theorem} stating that $\E\im\Tr G_{H_{t}}(z)$ has small enough time derivative, which establishes (iii). 

The proof of Green function comparison mainly concerns `decoupling' the index $a\in\{1,\cdots,N\}$ from each diagonal entry $\E (G_{H_{t}}^{2})_{aa}$, that is, to express them as a function of entries of $A$ and tracial quantities; see Proposition \ref{prop:decoup}. These expansions reveal additional cancellation within the time derivative of $\E\im \Tr G_{H_{t}}$, which is hard to observe otherwise. Even in the simplest case of deformed GUE, $A+W$, the cancellation is highly nontrivial; see Section \ref{sec:dGUE} for an exposition. Such an argument was first used in \cite{Lee-Schnelli2015} for deformed Wigner matrices, where non Gaussian $W^{\mathrm{Wig}}$ posed central difficulty.

In traditional applications of the three step strategy, the structure of $H_{0}$ remained intact along the flow $H_{t}$; for example when $H_{0}$ is a Wigner matrix, $H_{t}$ remains a Wigner matrix for all $t$ (see \cite{Erdos-Knowles-Yau-Yin2012,Lee-Schnelli2015,Lee-Schnelli2016} for examples). In the same vein, we may rewrite \eqref{eq:DBM} so that it has the same form as $H_{0}$, namely,
\beq
H_{t}=A+(UBU\adj+\sqrt{t}W)=A+U_{t}B_{t}U_{t}\adj.
\eeq
Here $B_{t}$ is a diagonal matrix consisting of the eigenvalues of $UBU\adj+\sqrt{t}W$, and $U_{t}$ is a Haar-distributed unitary/orthogonal matrix by the rotational invariance of $W$. Then we might attempt to analyze two flows of matrices $B_{t}$ and $U_{t}$, yet we avoid this approach for two reasons. First, for general $B$ the behavior of the spectrum of $B_{t}$ is much harder to analyze compared to the whole matrix $H_{t}$. More specifically, while the results of \cite{Bao-Erdos-Schnelli2020JAM} guarantees that $B_{t}$ has a regular edge for $t\sim1$, the neighborhood on which the square root decay holds true diminishes when $t\ll1$. In contrast, since $H_{0}$ has regular edge, the same holds true for $H_{t}$ no matter how small $t$ is. Second, even though the unitary matrix $U_{t}$ is Haar distributed for each fixed $t$, studying it as a stochastic process over $t$ is a difficult task. Indeed these two problems were handled in \cite{Che-Landon2019}, by assuming stronger conditions on $B$ and introducing a diffusion process $\wt{U}_{t}$ on the unitary/orthogonal group so that $A+\wt{U}_{t}B\wt{U}_{t}\adj$ has the same local statistics in the bulk as $H_{t}$ at $t\gg N^{-1}$. For our purposes it suffices to consider the Dyson matrix flow \eqref{eq:DBM} as the sum of three matrices, rather than a perturbation of the sum of two matrices. We explain how we handle the sum of three matrices in the next paragraph.

The major novelty of the proof of local laws for $H_{t}$ is that we introduce a time dependent, deterministic system of equations (see \eqref{eq:Phi_def}) that characterizes the subordination function $\omega_{A,t}$ above. We take this system as the deterministic equivalent of $G_{H_{t}}$, which allows us to consider the Brownian motion $\sqrt{t}W$ as one of the leading term, but not a perturbation. In fact, the new system \eqref{eq:Phi_def} is consistent with the one used in \cite{Bao-Erdos-Schnelli2020} in the sense that simply introducing a variant of $F$-transform therein, the negative reciprocal of Stieltjes transform, can fully reflect the time dependence. This consistency allows  us to prove the local law using the exact same strategy as in \cite{Bao-Erdos-Schnelli2020} with some extra bounds for terms originating from the GUE. Due to the same reason, our local laws hold for all finite time scales, which could be of separate interest. Finally, we remark that while the results in \cite{Landon-Yau2017} can prove an averaged local law for $H_{t}$, a direct adaptation of their method cannot prove entrywise local laws, even for $t\sim 1$. To be more specific, an entry of $G_{H_{t}}$ necessarily involves entries of $A$ (recall $G_{H_{t}}\approx G_{A}(\omega_{A,t}))$, whereas the averaged local law in \cite{Landon-Yau2017} is (by design) written solely in terms of the initial ESD. Taking $H_{0}$ to be the free sum $A+UBU\adj$, it already carries enough randomness via $U$ so that individual entries of $A$ are not visible through the ESD of $H_{0}$.

The proof of Green function comparison for $H_{t}$ requires new ideas compared to \cite{Lee-Schnelli2015} since the randomness in our matrix model originates from Haar unitary matrices. Firstly, entries of $H$ are correlated unlike those of $H^{\mathrm{dWig}}$. To aid this, we use partial randomness decomposition as in \cite{Bao-Erdos-Schnelli2020} to express $U$ in terms of the independent pair of an $N$-dimensional Gaussian vector and a Haar unitary matrix of degree $N-1$. 

Secondly, when expanding diagonal entries $\E (G_{H_{t}}^{2})_{aa}$, matrices of the form $\E G_{H_{t}}UBU\adj G_{H_{t}}$ emerge. Applying the same expansion to these quantities results in more factors of $U$ thus cannot lead to an accessible form. Such a problem did not appear in \cite{Lee-Schnelli2015}; see Section \ref{sec:frX_prelim} for details. To solve this we use the symmetry of our model, namely to consider $U\adj G_{H_{t}}U$ as the resolvent of
\beq\label{eq:themodel_sym}
\caH_{t}\deq U\adj AU+B+\sqrt{t}U\adj WU.
\eeq
Due to the invariance of GUE, we see that the matrix \eqref{eq:themodel_sym} in fact has the same form as \eqref{eq:DBM}. Thus we establish a system of linear equations involving weighted traces of $G_{H_{t}}^{2}$ and $G_{\caH_{t}}^{2}$, from which the Green function comparison follows. To the best of our knowledge, such calculations involving system of equations did not appear in previous proofs of edge universality.

\subsection{Related works}
The convergence of the ESD of the model in \eqref{eq:themodel} was first considered by Voiculescu \cite{Voiculescu1991} and extended to a local scale by Kargin \cite{Kargin2012PTRF,Kargin2013}. The properties of the free additive convolution in two deterministic measures such as stability, behavior and its qualitative description was studied by Bao, Erd\H{o}s, and Schnelli \cite{Bao-Erdos-Schnelli2016,Bao-Erdos-Schnelli2020JAM,Bao-Erdos-Schnelli2020}. They also established optimal local laws for $H$ and the convergence of the Green function of eigenvalue distribution when the parameter is close to the spectrum \cite{Bao-Erdos-Schnelli2017Adv,Bao-Erdos-Schnelli2017CMP,Bao-Erdos-Schnelli2020}. As a result, they proved in \cite{Bao-Erdos-Schnelli2020} that the typical eigenvalue spacing is of size $N^{-1}$ and $N^{-2/3}$ around the bulk and edge, respectively. The bulk universality for \eqref{eq:themodel} was established by Che and Landon \cite{Che-Landon2019}, that is, the local eigenvalue statistics of $H$ in the bulk on the scale $N^{-1}$ coincides with that of a GUE or GOE.

For the sum of i.i.d. unitarily invariant matrices whose number of summands exceeds certain threshold, the edge universality was obtained by Ahn in \cite[Theorem 1.1]{Ahn2020} using multivariate Bessel generating functions. As a byproduct, it was also proved in \cite[Theorem 1.3]{Ahn2020} that the Tracy--Widom limit holds for \eqref{eq:themodel} when $\mu_{\alpha}$ and $\mu_{\beta}$ are exactly beta distributions $\mathrm{Beta}(a,b)$ with $a\in [-1/2,\infty)$ and $b\in(-1,1/2]$. It should be noted that \cite{Ahn2020} did not cover the orthogonal case, and the proofs therein rely on Harish-Chandra-Itzykson-Zuber integral (see \cite[Lemma 2.5]{Ahn2020}). Since the corresponding integral for Haar orthogonal matrices does not lead to a determinantal form, it is not clear whether the same method as in \cite{Ahn2020} applies to the orthogonal case. On the other hand \cite[Theorem 1.1]{Ahn2020} allows for more general summands, for example whose spectral density has power-law decay with exponent larger than $1$: In this case the number of summands necessarily exceeds two, for otherwise the edge may not have square-root decay.

The linear eigenvalue statistics of \eqref{eq:themodel} were studied in \cite{Bao-Schnelli-Xu2020,Pastur-Vasilchuk2007}. The Gaussian fluctuation of linear eigenvalue statistics on the global scale was obtained by Pastur and Vasilchuk, \cite{Pastur-Vasilchuk2007}. On the mesoscopic scale, which concerns relatively few eigenvalues around a fixed energy level, Bao, Schnelli, and Xu proved in \cite{Bao-Schnelli-Xu2020} a central limit theorem for linear eigenvalue statistics when the energy is in the regular bulk. Also \cite{Bao-Schnelli-Xu2020} covered mesoscopic averages at regular edges up to the scale $N^{-2/5+o(1)}$, while the optimal scale would be $N^{-2/3+o(1)}$, slightly above the gap scaling. We expect that methods in the present paper can shed light on the extension of their result to the full mesoscopic scale at the edge.

\subsection{Organization}\label{sec:orga}
The rest of this paper is organized as follows. In Section 2, we present the model and assumptions on it, and rigorously state our main result. In Section 3, we collect preliminary results on free probability, in particular analytic subordination, and recall partial randomness decomposition. Section 4 is devoted to the proof of the main result, where we state steps (i) -- (iii) above as propositions. In Sections 5 and 6 we prove the decoupling lemma for an expected diagonal entry $\E(G^{2})_{aa}$ and use it to prove the Green function comparison theorem. The expansion of $\E(G^{2})_{aa}$ in Section 6 generates several sub-leading, not decoupled terms, and we decouple them in Section 7. Sections 8 collects proofs of various probabilistic estimates that are used throughout the paper. Finally, in Section 9 we show how to modify the proof in order to extend the result to orthogonal $U$.

Appendix A is devoted to detailed analysis of the limiting eigenvalue density of $H_{t}$. In Appendices B and C mainly concern the proof of local laws for the Dyson matrix flow. Lastly in Appendix D, we present formulas for the derivatives with respect to the Gaussian vector from partial randomness decomposition.

\begin{nrem}\label{nrem}
	We denote by $\C_{+}$ the complex open upper half-plane. The alphabet $N$ always denotes the size of our matrix $H$ in \eqref{eq:themodel}. All quantities, especially matrices and their entries, depend on $N$ unless otherwise specified. We denote by $c$ and $C$ (small and large, respectively) positive constants that do not depend on $N$, whose value may vary by line. We use the standard big-$O$ notations; for sequences $X\equiv X_{N}\in\C$ and $Y\equiv Y_{N}>0$, we write $X=O(Y)$ or $X\lesssim Y$ if $\absv{X}\leq CY$ for all $N$. We write $\tr$ and $\Tr$ for the normalized and usual traces; for $A\in\C^{d\times d}$,
	\beqs
	\tr A\deq \frac{1}{d}\Tr A.
	\eeqs
	For each $i\in\N$, we denote by $\bse_{i}$ the $i$-th standard coordinate vector whose dimension may vary by lines. For $\bbT \subset \llbra 1,N\rrbra$, we use the shorthand notations
	\begin{align*}
		\sum_{i}\deq \sum_{i=1}^{N},
		&&
		\sum_{i}^{(\bbT)}\deq \sum_{\substack{i=1 \\ i\notin \bbT}}^{N},
	\end{align*}
	and we abbreviate $(i)=(\{i\})$. 
\end{nrem}
\begin{nrem}
	For two sequences $X\deq X^{(N)}$ and $Y\deq Y^{(N)}$ of random variables with $Y\ge0$, we say that $Y$ stochastically dominates $X$ if, for all (small) $\epsilon>0$ and (large) $D>0$,
	\beqs
	\prob{\absv{X}>N^{\epsilon}Y}\leq N^{-D},
	\eeqs
	for sufficiently large $N\geq N_{0}(\epsilon,D)$. In this case we write $X\prec Y$ or $X=O_{\prec}(Y)$. 
\end{nrem}

\section{Definitions and main result}
\begin{defn}\label{def:model}
	Let $\{a_{i}:1\leq i\leq N\}$ and $\{b_{i}:1\leq i\leq N\}$ be nondecreasing sequences of real numbers and define $(N\times N)$ diagonal matrices $A$ and $B$ by
	\begin{align*}
		&A\deq \diag(\fra_{1},\cdots,\fra_{N}),&
		&B\deq \diag(\frb_{1},\cdots,\frb_{N}).
	\end{align*}
	We denote the empirical spectral distributions of $A$ and $B$ by $\mu_{A}$ and $\mu_{B}$, respectively;
	\begin{align*}
		&\mu_{A}\deq\frac{1}{N}\sum_{i=1}^{N}\delta_{\fra_{i}},&
		&\mu_{B}\deq\frac{1}{N}\sum_{i=1}^{N}\delta_{\frb_{i}}.
	\end{align*}
	Let $U$ be an $(N\times N)$ random unitary or orthogonal matrix drawn from the Haar measure respectively on the unitary group $\caU(N)$ or the orthogonal group $\caO(N)$. We define
	\begin{align*}
		&\wt{A}\deq U\adj AU, &
		&\wt{B}\deq UBU\adj, &
		&H_{0}\deq A+ \wt{B},&
		&\caH_{0}\deq B+\wt{A},
	\end{align*}
	so that $\caH_{0}=U\adj H_{0}U$.
\end{defn}

\begin{defn}\label{defn:LSD}
	Let $\mu_{\alpha}$ and $\mu_{\beta}$ be probability measures on $\R$ satisfying the following:
	\begin{itemize}
		\item[(i)] Both of $\mu_{\alpha}$ and $\mu_{\beta}$ are compactly supported and absolutely continuous. We denote their densities respectively by $f_{\alpha}$ and $f_{\beta}$, and write
		\begin{align*}
			&E_{\alpha}^{-}=\inf\supp\mu_{\alpha},&
			&E_{\alpha}^{+}=\sup\supp\mu_{\alpha},&
			&E_{\beta}^{-}=\inf\supp\mu_{\beta}, &
			&E_{\beta}^{+}=\sup\supp\mu_{\beta}.
		\end{align*}
		
		\item[(ii)] There exist $t_{\alpha}^{+},t_{\beta}^{+}\in(-1,1)$ and positive constants $\tau_{\alpha},\tau_{\beta}$, and $C$ such that
		\begin{align}\label{eq:Jac_edge_a}
			&C^{-1}(E_{\alpha}^{+}-x)^{t^{+}_{\alpha}}
			\leq f_{\alpha}(x)
			\leq C(E_{\alpha}^{+}-x)^{t^{+}_{\alpha}},& 
			&\forall x\in[E_{\alpha}^{+}-\tau_{\alpha},E_{\alpha}^{+}]\\\label{eq:Jac_edge_b}
			&C^{-1}(E_{\beta}^{+}-x)^{t^{+}_{\beta}}
			\leq f_{\beta}(x)
			\leq C(E_{\beta}^{+}-x)^{t^{+}_{\beta}}_{+},&
			&\forall x\in[E_{\beta}^{+}-\tau_{\beta},E_{\beta}^{+}].
		\end{align}
	\end{itemize}
\end{defn}

\begin{assump}\label{assump:ABconv}
	We assume the following:
	\begin{itemize}
		\item[(i)]	For each $\epsilon>0$, we have
		\beqs
		\bsd\deq d_{L}(\mu_{A},\mu_{\alpha})+d_{L}(\mu_{B},\mu_{\beta})\leq N^{-1+\epsilon}
		\eeqs
		for sufficiently large $N$, where $d_{L}$ stands for the L\'{e}vy distance.
		
		\item[(ii)] For each $\delta>0$, we have
		\begin{align*}
			&\sup\supp\mu_{A}\leq E_{\alpha}^{+}+\delta, &
			&\sup\supp\mu_{B}\leq E_{\beta}^{+}+\delta,
		\end{align*}
		for sufficiently large $N$.
		\item[(iii)] There exists $C>0$ such that 
		\begin{align*}
			&\inf\supp\mu_{A}\geq -C, &
			&\inf\supp\mu_{B}\geq -C.
		\end{align*}
		\item[(iv)] We have $\Tr A=0=\Tr B$.
	\end{itemize}
\end{assump}

Under Assumption \ref{assump:ABconv}, it is well known that the empirical eigenvalue distribution of $H_{0}$ converges to the free additive convolution $\mu_{\alpha}\boxplus\mu_{\beta}$. For our choices of $\mu_{\alpha}$ and $\mu_{\beta}$ in Definition \ref{defn:LSD}, the measure $\mu_{\alpha}\boxplus\mu_{\beta}$ has a \emph{regular} upper edge in the sense that $\mu_{\alpha}\boxplus\mu_{\beta}$ resembles the semi-circle distribution around the edge:
\begin{lem}[Theorem 2.2 of \cite{Bao-Erdos-Schnelli2020JAM}]\label{lem:fc_dens}
	The free convolution $\mu_{\alpha}\boxplus\mu_{\beta}$ is compactly supported and absolutely continuous with continuous, bounded density $\rho_{0}$. Furthermore, there exist $E_{+,0}\in\R$ and $\gamma_{0}>0$ such that $E_{+,0}=\sup\{x\in\R:\rho_{0}(x)>0\}$ and
	\beqs
	\lim_{x\nearrow E_{+,0}}\frac{\rho_{0}(x)}{\sqrt{E_{+,0}-x}}=\frac{\gamma_{0}^{3/2}}{\pi}.
	\eeqs
\end{lem}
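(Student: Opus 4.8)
The plan is to derive everything from the analytic subordination of $\mu_\alpha\boxplus\mu_\beta$. Recall that there are unique analytic self-maps $\omega_\alpha,\omega_\beta\colon\C_+\to\C_+$ with $\im\omega_\bullet(z)\ge\im z$ (here and below $\bullet$ stands for $\alpha$ or $\beta$) such that, writing $m_\mu(z)\deq\int(t-z)^{-1}\,\dd\mu(t)$ and $F_\mu\deq-1/m_\mu$,
\[
F_{\mu_\alpha\boxplus\mu_\beta}(z)=F_{\mu_\alpha}(\omega_\alpha(z))=F_{\mu_\beta}(\omega_\beta(z)),\qquad \omega_\alpha(z)+\omega_\beta(z)=z+F_{\mu_\alpha\boxplus\mu_\beta}(z).
\]
Compact support of $\mu_\alpha\boxplus\mu_\beta$ is immediate from compactness of $\supp\mu_\alpha$ and $\supp\mu_\beta$; absence of atoms follows from the Bercovici--Voiculescu atom criterion because $\mu_\alpha,\mu_\beta$ are non-atomic; and the general regularity theory of free additive convolution gives that $\mu_\alpha\boxplus\mu_\beta$ has no singular continuous part, that $\omega_\alpha,\omega_\beta$ extend continuously to $\overline{\C_+}$, and that $\rho_0(x)=\pi^{-1}\im m_{\mu_\alpha}(\omega_\alpha(x+\ii 0))$ is continuous. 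The square-root behaviour at $E_{+,0}\deq\sup\{x:\rho_0(x)>0\}$, and with it the boundedness of $\rho_0$, are the real content, and I would obtain both from the analysis near the edge.

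Near the upper edge I would pass to the real axis. For $x>E_{+,0}$ the values $u\deq\omega_\alpha(x)$ and $v\deq\omega_\beta(x)$ are real and lie in the unbounded components $(E_\alpha^+,\infty)$ and $(E_\beta^+,\infty)$ of the complements of the two supports, and the subordination relations become $F_{\mu_\alpha}(u)=F_{\mu_\beta}(v)$ and $x=u+v-F_{\mu_\alpha}(u)$. Parametrising the analytic curve $\{F_{\mu_\alpha}(u)=F_{\mu_\beta}(v)\}$ by $u$, say $v=v(u)$, this presents $x$ as a single real-analytic function $x(u)\deq u+v(u)-F_{\mu_\alpha}(u)$ of $u=\omega_\alpha(x)$, with $\omega_\alpha(x)\sim x$ as $x\to\infty$. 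The upper edge is then the smallest critical value of $x(\cdot)$: from $F_{\mu_\alpha}'(u)=F_{\mu_\beta}'(v)\,v'(u)$ one computes
\[
x'(u)=\frac{1-g(u)}{F_{\mu_\beta}'(v(u))},\qquad g(u)\deq\bigl(F_{\mu_\alpha}'(u)-1\bigr)\bigl(F_{\mu_\beta}'(v(u))-1\bigr),
\]
so $E_{+,0}=x(\sigma_\alpha)$ where $\sigma_\alpha$ is determined by $g(\sigma_\alpha)=1$, with $\sigma_\beta\deq v(\sigma_\alpha)$.

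The hypotheses of Definition~\ref{defn:LSD} enter precisely to make this critical point exist, be interior, and be a nondegenerate fold. A Cauchy--Schwarz argument shows $F_\mu$ is strictly concave on $(\sup\supp\mu,\infty)$ for any compactly supported non-atomic $\mu$, so $F_{\mu_\alpha}'$ and $F_{\mu_\beta}'$ are strictly decreasing on $(E_\alpha^+,\infty)$ and $(E_\beta^+,\infty)$ with limit $1$ at $+\infty$, hence strictly larger than $1$ throughout. Moreover a direct estimate of $m_{\mu_\bullet}$ and $m_{\mu_\bullet}'$ near $E_\bullet^+$ using the two-sided power law on $f_\bullet$ from Definition~\ref{defn:LSD}(ii) with $t_\bullet^+\in(-1,1)$ — the upper bound controlling $|m_{\mu_\bullet}|$, the lower bound forcing $m_{\mu_\bullet}'$ to diverge, and the constraint $t_\bullet^+<1$ making the divergence of $m_{\mu_\bullet}'$ dominate $m_{\mu_\bullet}^2$ — yields $F_{\mu_\bullet}'(x)\to+\infty$ as $x$ approaches $E_\bullet^+$ from outside the support. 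Since $v(\cdot)$ is increasing, $g$ is a product of two strictly positive, strictly decreasing functions of $u$, with $g\to\infty$ at the left end of its domain and $g\to0$ as $u\to\infty$; hence $g(\sigma_\alpha)=1$ has a unique solution, $\sigma_\alpha$ lies strictly to the right of $\supp\mu_\alpha$ (so $F_{\mu_\alpha}$, $m_{\mu_\alpha}$ and $x(\cdot)$ are analytic there), and $g'(\sigma_\alpha)<0$, whence $x'(\sigma_\alpha)=0$ and $x''(\sigma_\alpha)=-g'(\sigma_\alpha)/F_{\mu_\beta}'(\sigma_\beta)>0$. Inverting $x(u)=E_{+,0}+\tfrac12 x''(\sigma_\alpha)(u-\sigma_\alpha)^2+\cdots$ gives $\omega_\alpha(x+\ii 0)=\sigma_\alpha+\ii\sqrt{2(E_{+,0}-x)/x''(\sigma_\alpha)}\,(1+o(1))$ as $x\nearrow E_{+,0}$, and since $m_{\mu_\alpha}$ is analytic at $\sigma_\alpha$ with $m_{\mu_\alpha}'(\sigma_\alpha)=\int(t-\sigma_\alpha)^{-2}\,\dd\mu_\alpha(t)>0$,
\[
\rho_0(x)=\frac1\pi\im m_{\mu_\alpha}(\omega_\alpha(x+\ii 0))=\frac{m_{\mu_\alpha}'(\sigma_\alpha)}{\pi}\sqrt{\frac{2}{x''(\sigma_\alpha)}}\;\sqrt{E_{+,0}-x}\;(1+o(1)),\qquad x\nearrow E_{+,0},
\]
which identifies $\gamma_0^{3/2}$ with the strictly positive constant in front, shows $\rho_0$ is bounded near $E_{+,0}$, and (treating the remaining finitely many spectral edges the same way) finishes the lemma.

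The step I expect to be the main obstacle is not any single computation but the two global inputs underlying the reduction: that the boundary values $\omega_\alpha(x+\ii 0),\omega_\beta(x+\ii 0)$ stay strictly away from $\supp\mu_\alpha$, $\supp\mu_\beta$ near $E_{+,0}$, and that the first critical point of $x(\cdot)$ encountered as $u$ decreases from $+\infty$ really produces the \emph{global} right edge of $\supp(\mu_\alpha\boxplus\mu_\beta)$ rather than the right endpoint of an interior gap. Both follow from the blow-up of $F_{\mu_\bullet}'$ at $E_\bullet^+$ — i.e. from the hypothesis $t_\bullet^+\in(-1,1)$ — but making them rigorous requires the quantitative stability analysis of the subordination equations, which is the technical heart of \cite{Bao-Erdos-Schnelli2016,Bao-Erdos-Schnelli2020JAM}.
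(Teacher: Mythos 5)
Your argument is correct and follows essentially the same strategy as the paper: the lemma is cited from \cite{Bao-Erdos-Schnelli2020JAM} without a separate proof, and both that reference and the paper's own Appendix~\ref{sec:stab} (which reproves the $t$-dependent version in Lemma~\ref{lem:stab} and Proposition~\ref{prop:sqrt_lim}) obtain the square-root edge by analysing the subordination system on the real axis, characterising $E_{+,0}$ via $(F_{\mu_\alpha}'(\omega_\alpha)-1)(F_{\mu_\beta}'(\omega_\beta)-1)=1$, and inverting the resulting nondegenerate fold. The step you correctly flag as needing real care --- that $\omega_\alpha(E_{+,0})$ lies strictly above $E_\alpha^+$, which is exactly where the hypothesis $t_\alpha^+,t_\beta^+\in(-1,1)$ does the work --- is precisely Lemma~3.7 of \cite{Bao-Erdos-Schnelli2020JAM}, reproduced for the $t$-dependent convolution as Lemma~\ref{lem:stab_lim} here.
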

Now we are ready to introduce the main result of this paper, edge universality for $H_{0}$:
\begin{thm}\label{thm:main result}
	Let $A$ and $B$ be deterministic diagonal matrices satisfying Assumption \ref{assump:ABconv}, and let $\lambda_{1}$ be the largest eigenvalue of the matrix $H_{0}$ defined in Definition \ref{def:model}. Then we have
	\beqs
	\lim_{N\to\infty}\P\left(\gamma_{0}^{-1}N^{2/3}(\lambda_{1}-E_{+,0})\leq s\right)=\begin{cases} F_{2}(s)& \text{if $U$ is unitary},\\
		F_{1}(s) & \text{if $U$ is orthogonal}.
	\end{cases}
	\eeqs
	where $F_{1}$ and $F_{2}$ are respectively the GUE and GOE Tracy-Widom distribution functions.
\end{thm}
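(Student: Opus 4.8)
The plan is to follow the three-step dynamical approach outlined in the introduction, applied to the Dyson matrix flow $H_t = H_0 + \sqrt t\, W$ where $W$ is a GUE independent of the Haar unitary $U$. Fix a small $\chi>0$ and set the stopping time $t_0 \deq N^{-1/3+\chi}$. The endpoint of the flow is handled first: by Lemma \ref{lem:fc_dens} together with Assumption \ref{assump:ABconv}, the $N$-dependent free convolution $\mu_A\boxplus\mu_B$ has a square-root edge at some $E_{+,0}^{(N)}$ with the correct scaling $\gamma_0$, so $H_0$ is a \emph{regular} initial condition in the sense required by Landon--Yau \cite{Landon-Yau2017}. Their result then gives that the rescaled largest eigenvalue of $H_{t_0}$ converges to $F_2$. (One must check their hypotheses hold uniformly in $N$, using the quantitative edge regularity of $\mu_A\boxplus\mu_B$ supplied by \cite{Bao-Erdos-Schnelli2020JAM,Bao-Erdos-Schnelli2020}; this is where the L\'evy-distance bound in Assumption \ref{assump:ABconv}(i) feeds in, to control the gap between $\mu_A\boxplus\mu_B$ and $\mu_\alpha\boxplus\mu_\beta$ on scales down to $N^{-2/3}$.)

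Second, I would establish the optimal local law for $H_t$, uniformly for $t\in[0,t_0]$ and in fact for all $t$ up to constant order. The key device is the deterministic subordination system \eqref{eq:Phi_def}: one shows $G_{H_t}(z)\approx G_A(\omega_{A,t}(z))$ (and symmetrically, using $\caH_t$ from \eqref{eq:themodel_sym}, $G_{H_t}(z)\approx G_B(\omega_{B,t}(z))$), where the subordination functions solve the modified system that absorbs the Gaussian component $\sqrt t\,W$ into an $F$-transform shift. The proof is the argument of \cite{Bao-Erdos-Schnelli2020} — partial randomness decomposition of $U$ into a Gaussian vector and a degree-$(N-1)$ Haar unitary, Stein's lemma / cumulant expansions, a self-consistent equation with stability estimates near the regular edge — plus a handful of extra error terms coming from the independent GUE, each of which is bounded by the same moment/concentration methods. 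This yields entrywise and averaged local laws for $G_{H_t}$ down to the optimal scale $\im z \gg N^{-1}$ near $E_{+,0}$, as well as rigidity of the extreme eigenvalues of $H_t$; in particular $\lambda_1(H_{t_0})$ itself concentrates at scale $N^{-2/3+o(1)}$.

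Third, and this is the heart of the paper, comes the Green function comparison: showing that $\lambda_1(H_0)$ and $\lambda_1(H_{t_0})$ have the same limiting law. Following the scheme of \cite{Lee-Schnelli2015}, I would fix a smooth cutoff realizing $\prob{N^{2/3}(\lambda_1(H_t)-E_{+,0})\le s}$ as an expectation of a function of $\Tr G_{H_t}(E+\ii\eta)$ for $\eta$ slightly below $N^{-2/3}$ and $E$ near the edge, then bound $\frac{\dd}{\dd t}\E[\,\cdots]$ and integrate over $[0,t_0]$. Differentiating in $t$ produces, via the GUE part, terms $\frac1N\Tr(G_{H_t}^2)$ and, via the dependence of $H_t$ on the Haar unitary, weighted traces such as $\frac1N\Tr(G_{H_t}UBU\adj G_{H_t})$; expanding the latter with Stein's lemma generates further factors of $U$ which do not close. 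The resolution is to pass to the symmetrized matrix $\caH_t = U\adj A U + B + \sqrt t\, U\adj W U$ of \eqref{eq:themodel_sym}: since $U\adj G_{H_t} U = G_{\caH_t}$ and $\caH_t$ has the same structural form as $H_t$, the problematic traces can be rewritten in terms of $G_{\caH_t}^2$, and one obtains a \emph{closed linear system} relating the weighted traces of $G_{H_t}^2$ and $G_{\caH_t}^2$. Solving this system (its coefficient matrix is controlled, and invertible, by the stability of the subordination equations from Step 2) shows that each such trace is $\caO(N^{-1/3+C\chi}\eta^{-c})$ or better, making the total change over $[0,t_0]$ negligible. Combining the three steps: $\lambda_1(H_0)$ has the same limit as $\lambda_1(H_{t_0})$, which is $F_2$ by Step 1.

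The main obstacle I expect is Step 3, specifically setting up and closing the linear system for the weighted $G^2$-traces. In the deformed-Wigner case \cite{Lee-Schnelli2015} the Schur complement gives these quantities directly; here the correlations among entries of $U$ force repeated partial-randomness expansions, each introducing new resolvent-weighted traces, and the combinatorics of verifying that the system truncates at a manageable size — with all error terms provably subleading and the coefficient matrix provably invertible near the edge — is where the real work lies. A secondary technical point is ensuring the local law of Step 2 holds with sufficiently strong (optimal) error bounds and uniformly enough in $t$ and in the spectral parameter to be fed into the $t$-integration in Step 3 without losing powers of $N$; this is why the local law must be pushed to the constant time scale rather than merely to $t_0$.
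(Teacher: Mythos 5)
Your proposal follows essentially the same three-step strategy as the paper: apply Landon--Yau at $t_{0}=N^{-1/3+\chi}$ after verifying edge regularity of $H_{0}$ (Proposition~\ref{prop:t0 tracy widom}), establish optimal local laws for $H_{t}$ and the symmetrized $\caH_{t}$ via the time-dependent subordination system (Proposition~\ref{prop:ll}, Theorem~\ref{thm:ll}), and run a Green function comparison over $[0,t_{0}]$ in which the crucial device is passing to $\caH_{t}=U\adj AU+B+\sqrt{t}\,U\adj WU$ to close the linear system of resolvent-weighted traces (Proposition~\ref{prop:gfc}). You have correctly identified all the main proof components and the central technical obstruction, so your outline matches the paper's argument.
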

\begin{rem}\label{rem:joint}
	As in \cite[Theorem 6.4]{Erdos-Knowles-Yau-Yin2012}, Theorem \ref{thm:main result} easily extends to the joint statistics of the top $k$ eigenvalues of $H_{0}$. More precisely, the final result would be
	\begin{multline*}
		\P\left(\gamma_{0}^{-1}N^{2/3}(\lambda_{1}-E_{+,0})\leq s_{1},\cdots,\gamma_{0}^{-1}N^{2/3}(\lambda_{k}-E_{+,0})\leq s_{k} \right)	\\
		-\P\left(N^{2/3}(\mu_{1}-2)\leq s_{1},\cdots,N^{2/3}(\mu_{k}-2)\leq s_{k}\right)\to 0
	\end{multline*}
	as $N\to\infty$, where $\mu_{1}\geq\cdots\geq\mu_{N}$ are the eigenvalues of an $(N\times N)$ GUE.
\end{rem}

\begin{rem}
	For definiteness, we assume that $U$ is a Haar unitary matrix until Section \ref{sec:der_tech}. At the end of the paper, in Section \ref{sec:ort}, we show how to modify the proof for orthogonal $U$.
\end{rem}

We conclude this section with remarks on Theorem \ref{thm:main result} and assumptions we imposed.
\begin{rem}
	Having power-law type decay around the edge as in Definition \ref{defn:LSD} is a typical property among limiting spectral distributions of random matrices. Prime examples are semi-circle (see Definition \ref{defn:m}) and Mar\v{c}henko-Pastur distributions, and the arcsine distribution is an example with exponent $t^{+}=-1/2$ which is the limit of the ESD of $U+U\adj$.
	
	We emphasize that $t^{+}_{\alpha}<1$ is not a technical assumption. When either $t^{+}_{\alpha}$ or $t^{+}_{\beta}>1$, the density of the free convolution $\mu_{\alpha}\boxplus\mu_{\beta}$ may not have square root decay around the edge as in Lemma \ref{lem:fc_dens}. Indeed, it was proved in \cite{Lee-Schnelli2013} that when $\mu_{\alpha}$ is the semi-circle law and $t^{+}_{\beta}>1$, the density of $\mu_{\alpha}\boxplus\mu_{\beta}$ can decay as $t^{+}_{\beta}$ depending on the ratio of variances of $\mu_{\alpha}$ and $\mu_{\beta}$. In this case, the typical eigenvalue spacing of $H$ around the edge would be $N^{-1/(t^{+}_{\beta}+1)}$, which is incompatible with our result where the largest eigenvalue is scaled by $N^{2/3}$.
\end{rem}
\begin{rem}\label{rem:assu}
	Here we explain the role of Assumption \ref{assump:ABconv}. Firstly, assumption (i) guarantees that the $N$-dependent measure $\mu_{A}\boxplus\mu_{B}$ inherits the stability of $\mu_{\alpha}\boxplus\mu_{\beta}$, on the optical scale of $O(N^{-1+o(1)})$. We might be able to combine Definition \ref{defn:LSD} (ii) with Assumption \ref{assump:ABconv} (i) to make a statement on $\mu_{A}$ and $\mu_{B}$ that would ensure the same stability of $\mu_{A}\boxplus\mu_{B}$, uniformly over $N$. While we believe that this is possible following \cite{Ahn2020}, that is, imposing conditions on the inverses of their Stieltjes transforms, we choose the current assumptions to make direct connection with \cite{Bao-Erdos-Schnelli2020} and to avoid using inverse Stieltjes transforms.
	
	The second assumption (ii) should be understood in connection with so-called BBP transition \cite{Baik-BenArous-Peche2005}, meaning that spiked eigenvalues of $A$ and $B$ may result in those of $H_{0}$. The corresponding phenomenon was proved in \cite{Belinschi-Bercovici-Capitaine-Fevrier2017}, so that when $\max_{i\in\braN}a_{i}$ exceeds certain threshold ($\omega_{\alpha,0}(E_{+,0})$, to be specific), the largest eigenvalue $\lambda_{1}(H)$ of $H$ converges to a point strictly larger than $E_{+,0}$. In this case, we expect that the fluctuation of $\lambda_{1}(H)$ would be a Gaussian with magnitude $O(N^{-1/2})$ as in \cite{Baik-BenArous-Peche2005}, which is in a different regime from our result. We will pursue this line of study in a future work. While we can generalize our result by allowing few eigenvalues of $A$ to detach from $\supp\mu_{\alpha}$ but within the threshold above, we refrain ourselves for simplicity. The third assumption ensures that $A$ and $B$ are both norm-bounded.
	
	Finally, the last assumption (iv) is a mere shift, which can be dropped by simply translating $E_{+}$ by $\tr A+\tr B$. Note also that assumptions (i) and (iv) together imply that $\mu_{\alpha}$ and $\mu_{\beta}$ are of mean zero, that is,
	\beq\label{eq:mean_zero}
	\int_{\R}x\dd\mu_{\alpha}(x)=0=\int_{\R}x\dd\mu_{\beta}(x).
	\eeq
\end{rem}
\begin{rem}
	The most important and interesting examples would arise when $A$ and $B$ are random. In particular, if $A$ is a random matrix for which the optimal local laws (see Theorem B.1 for example) are known, $d_{L}(\mu_{A},\mu_{\alpha})\ll N^{-1+\epsilon}$ holds with high probability (see \cite[Section 5.1]{Ajanki-Erdos-Kruger2017} for a proof). Thus our theorem applies to random $A$ and $B$ as long as the optimal local laws are proved and they are unitarily invariant. There are many instances satisfying these criteria, including invariant ensembles \cite{Bourgade-Mody-Pain2022} that also cover GUE and Wishart ensemble. Other interesting examples concern self-adjoint polynomials of a Haar unitary matrix, such as $\re U^{m}$ \cite{Meckes-Meckes2013}.
	
	Also, due to \cite{Bao-Erdos-Schnelli2020}, the sum $H$ itself can serve as a summand if the conditions on $A$ and $B$ hold in the whole spectrum (see Corollary 2.8 of \cite{Bao-Erdos-Schnelli2020}). More specifically, our result applies to the sum of any finite number of summands, that is, sums of $U_{j}A_{j}U_{j}$'s where $U_{j}$'s are independent Haar unitary matrices and $A_{j}$'s satisfy the assumptions of \cite[Corollary 2.8]{Bao-Erdos-Schnelli2020}.
\end{rem}
\section{Preliminaries}\label{sec:prelim}

As mentioned in the introduction, our proof mainly involves the Dyson Brownian motion starting from $H_{0}$, whose ESD is approximated by the free additive convolutions of three measures, $\mu_{\alpha}\boxplus\mu_{\beta}\boxplus\mu_{\SC}^{(t)}$ or $\mu_{A}\boxplus\mu_{B}\boxplus\mu_{\SC}^{(t)}$. This section collects some complex analytic preliminary results on these free additive convolutions, including analytic subordination phenomenon and their properties.
\begin{defn}\label{defn:m}
	For $t\geq0$ and a probability measure $\mu$ on $\R$, we define functions $m_{\mu},F_{\mu,t}$ on $\C_{+}$ by
	\begin{align}\label{eq:mF_def}
		&m_{\mu}(z)=\int_{\R}\frac{1}{x-z}\dd\mu(x), &
		&F_{\mu,t}(z)=-\frac{1}{m_{\mu}(z)}+tm_{\mu}(z),&
		&z\in\C_{+}.
	\end{align}
	Also we denote the semicircle distribution on $[-2\sqrt{t},2\sqrt{t}]$ by $\mu_{\SC}^{(t)}$, that is,
	\beqs
	\dd\mu_{\SC}^{(t)}(x)= \frac{1}{2\pi t}\sqrt{(4t-x^{2})_{+}}\dd x.
	\eeqs
\end{defn}

One of the most efficient tools for studying free convolutions, or specifically its regularity, is the analytic subordination. We present the corresponding result for free additive convolution of two measures and a dilated semi-circle distribution:
\begin{prop}\label{prop:subor}
	Given $t>0$ and two Borel probability measures $\mu_{1}$ and $\mu_{2}$ on $\R$, there exist unique analytic functions $\omega_{1,t},\omega_{2,t}:\C_{+}\to\C_{+}$ that satisfy the following:
	\begin{itemize}
		\item[(i)] We have  $\im\omega_{1,t}(z),\im\omega_{2,t}(z)\geq\im z$ for all $z\in\C_{+}$ and 
		\beqs
		\lim_{\eta\nearrow\infty}\frac{\omega_{1,t}(\ii\eta)}{\ii\eta}=\lim_{\eta\nearrow\infty}\frac{\omega_{2,t}(\ii\eta)}{\ii\eta}=1:
		\eeqs
		
		\item[(ii)] For all $z\in\C_{+}$,
		\beq\label{eq:subor}
		F_{\mu_{1}\boxplus\mu_{2}\boxplus\mu_{\SC}^{(t)},t}(z)=F_{\mu_{1},t}(\omega_{1,t}(z))=F_{\mu_{2},t}(\omega_{2,t}(z))=\omega_{1,t}(z)+\omega_{2,t}(z)-z.
		\eeq
	\end{itemize}
\end{prop}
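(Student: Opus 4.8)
The plan is to obtain Proposition~\ref{prop:subor} by composing two subordination relations that are already understood, and to check that the modified transform $F_{\cdot,t}$ precisely bookkeeps the extra semicircular summand. Write $\nu\deq\mu_1\boxplus\mu_2$ and $\rho\deq\nu\boxplus\mu_{\SC}^{(t)}$. The first input is the classical analytic subordination for the free additive convolution of two measures: there exist unique analytic self-maps $\wt\omega_1,\wt\omega_2$ of $\C_+$ with $\im\wt\omega_j(w)\ge\im w$, $\wt\omega_j(\ii\eta)/(\ii\eta)\to1$ as $\eta\nearrow\infty$, and $F_\nu(w)=F_{\mu_1}(\wt\omega_1(w))=F_{\mu_2}(\wt\omega_2(w))=\wt\omega_1(w)+\wt\omega_2(w)-w$; since $F_\mu=-1/m_\mu$ for every probability measure, this also gives $m_\nu=m_{\mu_1}\circ\wt\omega_1=m_{\mu_2}\circ\wt\omega_2$. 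The second input is the (explicit) subordination for free convolution with a dilated semicircle: there is a unique analytic self-map $\omega_{\SC,t}$ of $\C_+$ with $\im\omega_{\SC,t}(z)\ge\im z$, $\omega_{\SC,t}(\ii\eta)/(\ii\eta)\to1$, $m_\rho(z)=m_\nu(\omega_{\SC,t}(z))$, and
\[
\omega_{\SC,t}(z)=z+tm_\rho(z)=z+tm_\nu\big(\omega_{\SC,t}(z)\big);
\]
equivalently, $\omega_{\SC,t}(z)$ is the unique $\zeta\in\C_+$ with $z=\zeta-tm_\nu(\zeta)$. (The relation $\omega_{\SC,t}(z)=z+tm_\rho(z)$ comes from $R_{\mu_{\SC}^{(t)}}(w)=tw$, or can be checked directly from the subordination equation; the model case $\nu=\delta_0$ recovers $tm_\rho^2+zm_\rho+1=0$.)

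For existence I would set $\omega_{j,t}\deq\wt\omega_j\circ\omega_{\SC,t}$, $j=1,2$, which are analytic self-maps of $\C_+$ with $\im\omega_{j,t}(z)=\im\wt\omega_j(\omega_{\SC,t}(z))\ge\im\omega_{\SC,t}(z)\ge\im z$; the normalizations in (i) follow by chaining the two vertical asymptotics, using that $\omega_{\SC,t}(\ii\eta)$ leaves every compact set while staying in a fixed Stolz angle so that the non-tangential asymptotics of $\wt\omega_j$ applies. For (ii), the key point is $m_{\mu_j}(\omega_{j,t}(z))=m_{\mu_j}(\wt\omega_j(\omega_{\SC,t}(z)))=m_\nu(\omega_{\SC,t}(z))=m_\rho(z)$, from which
\[
F_{\mu_j,t}\big(\omega_{j,t}(z)\big)=-\frac{1}{m_{\mu_j}(\omega_{j,t}(z))}+tm_{\mu_j}(\omega_{j,t}(z))=-\frac{1}{m_\rho(z)}+tm_\rho(z)=F_{\rho,t}(z),
\]
while $\wt\omega_1(w)+\wt\omega_2(w)=w-1/m_\nu(w)$ at $w=\omega_{\SC,t}(z)$, combined with $\omega_{\SC,t}(z)-z=tm_\rho(z)$, gives $\omega_{1,t}(z)+\omega_{2,t}(z)-z=tm_\rho(z)-1/m_\rho(z)=F_{\rho,t}(z)$; this is \eqref{eq:subor}.

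For uniqueness, suppose $(\omega_1,\omega_2)$ satisfies (i)--(ii). Then $F_{\mu_1,t}(\omega_1)=F_{\mu_2,t}(\omega_2)$, and writing $g\deq m_{\mu_1}(\omega_1)$, $h\deq m_{\mu_2}(\omega_2)$ this reads $-1/g+tg=-1/h+th$, i.e.\ $(g-h)(1+tgh)=0$ on $\C_+$; since (i) forces $\omega_j(\ii\eta)\to\infty$, hence $g(\ii\eta),h(\ii\eta)\to0$, the factor $1+tgh$ is not identically zero, so $g\equiv h$, say with common value $m$, and $\zeta\deq z+tm\in\C_+$. Now $F_{\mu_1}(\omega_1)=-1/m=F_{\mu_2}(\omega_2)$ and, by \eqref{eq:subor}, $\omega_1+\omega_2=z+F_{\mu_1,t}(\omega_1)=z+tm-1/m=\zeta+F_{\mu_1}(\omega_1)$, so $(\omega_1,\omega_2)$ solves the two-measure subordination system at the point $\zeta$; by its pointwise uniqueness, $\omega_j=\wt\omega_j(\zeta)$ and $m=m_{\mu_1}(\wt\omega_1(\zeta))=m_\nu(\zeta)$. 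Hence $z=\zeta-tm_\nu(\zeta)$, so $\zeta=\omega_{\SC,t}(z)$ and $\omega_j=\omega_{j,t}(z)$.

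The routine part is existence; I expect the uniqueness step to be the only delicate point, as it requires (a) discarding the spurious branch $1+tgh\equiv0$ via the boundary behaviour at $\ii\infty$, and (b) matching $z=\zeta-tm_\nu(\zeta)$ exactly with the fixed-point equation that characterizes $\omega_{\SC,t}$, i.e.\ being certain the ``twisted'' subordination decouples \emph{precisely} into the two classical ones with no residual ambiguity. If one prefers not to import the two classical theorems in full strength, an alternative (more robust but less transparent) route is to rewrite \eqref{eq:subor} as a self-consistent fixed-point equation for $\omega_{2,t}$, show the associated holomorphic map is a strict contraction of $\C_+$ in the hyperbolic metric (Denjoy--Wolff / Earle--Hamilton), and deduce existence from a normal-families argument and uniqueness from the contraction.
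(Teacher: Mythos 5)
Your proposal follows exactly the paper's second suggested route: apply the classical two-measure subordination theorem first to $\mu_1\boxplus\mu_2$ and then to $(\mu_1\boxplus\mu_2)\boxplus\mu_{\SC}^{(t)}$, which yields precisely the composition $\omega_{j,t}=\wt\omega_j\circ\omega_{\SC,t}$ with $\omega_{\SC,t}(z)=z+tm_\rho(z)$ (the paper records this as $\omega_{j,t}(z)=\omega_{j,0}(z+tm_\rho(z))$). You fill in the verification of (i)--(ii) and the uniqueness argument that the paper leaves as a one-line remark; both are correct and consistent with what the paper intends.
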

\begin{proof}
	The result can be proved with a straightforward modification of Theorem 4.1 of \cite{Belinschi-Bercovici2007}, which covers the case of $t=0$. Or we can apply the same theorem twice, firstly to the free convolution $\mu_{1}\boxplus\mu_{2}$ and then to $(\mu_{1}\boxplus\mu_{2})\boxplus\mu_{\SC}^{(t)}$. The second proof also reveals the relation
	\begin{align}\label{eq:omega(z+tm)}
		&\omega_{1,t}(z)=\omega_{1,0}(z+tm_{\rho}(z)), &
		&\omega_{2,t}(z)=\omega_{2,0}(z+tm_{\rho}(z)),
	\end{align}
	where we abbreviated $\rho=\mu_{1}\boxplus\mu_{2}\boxplus\mu_{\SC}^{(t)}$.
\end{proof}
Note that a direct consequence of \eqref{eq:subor} is
\beq\label{eq:subor_m}
m_{\mu_{1}\boxplus\mu_{2}\boxplus\mu_{\SC}^{(t)}}(z)=m_{\mu_{1}}(\omega_{1,t}(z))=m_{\mu_{2}}(\omega_{2,t}(z)),\qquad z\in\C_{+}.
\eeq

For simplicity, we use the following abbreviations;
\begin{align*}
	&m_{\mu_{\alpha}}\equiv m_{\alpha},&
	&m_{\mu_{\beta}}\equiv m_{\beta},&
	&m_{\mu_{A}}\equiv m_{A}, &
	&m_{\mu_{B}}\equiv m_{B}, \\
	&F_{\mu_{\alpha},t}\equiv F_{\alpha,t},&
	&F_{\mu_{\beta},t}\equiv F_{\beta,t},&
	&F_{\mu_{A},t}\equiv F_{A,t}, &
	&F_{\mu_{B},t}\equiv F_{B,t},\\
	&\mu_{\alpha}\boxplus\mu_{\beta}\boxplus\mu_{\SC}^{(t)}\equiv \mu_{t},&&&
	&\mu_{A}\boxplus\mu_{B}\boxplus\mu_{\SC}^{(t)}\equiv \wh{\mu}_{t}. &&
\end{align*}
We further denote the subordination functions corresponding to the pairs $(\mu_{\alpha},\mu_{\beta})$ and $(\mu_{A},\mu_{B})$ respectively by $(\omega_{\alpha,t},\omega_{\beta,t})$ and $(\omega_{A,t},\omega_{B,t})$, so that
\beqs
F_{\alpha,t}(\omega_{\alpha,t}(z))=F_{\beta,t}(\omega_{\beta,t}(z))=F_{\mu_{t}}(z),	\qquad
F_{A,t}(\omega_{A,t}(z))=F_{B,t}(\omega_{B,t}(z))=F_{\wh{\mu}_{t}}(z).
\eeqs
For later uses, we define a $t$-dependent function $\Phi_{\alpha\beta}\equiv\Phi_{\alpha\beta,t}=(\Phi_{\alpha,t},\Phi_{\beta,t}):\C_{+}^{3}\to\C^{2}$ as
\begin{align}\label{eq:Phi_def}
	&\Phi_{\alpha}(\omega_{1},\omega_{2},z)\equiv\Phi_{\alpha,t}(\omega_{1},\omega_{2},z)\deq F_{\alpha,t}(\omega_{1})-\omega_{1}-\omega_{2}+z,\\
	&\Phi_{\beta}(\omega_{1},\omega_{2},z)\equiv\Phi_{\beta,t}(\omega_{1},\omega_{2},z)\deq F_{\beta,t}(\omega_{2})-\omega_{1}-\omega_{2}+z,
\end{align}
and $\Phi_{AB}$ is defined similarly. Note that \eqref{eq:subor} is equivalent to $\Phi_{\alpha\beta}(\omega_{\alpha,t}(z),\omega_{\beta,t}(z),z)\equiv 0$.
Finally, we omit the subscript $t$, for example $\omega_{A,t}\equiv\omega_{A}$, when there is no confusion.

The boundary behaviors of the subordination functions are studied by Belinschi in the series of papers \cite{Belinschi2006,Belinschi2008,Belinschi2014}. In particular it is proved that if $\mu_{1}$ and $\mu_{2}$ are compactly supported measures such that $\mu_{1}(a)+\mu_{2}(b)<1$ for all $a,b\in\R$, then the corresponding subordination functions $\omega_{1,0}$ and $\omega_{2,0}$ extend continuously to $\C_{+}\cup\R$, possibly attaining value $\infty$. We can easily see that all measures considered in the present paper satisfy the assumption. In particular, $\omega_{\alpha,t},\omega_{\beta,t},\omega_{A,t}$ and $\omega_{B,t}$ extend continuously to $\C_{+}\cup\R$ for each fixed $t\geq0$.

We denote the upper edges of $\mu_{t}$ and $\wh{\mu}_{t}$ respectively by $E_{+,t}$ and $\wh{E}_{+,t}$, and consider the spectral domain
\begin{align}\label{eq:D_def}
	\caD_{\tau}(\eta_{1},\eta_{2})\equiv\caD_{\tau,t}(\eta_{1},\eta_{2})\deq\{z=E+\ii\eta\in\C_{+}:E\in[E_{+,t}-\tau,\tau^{-1}],\,\eta\in(\eta_{1},\eta_{2}]\},
\end{align}
for $\tau,\eta_{1},\eta_{2},t\geq0$. Furthermore we denote $\kappa\equiv\kappa_{t}(z)=\absv{z-E_{+,t}}$ for $z\in\C_{+}$.

In the following lemma, we present our results on the behavior of $\omega_{\alpha,t}(z)$ and $\omega_{\beta,t}(z)$. Its proof is deferred to Appendix \ref{sec:stab}.
\begin{lem}\label{lem:stab}
	Let $\omega_{\alpha,t},\omega_{\beta,t}$ be the subordination functions corresponding to the pair $(\mu_{\alpha},\mu_{\beta})$. Then the following hold true:
	\begin{itemize}
		\item[(i)] The functions $(t,z)\mapsto \omega_{\alpha,t}(z), \omega_{\beta,t}(z),$ and $m_{\mu_{t}}(z)$ are continuous on $[0,\infty)\times (\C_{+}\cup\R)$, with finite values.
		
		\item[(ii)] There exists a (small) constant $\tau>0$ such that for all fixed $\eta_{M}>0$ we have
		\beqs
		\sup_{t\in[0,1]}\sup_{z\in\caD_{\tau}(0,\eta_{M})}\absv{\omega_{\alpha,t}(z)}+\absv{\omega_{\beta,t}(z)}\leq C.
		\eeqs
		
		\item[(iii)] There exist constants $\tau>0$ and $k_{0}$ such that
		\beq\label{eq:stab}\begin{aligned}
			&\inf\{\absv{\omega_{\alpha,t}(z)-x}:t\in[0,1],\,z\in\caD_{\tau}(0,\infty),\,x\in\supp\mu_{\alpha}\}\geq k_{0},\\
			&\inf\{\absv{\omega_{\beta,t}(z)-x}:t\in[0,1],\,z\in\caD_{\tau}(0,\infty),\,x\in\supp\mu_{\beta}\}\geq k_{0}.
		\end{aligned}\eeq
		Furthermore, $\omega_{\alpha,t}(E_{+,t})>E_{\alpha}^{+}$ and $\omega_{\beta,t}(E_{+,t})>E_{\beta}^{+}$.
		
		\item[(iv)] For each $t\geq 0$, the edge $E_{+,t}$ satisfies the following equation.
		\beq\label{eq:S_at_E}
		(F_{\alpha}'(\omega_{\alpha}(E_{+,t}))-1)(F_{\beta}'(\omega_{\beta}(E_{+,t}))-1)-1=0.
		\eeq
		
		\item[(v)] For each $t\geq0$, the measure $\mu_{t}$ has a continuous density $\rho_{t}$ around $E_{+,t}$ and there exists a constant $\gamma_{t}>0$ such that the following holds;
		\beq\label{eq:gamma_def}
		\lim_{E\nearrow E_{+,t}}\frac{\rho_{t}(x)}{\sqrt{E_{+,t}-x}}=\frac{\gamma_{t}^{3/2}}{\pi}.
		\eeq
		Furthermore, $\gamma_{t}\sim 1$ and $\frac{\dd}{\dd t}\gamma_{t}\lesssim1$ for $t\in[0,1]$.
	\end{itemize}
\end{lem}
The identity \eqref{eq:S_at_E} in fact characterizes the edge; see Lemma \ref{lem:edgechar} for details. Viewing $\mu_{t}$ as the free convolution of $(\mu_{\alpha}\boxplus\mu_{\beta})$ and $\mu_{\SC}^{(t)}$, the edge $E_{+,t}$  admits another characterization as follows; see \cite[Lemma 2.3]{Landon-Yau2017} for a proof.
\beq\label{eq:Egamma}
\begin{aligned}
	\frac{1}{t}&=\int_{\R}\frac{1}{\absv{x-E_{+,t}-tm_{\mu_{t}}(E_{+,t})}^{2}}\dd(\mu_{\alpha}\boxplus\mu_{\beta})(x),\\
	\gamma_{t}&=\left(-t^{3}\int_{\R}\frac{1}{(x-E_{+,t}-tm_{\mu_{t}}(E_{+,t}))^{3}}\dd(\mu_{\alpha}\boxplus\mu_{\beta})(x)\right)^{-1/3}.
\end{aligned}
\eeq
Notice the integrating measure $\mu_{\alpha}\boxplus\mu_{\beta}$ in \eqref{eq:Egamma}. Both identities in \eqref{eq:Egamma} originate from the well-known \emph{Pastur equation} \cite{Pastur1972}, which is a special case of \eqref{eq:subor_m} with $\mu_{2}=\delta_{0}$;
\beq\label{eq:Pastur}
m_{\mu_{1}\boxplus\mu_{\SC}^{(t)}}(z)=\int_{\R}\frac{1}{x-z-tm_{\mu_{1}\boxplus\mu_{\SC}^{(t)}}(z)}\dd \mu_{1}(x),\qquad z\in\C_{+}.
\eeq
Finally, note that the dilation $\gamma_{t}^{-1}\rho_{t}(\gamma_{t}^{-1}\cdot)$ has the exactly the same decay $\pi^{-1}\sqrt{x}$ as the semi-circle distribution at the upper edge. In what follows we denote this rescaled edge by
\beq\label{eq:L_def}
L_{+,t}\deq \gamma_{t}E_{+,t}.
\eeq

We next present the partial randomness decomposition of a Haar unitary matrix and related notations, first introduced in \cite{Diaconis-Shahshahani1987} (see also \cite[Section 8]{Mezzadri2007}). They will be extensively used in the remaining sections.
\begin{lem}[{\cite[Lemma 4.1]{Diaconis-Shahshahani1987}\footnote{\cite[Lemma 4.1]{Diaconis-Shahshahani1987} applies to any Polish topological group. See page 27--28 therein for an application to $\caU(N)$.}}]\label{lem:prd}
	Let $U$ be the $(N\times N)$ Haar unitary random matrix in Definition \ref{def:model}. For each $i\in\llbra1,N\rrbra$, let $\bsv_{i}\deq U\bse_{i}$ be the $i$-th column vector of $U$, $\theta_{i}\in[0,2\pi)$ be the argument of $\bse_{i}\adj\bsv_{i}$, and
	\beq\label{eq:prd}
	U^{\angi}\deq -\e{-\ii\theta_{i}}R_{i}U, \quad\text{where}\quad \bsr_{i}\deq \sqrt{2}\frac{\bse_{i}+\e{-\ii\theta_{i}}\bsv_{i}}{\norm{\bse_{i}+\e{-\ii\theta_{i}}\bsv_{i}}_{2}},\quad R_{i}\deq I-\bsr_{i}\bsr_{i}\adj.
	\eeq
	Then we have
	\beq\label{eq:Uangi}
	U^{\angi}=\begin{pmatrix}
		1 & O\\ O & U^{i}
	\end{pmatrix},\qquad U^{i}\in\C^{(N-1)\times (N-1)},
	\eeq
	and $\bsv_{i}$ and $U^{i}$ are independent and uniformly distributed on $\bbS_{\C}^{N}\deq \{\bsv\in\C^{N}:\norm{\bsv}=1\}$ and $\caU(N-1)=\{V\in\C^{(N-1)\times (N-1)}:VV\adj=I\}$, respectively.
\end{lem}
Note that \eqref{eq:Uangi} holds for any unitary $U$ by a direct computation, hence only the properties of distributions of $\bsv_{i}$ and $U^{i}$ require Haar-distributed $U$. Note also that the matrix $R_{i}$ is a Householder reflection:
\beqs
R_{i}=R_{i}\adj\AND R_{i}^{2}=(I-\bsr_{i}\bsr_{i}\adj)(I-\bsr_{i}\bsr_{i}\adj)=I-2\bsr_{i}\bsr_{i}\adj+\norm{\bsr_{i}}_{2}^{2}\bsr_{i}\bsr_{i}\adj=I.
\eeqs
Using the matrix $U^{\angi}$ above, we further define 
\beqs
\wt{B}^{\angi}\deq U^{\angi}B(U^{\angi})\adj.
\eeqs
Since $\bsv_{i}$ is uniformly distributed on the unit sphere $\bbS^{N-1}_{\C}$, we can define a Gaussian vector $\wt{\bsg}_{i}\sim\caN_{\C}(0,N^{-1}I_{N})$ such that
\beqs
\bsv_{i}= \frac{\wt{\bsg}_{i}}{\norm{\wt{\bsg}_{i}}_{2}}.
\eeqs
Using the Gaussian vector $\wt{\bsg}_{i}$, we define
\beq\label{eq:def_g}\begin{aligned}
	\bsg_{i}\deq& \e{-\ii\theta_{i}}\wt{\bsg}_{i},\quad	&
	\bsh_{i}\deq& \frac{\bsg_{i}}{\norm{\bsg_{i}}}=\e{-\ii\theta_{i}}\bsv_{i},	\quad &	
	\ell_{i}\deq& \frac{\sqrt{2}}{\norm{\bse_{i}+\bsh_{i}}},	&	\\
	\mr{\bsg}_{i}\deq&\bsg_{i}-g_{ii}\bse_{i},	\quad &
	\mr{\bsh}_{i}\deq& \bsh_{i}-h_{ii}\bse_{i}.
\end{aligned}\eeq
Now for the vector $\bsh_{i}$ we have 
\beq\label{eq:Re=-h}
R_{i}\bse_{i}=-\bsh_{i} \AND R_{i}\bsh_{i}=-\bse_{i},
\eeq
so that
\beq\label{eq:Rh=-e}
\begin{aligned}
	\bsh_{i}\adj \wt{B}^{\angi}R_{i}=\bsh_{i}\adj U^{\angi}B(U^{\angi})\adj R_{i}	=\bsh_{i}\adj R_{i}UBU\adj =-\bse_{i}\adj \wt{B},	\\
	\bse_{i}\adj\wt{B}^{\angi}R_{i}=(-\e{\ii\theta_{i}})\bse_{i}\adj B U\adj =-b_{i}\bsh_{i}\adj=\bse_{i}\adj R_{i}\wt{B}=	-\bsh_{i}\adj \wt{B}.
\end{aligned}
\eeq

\begin{rem}
	The same decomposition applies to the orthogonal case. Namely, for an $(N\times N)$ Haar orthogonal matrix $U$, Lemma \ref{lem:prd} remains valid except that $\bsv_{i}$ and $U^{i}$ are uniformly distributed respectively on $\{\bsv\in\R^{N}:\norm{\bsv}=1\}$ and the orthogonal group of order $(N-1)$. Note also that in this case we have $\e{\ii\theta_{i}}=\mathrm{sign}(\bse_{i}\adj\bsv_{i})$.
\end{rem}

We conclude this section with two elementary identities that are used for computations.
\begin{itemize}
	\item (Stein's lemma)\quad For a $C^{1}$ function $F:\C\to\C$ and a standard complex Gaussian random variable $X$, that is, $\re X$ and $\im X$ are i.i.d. with law $\caN(0,1/2)$, we have
	\begin{align*}
		&\expct{\ol{X}F(X)}=\expct{\frac{\partial}{\partial X}F(X)},&
		&\expct{XF(X)}=\expct{\frac{\partial}{\partial \ol{X}}F(X)}
	\end{align*}
	whenever $\expct{\absv{XF(X)}},\expct{\absv{F_{x}(X)}},\expct{\absv{F_{y}(X)}}<\infty$, where $\frac{\partial}{\partial X}$ and $\frac{\partial}{\partial\ol{X}}$ denote the holomorphic and anti-holomorphic derivatives, i.e.
	\begin{align*}
		&\frac{\partial}{\partial X}=\frac{1}{2}\left(\frac{\partial}{\partial\re X}-\ii\frac{\partial}{\partial \im X}\right), &
		&\frac{\partial}{\partial \ol{X}}=\frac{1}{2}\left(\frac{\partial}{\partial \re X}+\ii\frac{\partial}{\partial \im X}\right).
	\end{align*}
	
	\item For the resolvent $G\equiv G(z)=(H-z)^{-1}$ of an $(N\times N)$ complex Hermitian matrix $H$ and $a,b\in\llbra 1,N\rrbra$, we have
	\begin{align*}
		&\frac{\partial}{\partial H_{ab}}G=-G\bse_{a}\bse_{b}\adj G, &
		&\frac{\partial}{\partial \ol{H_{ab}}}G=\frac{\partial}{\partial H_{ba}}G=-G\bse_{b}\bse_{a}\adj G,
	\end{align*}
	where we consider $G$ as a $\C^{N\times N}$-valued function of $N(N-1)/2$ complex variables $\{H_{ab}:a>b\}$ and $N$ real variables $\{H_{aa}\}$.
\end{itemize}

\section{Proof of Theorem \ref{thm:main result}}\label{sec:outline of proof of gfc}

The main idea of our proof is to apply Green function comparison to the Dyson Brownian motion (DBM) $H_{t}$ starting from $H_{0}$, whose precise definition is as follows:
\begin{defn}\label{def:rescaled model}
	We define the $(N\times N)$ random matrix $H_{t}$ as
	\begin{align}\label{eq:ht_def}
		&H_{t}=H_{0}+\sqrt{t}W,&
		&t\in[0,t_{0}], &
		&t_{0}\deq N^{-1/3+\chi},
	\end{align}
	where $\chi>0$ is a sufficiently small constant, $W$ is a GUE independent of $H_{0}$, and $\gamma_{t}$ is defined in \eqref{eq:gamma_def}. For each $t\in[0,t_{0}]$ we denote the eigenvalues of $H_{t}$ by
	\beqs
	\lambda_{1,t}\geq\cdots\geq \lambda_{N,t}.
	\eeqs
	For $z\in\C_{+}$, we define the resolvent and Green functions of $\gamma_{t} H_{t}$ as
	\begin{align}\label{eq:gt_def}
		&G_{t}(z)\deq (\gamma_{t} H_{t}-z)^{-1},& &m_{t}(z)\deq \tr G_{t}.
	\end{align}
	We introduce the symmetric counterparts of \eqref{eq:ht_def} and \eqref{eq:gt_def} as follows:
	\begin{align*}
		&\caH_{t}=\caH_{0}+\sqrt{t}U\adj WU, &
		&\caG_{t}=(\gamma_{t} \caH_{t}-z)^{-1}.
	\end{align*}
\end{defn}
Note that the limiting eigenvalue density of $\gamma_{t} H_{t}$ is exactly the dilation $\gamma_{t}^{-1}\rho_{t}(\gamma_{t}^{-1}\cdot)$ of $\mu_{t}$, so that it has the upper edge $L_{+,t}=\gamma_{t}E_{+,t}$ and decays as $\frac{1}{\pi}\sqrt{L_{+,t}-x}$ around $L_{+,t}$. For simplicity, we often omit the subscript $t$ to denote $H\equiv H_{t}$, $G\equiv G_{t}$, et cetera. 

\subsection{Proof of Theorem \ref{thm:main result}}\label{sec:prf_main}
As mentioned in the introduction, we prove Theorem \ref{thm:main result} by combining two results; edge universality at time $t=t_{0}$, and comparison along $t\in[0,t_{0}]$. In the next proposition, we prove the first result.
\begin{prop}\label{prop:t0 tracy widom} Let $F:\bbR^{k}\rightarrow \bbR$ be a smooth function such that $\norm{F}_{\infty}\leq C$ and $\norm{\nabla F}_{\infty}\leq C$ for some $C>0$. Then there exists a constant $c>0$ such that
	\begin{align}\label{eq:t0 tracy widom}
		\big\vert \E[F(\gamma_{t_{0}} N^{2/3}(\lambda_{1,t_{0}}&-E_{+,t_{0}}),\cdots,\gamma_{t_{0}} N^{2/3}(\lambda_{k,t_{0}}-E_{+,t_{0}}))]\nonumber\\
		&-\E[F(N^{2/3}(\mu_{1}-2),\cdots, N^{2/3}(\mu_{k}-2))]\big\vert\leq N^{-c},
	\end{align}
	where $\mu_{1}\geq\cdots\geq \mu_{N}$ are the eigenvalues of a GUE.
\end{prop}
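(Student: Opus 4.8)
The plan is to obtain Proposition~\ref{prop:t0 tracy widom} as a direct consequence of the edge universality of Dyson Brownian motion established by Landon and Yau~\cite{Landon-Yau2017}. Conditionally on $U$, the matrix $H_{0}=A+UBU\adj$ is deterministic and $H_{t}=H_{0}+\sqrt{t}W$ is exactly the Dyson matrix flow driven by a GUE; since $t_{0}=N^{-1/3+\chi}$ lies in the time window treated in \cite{Landon-Yau2017}, the whole statement reduces to verifying that, with high probability over $U$, the initial datum $H_{0}$ is a \emph{regular} initial condition at its upper edge in the sense required there. Concretely one needs: (a) the deterministic equivalent $\mu_{A}\boxplus\mu_{B}$ of the ESD of $H_{0}$ has square-root decay at its right edge $\wh E_{+,0}$ with an edge coefficient bounded above and below uniformly in $N$; (b) the eigenvalues of $H_{0}$ satisfy optimal rigidity in a neighbourhood of $\wh E_{+,0}$; and (c) there are no outliers, $\lambda_{1}(H_{0})\leq\wh E_{+,0}+N^{-2/3+o(1)}$.

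First I would check (a)--(c). For (a), Lemma~\ref{lem:fc_dens} gives the square-root behaviour of $\mu_{\alpha}\boxplus\mu_{\beta}$ at $E_{+,0}$ with coefficient $\gamma_{0}^{3/2}\sim1$, and the stability estimates in Lemma~\ref{lem:stab} (at $t=0$, together with their quantitative proof in the appendix) and Assumption~\ref{assump:ABconv}(i) transfer this to the $N$-dependent measure $\mu_{A}\boxplus\mu_{B}$, giving $\absv{\wh E_{+,0}-E_{+,0}}\leq N^{-1+\epsilon}$ and an edge coefficient $\wh\gamma_{0}$ with $\absv{\wh\gamma_{0}-\gamma_{0}}\leq N^{-c}$; the same reasoning applies to $\wh\mu_{t_{0}}=\mu_{A}\boxplus\mu_{B}\boxplus\mu_{\SC}^{(t_{0})}$ versus $\mu_{t_{0}}$, since adding the dilated semicircle preserves the quantitative control. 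For (b), the optimal (anisotropic and averaged) local laws for $H_{0}=A+UBU\adj$ near the edge, and the eigenvalue rigidity they entail, are proved in \cite{Bao-Erdos-Schnelli2020}; these are high-probability statements in $U$ and furnish precisely the rigidity hypothesis of \cite{Landon-Yau2017}. For (c), Assumption~\ref{assump:ABconv}(ii) together with the outlier description of \cite{Belinschi-Bercovici-Capitaine-Fevrier2017} gives $\lambda_{1}(H_{0})\to E_{+,0}$, and more precisely $\lambda_{1}(H_{0})\leq\wh E_{+,0}+N^{-2/3+o(1)}$ with high probability. Hence there is a high-probability event $\Omega\in\sigma(U)$ on which $H_{0}$ is a regular initial condition with all constants uniform in $N$.

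Working on $\Omega$ and applying the edge universality of \cite{Landon-Yau2017} to the flow $H_{t}$ at time $t_{0}$, one obtains the bound \eqref{eq:t0 tracy widom} but with the $N$-dependent edge data $\wh E_{+,t_{0}}$ (the right edge of $\wh\mu_{t_{0}}$) and $\wh\gamma_{t_{0}}$ (its edge coefficient) in place of $E_{+,t_{0}}$ and $\gamma_{t_{0}}$, with error $N^{-c}$; the same result also yields $\lambda_{j,t_{0}}-\wh E_{+,t_{0}}=\caO(N^{-2/3})$ for each fixed $j$ on a high-probability subevent. It then remains only to replace $(\wh E_{+,t_{0}},\wh\gamma_{t_{0}})$ by $(E_{+,t_{0}},\gamma_{t_{0}})$. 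By the quantitative stability from step (a), $N^{2/3}\absv{\wh E_{+,t_{0}}-E_{+,t_{0}}}\leq N^{-1/3+\epsilon}$ and $N^{2/3}\absv{\wh\gamma_{t_{0}}-\gamma_{t_{0}}}\,\absv{\lambda_{j,t_{0}}-E_{+,t_{0}}}\leq N^{-c+\epsilon}$, so each coordinate of the argument of $F$ changes by at most $N^{-c'}$ on the good event; since $\norm{\nabla F}_{\infty}\leq C$, the left-hand expectation in \eqref{eq:t0 tracy widom} changes by at most $N^{-c'}$, and combining these estimates proves the proposition with high probability.

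The main obstacle is steps (a) and (b) rather than the Dyson Brownian motion input, which is used as a black box: one must express the high-probability local law and rigidity of \cite{Bao-Erdos-Schnelli2020} for the \emph{random} matrix $H_{0}$ in exactly the form of the ``regular initial condition'' hypothesis of \cite{Landon-Yau2017}, with all constants uniform in $N$, and one must propagate the quantitative stability of the free-additive-convolution edge through the addition of the dilated semicircle $\mu_{\SC}^{(t_{0})}$ so that the passage from the $N$-dependent edge data $(\wh E_{+,t_{0}},\wh\gamma_{t_{0}})$ to the limiting data $(E_{+,t_{0}},\gamma_{t_{0}})$ is legitimate at the scale $N^{-2/3}$.
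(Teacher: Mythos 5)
Your overall strategy — reduce the statement to Landon--Yau edge universality by verifying that $H_{0}$ is an $\eta_{0}'$-regular initial condition with high probability, then convert the LY output back to the centering and scale appearing in the proposition — is exactly the paper's strategy, and your checks (a)--(c) of regularity are essentially the paper's (via Proposition~\ref{prop:stab}, Theorem~\ref{thm:ll}, and Assumption~\ref{assump:ABconv}). However, there is a concrete gap in your final step: you take the reference edge $\wh E_{+,t_{0}}$ delivered by \cite{Landon-Yau2017} to be the edge of the \emph{deterministic} free convolution $\mu_{A}\boxplus\mu_{B}\boxplus\mu_{\SC}^{(t_{0})}$, and then close with the Lévy-distance estimate $\absv{\wh E_{+,t_{0}}-E_{+,t_{0}}}\lesssim N^{-1+\epsilon}$ coming from Assumption~\ref{assump:ABconv}(i). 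But the centering and scale in \cite[Theorem~2.2]{Landon-Yau2017} are constructed from the empirical spectral distribution $\mu_{H_{0}}$ of the initial matrix, so $\wh E_{+,t_{0}}$ and $\wh\gamma_{t_{0}}$ are \emph{random} quantities depending on $U$, namely the edge data of $\mu_{H_{0}}\boxplus\mu_{\SC}^{(t_{0})}$. Your Lévy-distance argument for the deterministic measures $\mu_{A}\boxplus\mu_{B}$ versus $\mu_{\alpha}\boxplus\mu_{\beta}$ does not touch this random-versus-deterministic comparison.

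Bridging that gap is the content of the paper's Lemma~\ref{lem:center_diff}, and it is not a soft consequence of rigidity or Lévy-distance control: the edge of $\mu\boxplus\mu_{\SC}^{(t)}$ is a delicate nonlinear functional of $\mu$ at scale $t\sim N^{-1/3}$, and the paper achieves only $\absv{\wh E_{+,t_{0}}-E_{+,t_{0}}}\prec N^{-2/3-\chi}$ (just barely enough, since $N^{2/3}\cdot N^{-2/3-\chi}=N^{-\chi}$), not the much stronger $N^{-1+\epsilon}$ you claim. The proof requires the optimal local law for $m_{H_{0}}$ \emph{outside} the spectrum (\cite[Theorem~2.5]{Bao-Erdos-Schnelli2020}), a Cauchy-integral argument on a contour at distance $\sim t^{2}$ from the support to compare $\int(x-\xi)^{-2}\dd\mu_{H_{0}}$ with $\int(x-\xi)^{-2}\dd(\mu_{A}\boxplus\mu_{B})$, and a monotonicity argument in the defining equation \eqref{eq:fc_sc_edge} for the rightmost real solution $\wh\xi_{+}$; the scale comparison $\wh\gamma_{t_{0}}-\gamma_{t_{0}}=o(1)$ is handled by a similar Cauchy-integral bound on the third-moment integrals. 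Your proposal does not contain any of these ingredients and therefore cannot justify the replacement of the random edge data by the deterministic ones.
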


In order to compare the largest eigenvalues of $H_{0}$ and $H_{t}$, we employ Green function comparison whose precise statement is as follows.
\begin{prop}[Green function comparison]\label{prop:gfc}
	Let $F:\R\to\R$ be a smooth function satisfying
	\beq\label{eq:Fcond}
	\sup_{x\in\R}\absv{F^{(\ell)}(x)}(\absv{x}+1)^{-C}\leq C,\quad \ell=1,2,3,4
	\eeq
	for a constant $C>0$. Then there exist constants $C'>0$ and $\epsilon_{0}>0$ so that the following holds: For any $\epsilon\in(0,\epsilon_{0})$, there exists $N_{0}\in\N$ such that for any $N\geq N_{0}$ and $E_{1},E_{2}\in\R$ with $\absv{E_{1}},\absv{E_{2}}\leq N^{-2/3+\epsilon}$ we have
	\begin{multline}\label{eq:gfc_def}
		\bigg\vert\expct{F\left(\int_{E_{1}}^{E_{2}}\im \Tr G_{0}(L_{+,0}+E+\ii\eta_{0})\dd E\right)	\\
			-F\left(\int_{E_{1}}^{E_{2}}\im \Tr G_{t_0}(L_{+,t_{0}}+E +\ii\eta_{0})\dd E\right)}\bigg\vert\leq N^{-1/6+C'\epsilon+\chi},
	\end{multline}
	where we abbreviated $\eta_{0}=N^{-2/3-\epsilon}$.
\end{prop}
Note that the positive constant $C'$ in Proposition \ref{prop:gfc} is uniform over $\epsilon\in(0,\epsilon_{0})$, but the threshold $N_{0}$ for $N$ may depend on $\epsilon$.

We prove Propositions \ref{prop:t0 tracy widom} and \ref{prop:gfc} in the next two subsections. Along their proofs and also the derivation of Theorem \ref{thm:main result} from them, we need the following local laws for $H_{t}$ near the edge, which holds uniformly over all bounded $t$:
\begin{prop}\label{prop:ll} Under the settings in Proposition \ref{prop:gfc}, the followings hold uniformly over $t\in[0,t_{0}]$ and $z\in \{E+L_{+,t}+\ii\eta_{0}: E\in[E_{1},E_{2}]\}$:
	\begin{align}
		\absv{\gamma_{t}m_{t}(z)-m_{\mu_{t}}(E_{+,t})} &\prec N^{-1/3+\epsilon};\label{eq:ll_aver}\\
		\max_{a,b\in\llbra 1,N\rrbra}\Absv{\gamma_{t}G_{ab}-\frac{\delta_{ab}}{\fra_{a}-\omega_{\alpha,t}(E_{+,t})}}
		+\absv{(U\adj G)_{ab}} &\prec N^{-1/3+\epsilon};\label{eq:ll_etr}\\
		\max_{a,b\in\llbra 1,N\rrbra}\Absv{\gamma_{t}(\wt{B}G)_{ab}-\frac{\delta_{ab}(\omega_{\beta,t}(E_{+,t})+m_{\mu_{t}}(E_{+,t})^{-1})}{\fra_{a}-\omega_{\alpha,t}(E_{+,t})}}&\prec N^{-1/3+\epsilon},\label{eq:ll_etr_BG}
	\end{align}
	where we abbreviated $G\equiv G_{t}(z)$ and $\caG\equiv \caG_{t}(z)$. The same estimates remain true if we interchange roles of $(A,B)$, $(U,U\adj)$, $(W,\caW)$, $(G,\caG)$, pairwise.
\end{prop}
In fact Proposition \ref{prop:ll} is a specialization of more general local law, Theorem \ref{thm:ll}, that allows for $\im z\gg N^{-1+}$ beyond $\im z=\eta_{0}$ as well as $t\in[0,1]$. As a standard corollary of the general local law, we have the following rigidity of eigenvalues of $H_{t}$. Let $\gamma_{j,t}$ be the $j$-th $N$-quantile of $\mu_{t}$, that is, the smallest real number such that
\beqs
\mu_{t}((-\infty,\gamma_{j,t}])=\mu_{\alpha}\boxplus\mu_{\beta}\boxplus\mu_{\text{sc}}^{(t)}((-\infty,\gamma_{j,t}])=\frac{N-j+1}{N}.
\eeqs
\begin{lem}[Rigidity around the edge]\label{lem:rigidity} There exists a (small) constant $c>0$ such that 
	\begin{align}\label{eq:rigidity}
		\max_{1\leq i\leq cN}|\lambda_{i,t}-\gamma_{i,t}|\prec i^{-1/3}N^{-2/3}
	\end{align}
	hold uniformly over $t\in[0,t_{0}]$.
\end{lem}
The proof of Proposition \ref{prop:ll} is presented in Appendix \ref{sec:rigidity}. We omit the proof of Lemma \ref{lem:rigidity} since it only requires minor modifications to that of \cite[Theorem 2.6]{Bao-Erdos-Schnelli2020}. Armed with Propositions \ref{prop:gfc}--\ref{prop:ll} and Lemma \ref{lem:rigidity}, we now prove Theorem \ref{thm:main result}.
\begin{proof}[Proof of Theorem \ref{thm:main result}]
	We follow the proof of Theorem 2.10 in \cite{Lee-Schnelli2018} with some modifications. Namely, we can simply plugin the inputs above into their counterparts in \cite{Lee-Schnelli2018}. Below we briefly explain the role of each component.
	
	First of all, we claim that \cite[Proposition 7.1]{Lee-Schnelli2018} remains true if we replace the sparse Wigner matrix $H$ therein by $H_{0}$ and $H_{t_{0}}$ in \eqref{eq:ht_def}. In other words, the cumulative distribution functions of $\lambda_{1,0}$ and $\lambda_{1,t_{0}}$ are well approximated by respectively the first and second terms on the left side of \eqref{eq:gfc_def} for a suitably chosen $F$. One can easily find that the proof of \cite[Prposition 7.1]{Lee-Schnelli2018} requires only three inputs, namely averaged local law, eigenvalue rigidity, and square-root decay. Simply replacing these inputs respectively by Proposition \ref{prop:ll}, Lemma \ref{lem:rigidity}, and Lemma \ref{lem:stab} proves the analogue.
	
	Secondly, Proposition \ref{prop:gfc} plays exactly the same role as \cite[Proposition 7.2]{Lee-Schnelli2018}, so that combining with the analogue of \cite[Proposition 7.1]{Lee-Schnelli2018} above proves that the cumulative distribution functions of $\lambda_{1,0}$ and of $\lambda_{1,t_{0}}$ have the same limit. Since Proposition \ref{prop:t0 tracy widom} shows that the distribution of $\lambda_{1,t_{0}}$ weakly converges to the Tracy-Widom distribution $F_{2}$, Theorem \ref{thm:main result} follows.
\end{proof}
\subsection{Proof of Proposition \ref{prop:t0 tracy widom}}\label{sec:prf_t0}
In this section we prove Proposition \ref{prop:t0 tracy widom}. By Lemma~\ref{lem:stab} and Theorem~\ref{thm:ll}, we find that the ESD $\mu_{H_{0}}$ of $H_{0}$ satisfies the assumptions of \cite[Theorem 2.2]{Landon-Yau2017} with high probability. Specifically, taking $\eta_{0}'=N^{-2/3+\chi/4}$, the diagonalization of $H_{0}$ is $\eta_{0}'$-regular with high probability. Therefore \cite[Theorem 2.2]{Landon-Yau2017} implies a \emph{random} version of Proposition \ref{prop:t0 tracy widom}, where random quantities $\wh{\gamma}_{t_{0}}$ and $\wh{E}_{+,t_{0}}$ replace $\gamma_{t_{0}}$ and $E_{+,t_{0}}$ in \eqref{eq:t0 tracy widom}, respectively; these quantities are defined as follows. Firstly, $\wh{E}_{+,t_{0}}$ is the upper edge of $\wh{\mu}_{t_{0}}=\mu_{H_{0}}\boxplus\mu_{\SC}^{(t_{0})}$. Secondly, the scale $\wh{\gamma}_{t_{0}}$ is defined as 
\beqs
\wh{\gamma}_{t_{0}}\deq \left(-t_{0}^{3}\int_{\R}\frac{1}{(x-\wh{E}_{+,t_{0}}-t_{0}m_{\wh{\mu}_{t_{0}}}(\wh{E}_{+,t_{0}}))^{3}}\dd \mu_{H_{0}}(x)\right)^{-1/3}.
\eeqs

Therefore, in order to prove Proposition \ref{prop:t0 tracy widom}, it suffices to show that $\absv{\wh{E}_{+,t_{0}}-E_{+,t_{0}}}$ is much smaller than the scale $N^{-2/3}$ of the fluctuations of $\lambda_{1}$, and that $\absv{\wh{\gamma}_{t_{0}}-\gamma_{t_{0}}}=o(1)$. We prove these two statements in the next lemma.
\begin{lem}\label{lem:center_diff}
	Let $D>0$. There exist a constant $C>0$ and an event $\Xi$ with $\P[\Xi^{c}]\leq N^{-D}$ such that
	\begin{align}
		&\lone_{\Xi}\absv{\wh{E}_{+,t_{0}}-E_{+,t_{0}}}\leq  CN^{-2/3-\chi/2},\label{eq:center_diff}\\\label{eq:scale_diff}
		&\lone_{\Xi}\absv{\gamma_{t_{0}}-\wh{\gamma}_{t_{0}}}\leq CN^{-5\chi/2}.
	\end{align}
\end{lem}
\begin{proof}
We first collect preliminary facts on the measure $\wh{\mu}_{t_{0}}$. As pointed out in \cite[Equation (7.9)]{Landon-Yau2017}, $\wh{E}_{+}$ is characterized as $\wh{E}_{+}=\wh{\xi}_{+}-t_{0}m_{\mu_{H_{0}}}(\wh{\xi}_{+})$ where $\wh{\xi}_{+}>\lambda_{1,0}$ is the rightmost solution of 
\beq\label{eq:whxi_char}
\int_{\R}\frac{1}{(x-\wh{\xi})^{2}}\dd\mu_{H_{0}}=\frac{1}{t_{0}}.
\eeq
Recall from \eqref{eq:Egamma} that $\xi_{+}\deq E_{+,t_{0}}+t_{0}m_{\mu_{t_{0}}}(E_{+,t_{0}})$ satisfy the same equation with $\mu_{H_{0}}$ replaced by $\mu_{0}$. We also have $E_{+,t_{0}}=\xi_{+}-t_{0}m_{\mu_{0}}(\xi_{+})$ due to \eqref{eq:Pastur}. Furthermore, by Lemma \ref{lem:stab} and \cite[Equation (7.10)]{Landon-Yau2017}, we have with high probability that
\beq\label{eq:xi-E_1}
\xi_{+}-E_{+,0}\sim t_{0}^{2}\sim\wh{\xi}_{+}-\lambda_{1,0}.
\eeq
Also recall from the eigenvalue rigidity for $t=0$ in Lemma \ref{lem:rigidity} that 
\begin{align}\label{eq:rigid_1}
	\absv{\lambda_{1,0}-E_{+,0}}\prec N^{-2/3}.
\end{align}
Combining \eqref{eq:xi-E_1} and \eqref{eq:rigid_1}, we have a constant $C>1$ and an event $\Xi_{0}$ with $\P[\Xi_{0}^{c}]\leq N^{-D-2}$ such that on $\Xi_{0}$ the following holds;
\begin{align}
	&C^{-1}t_{0}^{2}\leq \min(\xi_{+}-E_{+,0},\wh{\xi}_{+}-\lambda_{1,0})\leq \max(\xi_{+}-E_{+,0},\wh{\xi}_{+}-\lambda_{1,0})\leq Ct_{0}^{2},	\label{eq:xi-E}\\
	&\absv{\lambda_{1,0}-E_{+,0}}\leq CN^{-2/3+\chi/100}\leq (2C)^{-1}t_{0}^{2}	\label{eq:rigid}.
\end{align}

We next construct an event $\Xi$ with $\P[\Xi^{c}]\leq N^{D-1}$ such that
\beq\label{eq:xi_diff}
\lone_{\Xi}\absv{\xi_{+}-\wh{\xi}_{+}}\leq C N^{-2/3-\chi/2}.
\eeq
By \cite[Theorem 2.5]{Bao-Erdos-Schnelli2020} (see also Theorem \ref{thm:ll}), for any fixed $\sigma,\epsilon'>0$ we have an event $\Xi_{1}(\sigma,\epsilon')$ with $\P[\Xi_{1}(\sigma,\epsilon')^{c}]\leq N^{-D}$ on which we have
\beq\label{eq:ll_out}
\absv{m_{\mu_{H_{0}}}(z)-m_{\mu_{0}}(z)}\leq\frac{N^{\epsilon'}}{N\absv{z-E_{+,0}}},\,\forall z\in\caD_{\tau}(0,1)\cap\{\re z\geq E_{+,0}+N^{-2/3+\sigma}\}.
\eeq
Let $s=N^{-2/3-\chi/2}$. By \eqref{eq:xi-E}, we may take a constant $r>0$ so that on the event $\Xi_{0}$
\beqs
2rt_{0}^{2}<\min(\xi_{+}-E_{+,0},\wh{\xi}_{+}-\lambda_{1,0}).
\eeqs
Taking the Cauchy integral along a circle of radius $rt_{0}^{2}$ about $\xi_{+}+s$, we find that, on the event $\Xi_{1}(\chi/200,\epsilon')\cap\Xi_{0}$,
\beq\label{eq:xi_diff_0}\begin{aligned}
	&\Absv{\int_{\R}\frac{1}{(x-\xi_{+}-s)^{2}}\dd \mu_{H_{0}}(x)-\int_{\R}\frac{1}{(x-\xi_{+}-s)^{2}}\dd \mu_{0}(x)}	\\
	=&\frac{1}{2\pi}\Absv{\oint_{\absv{z-\xi_{+}-s}=rt_{0}^{2}}\frac{m_{H_{0}}(z)-m_{\mu_{0}}(z)}{(z-\xi_{+}-s)^{2}}\dd z}\leq \frac{N^{-1+\epsilon'}}{2\pi (rt_{0}^{2})^{2}}=\frac{1}{2\pi r^{2}}N^{1/3-4\chi+\epsilon'},
\end{aligned}\eeq
where we used $\xi_{+}+s-rt^{2}>E_{+,0}+rt^{2}$ to ensure that $\absv{z-E_{+}}\geq rt^{2}$ on the circle. On the other hand	we have constants $c,c'>0$ such that
\beq\begin{aligned}\label{eq:xi_diff_1}
	&\int_{\R}\frac{1}{(x-\xi_{+})^{2}}\dd\mu_{0}(x)-\int_{\R}\frac{1}{(x-\xi_{+}-s)^{2}}\dd\mu_{0}(x)=s\int_{\R}\frac{2\xi_{+}+s-2x}{(x-\xi_{+})^{2}(x-\xi_{+}-s)^{2}}\dd\mu_{0}(x)	\\
	\geq& c's\int_{\R}\frac{1}{(E_{+,0}+rt_{0}^{2}-x)^{3}}\dd\mu_{0}(x)\geq cst_{0}^{-3}=cN^{1/3-7\chi/2},
\end{aligned}\eeq
where we used 
\beqs
(\xi_{+}+s-x)\sim(\xi_{+}+s/2-x)\sim(\xi_{+}-x) \qquad\text{uniformly over }x\in\supp\mu_{0}
\eeqs
in the first inequality and the elementary asymptotics
\beq
\int_{0}^{1}\frac{\sqrt{x}}{(x+s)^{3}}\sim s^{-3/2},\qquad \text{as }s\to 0
\eeq
in the second. Combining \eqref{eq:xi_diff_0} and \eqref{eq:xi_diff_1}, on the event $\Xi_{1}(\chi/200,\epsilon')\cap\Xi_{0}$ we have
\beqs
\int_{\R}\frac{1}{(x-\xi_{+}-s)^{2}}\dd\mu_{H_{0}}(x)\leq \int_{\R}\frac{1}{(x-\xi_{+})^{2}}\dd\mu_{0}(x)-cN^{1/3-7\chi/2}+\frac{N^{1/3-4\chi+\epsilon'}}{2\pi r^{2}}.
\eeqs
Choosing $\epsilon'<\chi/2$ and using \eqref{eq:Egamma}, on the event $\Xi=\Xi_{1}(\chi/200,\epsilon')\cap\Xi_{0}$ we have
\beqs
\int_{\R}\frac{1}{(x-\xi_{+}-s)^{2}}\dd\mu_{H_{0}}(x)<\frac{1}{t_{0}}.
\eeqs
By a similar argument, on the event $\Xi$ we also have
\beqs
\int_{\R}\frac{1}{(x-\xi_{+}+s)^{2}} \dd \mu_{H_{0}}(x)>\frac{1}{t_{0}}.
\eeqs
On the event $\Xi$ we have $\xi_{+}-s>\lambda_{1,0}$ due to \eqref{eq:xi-E} and \eqref{eq:rigid}, so that the map
\beq\label{eq:tau-whtau}
y\mapsto \int_{\R}\frac{1}{(x-\xi_{+}+y)^{2}}\dd \mu_{H_{0}}(x), \quad y\in(-s,s)
\eeq
is monotone increasing. Therefore there exists $y_{0}\in(-s,s)$ such that $\xi_{+}+y_{0}=\wh{\xi}_{+}$ satisfies \eqref{eq:whxi_char}. Recalling that $s=N^{-2/3-\chi/2}$, we have proved \eqref{eq:xi_diff}.

Now we can prove \eqref{eq:center_diff} by recovering $(E_{+},\wh{E}_{+})$ from $(\xi_{+},\wh{\xi}_{+})$. On the event $\Xi$, we have
\beqs
\begin{split}
	E_{+}-\wh{E}_{+}&=\xi_{+}-\wh{\xi}_{+}-t_{0}\int_{\R}\frac{1}{x-\xi_{+}}\dd \mu_{0}(x)+t_{0}\int_{\R}\frac{1}{x-\wh{\xi}_{+}}\dd \mu_{H_{0}}(x)\\
	&=(\xi_{+}-\wh{\xi}_{+})\left(1-t_{0}\int_{\R}\frac{1}{(x-\xi_{+})(x-\wh{\xi}_{+})}\dd\mu_{0}(x)\right)+O(N^{-2/3-\chi/2})\\
	&=(\xi_{+}-\wh{\xi}_{+})\left(t_{0}\int_{\R}\frac{\xi_{+}-\wh{\xi}_{+}}{(x-\xi_{+})^{2}(x-\wh{\xi}_{+})}\dd\mu_{0}(x)\right)+O(N^{-2/3-\chi/2})\\
	&=(\xi_{+}-\wh{\xi}_{+})^{2}O(t_{0}^{-2})+O(N^{-2/3-\chi/2})
	=O(N^{-2/3-\chi/2}),
\end{split}
\eeqs
where we used \eqref{eq:ll_out} and $\Xi\subset\Xi_{1}(\chi/200,\epsilon')$ to get
\beq
t\int_{\R}\frac{1}{x-\wh{\xi}_{+}}\dd\mu_{H_{0}}(x)-t\int_{\R}\frac{1}{x-\wh{\xi}_{+}}\dd\mu_{0}(x)\leq Ct\cdot\frac{N^{\epsilon'}}{Nt^{2}}\leq CN^{-2/3-\chi/2}
\eeq
in the second equality, \eqref{eq:Egamma} in the third, and \eqref{eq:xi_diff_1} in the fourth.
This concludes the proof of \eqref{eq:center_diff}.

Next, we turn to the proof of \eqref{eq:scale_diff}. Since $\gamma_{t_{0}}\sim 1$ from \eqref{eq:gamma_def}, it suffices to estimate
\beq\label{eq:gamma_diff}
\begin{split}
	&\absv{\gamma_{t}^{-3}-\wh{\gamma}_{t}^{-3}}=t^{3} \Absv{\int_{\R}\frac{1}{(x-\xi_{+})^{3}}\dd \mu_{0}(x)-\int_{\R} \frac{1}{(x-\wh{\xi}_{+})^{3}} \dd \mu_{H_{0}}(x)}	\\
	&\leq t^{3}\Absv{\int_{\R}\frac{1}{(x-\xi_{+})^{3}}-\frac{1}{(x-\wh{\xi}_{+})^{3}}\dd \mu_{0}(x)}
	+t^{3}\Absv{\int_{\R}\frac{1}{(x-\wh{\xi}_{+})^{3}}\dd (\mu_{0}-\mu_{H_{0}})(x)},
\end{split}
\eeq
where we used $m_{\mu_{H_{0}}}(\wh{\xi}_{+})=m_{\wh{\mu}_{t_{0}}}(\wh{E}_{+,t_{0}})$ due to \eqref{eq:Pastur}.
The second term on the right-hand side of \eqref{eq:gamma_diff} can be estimated as in \eqref{eq:ll_out}, that is, taking the Cauchy integral:
\beqs
\lone_{\Xi}t^{3}\left| \int\frac{1}{(x-\wh{\xi}_{+})^{3}}\dd \mu_{0}(x)-\int\frac{1}{(x-\wh{\xi}_{+})^{3}} \dd \mu_{H_{0}}(x)\right|\leq Ct^{3}\frac{N^{\epsilon}}{N(rt^{2})^{3}}=CN^{-3\chi+\epsilon}.
\eeqs
The first term on the right-hand side of \eqref{eq:gamma_diff} can be estimated as
\beqs
\lone_{\Xi}t^{3}\Absv{\int \frac{1}{(x-\xi_{+})^{3}}\dd \mu_{0}(x) -\int \frac{1}{(x-\wh{\xi}_{+})^{3}}\dd \mu_{0}(x)}
\leq O(t^{-2}|\xi_{+}-\wh{\xi}_{+}|)=O(N^{-5\chi/2}),
\eeqs
where we used explicit calculations as in \eqref{eq:xi_diff_1} in the first inequality and $\absv{\xi_{+}-\wh{\xi}_{+}}\lesssim N^{-2/3-\chi/2}$ in the second. This completes the proof of Lemma \ref{lem:center_diff}.
\end{proof}

\subsection{Proof of Proposition \ref{prop:gfc}}\label{sec:gfc_prf}
Define
\beqs
Y\equiv Y_{t}\deq N\int_{E_{1}}^{E_{2}} \im m_{t}(E+L_{+,t}+\ii\eta_0) \dd E.
\eeqs
Recalling that $t_{0}=N^{-1/3+\chi}$, it suffices to show that 
\beq\label{eq:dt_gfc_goal}
\frac{\dd}{\dd t}\expct{F(Y)}\leq N^{1/6+C'\epsilon},
\eeq
in order to prove \eqref{eq:gfc_def}. Computing the derivative explicitly, we obtain
\begin{align}\label{eq:EFY_der}
	&\frac{\dd\expct{F(Y)}}{\dd t}=\Expct{F'(Y) \frac{\dd Y}{\dd t}}=\Expct{F'(Y)\im \int_{E_{1}}^{E_{2}}\frac{\dd \Tr G(L_{+}+E+\ii\eta_{0})}{\dd t}\dd E}\\
	=&\bbE\Bigg[F'(Y)\im \int_{E_{1}}^{E_{2}}\bigg(\dot{L}_{+}\Tr G^{2} -\dot{\gamma}\Tr GHG-\frac{\gamma}{2\sqrt{t}}\Tr GWG\bigg)\dd E\Bigg]\nonumber,
\end{align}
where we abbreviated $G\equiv G(L_{+}+\ii\eta_{0}+E)$ and $\dot{L}_{+},\dot{\gamma}$ denote time derivatives of $L_{+},\gamma$, respectively. Since $W$ is a GUE, we can apply Stein's lemma to the last term on the right-hand side of \eqref{eq:EFY_der} to get
\beq\label{eq:Stein_W}\begin{aligned}
	\expct{F'(Y)\Tr WG^{2}}
	=&-2\gamma\sqrt{t}\expct{F'(Y)\tr G\Tr G^{2}}	\\
	&+\frac{\sqrt{t}}{N}\sum_{a,b}\expct{F''(Y)(G^{2})_{ba}\frac{\partial Y}{\partial H_{ba}}}.
\end{aligned}\eeq
Also, we can calculate the derivative $\partial Y/(\partial H_{cb})$ explicitly as
\beq\label{eq:Yder}\begin{aligned}
	\frac{\partial Y}{\partial H_{ba}}
	=-\frac{\gamma}{2\ii}\int_{E_{1}}^{E_{2}}((\wt{G}^{2})_{ab}-((\wt{G}\adj)^{2})_{ab})\dd\wt{E}
	=-\gamma\int_{E_{1}}^{E_{2}}(\im [\wt{G}^{2}])_{ab}\dd\wt{E},
\end{aligned}\eeq
where we abbreviated $\wt{G}\deq G(\wt{E}+L_{+}+\ii\eta_{0})$; the same notation applies to the rest of this paper. In summary, we have
\beq\begin{aligned}\label{eq:dt_gfc}
	\frac{\dd\expct{F(Y)}}{\dd t}=&\Expct{F'(Y)\im\int_{E_{1}}^{E_{2}}\left(\dot{L}_{+}\Tr G^{2} -\dot{\gamma}\Tr GHG+\gamma^{2}(\tr G)\Tr G^{2}\right)\dd E}\\
	&+\frac{1}{N}\frac{\gamma^{2}}{2}\Expct{F''(Y)\int_{E_{1}}^{E_{2}}\int_{E_{1}}^{E_{2}}\Tr \im[\wt{G}^{2}]\im[G^{2}]\dd\wt{E}\dd E}.
\end{aligned}\eeq

Next, we further simplify the first line of \eqref{eq:dt_gfc}. Recall from \eqref{eq:Pastur} that
\beqs
E_{+,t}=E_{+,t}+tm_{\mu_{t}}(E_{+,t})-t\int_{\R}\frac{1}{x-E_{+,t}-tm_{\mu_{t}}(E_{+,t})}\dd\mu_{0}(x),
\eeqs
where we recall $\mu_{0}=\mu_{t}\vert_{t=0}=\mu_{\alpha}\boxplus\mu_{\beta}$. Taking the time derivative of both sides, we obtain
\begin{align*}
	\dot{E}_{+,t}=-m_{\mu_{t}}(E_{+,t})+\left(1-t\int_{\R}\frac{1}{(x-E_{+,t}-tm_{\mu_{t}}(E_{+,t}))^{2}}\dd\mu_{0}(x)\right)\cdot\frac{\partial}{\partial t}\left[E_{+,t}+tm_{\mu_{t}}(E_{+,t})\right].
\end{align*}
By \eqref{eq:Egamma} the second term vanishes so that $\dot{E}_{+,t}=-m_{\mu_{t}}(E_{+,t})$, from which we get
\beq\label{eq:dot_L}
\dot{L}_{+}=\dot{\gamma}E_{+}+\gamma\dot{E}_{+}
=\dot{\gamma}\frac{L_+}{\gamma}-\gamma m_{\mu_{t}}(E_{+,t}).
\eeq
On the other hand, by the identity $\gamma HG=zG+I$ we have
\beq\label{eq:resolv_id}
\Tr GHG=\frac{z}{\gamma}\Tr G^{2}+\frac{1}{\gamma}\Tr G.
\eeq
Plugging in \eqref{eq:dot_L} and \eqref{eq:resolv_id} to the integrand in the first line of \eqref{eq:dt_gfc} yields
\beq\label{eq:dt_gfc_1}\begin{aligned}
	&\dot{L}_{+}\Tr G^{2}-\dot{\gamma}\Tr GHG+\gamma^{2}\tr G\Tr G^{2}	\\
	=&\frac{\dot{\gamma}}{\gamma}(L_{+}-z)\Tr G^{2}-\frac{\dot{\gamma}}{\gamma}\Tr G
	+\gamma(\gamma\tr G-m_{\mu_{t}}(E_{+,t}))\Tr G^{2}.
\end{aligned}\eeq

Note that \eqref{eq:dt_gfc_1} is a non-asymptotic, exact identity, and we have used only \eqref{eq:Egamma} and \eqref{eq:Pastur} along the proof. We now use the asymptotic inputs Lemma \ref{lem:stab} and Proposition \ref{prop:ll} to prove that only the last term of \eqref{eq:dt_gfc_1} is relevant, that is, the contributions of the first two terms to $\partial_{t}\E[F(Y)]$ are $O_{\prec}(N^{1/6+C'\epsilon'})$. To this end, we first roughly estimate the size of $F'(Y)$. By $\gamma_{t}\sim 1$ from Lemma \ref{lem:stab} (v) and \eqref{eq:ll_aver} we have
\beq\label{eq:imm}
\im m_{t}=\im{\tr G_{t}}=\frac{\im{m_{\mu_{t}}(E_{+})}}{\gamma_{t}}+O_{\prec}(N^{-1/3+\epsilon})=O_{\prec}(N^{-1/3+\epsilon})
\eeq
uniformly over $z\in L_{+}+[E_{1},E_{2}]+\ii\eta_{0}$. This further implies
\beqs
0\leq Y=N\int_{E_{1}}^{E_{2}}\im{m_{t}(E+L_{+}+\ii\eta_{0})}\dd E\prec N^{2/3+\epsilon}(E_{2}-E_{1})=N^{2\epsilon},
\eeqs
which, together with \eqref{eq:Fcond}, gives 
\beq\label{eq:FY}
\absv{F^{(\ell)}(Y)}\prec N^{2C\epsilon},\qquad \ell=1,\cdots,4.
\eeq

For the first term on the right-hand side of \eqref{eq:dt_gfc_1}, note that Ward identity (i.e. $\im G/\im z=\absv{G}^{2}$)  implies
\beq\label{eq:G2_rough}
\Absv{\Tr G^{2}}\leq \Tr \absv{G}^{2}=\frac{\im\Tr G}{\eta_{0}}\prec N^{4/3+2\epsilon},
\eeq
where we used \eqref{eq:ll_etr} and \eqref{eq:imm}. Recalling $\gamma\sim 1$ and $\dot{\gamma}\lesssim 1$ from Lemma \ref{lem:stab} (v), we have
\beq\label{eq:dt_gfc_2}
\absv{F'(Y)}\int_{E_{1}}^{E_{2}}\Absv{\frac{\dot{\gamma}}{\gamma}(L_{+}-z)\Tr G^{2}}\dd E
\prec (\absv{E_{1}}+\absv{E_{2}})^{2}N^{3/4+2(1+C)\epsilon}\leq N^{2(2+C)\epsilon}.
\eeq
On the other hand for the second term of \eqref{eq:dt_gfc_1}, we use \eqref{eq:imm} again to get
\beq\label{eq:dt_gfc_3}
\frac{\dot{\gamma}}{\gamma}\Absv{F'(Y)}\int_{E_{1}}^{E_{2}}\im\Tr G\dd E\prec N^{2C\epsilon}\cdot (E_{2}-E_{1})\cdot N^{2/3+\epsilon}=N^{2(1+C)\epsilon}.
\eeq
Plugging in \eqref{eq:dt_gfc_2} and \eqref{eq:dt_gfc_3} to \eqref{eq:dt_gfc_1} and then to \eqref{eq:dt_gfc}, we finally obtain
\beq\label{eq:dt_gfc_result}
\frac{\dd \E[F(Y)]}{\dd t}=
\frac{\gamma}{2}\im\int_{E_{1}}^{E_{2}}\frX\dd E+O(N^{2(3+C)\epsilon}),
\eeq
where we defined
\beq\begin{aligned}\label{eq:frX_def}
	&\frX\equiv\frX(E)	\\
	\deq&\E\bigg[2F'(Y)(\gamma\tr G-m_{\mu_{t}}(E_{+,t})\Tr G^{2})	
	+\frac{\gamma}{N}F''(Y)\int_{E_{1}}^{E_{2}}\Tr(\im[\wt{G}^{2}]G^{2})\dd\wt{E}\bigg].
\end{aligned}\eeq

Applying the rough estimates from Proposition \ref{prop:ll} to $\frX$, we find the first term of \eqref{eq:frX_def} is estimated as
\beqs
F'(Y)(\gamma\tr G-m_{\mu_{t}}(E_{+,t}))\Tr G^{2}\prec N^{1+(2C+3)\epsilon},
\eeqs
and the second term admits the rough upper bound
\beq\label{eq:frX_rough}\begin{aligned}
	&\Absv{\frac{1}{N}\int_{E_{1}}^{E_{2}}\Tr (\im[\wt{G}^{2}]G^{2})\dd\wt{E}}\leq \frac{1}{N}\int_{E_{1}}^{E_{2}}\sum_{a,b,c,d}\frac{\absv{\wt{G}_{ab}\wt{G}_{bc}}+\absv{\wt{G}_{cb}\wt{G}_{ba}}}{2}{G_{cd}G_{da}}\dd\wt{E}	\\
	\prec &N^{3}(E_{2}-E_{1})N^{-4/3+4\epsilon}=N^{1+5\epsilon},
\end{aligned}\eeq
so that $\frX=O(N^{1+C\epsilon})$. Comparing \eqref{eq:dt_gfc_result} with \eqref{eq:dt_gfc_goal}, we need to improve the rough estimate for $\im\frX$ by a factor of $N^{-1/6}$. We present the required estimate in the next proposition, whose proof is postponed to the next section;
\begin{prop}\label{prop:main}
	Let $\epsilon>0$, take $E_{1},E_{2}$ and $\eta_{0}$ as in Proposition \ref{prop:gfc}, and define $\frX$ by \eqref{eq:frX_def} for each $E\in[E_{1},E_{2}]$. Then there exist constants $\epsilon_{1},C''>0$ such that for any $\epsilon\in(0,\epsilon_{1})$ we have
	\beq\label{eq:frX_result}
	\sup_{E\in[E_{1},E_{2}]}\im \frX(E)\leq N^{5/6+C''\epsilon}
	\eeq
	uniformly over $E\in[E_{1},E_{2}]$ for all $N\geq N_{1}(\epsilon)$.
\end{prop}
Proposition \ref{prop:main} is the main technical achievement of this paper, and all of Sections \ref{sec:frX}--\ref{sec:der_tech} are devoted to its proof. Assuming validity of Proposition \ref{prop:main}, we immediately have
\begin{align}
	\frac{\dd\expct{F(Y)}}{\dd t}=&\gamma\int_{E_{1}}^{E_{2}}\im \frX\dd E+O(N^{2(3+C)\epsilon})	\nonumber\\
	\leq& (E_{2}-E_{1})N^{5/6+C''\epsilon}+O(N^{2(3+C)\epsilon})=O(N^{1/6+(C''+1)\epsilon})\leq N^{1/6+(C''+2)\epsilon}
\end{align}
for all sufficiently large $N$. This establishes \eqref{eq:dt_gfc_goal}, hence concludes the proof of Proposition \ref{prop:gfc}.

\begin{rem}\label{rem:model_dep}
	The arguments in this section apply to a wide variety of matrix models. More precisely, except for local laws, the only truly model-dependent component of the proof of Theorem \ref{thm:main result} is Proposition \ref{prop:main}. Consider a general initial matrix $H_{0}$ that has a regular edge as in Lemma \ref{lem:fc_dens} and satisfies an optimal local law at the edge as in Proposition \ref{prop:ll} for $t=0$. Then a minor modification \cite{Landon-Yau2017} proves almost\footnote{Finer regularity of the limiting density $\rho_{t}$ may involve a model-dependent proof, for example the continuity of $m_{\mu_{t}}$ and $\dot{\gamma}_{t}\sim 1$ in Lemma \ref{lem:stab} (i) and (v). See \cite[Appendix A]{Adhikari-Huang2020} for an instance, where it is assumed that $m_{\mu_{0}}(z)=A(z)+\sqrt{B(z)}$ for some analytic functions $A,B$ around the edge.} all of required inputs, that is, edge regularity and the optimal local law carry over to the DBM $H+\sqrt{t}W$. For such an $H_{0}$, following the exact same arguments as in this section shows that it suffices to prove the analogue of Proposition \ref{prop:main} in order to prove edge universality.
\end{rem}

\section{Proof of Proposition \ref{prop:main}}\label{sec:frX}

\subsection{Special case: deformed GUE}\label{sec:dGUE}
The proof of Proposition \ref{prop:main} mainly concerns finding non-trivial cancellation within $\im\frX$. In order to facilitate the (much technical) computations for the free sum, here we first consider the special case of deformed GUE. To be precise, we make the following simplifications; we assume (i) $\wt{B}$ is a GUE (so that $\mu_{\beta}=\mu_{\SC}^{(1)}$), (ii) $t=0$, (iii) $\gamma=1$, (iv) and $F(Y)\equiv Y$. It should be noted that the content of this section was essentially covered in \cite{Lee-Schnelli2015}, and is included in here for purely pedagogical purposes. The only difference of this section and \cite{Lee-Schnelli2015} is that we use Stein's lemma whereas \cite{Lee-Schnelli2015} used Schur's complement when expanding the resolvent $G$.

With these choices, $H_{0}$ becomes deformed GUE for which Proposition \ref{prop:ll} was proved in \cite{Lee-Schnelli2013}. Also the assumptions $\gamma=1$ and $F(Y)\equiv Y$ imply
\beq
L_{+}=E_{+},\qquad G\equiv G(E_{+}+E+\ii\eta_{0}),\qquad \frX=2\Expct{(\tr G-m_{\mu})\Tr G^{2}}.
\eeq
For simplicity, we abbreviate $m_{\mu}\equiv m_{\mu}(E_{+})$ and $\omega_{\alpha}\equiv \omega_{\alpha}(E_{+})$.
The most important point is that, in order to see the cancellation, we have to expand the diagonal entry $\E(G^{2})_{aa}$ but not $\frX$ itself. The goal of our expansion is to ``decouple'' the index $a$ from $\E(G^{2})_{aa}$: Specifically, we prove
\beq\label{eq:dGUE_decouped}
\E(G^{2})_{aa}=\frac{\E\tr G^{2}}{(\fra_{a}-\omega_{\alpha})^{2}}+
\frac{\frX}{(\fra_{a}-\omega_{\alpha})^{3}}+\frac{1}{(\fra_{a}-\omega_{\alpha})^{2}}+O(N^{-1/3+C\epsilon}).
\eeq
Notice that, on the right-hand side of \eqref{eq:dGUE_decouped}, the index $a$ only appear as deterministic factors, and all the remaining quantities exclusively involve traces of $G$ and $G^{2}$. We refer to such an expression as \emph{decoupled} (of the index $a$).

Indeed, we only need few more lines to derive the required cancellation from \eqref{eq:dGUE_decouped}. Taking the sum over $a$ and the imaginary part of \eqref{eq:dGUE_decouped}, we obtain
\beq\label{eq:dGUE_optical}
\left(1-\frac{1}{N}\sum_{a}\frac{1}{(\fra_{a}-\omega_{\alpha})^{2}}\right)\im\E\Tr G^{2}=\frac{1}{N}\sum_{a}\frac{1}{(\fra_{a}-\omega_{\alpha})^{3}}\im\frX+O(N^{2/3+C\epsilon}).
\eeq
On the other hand, from Assumption~\ref{assump:ABconv}, \eqref{eq:stab}, and \eqref{eq:Egamma} we have
\beq\label{eq:dGUE_edge}
1-\frac{1}{N}\sum_{a}\frac{1}{(\fra_{a}-\omega_{\alpha})^{2}}=1-\int_{\R}\frac{1}{(x-\omega_{a})^{2}}\dd\mu_{\alpha}(x)+O(\bsd)=O(N^{-1+\epsilon}).
\eeq
Similarly Assumption \ref{assump:ABconv} and \eqref{eq:stab} also implies
\beq\label{eq:dGUE_scale}
-\frac{1}{N}\sum_{a}\frac{1}{(\fra_{a}-\omega_{\alpha})^{3}}>c
\eeq
for some constant $c>0$. Therefore we finally get
\beq\label{eq:dGUE_frX}
\absv{\im\frX}\lesssim \bsd\absv{\E\Tr G^{2}}=O(N^{2/3+C\epsilon}),
\eeq
where the first inequality is due to \eqref{eq:dGUE_optical}--\eqref{eq:dGUE_scale} and the second follows from Assumption~\ref{assump:ABconv} and \eqref{eq:G2_rough}.

Now we move on to the proof of \eqref{eq:dGUE_decouped}. As we took $\wt{B}$ to be a GUE, Stein's lemma gives
\beq\label{eq:Stein_first}
\E[\wt{B}_{ab}G_{cd}]=\frac{1}{N}\Expct{\bse_{c}\adj\frac{\partial G}{\partial\wt{B}_{ba}}\bse_{d}}=\frac{1}{N}\E[G_{cb}G_{ad}],\qquad \forall a,b,c,d\in\{1,\cdots,N\}.
\eeq
Using $zG+I=HG$ and \eqref{eq:Stein_first}, we find that
\beq\label{eq:dGUE_expa_1}\begin{aligned}
	z\E(G^{2})_{aa}=&\fra_{a}\E (G^{2})_{aa}+\sum_{b,c}\E\wt{B}_{ab}G_{bc}G_{ca}-\E G_{aa}	\\
	=&\fra_{a}\E(G^{2})_{aa}-\expct{(\tr G)(G^{2})_{aa}}-\expct{G_{aa}\tr G^{2}}-\E G_{aa}.
\end{aligned}\eeq
Then the local law \eqref{eq:ll_etr} and \eqref{eq:G2_rough} imply
\beq\label{eq:dGUE_0}\begin{aligned}
	z\E(G^{2})_{aa}=&(\fra_{a}-m_{\mu})\E (G^{2})_{aa}-\E[G_{aa}\tr G^{2}]\\
	&-\expct{(\tr G-m_{\mu})(G^{2})_{aa}}-\frac{1}{\fra_{a}-\omega_{\alpha}}+O(N^{-1/3+C\epsilon}).
\end{aligned}\eeq

Since $\mu_{\beta}=\mu_{\SC}^{(1)}$, \eqref{eq:Pastur} implies $z+m_{\mu}(z)=\omega_{\alpha}(z)$. Recalling that $z\in E_{+}+[E_{1},E_{2}]+\ii\eta_{0}$, this further gives
\beq\label{eq:dGUE_01}
z+m_{\mu}=E_{+}+m_{\mu}(E_{+})+O(N^{-2/3+\epsilon})=\omega_{\alpha}(E_{+})+O(N^{-2/3+\epsilon}).
\eeq
Moving the first term on the right-hand side of \eqref{eq:dGUE_0} to the left and using \eqref{eq:dGUE_01}, we have
\beq\label{eq:dGUE_1}\begin{aligned}
	(\fra_{a}-\omega_{\alpha})\E(G^{2})_{aa}=&\expct{G_{aa}\tr G^{2}}	\\
	&+\expct{(\tr G-m_{\mu})(G^{2})_{aa}}+\frac{1}{\fra_{a}-\omega_{\alpha}}
	+O(N^{-1/3+C\epsilon}),
\end{aligned}\eeq
where we also used the same estimates as in \eqref{eq:G2_rough} to get $\E\absv{(G^{2})_{aa}}=O(N^{1/3+3\epsilon})$. Recall $\absv{\fra_{a}-\omega_{\alpha}}\sim 1$ from \eqref{eq:stab}, so that we may divide both sides of \eqref{eq:dGUE_1} by $(\fra_{a}-\omega_{\alpha})$. Comparing with our goal \eqref{eq:dGUE_decouped}, it suffices to prove the following:
\begin{align}
	\expct{G_{aa}\tr G^{2}}&=\frac{\E\tr G^{2}}{\fra_{a}-\omega_{\alpha}}+\frac{\frX}{2(\fra_{a}-\omega_{\alpha})^{2}}+O(N^{-1/3+C\epsilon}), \label{eq:dGUE_decoup_1}	\\
	\expct{(\tr G-m_{\mu})(G^{2})_{aa}}&=\frac{\frX}{2(\fra_{a}-\omega_{\alpha})^{2}}+O(N^{-1/3+C\epsilon}).\label{eq:dGUE_decoup_2}
\end{align}
These are yet another form of decoupling; the index $a$ on the right-hand side appears only as deterministic factors. 

Next, we prove \eqref{eq:dGUE_decoup_1}. We start from the same expansion as in \eqref{eq:dGUE_0};
\beq\label{eq:dGUE_expa_2}\begin{aligned}
	&z\E[G_{aa}\tr G^{2}] =\fra_{a}\E[G_{aa}\tr G^{2}]+\E[(\wt{B}G)_{aa}\tr G^{2}]-\E\tr G^{2}	\\
	=&\fra_{a}\E[G_{aa}\tr G^{2}]-\E[(\tr G)G_{aa}\tr G^{2}]-\E\tr G^{2}
	-\frac{2}{N^{2}}\E[(G^{4})_{aa}]	\\
	=&\fra_{a}\E[G_{aa}\tr G^{2}]-\E[(\tr G)G_{aa}\tr G^{2}]-\E\tr G^{2}+O(N^{-1/3+C\epsilon}),
\end{aligned}\eeq
where in the last line we used \eqref{eq:ll_etr} to get
\beq
\absv{(G^{4})_{aa}}\leq\sum_{b,c,d}\absv{G_{ab}G_{bc}G_{cd}G_{da}}\prec N^{3}\cdot N^{-4/3+4\epsilon}.
\eeq
Then, by the exact same rearrangement as in \eqref{eq:dGUE_1} we get
\begin{align}
	(\fra_{a}-\omega_{\alpha})\E[G_{aa}\tr G^{2}]
	=&\E[(\tr G-m_{\mu})G_{aa}\tr G^{2}]+\E\tr G^{2}+ O(N^{-1/3+5\epsilon})	\nonumber\\
	=&\frac{1}{\fra_{a}-\omega_{\alpha}}\E[(\tr G-m_{\mu})\tr G^{2}]+\E\tr G^{2}+O(N^{-1/3+5\epsilon}),
\end{align}
and dividing both sides by $(\fra_{a}-\omega_{\alpha})$ proves \eqref{eq:dGUE_decoup_1}.

To prove \eqref{eq:dGUE_decoup_2}, we repeat the same procedure except that we apply the identity $zG+I=HG$ to the factor $(G^{2})_{aa}$;
\beq\begin{aligned}\label{eq:dGUE_expa_3}
	&z\E[(\tr G-m_{\mu})(G^{2})_{aa}]	\\
	=&\fra_{a}\E[(\tr G-m_{\mu})(G^{2})_{aa}]+\E[(\tr G-m_{\mu})(\wt{B}G^{2})_{aa}]-\E[G_{aa}(\tr G-m_{\mu})]\\
	=&\fra_{a}\E[(\tr G-m_{\mu})(G^{2})_{aa}]-\E[(\tr G-m_{\mu})(\tr G)(G^{2})_{aa}]	\\
	&-\Expct{(\tr G-m_{\mu})G_{aa}\tr G^{2}}	
	-\Expct{\frac{(G^{4})_{aa}}{N^{2}}}-\E[G_{aa}(\tr G-m_{\mu})]	\\
	=&\fra_{a}\E[(\tr G-m_{\mu})(G^{2})_{aa}]-m_{\mu}\E[(\tr G-m_{\mu})(G^{2})_{aa}]	\\
	&-\frac{1}{\fra_{a}-\omega_{a}}\Expct{(\tr G-m_{\mu})\tr G^{2}}
	+O(N^{-1/3+5\epsilon}).	
\end{aligned}\eeq
After rearranging, we obtain
\beq\begin{aligned}
	\E[(\tr G-m_{\mu})(G^{2})_{aa}]
	=&\frac{1}{(\fra_{a}-\omega_{\alpha})^{2}}\E[(\tr G-m_{\mu})\tr G^{2}]+O(N^{-1/3+5\epsilon}).
\end{aligned}\eeq
This completes the proof of Proposition \ref{prop:main} in the simplified deformed GUE case.

\begin{rem}
	One can easily notice that the right-hand side of \eqref{eq:frX_result} and \eqref{eq:dGUE_frX} are different; the former is $O(N^{5/6+C\epsilon})$ whereas the latter is $O(N^{2/3+C\epsilon})$. The extra factor of $N^{1/6}$ in the free sum case is due to the same factors in Lemmas \ref{lem:BG2_conc} -- \ref{lem:Yder}. For more details, we refer to Remark \ref{rem:extra} after the proof of Lemma \ref{lem:BG2_conc}.
\end{rem}

\subsection{Preliminaries for the proof of Proposition \ref{prop:main}}\label{sec:frX_prelim}
In this section we introduce new notations that are used throughout the proof of Proposition \ref{prop:main} for the general free sum $H$. We first introduce the following abbreviations;
\beq\label{eq:abbrv}\begin{aligned}
	m_{\mu}&\equiv m_{\mu_{t}}(E_{+,t}), &
	\omega_{\alpha}&\equiv \omega_{\alpha,t}(E_{+,t}), &
	\omega_{\beta}&\equiv \omega_{\beta,t}(E_{+,t}).
\end{aligned}\eeq
We always take the spectral parameter of $G,\caG$ to be $z=L_{+}+E+\ii\eta_{0}$ with $E\in[E_{1},E_{2}]$ and often omit the dependence on $z$ to write $G\equiv G(z),\caG\equiv\caG(z)$. We remark that all of $\omega_{\alpha},\omega_{\beta}$, and $m_{\mu}$ are deterministic and $N$-independent, hence \eqref{eq:abbrv} should not be confused with $G\equiv G(z)$. 

We aim at applying similar arguments as in the previous section to the free sum. However, there is an additional, fundamental difficulty compared to the deformed GUE case, namely that the derivative of $G$ with respect to $U$ has an additional factor of $B$. Indeed, if not for the constraint $UU\adj=I$ (corresponding to complex Ginibre $U$) we would have
\beqs
\frac{\partial G}{\partial U_{ab}}=G\bse_{a}\bse_{b}\adj BU\adj G.
\eeqs
Recall from \eqref{eq:Stein_first} that in the deformed GUE case the corresponding derivative of $G$ did not involve any other matrix than $G$. Obviously, in practice, we need to take the constraint $UU\adj=I$ into account hence resort to partial randomness decomposition; see \eqref{eq:dgbd} for instance.

Consequently, we will shortly see that the expansion of $(G^{2})_{aa}$ involves traces and entries of $G\wt{B}^{k}G$ for $k\in\{0,1,2\}$. In this regard, for each $a\in\llbra 1,N\rrbra$ we define three dimensional complex vectors $\bfx_{a}\equiv(\rmx_{ai})_{i=1,2,3}$ and $\bsx_{a}\equiv(x_{ai})_{i=1,2,3}$ by
\beq\label{eq:def_x}\begin{aligned}
	& \rmx_{ai}\deq\expct{F'(Y)(\caG\wt{A}^{i-1}\caG)_{aa}},& 
	& x_{ai}\deq\expct{F'(Y)(G\wt{B}^{i-1}G)_{aa}},	&&i\in\{1,2,3\}.
\end{aligned}\eeq
The components of $\bfx_{a}$ and $\bsx_{a}$ generalize $\E(G^{2})_{aa}$ in \eqref{eq:dGUE_0}. Likewise, the sub-leading order terms corresponding to \eqref{eq:dGUE_decoup_2} involve factors of $\tr \wt{B}^{k}G$ in place of $\tr G$. We denote related quantities as follows; define random diagonal matrices $D,\frD$ and random numbers $d_{k},\frd_{k}$ for $k=1,2,3$ by
\beq\begin{aligned}
	&	D\deq\diag\left(\gamma G_{aa}-\frac{1}{\fra_{a}-\omega_{\alpha}}\right)_{a\in\llbra 1,N\rrbra}, &
	& 	\frD\deq\diag\left(\gamma \mathcal{G}_{bb}-\frac{1}{\frb_{a}-\omega_{\beta}}\right)_{a\in\llbra 1,N\rrbra},\\
	&	d_{k}\deq\tr(A^{k-1}D),&
	&	\frd_{k}\deq\tr(B^{k-1}\frD).	\label{eq:d_def}
\end{aligned}\eeq
For each $a\in\llbra{1, N}\rrbra$, we define $Z_{a},\caZ_{a},\wt{Z}_{a},\wt{\caZ}_{a}\in\C^{3\times 3}$ to be the following generalizations of the left-hand side of \eqref{eq:dGUE_decoup_2}; their $(k,\ell)$-th entries are given by
\beq\label{eq:def_Z}\begin{aligned}
	&Z_{akl}\deq\expct{F'(Y)\frd_{k}(G\wt{B}^{l-1}G)_{aa}},& 
	&\caZ_{akl}\deq\expct{F'(Y)d_{k}(\caG\wt{A}^{l-1}\caG)_{aa}},\\
	&\wt{Z}_{akl}\deq\expct{F'(Y) \frd_{k} (\caG\wt{A}^{l-1}\caG)_{aa}},&
	&\wt{\caZ}_{akl}\deq\expct{F'(Y)d_{k}(G\wt{B}^{l-1}G)_{aa}}.
\end{aligned}\eeq
For vectors in \eqref{eq:def_x} and matrices in \eqref{eq:def_Z}, the same notations without the subscript $a$ stand for sums over $a$; for example $\bfx:=\sum_{a}\bfx_{a}$ and $Z\deq \sum_{a}Z_{a}$.

We next present rough estimates for above quantities due to the local law, Proposition \ref{prop:ll}. Firstly for $d_{k}$ and $\frd_{k}$, we often use that
\beq\label{eq:trace identity}
d_{k}=\gamma\tr A^{k-1}G-\int_{\R}\frac{x^{k-1}}{x-\omega_{\alpha}}\dd\mu_{\alpha}(x)+O(\bsd),
\eeq
which follows from Assumption \ref{assump:ABconv} and \eqref{eq:stab}. By \eqref{eq:subor_m}, the integral on the right-hand side of \eqref{eq:trace identity} can be simplified as
\beq\label{eq:trace}
\int_{\R}\frac{x^{k-1}}{x-\omega_{\alpha}}\dd\mu_{\alpha}(x)=\begin{cases}
	m_{\mu}, & k=1,\\
	\omega_{\alpha}m_{\mu}+1, & k=2,\\
	\omega_{\alpha}(\omega_{\alpha}m_{\mu}+1), & k=3,
\end{cases}
\eeq
where for $k=3$ we also used that $\mu_{\alpha}$ is of mean zero (recall \eqref{eq:mean_zero}).

Secondly for those in \eqref{eq:def_x}, we use Cauchy-Schwarz inequality and Ward identity to write
\beq\label{eq:x_rough_1}\begin{aligned}
	(G\wt{B}^{i-1}G)_{aa}\leq\norm{G\adj \bse_{a}}^{2}+\norm{\wt{B}^{i-1}G\bse_{a}}^{2}	
	\leq (1+\norm{\wt{B}}^{i-1})\frac{\im G_{aa}}{\eta_{0}}\prec N^{1/3+2\epsilon},
\end{aligned}\eeq
where the third inequality follows from \eqref{eq:ll_etr}. The same inequalities apply to $(\caG\wt{A}^{i-1}\caG)_{aa}$, and combining with \eqref{eq:FY} yields
\beq\label{eq:x_rough}
\rmx_{ai},x_{ai}=O(N^{1/3+(2C+3)\epsilon}).
\eeq
Thirdly for \eqref{eq:d_def}, Proposition \ref{prop:ll} implies that $d_{k}$, $\frd_{k}$, and all entries of $D,\frD$ are $O_{\prec}(N^{-1/3+\epsilon})$. Lastly, combining \eqref{eq:x_rough_1} and $d_{k},\frd_{k}=O_{\prec}(N^{-1/3+2\epsilon})$ we get
\beq\label{eq:Z_rough}
Z_{akl},\caZ_{akl},\wt{Z}_{akl},\wt{\caZ}_{akl}=O(N^{(2C+4\epsilon)}).
\eeq

We conclude this section by defining deterministic real vectors
\beq\label{eq:uv def}
\begin{split}
	&\bfu_{\alpha}\equiv(u_{\alpha i})_{i=1,2,3}:=\begin{pmatrix}
		\int\frac{1}{(x-\omega_{\alpha})^{2}}\dd \mu_{\alpha}(x), &
		\int\frac{x}{(x-\omega_{\alpha})^{2}}\dd \mu_{\alpha}(x), &
		\int\frac{x^{2}}{(x-\omega_{\alpha})^{2}}\dd\mu_{\alpha}(x)
	\end{pmatrix}^{\intercal},\\
	&\bfv_{\alpha}\equiv(v_{\alpha i})_{i=1,2,3}:=\begin{pmatrix}
		(\omega_{\alpha}+m_{\mu}^{-1})^{2}, &
		-2(\omega_{\alpha}+m_{\mu}^{-1}), &
		1
	\end{pmatrix}^{\intercal},
\end{split}
\eeq
and $\bfu_{\beta}, \bfv_{\beta}$ are defined by the same equation with roles of $\mu_{\alpha}$ and $\mu_{\beta}$ interchanged. From Lemma \ref{lem:stab}, all components of $\bfu_{\alpha},\bfu_{\beta},\bfv_{\alpha},\bfv_{\beta}$ are bounded. Furthermore, the first components $u_{\alpha 1}$ and $u_{\beta 1}$ of $\bfu_{\alpha}$ and $\bfu_{\beta}$ are positive and bounded from below.

\subsection{Proof of Proposition \ref{prop:main}}\label{sec:frX_conc}
In this section, we prove Proposition \ref{prop:main}. The proof consists of three steps; 
\begin{itemize}
	\item[(i)] proving an analogue of \eqref{eq:dGUE_decouped} for the free sum;
	\item[(ii)] deducing $\caK\im\frX=O(N^{5/6+C\epsilon})$ for some deterministic, real, $N$-dependent factor $\caK$;
	\item[(iii)] proving $\absv{\caK}$ is bounded from below.
\end{itemize} 

The following proposition handles the first step, whose proof is postponed to Section \ref{sec:decouple}.
\begin{prop}\label{prop:decoup} Under the settings of Proposition \ref{prop:gfc}, there exists a constant $C>0$ such that the following holds true uniformly over $z\in\{E+L_{+}+\ii\eta_{0}:E\in[E_{1},E_{2}]\}$.
	\beq\label{eq:xa1}
	\begin{split}
		x_{a1}&=
		\frac{1}{N(\fra_{a}-\omega_{\alpha})^{2}}\bfv_{\beta}\tp\bsx
		-\frac{(\bfv_{\beta}\tp\bfu_{\beta})^{2}}{u_{\beta 1}^{2}}\left(-\frac{1}{N(\fra_{a}-\omega_{\alpha})^{3}}+\frac{m_{\mu}}{N(\fra_{a}-\omega_{\alpha})^{2}}\right)\frX\\
		&+\frac{t}{N}\frac{1}{(\fra_{a}-\omega_{\alpha})^{2}}\rmx_{1}
		+\frac{\E[F'(Y)]}{\gamma^{2}(\fra_{a}-\omega_{\alpha})^{2}}
		+O(N^{-1/6+C\epsilon}).
	\end{split}\eeq
\end{prop}
Notice that \eqref{eq:xa1} decouples the index $a$ from $x_{a1}$, in the sense described below \eqref{eq:dGUE_decouped}. 

We next move on to the second step, that is, finding a cancellation from \eqref{eq:xa1}. We take the sum of \eqref{eq:xa1} over $a$ with weights $\fra_{a}^{k-1}$ for $k=1,2,3$, so that
\beq\label{eq:bfx}\begin{aligned}
	\bfx&=\bfu_{\alpha}\bfv_{\beta}\tp\bsx
	+t\bfu_{\alpha}x_{1}	\\
	&-\frac{(\bfv_{\beta}\tp\bfu_{\beta})^{2}}{u_{\beta 1}^{2}}\left( -\bfw_{\alpha}+m_{\mu}\bfu_{\alpha} \right)\frX
	+\gamma^{-2}N\bfu_{\alpha}\E[F'(Y)]
	+O(N^{5/6+C\epsilon}),
\end{aligned}\eeq
where we used $\rmx_{1}=x_{1}$ and
\beqs
\rmx_{k}=\E[F'(Y)\Tr \caG\wt{A}^{k-1}\caG]=\E[F'(Y)\Tr \wt{A}^{k-1}\caG^{2}]=\E[F'(Y)\Tr A^{k-1}G^{2}],
\eeqs
and defined $\bfw_{\alpha}\equiv(w_{\alpha 1},w_{\alpha 2},w_{\alpha 3})$ by
\beqs
w_{\alpha k}\deq \int_{\R}\frac{x^{k-1}}{(x-\omega_{\alpha})^{3}}\dd\mu_{\alpha}(x).
\eeqs
By symmetry, we may interchange the roles of $(A,B)$, $(\alpha,\beta)$, $(U,U\adj)$, and $(W,\caW)$ in \eqref{eq:bfx} to get
\beq\label{eq:bsx}\begin{aligned}
	\bsx=&\bfu_{\beta}\bfv_{\alpha}\tp\bfx
	+t\bfu_{\beta}\rmx_{1}	\\
	&-\frac{(\bfv_{\alpha}\tp\bfu_{\alpha})^{2}}{u_{\alpha 1}^{2}}\left( -\bfw_{\beta}+m_{\mu}\bfu_{\beta} \right)\frX
	+\gamma^{-2}N\bfu_{\beta}\E[F'(Y)]
	+O(N^{5/6+C\epsilon}),
\end{aligned}\eeq
where $\bfw_{\beta}$ is defined in a similar way. Here we used that $Y$ and $\frX$ are invariant under the interchange; this fact follows from 
\beqs\begin{aligned}
	&\Tr G=\Tr \caG, \quad&& \Tr \im[\wt{G}^{2}]G^{2}=\Tr \im[\wt{\caG}^{2}]\caG^{2},&& \wt{\caG}\equiv\caG(\wt{E}+L_{+}+\ii\eta_{0}).
\end{aligned}\eeqs
Combining \eqref{eq:bfx} and \eqref{eq:bsx}, we obtain 
\beq\label{eq:x_eq}
\begin{split}
	&\left(1-\bfv_{\alpha}\tp \bfu_{\alpha}\bfv_{\beta}\tp \bfu_{\beta}\right)\bfv_{\alpha}\tp\bfx
	=-\bfv_{\alpha}\tp \bfu_{\alpha}\frac{(\bfv_{\alpha}\tp \bfu_{\alpha})^{2}}{u_{\alpha 1}^{2}}\bfv_{\beta}\tp(-\bfw_{\beta}+m_{\mu}\bfu_{\beta}) \frX
	-\frac{(\bfv_{\beta}\tp \bfu_{\beta})^{2}}{u_{\beta 1}^{2}}\bfv_{\alpha}\tp (-\bfw_{\alpha}+m_{\mu}\bfu_{\alpha})\frX\\
	&+t\bfv_{\alpha}\tp\bfu_{\alpha}\bfv_{\beta}\tp\bfu_{\beta}\rmx_{1}
	+t\bfv_{\alpha}\tp\bfu_{\alpha}x_{1}
	+\gamma^{-2}N\bfv_{\alpha}\tp \bfu_{\alpha}(\bfv_{\beta}\tp\bfu_{\beta}+1)\E[F'(Y)]
	+O(N^{5/6+C\epsilon}).
\end{split}
\eeq

We next show that all terms in \eqref{eq:x_eq} involving $\bsx,\bfx$ cancel out. Collecting only the leading orders from \eqref{eq:bfx} and \eqref{eq:bsx}, we have
\begin{align*}
	&\bfx=\bfu_{\alpha}\bfv_{\beta}\tp\bsx
	+O(N^{1+C\epsilon+\chi}), &
	&\bsx=\bfu_{\beta}\bfv_{\alpha}\tp\bfx
	+O(N^{1+C\epsilon+\chi}),
\end{align*}
where we used \eqref{eq:frX_rough}, \eqref{eq:x_rough}, and \eqref{eq:Z_rough}. In particular, taking the first coordinates of both equalities, we obtain
\begin{align}\label{eq:x_linear}
	&\bfv_{\alpha}\tp\bfu_{\alpha}\rmx_{1}=u_{\alpha 1}\bfv_{\alpha}\tp\bfx +O(N^{1+C\epsilon+\chi}),&
	&x_{1}=u_{\beta1}\bfv_{\alpha}\tp\bfx+O(N^{1+C\epsilon+\chi}).
\end{align}
This in turn implies
\beq\begin{aligned}\label{eq:tx=tux}
	&\left(1-\bfv_{\alpha}\tp\bfu_{\alpha}\bfv_{\beta}\tp\bfu_{\beta}\right) \bfv_{\alpha}\tp\bfx
	-t\bfv_{\alpha}\tp\bfu_{\alpha}\bfv_{\beta}\tp\bfu_{\beta}x_{1}-t\bfv_{\alpha}\tp\bfu_{\alpha}x_{1}\\
	=&\left(1-\bfv_{\alpha}\tp \bfu_{\alpha}\bfv_{\beta}\tp \bfu_{\beta}-tu_{\alpha1}\bfv_{\beta}\tp\bfu_{\beta}-tu_{\beta1}\bfv_{\alpha}\tp\bfu_{\alpha}\right)\bfv_{\alpha}\tp\bfx+O(N^{2/3+C\epsilon+2\chi}),
\end{aligned}\eeq
where we applied the first and second equalities of \eqref{eq:x_linear} respectively to the second and third terms of \eqref{eq:tx=tux}.
Then we may rewrite \eqref{eq:x_eq} as
\beq\label{eq:x_eq_1}\begin{aligned}
	&\left(1-\bfv_{\alpha}\tp \bfu_{\alpha}\bfv_{\beta}\tp \bfu_{\beta}-tu_{\alpha1}\bfv_{\beta}\tp\bfu_{\beta}-tu_{\beta1}\bfv_{\alpha}\tp\bfu_{\alpha}\right)\bfv_{\alpha}\tp\bfx	\\
	=&-\bfv_{\alpha}\tp \bfu_{\alpha}\frac{(\bfv_{\alpha}\tp \bfu_{\alpha})^{2}}{u_{\alpha 1}^{2}}\bfv_{\beta}\tp(-\bfw_{\beta}+m_{\mu}\bfu_{\beta}) \frX
	-\frac{(\bfv_{\beta}\tp \bfu_{\beta})^{2}}{u_{\beta 1}^{2}}\bfv_{\alpha}\tp (-\bfw_{\alpha}+m_{\mu}\bfu_{\alpha})\frX\\
	&+\gamma^{-2}N\bfv_{\alpha}\tp \bfu_{\alpha}\bfv_{\beta}\tp\bfu_{\beta}
	+\gamma^{-2}N\bfv_{\alpha}\tp \bfu_{\alpha}
	+O(N^{5/6+C\epsilon}).
\end{aligned}\eeq
On the other hand, note that
\beq\label{eq:vu=F'}
(\bfv_{\alpha}+t\bse_{1})\tp\bfu_{\alpha}=\int_{\R}\frac{(\omega_{\alpha}+m_{\mu}^{-1}-x)^{2}+t}{(x-\omega_{\alpha})^{2}}\dd\mu_{\alpha}(x)=F_{\alpha,t}'(\omega_{\alpha})-1.
\eeq
The same identity holds true with $\alpha$ replaced by $\beta$. Therefore \eqref{eq:S_at_E} implies
\beq\label{eq:edge}
1-\bfv_{\alpha}\tp \bfu_{\alpha}\bfv_{\beta}\tp \bfu_{\beta}-tu_{\alpha1}\bfv_{\beta}\tp\bfu_{\beta}-tu_{\beta1}\bfv_{\alpha}\tp\bfu_{\alpha}=O(t^{2})=O(N^{-2/3+2\chi}).
\eeq
Plugging \eqref{eq:edge} into \eqref{eq:x_eq_1} and taking the imaginary part proves
\beq\label{eq:frX}
O(N^{5/6+C\epsilon})
=\left(\bfv_{\alpha}\tp \bfu_{\alpha}\frac{(\bfv_{\alpha}\tp \bfu_{\alpha})^{2}}{u_{\alpha 1}^{2}}\bfv_{\beta}\tp(-\bfw_{\beta}+m_{\mu}\bfu_{\beta}) 
+\frac{(\bfv_{\beta}\tp \bfu_{\beta})^{2}}{u_{\beta 1}^{2}}\bfv_{\alpha}\tp (-\bfw_{\alpha}+m_{\mu}\bfu_{\alpha})\right)\im\frX,
\eeq
where we used that $\E[F'(Y)]$ is real. This completes the second step.

Finally, to conclude the third step, it only remains to show that the deterministic factor in \eqref{eq:frX} is bounded from below. Recall from \eqref{eq:edge} that
\beq\label{eq:edge_rough}
(\bfv_{\alpha}\tp\bfu_{\alpha})(\bfv_{\beta}\tp\bfu_{\beta})=1+O(t),\qquad \bfv_{\alpha}\tp\bfu_{\alpha}\sim 1\sim\bfv_{\beta}\tp\bfu_{\beta},
\eeq
where the second asymptotics is due to the fact that
\beqs
\bfv_{\alpha}\tp\bfu_{\alpha}=\int_{\R}\frac{(\omega_{\alpha}+m_{\mu}^{-1}-x)^{2}}{(x-\omega_{\alpha})^{2}}\dd\mu_{\alpha}
\eeqs
is strictly positive and bounded. Recalling also that $u_{\alpha1},u_{\beta 1}\geq c$, it suffices to prove for a constant $c>0$ that
\beq\label{eq:K_lb}\begin{aligned}
	\bfv_{\alpha}\tp\bfw_{\alpha}-m_{\mu}\bfv_{\alpha}\tp\bfu_{\alpha}\leq -c,\\
	\bfv_{\beta}\tp\bfw_{\beta}-m_{\mu}\bfv_{\beta}\bfu_{\beta}\leq -c.
\end{aligned}\eeq
We remark that \eqref{eq:K_lb} is not a triviality in contrast to \eqref{eq:dGUE_optical}; since $\omega_{\alpha}=\omega_{\alpha,t}(E_{+,t})>E_{\alpha}^{+}$ from Lemma \ref{lem:stab} (iii), for the first term in \eqref{eq:K_lb} we have
\beqs
\bfv_{\alpha}\tp\bfw_{\alpha}=\int_{\R}\frac{(\omega_{\alpha}+m_{\mu}^{-1}+x)^{2}}{(x-\omega_{\alpha})^{3}}\leq0,
\eeqs
whereas the second term $-m_{\mu}\bfv_{\alpha}\tp\bfu_{\alpha}$ is positive since
\beqs
m_{\mu}=m_{\alpha}(\omega_{\alpha})=\int_{\R}\frac{1}{x-\omega_{\alpha}}\dd\mu_{\alpha}(x)\leq 0,\qquad \bfv_{\alpha}\tp\bfu_{\alpha}\geq 0.
\eeqs
Note that
\beq\label{eq:vw_expa}\begin{aligned}
	&\bfv_{\alpha}\tp\bsw_{\alpha}-m_{\mu}\bfv_{\alpha}\tp\bfu_{\alpha}=\int_{\R}\left(\frac{(\omega_{\alpha}+m_{\mu}^{-1}-x)^{2}}{(x-\omega_{\alpha})^{3}}-m_{\mu}\frac{(\omega_{\alpha}+m_{\mu}^{-1}-x)^{2}}{(x-\omega_{\alpha})^{2}}\right)\dd\mu_{\alpha}(x)\\
	=&\frac{1}{2}m_{\mu}^{-2}m_{\alpha}''(\omega_{\alpha})-m_{\mu}^{-1}m_{\alpha}'(\omega_{\alpha}) =\frac{1}{2}F_{\alpha,0}''(\omega_{\alpha})+m_{\mu}^{-1}m_{\alpha}'(\omega_{\alpha})(m_{\mu}^{-2}m_{\alpha}'(\omega_{\alpha})-1)	\\
	=&\frac{1}{2}F_{\alpha,0}''(\omega_{\alpha})+m_{\mu}F_{\alpha,0}'(\omega_{\alpha})(F_{\alpha,0}'(\omega_{\alpha})-1),
\end{aligned}\eeq
where the third equality is due to
\beqs
F_{\alpha,0}''(z)=\frac{\dd}{\dd z}\frac{m_{\alpha}'(z)}{m_{\alpha}(z)^{2}}=\frac{m_{\alpha}''(z)}{m_{\alpha}(z)^{2}}-2\frac{m_{\alpha}'(z)^{2}}{m_{\alpha}(z)^{3}}.
\eeqs
Since $z\mapsto F_{\alpha,0}(z)-z$ is a Pick function vanishing at infinity (see Lemma \ref{lem:Pick_F} for details), it is the Stieltjes transform of a measure $\wh{\mu}_{\alpha}$ on $\R$; 
\beq\label{eq:Pick_F}
F_{\alpha,0}(z)-z=\int_{\R}\frac{1}{x-z}\dd\wh{\mu}_{\alpha,0}(x).
\eeq
In Lemma \ref{lem:Pick_F} we further prove that $\wh{\mu}_{\alpha,0}$ is indeed a non-trivial, finite measure supported in $\supp\mu_{\alpha}$;
\beqs
\supp\wh{\mu}_{\alpha,0}\subset[E_{\alpha}^{-},E_{\alpha}^{+}],\qquad \wh{\mu}_{\alpha,0}(\R)=\int_{\R}x^{2}\dd\mu_{\alpha}(x)-\left(\int_{\R}x\dd\mu_{\alpha}(x)\right)^{2}.
\eeqs
In particular $\wh{\mu}_{\alpha,0}=0$ if and only if $\mu_{\alpha}$ is a point mass. Taking the first derivative of \eqref{eq:Pick_F}, for a constant $c>0$ we have
\beq
F_{\alpha}'(\omega_{\alpha})-1=\int_{\R}\frac{1}{(x-\omega_{\alpha})^{2}}\dd\wh{\mu}_{\alpha,0}(x)\geq\frac{ \wh{\mu}_{\alpha,0}(\R)}{(\omega_{\alpha,t}-E_{\alpha}^{-})^{2}}\geq c.
\eeq
Similarly we have $m_{\mu}=m_{\alpha}(\omega_{\alpha})<-c$ and $F_{\alpha}''(\omega_{\alpha})<-c$, so that by \eqref{eq:vw_expa} we obtain
\beq
\bfv_{\alpha}\tp\bsw_{\alpha}-m_{\mu}\bfv_{\alpha}\tp\bfu_{\alpha}<-c.
\eeq
This completes the proof of the first inequality \eqref{eq:K_lb}, and the second follows analogously. Plugging \eqref{eq:K_lb} back to \eqref{eq:frX} proves
\beq
\im\frX=O(N^{5/6+C\epsilon}),
\eeq 
concluding the proof of Proposition \ref{prop:main}.

%
%In the first step of the proof of Proposition \ref{prop:gfc}, we decouple the index $a$ from the resolvent entries in \eqref{eq:dt_gfc}. More specifically, in Proposition \ref{prop:Bab2} we express them in terms of deterministic functions of the index $a$ and weighted traces of $G^{2}$. After that, in Section \ref{sec:prf_gfc} we observe that the all terms cancel except an error of size $O(N^{1/6+3\epsilon})$, whose contribution would be $O(N^{-1/6+3\epsilon})$ after taking the integral over $t$. 

\section{Proof of Proposition \ref{prop:decoup}}\label{sec:decouple}

\subsection{Proof of Proposition \ref{prop:decoup}}
As mentioned above \eqref{eq:xa1} is an analogue of \eqref{eq:dGUE_decouped}, and their proofs  roughly follow the same outline. Namely, we prove expansions resembling \eqref{eq:dGUE_expa_1}, \eqref{eq:dGUE_decoup_1}, and \eqref{eq:dGUE_decoup_2}, and simply combine them to conclude \eqref{eq:xa1}. In this section, we first collect the results of such expansions in Lemmas \ref{lem:BG2_conc} -- \ref{lem:Yder}, and then use them to conclude Proposition \ref{prop:decoup}. Proofs of Lemmas \ref{lem:BG2_conc} -- \ref{lem:Yder} are postponed to later sections.

Recall that in \eqref{eq:dGUE_expa_1} we expanded $\E(\wt{B}G^{2})_{aa}$ using Stein's lemma. Analogously, the proof of \eqref{eq:xa1} naturally involves expanding the same quantity. We present the resulting expansion in the following lemma, whose proof is postponed to the next subsection.
\begin{lem}\label{lem:BG2_conc}
	Under the conditions of Proposition \ref{prop:decoup}, we have
	\begin{align}\label{eq:BG2_1}
		&\E[F'(Y)(\wt{B}G^{2})_{aa}]
		=(m_{\mu}^{-1}+\omega_{\beta})x_{a1}	\\
		&+\gamma\Expct{F'(Y)\left(-\Tr\wt{B}^{2}G^{2}+(\omega_{\beta}+m_{\mu}^{-1})\Tr \wt{B}G^{2}\right)\frac{G_{aa}}{N}}	\label{eq:BG2_2}\\
		&+\gamma\Expct{F'(Y)\left(\Tr \wt{B}G^{2}-(\omega_{\beta}+m_{\mu}^{-1})\Tr G^{2}\right)\frac{(\wt{B}G)_{aa}}{N}}		\label{eq:BG2_3}\\
		&-\gamma\Expct{F'(Y)\Tr \left((\wt{B}-\frb_{a})(\wt{B}-\omega_{\beta}-m_{\mu}^{-1})G^{2}\right)\frac{\bsh_{a}\adj G\bse_{a}}{N}}	\label{eq:BG2_4}\\
		&+\expct{F'(Y)\left(-\frd_{3}+(\omega_{\beta}+m_{\mu}^{-1})\frd_{2}\right)(G^{2})_{aa}}	\label{eq:BG2_5}\\
		&+\expct{F'(Y)\left(-(\omega_{\beta}+m_{\mu}^{-1})\frd_{1}+\frd_{2}\right)(\wt{B}G^{2})_{aa}}	\label{eq:BG2_6}\\
		&+\expct{F'(Y)\left((\frb_{a}\frd_{2}-\frd_{3})-(m_{\mu}^{-1}+\omega_{\beta})(\frb_{a}\frd_{1}-\frd_{2})\right)\bsh_{a}\adj G^{2}\bse_{a}}	\label{eq:BG2_7}\\
		&+\frac{1}{N}\sum_{c}^{(a)}\Expct{F''(Y)\frac{\partial Y}{\partial g_{ac}}\left(-(\wt{B}G^{2})_{ca}+(\wh{\omega}_{\beta}+m_{\mu}^{-1})(G^{2})_{ca}\right)}	\label{eq:BG2_8}\\
		&+O(N^{-1/6+C\epsilon}),\nonumber
	\end{align}
	where $g_{ac}$ is the $c$-th component of the vector $\bsg_{a}$ defined in \eqref{eq:def_g}
\end{lem}

Note that each term in \eqref{eq:BG2_2} -- \eqref{eq:BG2_8} has either one of the following forms:
\begin{align}\label{eq:DaG2}
	\gamma\Expct{F'(Y) \Tr \wt{B}^{k}G^{2} \frac{(KG)_{aa}}{N}},\\
	\Expct{F'(Y)\frd_{k}(KG^{2})_{aa}},\label{eq:DaG3}
\end{align}
where $k\in\{0,1,2\}$ and $K$ can be $I$, $\wt{B}$, or $R_{i}$, due to \eqref{eq:Re=-h}. 
Precisely, those in \eqref{eq:BG2_2} -- \eqref{eq:BG2_4} corresponds to \eqref{eq:DaG2}, and \eqref{eq:BG2_5} -- \eqref{eq:BG2_7} to \eqref{eq:DaG3}.
One can immediately see the resemblance respectively between  \eqref{eq:DaG2}, \eqref{eq:DaG3} and the left-hand sides of \eqref{eq:dGUE_decoup_1}, \eqref{eq:dGUE_decoup_2}. Indeed, we decouple the index $a$ from \eqref{eq:DaG2} and \eqref{eq:DaG3} in the next two lemmas, which are analogues of \eqref{eq:dGUE_decoup_1} and \eqref{eq:dGUE_decoup_2}, respectively. We postpone their proofs to Sections~\ref{sec:GBG_pf}~and~\ref{sec:dBGG_pf}.
\begin{lem}\label{lem:GBG}
	Under the conditions of Proposition \ref{prop:decoup}, the following holds true uniformly over $a\in\braN$ and $k=1,2,3$:
	\begin{align}
		\gamma\expct{F'(Y)&G_{aa}\Tr \wt{B}^{k-1} G^{2}}=\frac{1}{\fra_{a}-\omega_{\alpha}}x_{k}
		+\frac{1}{(\fra_{a}-\omega_{\alpha})^{2}}\bfv_{\beta}^{\intercal}Z\bse_{k}	
		+O(N^{5/6+C\epsilon}),\label{eq:GBGdG_conc}\\
		\gamma\expct{F'(Y)&(\wt{B}G)_{aa}\Tr \wt{B}^{k-1} G^{2}}=\frac{(\omega_{\beta}+m_{\mu}^{-1})}{\fra_{a}-\omega_{\alpha}}x_{k}	\nonumber\\
		&+\left(\frac{(\omega_{\beta}+m_{\mu}^{-1})}{(\fra_{a}-\omega_{\alpha})^{2}}
		-\frac{1}{\fra_{a}-\omega_{\alpha}}\right)\bfv_{\beta}^{\intercal}Z\bse_{k}
		+O(N^{5/6+C\epsilon}),	\label{eq:GBGdBG_conc}\\
		\gamma\expct{F'(Y)&\bsh_{a}\adj G\bse_{a}\Tr \wt{B}^{k-1}G^{2}}	\nonumber\\
		&=\frac{\omega_{\beta}(\omega_{\beta}+m_{\mu}^{-1})Z_{1k}-(2\omega_{\beta}+m_{\mu}^{-1})Z_{2k}+Z_{3k}}{\gamma(\fra_{a}-\omega_{\alpha})(\frb_{a}-\omega_{\beta})}
		+O(N^{5/6+C\epsilon}).	\label{eq:GBGhG_conc}
	\end{align}
\end{lem}
\begin{lem}\label{lem:dBGG}
	Under the conditions of Proposition \ref{prop:decoup}, the following holds true uniformly over $a\in\braN$ and $k=1,2,3$:
	\begin{align}
		\E[F'(Y) \frd_{k}&(G^{2})_{aa}]	
		=\frac{1}{N(\fra_{a}-\omega_{\alpha})^2}\bse_{k}\tp Z\bfv_{\beta}+O(N^{-1/6+C\epsilon}),	\label{eq:dGG_conc}\\
		\E[F'(Y)\frd_{k}&(\wt{B}G^{2})_{aa}]
		=\left(\frac{(\omega_{\beta}+m_{\mu}^{-1})}{N(\fra_{a}-\omega_{\alpha})^{2}}-\frac{1}{N(\fra_{a}-\omega_{\alpha})}\right)\bse_{k}^{\intercal}Z\bfv_{\beta}+O(N^{-1/6+C\epsilon}),\label{eq:dBGG_conc}\\
		\E[F'(Y)\frd_{k}&\bsh_{a}\adj G^{2}\bse_{a}] \nonumber \\
		&=\frac{\omega_{\beta}(\omega_{\beta}+m_{\mu}^{-1})Z_{k1}-(2\omega_{\beta}+m_{\mu}^{-1})Z_{k2}+Z_{k3}}{N(\fra_{a}-\omega_{\alpha})(\frb_{a}-\omega_{\beta})}
		+O(N^{-1/6+C\epsilon}).\label{eq:dhGG_conc}
	\end{align}
\end{lem}
While Lemmas \ref{lem:GBG} and \ref{lem:dBGG} do decouple $a$ from \eqref{eq:DaG2} and \eqref{eq:DaG3}, there is a discrepancy compared to the deformed GUE case. Namely, the estimates in Lemmas \ref{lem:GBG} and \ref{lem:dBGG} involve $Z_{ij}$ for all choices of $i$ and $j$, whereas \eqref{eq:dGUE_decoup_1} and \eqref{eq:dGUE_decoup_2} only have $\frX=2Z_{11}$. In the next lemma, we show that $Z_{ij}$ are in fact all comparable to each another. We prove the lemma in Section \ref{sec:veq_pf}.
\begin{lem}\label{lem:veq1}
	Under the conditions in Proposition \ref{prop:decoup}, the following hold true uniformly over $E\in[E_{1},E_{2}]$.
	\beq\label{eq:Z_veq}
	Z=\frac{\bfu_{\beta}\bfu_{\beta}\tp}{u_{\beta 1}^{2}}Z_{11}+O(N^{5/6+C\epsilon}).
	\eeq
\end{lem}

Applying Lemma~\ref{lem:veq1} to $Z$-dependent factors in Lemmas \ref{lem:GBG} and \ref{lem:dBGG} yields that
\beq\label{eq:veq_1}
\bfv_{\beta}\tp Z\bse_{k}=\bfv_{\beta}\tp \bfu_{\beta} \frac{u_{\beta k}}{u_{\beta 1}^{2}}Z_{11}+O(N^{5/6+C\epsilon})
=\bse_{k}\tp Z\bfv_{\beta}.
\eeq
In fact, Lemma \ref{lem:veq1} implies that the linear combinations of $Z$ appearing in \eqref{eq:GBGhG_conc} and \eqref{eq:dhGG_conc} are negligible:
\beq\begin{aligned}\label{eq:veq_2}
	&\omega_{\beta}(\omega_{\beta}+m_{\mu}^{-1})Z_{k1}-(2\omega_{\beta}+m_{\mu}^{-1})Z_{k2}+Z_{k3}	\\
	=&\frac{u_{\beta k}}{u_{\beta 1}^{2}}(\omega_{\beta}(\omega_{\beta}+m_{\mu}^{-1})u_{\beta1}-(2\omega_{\beta}+m_{\mu}^{-1})u_{\beta2}+u_{\beta3})Z_{11}+O(N^{5/6+C\epsilon})\\
	=&\omega_{\beta}(\omega_{\beta}+m_{\mu}^{-1})Z_{1k}-(2\omega_{\beta}+m_{\mu}^{-1})Z_{2k}+Z_{3k} +O(N^{5/6+C\epsilon}),
\end{aligned}\eeq
and the second line of \eqref{eq:veq_2} is $O(N^{5/6+C\epsilon})$ since
\beq\begin{aligned}\label{eq:veq_2_cancel}
	&\omega_{\beta}(\omega_{\beta}+m_{\mu}^{-1})u_{\beta1}-(2\omega_{\beta}+m_{\mu}^{-1})u_{\beta2}+u_{\beta3}	\\
	=&\int_{\R}\frac{(x-\omega_{\beta})(x-\omega_{\beta}-m_{\mu}^{-1})}{(x-\omega_{\beta})^{2}}\dd\mu_{\beta}(x)	
	=1-m_{\mu}^{-1}\int_{\R}\frac{1}{x-\omega_{\beta}}\dd\mu_{\beta}(x)=0,
\end{aligned}\eeq
where the last equality is due to \eqref{eq:subor_m}. Plugging in \eqref{eq:veq_2} and \eqref{eq:veq_2_cancel} into \eqref{eq:GBGhG_conc} and \eqref{eq:dhGG_conc}, we have
\beq\label{eq:h_cancel}
	\E[F'(Y)\bsh_{a}\adj G\bse_{a}\Tr \wt{B}^{k-1}G^{2}]=O(N^{5/6+C\epsilon}),\qquad \E[F'(Y)\frd_{k}\bsh_{a}\adj G^{2}\bse_{a}]=O(N^{-1/6+C\epsilon}).
\eeq
Note that \eqref{eq:veq_2_cancel} also implies
\beq\label{eq:veq_2_cancel_2}
\bfv_{\beta}\tp\bfu_{\beta}-m_{\mu}^{-1}\left((\omega_{\beta}+m_{\mu}^{-1})u_{\beta1}+u_{\beta 2}\right)=0.
\eeq

Finally, we deal with the last term of \eqref{eq:BG2_1} in the following lemma, whose proof is presented in Section \ref{sec:Yder_pf}.
\begin{lem}\label{lem:Yder}
	Under the conditions of Proposition \ref{prop:decoup}, for $k=1,2$, we have
	\begin{align}
		\sum_{c}^{(a)}\expct{F''(Y)\frac{\partial Y}{\partial g_{ac}}(\wt{B}^{k-1}G^{2})_{ca}}\nonumber
		=&\gamma\frac{\bfv_{\beta}\tp\bfu_{\beta}}{u_{\beta 1}^{2}}\left(-\frac{(\omega_{\beta}+m_{\mu}^{-1})u_{\beta k}-u_{\beta(k+1)}}{N(\fra_{a}-\omega_{\alpha})^{2}}+\frac{u_{\beta k}}{N(\fra_{a}-\omega_{\alpha})}\right) \\ 
		&\times \int_{E_{1}}^{E_{2}}\expct{F''(Y)\Tr G^{2}\im (\wt{G}^{2})}\dd \wt{E}+O(N^{5/6+C\epsilon}),\label{eq:Y_der_conc}
	\end{align}
	where we recall $\wt{G}\equiv G(L_{+}+\wt{E}+\ii\eta_{0})$.
\end{lem}

We have collected all necessary ingredients thus move on to the proof of Proposition \ref{prop:decoup}.
\begin{proof}[Proof of Proposition~\ref{prop:decoup}]
	We first use the identity $zG+I=\gamma HG$ to write
	\beq\label{eq:x_expa}\begin{aligned}
		&\frac{z}{\gamma}\expct{F'(Y)(G^{2})_{aa}}+\frac{1}{\gamma}\expct{F'(Y)G_{aa}}
		=\expct{F'(Y)\bse_{a}\adj(A+\wt{B}+\sqrt{t}W)G^{2}\bse_{a}}\\
		=&\fra_{a}\expct{F'(Y)(G^{2})_{aa}}+\expct{F'(Y)(\wt{B}G^{2})_{aa}}-\gamma t\expct{F'(Y)G_{aa}\tr G^{2}}	\\
		&-\gamma t\expct{F'(Y)(G^{2})_{aa}\tr G} +\sqrt{t}\frac{1}{N}\sum_{b}\Expct{F''(Y)(G^{2})_{ba}\frac{\partial Y}{\partial \ol{W}_{ab}}},
	\end{aligned}\eeq
	where we applied Stein's lemma to $W$ in the second equality. We next simplify the third to fifth terms on the right-hand side of \eqref{eq:x_expa}. For the last term, recall from \eqref{eq:Yder} that
	\beq\label{eq:Yder_1}
	\frac{\partial Y}{\partial\ol{W}_{ab}}=\frac{\partial Y}{\partial W_{ba}}=-\gamma\sqrt{t}\int_{E_{1}}^{E_{2}}(\im[\wt{G}^{2}])_{ab}\dd\wt{E},
	\eeq
	where $\wt{G}=G(L_{+}+\wt{E}+\ii\eta_{0})$. Thus, as in \eqref{eq:frX_rough}, we use Proposition \ref{prop:ll} to find that
	\beq\label{eq:x_expa_rem1}
	\Absv{\sqrt{t}\frac{1}{N}\sum_{b}\Expct{F''(Y)(G^{2})_{ba}\frac{\partial Y}{\partial \ol{W}_{ab}}}}=O(tN^{C\epsilon}).
	\eeq
	Similarly, for the third and fourth terms of \eqref{eq:x_expa} we use Proposition \ref{prop:ll} and \eqref{eq:x_rough_1} to obtain
	\beq\label{eq:x_expa_rem2}\begin{aligned}
		\gamma t\expct{F'(Y)G_{aa}\tr G^{2}}-t\frac{1}{a-\omega_{\alpha}}\expct{F'(Y)\tr G^{2}}
		&=t\expct{O_{\prec}(N^{C\epsilon})}=tO(N^{C\epsilon}),\\
		\gamma t\expct{F'(Y)(G^{2})_{aa}\tr G}-tm_{\mu}\expct{F'(Y)(G^{2})_{aa}}&=tO(N^{C\epsilon}).
	\end{aligned}\eeq
	Plugging in \eqref{eq:x_expa_rem1} and \eqref{eq:x_expa_rem2} to \eqref{eq:x_expa} and then using $t=N^{-1/3+\chi}\leq N^{-1/6}$, we have
	\beq\label{eq:x_expa_result}\begin{aligned}
		\left(\frac{z}{\gamma}-\fra_{a}+tm_{\mu}\right)x_{a1}	=\expct{F'(Y)(\wt{B}G^{2})_{aa}}-t\frac{1}{N(\fra_{a}-\omega_{\alpha})}\rmx_{1}+O(N^{-1/6+C\epsilon}).
	\end{aligned}\eeq
	
	We next claim that it suffices to prove the following:
	\begin{align}
		\E[F'(Y)(\wt{B}G^{2})_{aa}]
		&=(m_{\mu}^{-1}+\omega_{\beta})x_{a1}
		-\frac{1}{N(\fra_{a}-\omega_{\alpha})}\bfv_{\beta}\tp\bsx\nonumber\\
		&-\frac{(\bfv_{\beta}\tp\bfu_{\beta})^{2}}{u_{\beta 1}^{2}}\left(\frac{1}{N(\fra_{a}-\omega_{\alpha})^{2}}-\frac{m_{\mu}}{N(\fra_{a}-\omega_{\alpha})}\right)\frX+O(N^{-1/6+C\epsilon}).\label{eq:Bab2}
	\end{align}
	Indeed, given \eqref{eq:Bab2}, we may simply substitute the first term on the right-hand side of \eqref{eq:x_expa_result} by \eqref{eq:Bab2} to obtain
	\beq\begin{aligned}
		&\left(E_{+}-\fra_{a}+tm_{\mu}-\frac{1}{m_{\mu}}-\omega_{\beta}\right)x_{a1}	\\
		=&\left(\frac{z}{\gamma}-\fra_{a}+tm_{\mu}-\frac{1}{m_{\mu}}-\omega_{\beta}\right)x_{a1}+O(N^{-1/3+C\epsilon})	\label{eq:Bab2_conseq}\\
		=&-\frac{1}{N(\fra_{a}-\omega_{\alpha})}\bfv_{\beta}\tp\bsx-t\frac{1}{\fra_{a}-\omega_{\alpha}}\rmx_{1}\\
		&-\frac{(\bfv_{\beta}\tp\bfu_{\beta})^{2}}{u_{\beta1}^{2}}\left(\frac{1}{N(\fra_{a}-\omega_{\alpha})^{2}}-\frac{m_{\mu}}{N(\fra_{a}-\omega_{\alpha})}\right)\frX+O(N^{-1/6+C\epsilon}),
	\end{aligned}\eeq
	where we used $\absv{E_{+}-z/\gamma}=O(N^{-2/3+C\epsilon})$ and \eqref{eq:x_rough} in the first equality. Then we notice that the deterministic factor on the leftmost side of \eqref{eq:Bab2_conseq} can be simplified as
	\begin{align}\label{eq:B2_factor}
		E_{+}-\fra_{a}+tm_{\mu}-\frac{1}{m_{\mu}}-\omega_{\beta}
		=&F_{\mu_{t},t}(E_{+,t})-\omega_{\beta}(E_{+,t})+E_{+,t}-\fra_{a}	\nonumber\\
		=&\omega_{\alpha}(E_{+,t})-\fra_{a},
	\end{align}
	where the second equality is due to \eqref{eq:subor}. By \eqref{eq:stab} we may divide \eqref{eq:Bab2_conseq} by $(\omega_{\alpha}-\fra_{a})$, leading to exactly the desired conclusion:
	\beqs\begin{aligned}
		x_{a1}=&\frac{1}{N(\fra_{a}-\omega_{\alpha})^{2}}\bfv_{\beta}\tp\bsx
		+t\frac{1}{(\fra_{a}-\omega_{\alpha})^{2}}\rmx_{1}\\
		&+\frac{(\bfv_{\beta}\tp\bfu_{\beta})^{2}}{u_{\beta1}^{2}}\left(\frac{1}{N(\fra_{a}-\omega_{\alpha})^{3}}-\frac{m_{\mu}}{N(\fra_{a}-\omega_{\alpha})^{2}}\right)\frX+O(N^{-1/6+C\epsilon}).
	\end{aligned}\eeqs
	
	Now we prove \eqref{eq:Bab2} by substituting Lemmas \ref{lem:GBG} -- \ref{lem:Yder} into each of \eqref{eq:BG2_2} -- \eqref{eq:BG2_8}. We explain the detail line by line.
	\subsubsection*{Second line \eqref{eq:BG2_2}}
	We apply \eqref{eq:GBGdG_conc} with $k=2,3$ and then \eqref{eq:veq_1} to obtain
	\beq\begin{aligned}\label{eq:BG2_2_conc}
		\text{\eqref{eq:BG2_2}}=&\frac{1}{N(\fra_{a}-\omega_{\alpha})}\left((\omega_{\beta}+m_{\mu}^{-1})x_{2}-x_{3}\right)	\\+&\frac{\bfv_{\beta}\tp\bfu_{\beta}}{N(\fra_{a}-\omega_{\alpha})^{2}u_{\beta1}^{2}}\left((\omega_{\beta}+m_{\mu}^{-1})u_{\beta2}-u_{\beta 3}\right)Z_{11}+O(N^{-1/6+C\epsilon}).
	\end{aligned}\eeq
	Notice that \eqref{eq:BG2_2_conc} has an overall additional factor of $N^{-1}$ compared to \eqref{eq:GBGdG_conc} so that the error in \eqref{eq:BG2_2_conc} is $O(N^{-1}\cdot N^{5/6+C\epsilon})$. This scaling also applies to \eqref{eq:BG2_3} and \eqref{eq:BG2_4}.
	
	\subsubsection*{Third line \eqref{eq:BG2_3}}
	We apply \eqref{eq:GBGdBG_conc} with $k=1,2$ and \eqref{eq:veq_1} to get
	\beq\begin{aligned}\label{eq:BG2_3_conc}
		&\text{\eqref{eq:BG2_3}}
		=\frac{\omega_{\beta}+m_{\mu}^{-1}}{N(\fra_{a}-\omega_{\alpha})}\left(x_{2}-(\omega_{\beta}+m_{\mu}^{-1})x_{1}\right)	\\
		+&\frac{\bfv_{\beta}\tp\bfu_{\beta}}{Nu_{\beta1}^{2}}\left(\frac{\omega_{\beta}+m_{\mu}^{-1}}{(\fra_{a}-\omega_{\alpha})^{2}}-\frac{1}{\fra_{a}-\omega_{\alpha}}\right)(u_{\beta2}-(\omega_{\beta}+m_{\mu}^{-1})u_{\beta1})Z_{11}+O(N^{-1/6+C\epsilon}).
	\end{aligned}\eeq
	\subsubsection*{Fourth line \eqref{eq:BG2_4}}
	Applying \eqref{eq:GBGhG_conc} and \eqref{eq:h_cancel} we find that
	\beq\label{eq:BG2_4_conc}\begin{aligned}
		\text{\eqref{eq:BG2_4}}=O(N^{-1/6+C\epsilon}).
	\end{aligned}\eeq
	Here we briefly pause to simplify the combined contribution of \eqref{eq:BG2_2} -- \eqref{eq:BG2_4}. Summing \eqref{eq:BG2_2_conc}, \eqref{eq:BG2_3_conc}, and \eqref{eq:BG2_4_conc} gives
	\begin{align}
		\text{\eqref{eq:BG2_2}+\eqref{eq:BG2_3}+\eqref{eq:BG2_4}}=&-\frac{1}{N(\fra_{a}-\omega_{\alpha})}\left((\omega_{\beta}+m_{\mu}^{-1})^{2}x_{1}-2(\omega_{\beta}+m_{\mu}^{-1})x_{2}+x_{3}\right)	\nonumber\\
		&-\frac{\bfv_{\beta}\tp\bfu_{\beta}}{N(\fra_{a}-\omega_{\alpha})^{2}u_{\beta1}^{2}}\left((\omega_{\beta}+m_{\mu}^{-1})^{2}u_{\beta1}-2(\omega_{\beta}+m_{\mu}^{-1})u_{\beta2}+u_{\beta3}\right)Z_{11}	\nonumber\\
		&+\frac{\bfv_{\beta}\tp\bfu_{\beta}}{N(\fra_{a}-\omega_{\alpha})u_{\beta1}^{2}}\left((\omega_{\beta}+m_{\mu}^{-1})u_{\beta 1}-u_{\beta2}\right)Z_{11}+O(N^{-1/6+C\epsilon})	\label{eq:BG2_234_conc}\\
		=&-\frac{\bfv_{\beta}\tp\bsx}{N(\fra_{a}-\omega_{\alpha})}-\frac{(\bfv_{\beta}\tp\bfu_{\beta})^{2}}{Nu_{\beta1}^{2}}\left(\frac{1}{(\fra_{a}-\omega_{\alpha})^{2}}-\frac{m_{\mu}}{\fra_{a}-\omega_{\alpha}}\right)Z_{11}+O(N^{-1/6+C\epsilon})\nonumber,
	\end{align}
	where we used \eqref{eq:veq_2_cancel_2} in the last line.
	\subsubsection*{Fifth line \eqref{eq:BG2_5}}
	Here we apply \eqref{eq:dGG_conc} with $k=2,3$ and \eqref{eq:veq_1} to obtain
	\beq\label{eq:BG2_5_conc}
	\text{\eqref{eq:BG2_5}}
	=\frac{\bfv_{\beta}\tp\bfu_{\beta}}{N(\fra_{a}-\omega_{\alpha})^{2}}((\omega_{\beta}+m_{\mu}^{-1})u_{\beta2}-u_{\beta3})Z_{11}+O(N^{-1/6+C\epsilon}).\eeq
	\subsubsection*{Sixth line \eqref{eq:BG2_6}}
	For \eqref{eq:BG2_6}, we use \eqref{eq:dBGG_conc} with $k=1,2$ and \eqref{eq:veq_1} so that
	\beq\label{eq:BG2_6_conc}
	\text{\eqref{eq:BG2_6}}
	=\frac{\bfv_{\beta}\tp\bfu_{\beta}}{N}\left(\frac{\omega_{\beta}+m_{\mu}^{-1}}{(\fra_{a}-\omega_{\alpha})^{2}}-\frac{1}{\fra_{a}-\omega_{a}}\right)(u_{\beta2}-(\omega_{\beta}+m_{\mu}^{-1})u_{\beta1})
	\eeq
	\subsubsection*{Seventh line \eqref{eq:BG2_7}}
	Combining \eqref{eq:dhGG_conc} and \eqref{eq:h_cancel} we have
	\beq
	\text{\eqref{eq:BG2_7}}=O(N^{-1/6+C\epsilon}).
	\eeq
	Notice that the right-hand sides of \eqref{eq:BG2_5_conc} and \eqref{eq:BG2_6_conc} exactly match the second lines of \eqref{eq:BG2_2_conc} and \eqref{eq:BG2_3_conc}, respectively. Thus, by \eqref{eq:BG2_234_conc}, the combined contribution of \eqref{eq:BG2_5} -- \eqref{eq:BG2_7} equals to
	\beq\label{eq:BG2_567_conc}
	\text{\eqref{eq:BG2_5}+\eqref{eq:BG2_6}+\eqref{eq:BG2_7}}=-\frac{(\bfv_{\beta}\tp\bfu_{\beta})^{2}}{Nu_{\beta1}^{2}}\left(\frac{1}{(\fra_{a}-\omega_{\alpha})^{2}}-\frac{m_{\mu}}{\fra_{a}-\omega_{\alpha}}\right)Z_{11}+O(N^{-1/6+C\epsilon}).
	\eeq
	\subsubsection*{Eighth line \eqref{eq:BG2_8}}
	For \eqref{eq:BG2_8} we apply \eqref{eq:Y_der_conc} with $k=1,2$ and use the same algebraic manipulation as in \eqref{eq:BG2_234_conc} to obtain
	\begin{multline}\label{eq:BG2_8_conc}
		\text{\eqref{eq:BG2_8}}=-\frac{\gamma(\bfv_{\beta}\tp\bfu_{\beta})^{2}}{Nu_{\beta1}^{2}}\left(\frac{1}{(\fra_{a}-\omega_{\alpha})^{3}}-\frac{m_{\mu}}{(\fra_{a}-\omega_{\alpha})^{2}}\right)	\\
		\times\int_{E_{1}}^{E_{2}}\expct{F''(Y)\Tr G^{2}\im (\wt{G}^{2})}\dd \wt{E}+O(N^{-1/6+C\epsilon}).
	\end{multline}
	As in \eqref{eq:BG2_2_conc}, notice the additional $N^{-1}$ factors in \eqref{eq:BG2_8} and \eqref{eq:BG2_8_conc} compared to \eqref{eq:Y_der_conc}.
	
	Summing \eqref{eq:BG2_234_conc}, \eqref{eq:BG2_567_conc}, and \eqref{eq:BG2_8_conc} we get
	\beqs\begin{aligned}
		&\text{\eqref{eq:BG2_234_conc}+\eqref{eq:BG2_567_conc}+\eqref{eq:BG2_8_conc}}
		=-\frac{(\bfv_{\beta}\tp\bfu_{\beta})^{2}}{Nu_{\beta1}^{2}}\left(\frac{1}{(\fra_{a}-\omega_{\alpha})^{2}}-\frac{m_{\mu}}{(\fra_{a}-\omega_{\alpha})^{2}}\right)\frX +O(N^{-1/6+C\epsilon}),
	\end{aligned}\eeqs
	where we used definitions of $\frX$ and $Z_{11}$. Finally, \eqref{eq:Bab2} follows immediately as
	\beq\begin{aligned}\label{eq:Bab2_conc}
		&\E[F'(Y)(\wt{B}G^{2})_{aa}]=(\omega_{\beta}+m_{\mu}^{-1})x_{a1}+\text{\eqref{eq:BG2_234_conc}+\eqref{eq:BG2_567_conc}+\eqref{eq:BG2_8_conc}}+O(N^{-1/6+C\epsilon})	\\
		=&(\omega_{\beta}+m_{\mu}^{-1})x_{a1}
		-\frac{(\bfv_{\beta}\tp\bfu_{\beta})^{2}}{Nu_{\beta1}^{2}}\left(\frac{1}{(\fra_{a}-\omega_{\alpha})^{2}}-\frac{m_{\mu}}{(\fra_{a}-\omega_{\alpha})^{2}}\right)\frX+O(N^{-1/6+C\epsilon}).
	\end{aligned}\eeq
	This concludes the proof of Proposition \ref{prop:decoup}.
\end{proof}

\subsection{Proof of Lemma \ref{lem:BG2_conc}}\label{sec:B2_prf}
In this section we prove the first expansion, Lemma \ref{lem:BG2_conc}. As mentioned in the introduction, along the proof we often apply Stein's lemma with respect to the Gaussian vector $\bsg_{a}$ from the partial randomness decomposition. Hence derivatives of $G$ with respect to components of $\bsg_{a}$ naturally appear, whose precise form is given in the following lemma; see Appendix \ref{sec:der} for its proof.
\begin{lem}\label{lem:dgbd}
	For $a\neq d\in\llbra 1, N\rrbra$, we have
	\beq\begin{aligned}\label{eq:dgbd}
		\frac{\partial G}{\partial g_{ad}}=&-\gamma\frac{\ell_{a}^{2}}{\norm{\bsg_{a}}}G[\bse_{d}(\bse_{a}+\bsh_{a})\adj,\wt{B}]G
		+\gamma\frac{\ell_{a}^{2}}{2\norm{\bsg_{a}}}\ol{h}_{ad}G[(\bse_{a}+2\bsh_{a})\bse_{a}\adj,\wt{B}]G\\
		-&\gamma\frac{\ell_{a}^{4}}{2\norm{\bsg_{a}}}h_{aa}\ol{h}_{ad}G[\bse_{a}\bse_{a}\adj+\bse_{a}\bsh_{a}\adj+\bsh_{a}\bse_{a}\adj,\wt{B}]G,
	\end{aligned}\eeq
	where $[P,Q]\deq PQ-QP$ for matrices $P$ and $Q$ of the same size. Similarly, we have
	\beq\label{eq:dgbd_bar}\begin{aligned}
		\frac{\partial G}{\overline{\partial}g_{ad}}=&-\gamma \frac{\ell_{a}^2}{\norm{\bsg_{a}}} G [\wt{B},(\bse_{a}+\bsh_{a})\bse_{d}\adj]G
		+\gamma \frac{\ell_{a}^{2}}{2\norm{\bsg_{a}}}h_{ad}G[\wt{B},\bse_{a}(\bse_{a}+2\bsh_{a})\adj]G\\
		-&\gamma \frac{\ell_{a}^{4}}{2\norm{\bsg_{b}}}h_{aa}h_{ad}G[\wt{B},\bse_{a}\bse_{a}\adj+\bse_{a}\bsh_{a}\adj+\bsh_{a}\bse_{a}\adj]G.
	\end{aligned}\eeq 
\end{lem}

Note that \eqref{eq:dgbd} is an exact identity that remains true for all $N$. However, as $N\to\infty$ we have $h_{aa},\ol{h}_{ad}=O_{\prec}(N^{-1/2})$, so that the second and third terms of \eqref{eq:dgbd} are typically much smaller than the first. Similarly we have 
\beq
\norm{\bsg_{a}}^{2}=1+O_{\prec}(N^{-1/2}),\qquad \ell_{a}^{2}=\frac{2}{\norm{\bse_{a}+\bsh_{a}}^{2}}=\frac{1}{1+h_{aa}}=1+O_{\prec}(N^{-1/2}).
\eeq 
These two facts lead to the following heuristic asymptotics:
\beq\label{eq:dg_heu}
\frac{\partial G}{\partial g_{ac}}=-\gamma G[\bse_{c}(\bse_{a}+\bsh_{a})\adj,\wt{B}]G+\text{(remainders)}.
\eeq
While \eqref{eq:dg_heu} hardly a rigorous statement, it helps to see how the leading terms arise in our estimates; see Lemmas \ref{lem:dgbd_1}, \ref{lem:dgdiag}, and \ref{lem:dg_cr}, and also \eqref{eq:Yder_BG}, \eqref{eq:Yder_hG}. In practice, the `remainders' are treated differently depending on the precise form of the estimate.

The following asymptotic lemma is the first application of Lemma \ref{lem:dgbd}, which will serve as a major input for all proofs of Lemmas \ref{lem:BG2_conc} -- \ref{lem:Yder}. We prove the lemma in Section \ref{sec:der_tech}.
\begin{lem}\label{lem:dgbd_1}
	The following holds uniformly over $a,b\in\llbra{1,N\rrbra}$ and $E\in[E_{1},E_{2}]$:
	\beq\begin{aligned}
		&\frac{1}{N}\sum_{c}^{(a)}\frac{\partial}{\partial g_{ac}}\left[\frac{1}{\norm{\bsg_{a}}}\bse_{c}\adj \wt{B}^{\anga}R_{a}G\bse_{b}G_{ba}\right]	\\
		=& -\gamma \tr\wt{B}G (\bse_{a}+\bsh_{a})\adj \wt{B}G\bse_{b}G_{ba}+\gamma\tr\wt{B}^{2}G(\bse_{a}+\bsh_{a})\adj G\bse_{b}G_{ba}	\\
		&-\gamma\frac{(G\wt{B}G)_{bb}}{N}(\bse_{a}+\bsh_{a})\adj\wt{B}G\bse_{a}
		+\gamma\frac{(G\wt{B}^{2}G)_{bb}}{N}(\bse_{a}+\bsh_{a})\adj G\bse_{a}	\\
		&+O_{\prec}(N^{-1/2}(N^{-2/3+2\epsilon}+\delta_{ab})). \label{eq:dgBGG}	\\
	\end{aligned}\eeq
	Similarly, we have
	\beq\begin{aligned}
		&\frac{1}{N}\sum_{c}^{(a)}\frac{\partial}{\partial g_{ac}}\left[\frac{1}{\norm{\bsg_{a}}}G_{cb}G_{ba}\right]	\\
		=&-\gamma\tr G(\bse_{a}+\bsh_{a})\adj \wt{B}G\bse_{b}G_{ba}
		+\gamma\tr\wt{B}G(\bse_{a}+\bsh_{a})\adj G\bse_{b}G_{ba}	\\
		&-\gamma\frac{(G^{2})_{bb}}{N}(\bse_{a}+\bsh_{a})\adj \wt{B}G\bse_{a} 
		+\gamma\frac{(G\wt{B}G)_{bb}}{N}(\bse_{a}+\bsh_{a})\adj G\bse_{a}\\
		&+O_{\prec}(N^{-1/2}(N^{-2/3+2\epsilon}+\delta_{ab})).\label{eq:dgGG}
	\end{aligned}\eeq
\end{lem}

\begin{proof}[Proof of Lemma \ref{lem:BG2_conc}]
	The proof consists of three steps; 
	\begin{itemize}
		\item[(i)] Using \eqref{eq:dgBGG} to express $\E[F'(Y)(\wt{B}G^{2})_{aa}]$ as a linear combination of $\E[F'(Y)\bsh_{a}\adj G^{2}\bse_{a}]$ and terms appearing in \eqref{eq:BG2_1} -- \eqref{eq:BG2_8}:
		\item[(ii)] Using \eqref{eq:dgGG} to express $\E[F'(Y)\bsh_{a}\adj G^{2}\bse_{a}]$ as a linear combination of $\E[F'(Y)(\wt{B}G^{2})_{aa}]$ and terms appearing in \eqref{eq:BG2_1} -- \eqref{eq:BG2_8}:
		\item[(iii)] Solving the system of linear equations from (i) and (ii) for $\E[F'(Y)(\wt{B}G^{2})_{aa}]$ and $\E[F'(Y)\bsh_{a}\adj G^{2}\bse_{a}]$.
	\end{itemize}
	We will use similar strategies for other proofs; see Section \ref{sec:dBGG_pf} for example. In what follows, we repeatedly use the following direct consequence of Proposition \ref{prop:ll}:
	\beq\label{eq:BG2_rough}
	(\wt{B}G^{2})_{aa}=\sum_{b}(\wt{B}G)_{ab}G_{ba}\prec N^{1/3+2\epsilon},\quad \bsh_{a}\adj G^{2}\bse_{a}=\sum_{b}\bsh_{a}\adj G\bse_{b}G_{ba}\prec N^{1/3+2\epsilon}.
	\eeq
	
	\subsubsection*{Step (i): Expansion of $\E[F'(Y)(\wt{B}G^{2})_{aa}]$}
	We first use $\bse_{a}\adj\wt{B}=-\bsh_{a}\adj \wt{B}^{\anga}R_{a}$ to write
	\beq\label{eq:B2}\begin{aligned}
		&\E[F'(Y)(\wt{B}G^{2})_{aa}]
		=-\expct{F'(Y)\bsh_{a}\adj \wt{B}^{\anga}R_{a}G^{2}\bse_{a}}	\\
		=&-\sum_{c}^{(a)}\Expct{F'(Y)\frac{\ol{g}_{ac}}{\norm{\bsg_{a}}}\bse_{c}\adj\wt{B}^{\anga}R_{a}G^{2}\bse_{a}}+O(N^{-1/6+C\epsilon})	\\
		=&-\frac{1}{N}\sum_{c}^{(a)}\Expct{\frac{\partial}{\partial g_{ac}}\left(F'(Y)\norm{\bsg_{a}}^{-1}\bse_{c}\adj\wt{B}^{\anga}R_{a}G^{2}\bse_{a}\right)}+O(N^{-1/6+C\epsilon}),
	\end{aligned}\eeq
	where we dropped the summand for $c=a$ in the second equality using $F'(Y)\prec N^{C\epsilon}$ and
	\beqs
	h_{aa}\bse_{a}\adj \wt{B}^{\anga}R_{a}G^{2}\bse_{a}=h_{aa}\frb_{a}\bsh_{a}\adj G^{2}\bse_{a}\prec N^{-1/6+2\epsilon},
	\eeqs
	which is due to $h_{aa}\prec N^{-1/2}$, $\norm{B}\lesssim 1$, and \eqref{eq:BG2_rough}.
	
	By Leibniz rule, the derivative in the rightmost side of \eqref{eq:B2} can be divided into that of $F'(Y)$ and the rest. For the former, we write
	\beq\label{eq:B2_Yder}\begin{aligned}
		&-\frac{1}{N}\sum_{c}^{(a)}\Expct{\frac{\partial F'(Y)}{\partial g_{ac}}\norm{\bsg_{a}}^{-1}\bse_{c}\adj\wt{B}^{(a)}R_{a}G^{2}\bse_{a}}	\\
		=&-\frac{1}{N}\sum_{c}^{(a)}\Expct{F''(Y)\frac{\partial Y}{\partial g_{ac}}\bse_{c}\adj\left(\wt{B}+\left(\norm{\bsg}_{a}^{-1}\wt{B}^{\anga}R_{a}-\wt{B}\right)\right)G^{2}\bse_{a}}.
	\end{aligned}\eeq
	We next prove that the matrix $(\norm{\bsg_{a}}^{-1}\wt{B}^{\anga}R_{a}-\wt{B})$ has negligible contribution to \eqref{eq:B2_Yder}. Since $\wt{B}^{\anga}=R_{a}\wt{B}R_{a}$ and $R_{a}=I-\bsr_{a}\bsr_{a}\adj$, we have
	\beq\label{eq:BR_1}\begin{aligned}
		&\norm{\bsg_{a}}^{-1}\bse_{c}\adj(\wt{B}^{\anga}R_{a}-\wt{B})G^{2}\bse_{a}
		=\ell_{a}^{2}\norm{\bsg_{a}}^{-1}\bse_{c}\adj\bsr_{a} \bsr_{a}\adj\wt{B}G^{2}\bse_{a}	\\
		=&\ell_{a}^{2}\norm{\bsg_{a}}^{-1}h_{ac}((\wt{B}G^{2})_{aa}+\frb_{a}\bsh_{a}\adj G^{2}\bse_{a})\prec N^{-1/6+2\epsilon},
	\end{aligned}\eeq
	where we used $\ell_{a}^{2}=1+O_{\prec}(N^{-1/2})$, $\norm{\bsg_{a}}=1+O_{\prec}(N^{-1/2})$, $h_{ac}\prec N^{-1/2}$, $\absv{\frb_{a}}\leq \norm{B}\lesssim 1$, and \eqref{eq:BG2_rough} in the last inequality. Similarly we have
	\beq
	(\norm{\bsg_{a}}^{-1}-1)(\wt{B}G^{2})_{ca}\prec N^{-1/6+2\epsilon}.
	\eeq
	Hence for all $a\neq c$ we have
	\beq\label{eq:BG2_8_rough}
	\bse_{c}\adj (\norm{\bsg_{a}}^{-1}\wt{B}^{\anga}R_{a}-\wt{B})G^{2}\bse_{a}\prec N^{-1/6+2\epsilon}.
	\eeq
	Finally, we roughly estimate the size of $\frac{\partial Y}{\partial g_{ac}}$ in the following lemma.
	\begin{lem}\label{lem:Yder_rough}
		The following holds uniformly over $a\neq c\in\llbra 1,N\rrbra$:
		\beq\label{eq:Yder_rough}
		\frac{\partial Y}{\partial g_{ac}}\prec N^{-1/3+C\epsilon}.
		\eeq
	\end{lem}
	We postpone the proof of Lemma \ref{lem:Yder_rough} to Section \ref{sec:der_tech}. By \eqref{eq:BG2_8_rough} and Lemma \ref{lem:Yder_rough} we get
	\beq\label{eq:B2_Yder_err}
	\frac{1}{N}\sum_{c}^{(a)}\frac{\partial Y}{\partial g_{ac}}\bse_{c}\adj(\norm{\bsg_{a}}^{-1}\wt{B}^{\anga}R_{a}-\wt{B})G^{2}\bse_{a}\prec N^{-1/2+C\epsilon}.
	\eeq
	Plugging in \eqref{eq:B2_Yder_err} to \eqref{eq:B2_Yder} and then to \eqref{eq:B2}, we have
	\beq\label{eq:B2_1}\begin{aligned}
		&\E[F'(Y)(\wt{B}G^{2})_{aa}]
		=-\frac{1}{N}\sum_{c}^{(a)}\Expct{F'(Y)\frac{\partial}{\partial g_{ac}}\left(\norm{\bsg_{a}}^{-1}\bse_{c}\adj\wt{B}^{\anga}R_{a}G^{2}\bse_{a}\right)}\\
		&-\frac{1}{N}\sum_{c}^{(a)}\Expct{F''(Y)\frac{\partial Y}{\partial g_{ac}}(\wt{B}G^{2})_{ca}}+O(N^{-1/2+C\epsilon}).
	\end{aligned}\eeq
	
	Notice that the the derivative of the first term in \eqref{eq:B2_1} is exactly the sum of \eqref{eq:dgBGG} over $b$. Therefore we conclude
	\beq\label{eq:BGG_conc}\begin{aligned}
		\E[&F'(Y)(\wt{B}G^{2})_{aa}]	\\
		=&-\omega_{\beta}(1+\omega_{\beta}m_{\mu})x_{a1}+(1+\omega_{\beta}m_{\mu})\E[F'(Y)(\wt{B}G^{2})_{aa}]	\\
		&+(\frb_{a}-\omega_{\beta})(1+\omega_{\beta}m_{\mu})\expct{F'(Y)\bsh_{a}\adj G^{2}\bse_{a}}\\
		&-\expct{F'(Y)\frd_{3}(G^{2})_{aa}}
		+\expct{F'(Y)\frd_{2}(\wt{B}G^{2})_{aa}}
		+\expct{F'(Y)(\frb_{a}\frd_{2}-\frd_{3})\bsh_{a}\adj G^{2}\bse_{a}}\\
		&+\gamma\expct{F'(Y)\tr(G \wt{B}G)(\wt{B}G)_{aa}}	+\gamma\expct{\frb_{a}F'(Y)\tr(G\wt{B}G)\bsh_a\adj G\bse_{a}}\\
		&-\gamma\expct{F'(Y)\tr(G\wt{B}^{2}G)G_{aa}}-\gamma\expct{F'(Y)\tr(G\wt{B}^{2}G)\bsh_a\adj G\bse_{a}}	\\
		&-\frac{1}{N}\sum_{c}^{(a)}\Expct{F''(Y)\frac{\partial Y}{\partial g_{ac}}(\wt{B}G^{2})_{ca}}
		+O(N^{-1/6+C\epsilon}).
	\end{aligned}
	\eeq
	Here we used $\bsh_{a}\adj\wt{B}=\frb_{a}\bsh_{a}\adj$, and applied \eqref{eq:trace identity} to the tracial prefactors in \eqref{eq:dgBGG}; for example, the first and fourth terms on the right-hand side of \eqref{eq:BGG_conc} comes from
	\beq\label{eq:tr_id_appl}
	F'(Y)\left(\gamma\tr( \wt{B}^{2}G)-\omega_{\beta}(\omega_{\beta}m_{\mu}+1)-\frd_{3}\right)(G^{2})_{aa}\prec\bsd N^{1/3+C\epsilon}\prec N^{-2/3+C\epsilon}.
	\eeq
	
	\subsubsection*{Step (ii): Expansion of $\E[F'(Y)\bsh_{a}\adj G^{2}\bse_{a}]$}
	Now we expand $\expct{F'(Y)\bsh_{a}\adj G^{2}\bse_{a}}$ using the same method. Specifically, we write
	\beqs
	\expct{F'(Y)\bsh_{a}\adj G^{2}\bse_{a}}
	=\frac{1}{N}\sum_{c}^{(a)}\Expct{\frac{\partial}{\partial g_{ac}}\left(\norm{\bsg_{a}}^{-1}F'(Y)(G^{2})_{ca}\right)}+O(N^{-1/6+C\epsilon}),
	\eeqs
	where we estimated the summand for $c=a$ using
	\beq
	F'(Y)h_{aa}(G^{2})_{aa}\prec N^{-1/2+C\epsilon}\cdot N^{1/3+2\epsilon}=N^{-1/6+2\epsilon}.
	\eeq
	
	The rest of the expansion is completely analogous to (in fact easier than) the proof of \eqref{eq:BGG_conc}, except that we use \eqref{eq:dgGG} instead of \eqref{eq:dgBGG}. We omit further details and record the resulting expansion as 
	\beq\label{eq:hGG_conc}
	\begin{split}
		&\expct{F'(Y)\bsh_{a}\adj G^{2}\bse_{a}}
		=-m_{\mu}\E[F'(Y)(\wt{B}G^{2})_{aa}]
		+(1+\omega_{\beta}m_{\mu})x_{a1}	\\
		&+(1+\omega_{\beta}m_{\mu}-m_{\mu}\frb_{a})\expct{F'(Y)\bsh_{a}\adj G^{2}\bse_{a}}\\
		&+\expct{F'(Y)\frd_{2}(G^{2})_{aa}}
		-\expct{F'(Y)\frd_{1}(\wt{B}G^{2})_{aa}}
		+\expct{F'(Y)(\frd_{2}-\frb_{a}\frd_{1})\bsh_{a}\adj G^{2}\bse_{a}}\\
		&-\gamma\Expct{F'(Y)\left(\tr(G^{2})((\wt{B}G)_{aa}+\frb_{a}\bsh_{a}\adj G\bse_{a})-\tr(G\wt{B}G)(G_{aa}+\bsh_{a}\adj G\bse_{a})\right)}\\
		&+\frac{1}{N}\sum_{c}^{(a)}\expct{F''(Y)\frac{\partial Y}{\partial g_{ac}}(G^{2})_{ca}}+O(N^{-1/6+C\epsilon}).
	\end{split}
	\eeq
	\subsubsection*{Step (iii): Conclusion}
	After some algebra, we find that
	\beq
	\text{\eqref{eq:BGG_conc}}+(\omega_{\beta}+m_{\mu}^{-1})\cdot \text{\eqref{eq:hGG_conc}}
	\eeq
	exactly matches \eqref{eq:BG2_1} -- \eqref{eq:BG2_8}. This completes the proof of Lemma \ref{lem:BG2_conc}, modulo those of Lemmas \ref{lem:dgbd_1} and \ref{lem:Yder_rough}.
\end{proof}
\begin{rem}\label{rem:extra}
	Upon closely inspecting the proof, we can see that the main errors of size $N^{-1/6+C\epsilon}$ in \eqref{eq:BG2_1} come from the following estimates:
	\begin{itemize}
		\item[(i)] In \eqref{eq:B2}, we used $h_{aa}\prec N^{-1/2}$:
		\item[(ii)] In \eqref{eq:BG2_8_rough} we used $h_{ac}\prec N^{-1/2}$ and $\norm{\bsg_{a}}^{2}-1\prec N^{-1/2}$:
		\item[(iii)] In the proof of \eqref{eq:dgBGG}, we used $h_{aa},\norm{\bsg_{a}}^{2}-1\prec N^{-1/2}$ (see \eqref{eq:Delta1_goal} for details):
		\item[(iv)] Each of (i) -- (iii) has a counterpart along the expansion of $\E[F'(Y)\bsh_{a}\adj G^{2}\bse_{a}]$.
	\end{itemize}
	We believe that it is technically possible to expand these quantities further, but we do not need such precise estimates due to the smallness of the time scale $t_{0}=N^{-1/3+\chi}$. For example, we know that $\sqrt{N}g_{aa}$ is a $\chi$-distributed random variable for which explicit formulas for moments are available, so that we can apply cumulant expansion to quantities involving $h_{aa}$. Similarly we may write $\norm{\bsg_{a}}^{2}=\sum_{c}\absv{g_{ac}}^{2}$ and apply Stein's lemma for each summand $\ol{g}_{ac}g_{ac}$. Analogous comments apply to Lemma \ref{lem:GBG}.
\end{rem}

\section{Decoupling lemmas for remainders}\label{sec:decoup_rem}
We now prove Lemmas \ref{lem:GBG}--\ref{lem:Yder}. As in the previous section, we separately state the most technical part of the proof in the next two lemmas, that serve similar roles as Lemmas \ref{lem:dgbd_1} and \ref{lem:Yder_rough}. Their proofs are deferred to Section \ref{sec:der_tech}.
\begin{lem}\label{lem:dgdiag}
	The following holds uniformly over $a\in\llbra 1,N\rrbra$:
	\beq\label{eq:dgdiag_BG}\begin{aligned}
		&\frac{1}{N}\sum_{c}^{(a)}\frac{\partial}{\partial g_{ac}}\left[\frac{1}{\norm{\bsg_{a}}}\bse_{c}\adj \wt{B}^{\anga}R_{a}G\bse_{a}\right]	\\ 
		=&-(\omega_{\beta}m_{\mu}+1)(\wt{B}G)_{aa}+\omega_{\beta}(\omega_{\beta}m_{\mu}+1)G_{aa}	
		+\frac{-(\omega_{\beta}+m_{\mu}^{-1})\frd_{2}+\frd_{3}}{\gamma(\fra_{a}-\omega_{\alpha})}	\\
		&+(\omega_{\beta}m_{\mu}+1)(\omega_{\beta}-\frb_{a})\bsh_{a}\adj G\bse_{a}+O_{\prec}(N^{-1/2}),
	\end{aligned}\eeq
	and similarly
	\beq\label{eq:dgdiag_hG}\begin{aligned}
		&\frac{1}{N}\sum_{c}^{(a)}\frac{\partial}{\partial g_{ac}}\left[\frac{1}{\norm{\bsg_{a}}}G_{ca}\right]	\\
		=&-m_{\mu}(\wt{B}G)_{aa}+(\omega_{\beta}m_{\mu}+1)G_{aa}+\frac{-(\omega_{\beta}+m_{\mu}^{-1})\frd_{1}+\frd_{2}}{\gamma(\fra_{a}-\omega_{\alpha})}\\
		&+m_{\mu}(\omega_{\beta}+m_{\mu}^{-1}-\frb_{a})\bsh_{a}\adj G\bse_{a}+O_{\prec}(N^{-1/2}).
	\end{aligned}\eeq
\end{lem}
\begin{lem}\label{lem:dg_cr}
	The following holds uniformly over $a\in\llbra 1,N\rrbra$ and $k\in\{1,2,3\}$:
	\begin{align}\label{eq:dg_cr_BG}
		\frac{1}{N}\sum_{c}^{(a)}\bse_{c}\adj\wt{B}^{\anga}R_{a}G\bse_{a}\frac{\partial}{\partial g_{ac}}\Tr G\wt{B}^{k-1}G\prec N^{2/3+C\epsilon},\\
		\frac{1}{N}\sum_{c}^{(a)}G_{ca}\frac{\partial}{\partial g_{ac}}\Tr G\wt{B}^{k-1}G\prec N^{2/3+C\epsilon}\label{eq:dg_cr_hG}.
	\end{align}
	Similarly we have
	\begin{align}
		\frac{1}{N}\sum_{c}^{(a)}\frac{\partial(\tr \wt{B}^{k-1}G)}{\partial g_{ac}}\bse_{c}\adj\wt{B}^{\anga}R_{a}G^{2}\bse_{a}\prec N^{-1/3+C\epsilon}, \label{eq:dg_cr_BGG}\\
		\frac{1}{N}\sum_{c}^{(a)}\frac{\partial (\tr \wt{B}^{k-1}G)}{\partial g_{ac}}\bse_{c}\adj G^{2}\bse_{a}\prec N^{-1/3+C\epsilon}.\label{eq:dg_cr_hGG}
	\end{align}
	Furthermore, \eqref{eq:dg_cr_BGG} and \eqref{eq:dg_cr_hGG} remain true if we replace $\tr\wt{B}^{k-1}G$ by $\tr A^{k-1}G$.
\end{lem}

\subsection{Proof of Lemma \ref{lem:GBG}}\label{sec:GBG_pf}
The proof is parallel to that of Proposition \ref{prop:decoup} in the following sense. For each fixed $k=0,1,2,$ it consists of four steps:
\begin{itemize}
	\item[(0)] Using Stein's lemma, with respect to $W$ in $(WG)_{aa}$, to expand $\E[F'(Y)G_{aa}\Tr G\wt{B}^{k}G]$:
	\item[(i)] Using \eqref{eq:dgdiag_BG} to expand $\E[F'(Y)(\wt{B}G)_{aa}\Tr G\wt{B}^{k}G]$:
	\item[(ii)] Using \eqref{eq:dgdiag_hG} to expand $\E[F'(Y)\bsh_{a}\adj G\bse_{a}\Tr G\wt{B}^{k}G]$:
	\item[(iii)] Solving the system of three equations from Steps (0) -- (ii).
\end{itemize}
To compare with the proof of Proposition \ref{prop:decoup}, Step (0) corresponds to \eqref{eq:Bab2_conseq} and Steps (i) -- (iii) are parallel to those in Section \ref{sec:B2_prf}.
\begin{proof}[Proof of Lemma \ref{lem:GBG}]\
	\subsubsection*{Step (0): Expansion of $\E[F'(Y)G_{aa}\Tr G\wt{B}^{k}G]$}
	
	We use the identity $\gamma HG=zG+I$ to the first resolvent factor $G_{aa}$ to get
	\begin{align}
		z\expct{F'(Y)G_{aa}\Tr G\wt{B}^{k}G}	
		=&\fra_{a}\gamma\expct{F'(Y)G_{aa}\Tr G\wt{B}^{k}G}
		+\gamma\expct{F'(Y)(\wt{B}G)_{aa}\Tr G\wt{B}^{k}G}	\nonumber\\
		&+\sqrt{t}\gamma\expct{F'(Y)(WG)_{aa}\Tr G\wt{B}^{k}G}-\expct{F'(Y)\Tr G\wt{B}^{k}G}.\label{eq:YBG3}
	\end{align}
	Then we apply Stein's lemma to the third term so that
	\beq\begin{aligned}\label{eq:YBG4}
		\sqrt{t}&\E[F'(Y)(WG)_{aa}\Tr G\wt{B}^{k}G]=-\gamma t\E[F'(Y)G_{aa}\tr G\Tr G\wt{B}^{k}G]
		\\
		&-\frac{\gamma t}{N}\expct{F'(Y)((G\wt{B}^{k}G^{3})_{aa}+(G^{2}\wt{B}^{k}G^{2})_{aa})}	\\
		&+\frac{\gamma t}{N}\Expct{F''(Y)\int_{E_{1}}^{E_{2}}(\im[\wt{G}^{2}]G)_{aa}\dd\wt{E} \Tr G\wt{B}^{k}G}	\\
		=&-tm_{\mu}\E[F'(Y)G_{aa}\Tr G\wt{B}^{k}G]-\frac{tZ_{k1}}{\gamma(\fra_{a}-\omega_{\alpha})}+O(tN^{2/3+C\epsilon}),
	\end{aligned}\eeq
	where we applied \eqref{eq:x_rough_1} together with the following direct consequences of Proposition \ref{prop:ll}:
	\beq\begin{aligned}\label{eq:YBG_1}
		\absv{(G\wt{B}^{k}G^{3})_{aa}}
		\leq& \sum_{b}\absv{(G\wt{B}^{k}G)_{ab}}\absv{(G^{2})_{ba}}	\\
		\leq& \norm{G\bse_{a}}^{2}\norm{B}^{k}\sum_{b}\norm{G\bse_{b}}^{2}\lesssim \frac{\im G_{aa}\im\Tr G}{\eta^{2}}\prec N^{5/3+4\epsilon},\\
		\absv{(G^{2}\wt{B}^{k}G^{2})_{aa}}\leq &\sum_{b,c}\absv{G_{ab}}\absv{(G\wt{B}^{k}G)_{bc}}\absv{G_{ca}}	\\
		\leq& \norm{B}^{k}\sum_{b,c}\absv{G_{ab}}\frac{\sqrt{\im G_{bb}\im G_{cc}}}{\eta}\absv{G_{ca}}\prec N^{5/3+4\epsilon}.
	\end{aligned}\eeq
	Substituting \eqref{eq:YBG4} into \eqref{eq:YBG3} gives
	\beq\label{eq:YG}\begin{aligned}
		&\left(\frac{z}{\gamma}-\fra_{a}+tm_{\mu}\right)\expct{F'(Y)G_{aa}\Tr G\wt{B}^{k}G}	\\
		=&\E[F'(Y)(\wt{B}G)_{aa}\Tr G\wt{B}^{k}G]-\E[F'(Y)\Tr G\wt{B}^{k}G]+O(t N^{1+C\epsilon}),
	\end{aligned}\eeq 
	where we absorbed the second term on the right-most side of \eqref{eq:YBG4} into the error. Recalling $t\leq t_{0}=N^{-1/3+\chi}\ll N^{-1/6}$, we may replaced the error in \eqref{eq:YG} with $O(N^{5/6+C\epsilon})$. This completes Step (0).
	\subsubsection*{Step (i): Expansion of $\E[F'(Y)(\wt{B}G)_{aa}\Tr G\wt{B}^{k}G]$}
	Here we extract the Gaussian vector $\bsg_{a}\adj$ from $\bse_{a}\adj\wt{B}$ as in \eqref{eq:B2} to get
	\beq\label{eq:YBG0}
	\begin{split}
		&\expct{F'(Y)(\wt{B}G)_{aa}\Tr G\wt{B}^{k} G}	\\
		&=-\frac{1}{N}\sum_{c}^{(a)}\Expct{\frac{\partial}{\partial g_{ac}}\left[F'(Y)\frac{1}{\norm{\bsg_{a}}}\bse_{c}\adj\wt{B}^{\anga}R_{a}G\bse_{a}\Tr G\wt{B}^{k}G\right]} +O(N^{5/6+C\epsilon}),
	\end{split}\eeq
	where we estimated the summand for $c=a$ using $h_{aa}\prec N^{-1/2}$ as in \eqref{eq:B2}. By Leibniz rule we divide the right-hand side of \eqref{eq:YBG0} into three parts, according to which of the following three quantities are differentiated with respect to $g_{ac}$;
	\beq\label{eq:YBG33}
	F'(Y),\qquad \frac{1}{\norm{\bsg_{a}}}\bse_{c}\adj\wt{B}^{\anga}R_{a}G\bse_{a},\qquad \Tr G\wt{B}^{k}G.
	\eeq
	The part involving derivative of $F'(Y)$ is estimated as
	\beq
	\frac{1}{N}\sum_{c}^{(a)}\Expct{F''(Y)\frac{\partial Y}{\partial g_{ac}}\frac{1}{\norm{\bsg_{a}}} \bse_{c}\adj\wt{B}^{\anga}R_{a}G\bse_{a}\Tr G\wt{B}^{k}G}\lesssim N^{2/3+C\epsilon},
	\eeq
	where we used \eqref{eq:x_rough_1}, Lemma \ref{lem:Yder_rough}, $\norm{\bsg_{a}}^{-1}\prec 1$, and (see also \eqref{eq:BR_1})
	\beq
	\bse_{c}\adj \wt{B}^{\anga}R_{a}G\bse_{a}=(\wt{B}G)_{ca}-\ell_{a}^{2}h_{ac}\bsr_{a}\adj\wt{B}G\bse_{a}\prec \delta_{ac}+N^{-1/3+\epsilon}.
	\eeq
	The contributions of (derivatives of) the remaining two quantities in \eqref{eq:YBG33} are exactly those in \eqref{eq:dgdiag_BG} and \eqref{eq:dg_cr_BG}, hence we can simply plug in the results. Altogether we conclude
	\beq\begin{split}\label{eq:YBG}
		&\expct{F'(Y)(\wt{B}G)_{aa}\Tr G\wt{B}^{k}G}	\\
		=&(\omega_{\beta}m_{\mu}+1)\expct{F'(Y)(\wt{B}G)_{aa}\Tr G\wt{B}^{k}G}
		-\omega_{\beta}(\omega_{\beta}m_{\mu}+1)\expct{F'(Y)G_{aa}\Tr G\wt{B}^{k}G}\\
		&+(\omega_{\beta}m_{\mu}+1)(\frb_{a}-\omega_{\beta})\expct{F'(Y)\bsh_{a}\adj G \bse_{a}\Tr G\wt{B}^{k}G}	\\
		&+\frac{1}{\gamma(\fra_{a}-\omega_{\alpha})}\expct{F'(Y)((\omega_{\beta}+m_{\mu}^{-1})\frd_{2}-\frd_{3})\Tr G\wt{B}^{k}G}	
		+O(N^{5/6+C\epsilon}).
	\end{split}
	\eeq
	
	\subsubsection*{Step (ii): Expansion of $\E[F'(Y)\bsh_{a}\adj G\bse_{a}\Tr G\wt{B}^{k}G]$} 
	We follow the computations as in Step (i) almost verbatim, except that we use \eqref{eq:dgdiag_hG} and \eqref{eq:dg_cr_hG} instead of \eqref{eq:dgdiag_BG} and \eqref{eq:dg_cr_BG}, respectively. As a result, we obtain
	\beq\label{eq:YhG}
	\begin{split}
		&\expct{F'(Y)\bsh_{a}\adj G\bse_{a}\Tr G\wt{B}^{k}G}\\
		=&-m_{\mu}\expct{F'(Y)(\wt{B}G)_{aa}\Tr G\wt{B}^{k}G}
		+(\omega_{\beta}m_{\mu}+1)\expct{F'(Y)G_{aa}\Tr G\wt{B}^{k}G}\\
		&+m_{\mu}(\omega_{\beta}+m_{\mu}^{-1}-\frb_{a})\expct{F'(Y)\bsh_{a}\adj G\bse_{a}\Tr G\wt{B}^{k}G}\\
		&+\frac{1}{\gamma(\fra_{a}-\omega_{\alpha})}\expct{F'(Y)(-(\omega_{\beta}+m_{\mu}^{-1})\frd_{1}+\frd_{2})\Tr G\wt{B}^{k}G}+O(N^{5/6+C\epsilon}).
	\end{split}
	\eeq
	
	\subsubsection*{Step (iii): Conclusion}
	Taking
	\beq
	\eqref{eq:YG}+\text{\eqref{eq:YBG}}+(\omega_{\beta}+m_{\mu}^{-1})\cdot\text{\eqref{eq:YhG}}
	\eeq
	implies, after rearrangement, that (recall definitions of $x_{k}$ and $Z$ from \eqref{eq:def_x} and \eqref{eq:def_Z})
	\beq\begin{aligned}\label{eq:GBG}
		&\left(\frac{z}{\gamma}-\fra_{a}+tm_{\mu}-m_{\mu}^{-1}-\omega_{\beta}\right)\E[F'(Y)G_{aa}\Tr G\wt{B}^{k}G]	\\
		=&-x_{k}-\frac{\bfv_{\beta}\tp Z\bse_{k}}{\gamma(\fra_{a}-\omega_{\alpha})} +O(N^{5/6+C\epsilon}).
	\end{aligned}\eeq
	Applying the same argument as in \eqref{eq:Bab2_conseq} -- \eqref{eq:B2_factor} to the left-hand side of \eqref{eq:GBG} proves \eqref{eq:GBGdG_conc}. Similarly, \eqref{eq:GBGdBG_conc} and \eqref{eq:GBGhG_conc} respectively match
	\beqs\begin{aligned}
		(\omega_{\beta}+m_{\mu}^{-1})\cdot\text{\eqref{eq:GBGdG_conc}}&+\text{\eqref{eq:YBG}}+(\omega_{\beta}+m_{\mu}^{-1})\cdot\text{\eqref{eq:YhG}}\qquad\text{and} \\
		&\frac{\text{\eqref{eq:YBG}}+\omega_{\beta}\cdot\text{\eqref{eq:YhG}}}{\frb_{a}-\omega_{\beta}}.
	\end{aligned}\eeqs
	This completes the proof of Lemma \ref{lem:GBG}.
\end{proof}

\subsection{Proof of Lemma \ref{lem:dBGG}}\label{sec:dBGG_pf}
The proof follows a parallel outline to that of Lemma \ref{lem:GBG} with different inputs. Here we again use Lemma \ref{lem:dgbd_1}, but we require much less precision due to the factor $\frd_{k}$ which is small. Namely, we use the following two direct consequences of \eqref{eq:dgBGG} and \eqref{eq:dgGG}, respectively:
\beq\label{eq:dgBGG_d}\begin{aligned}
	&\frd_{k}\frac{1}{N}\sum_{c}^{(a)}\frac{\partial}{\partial g_{ac}}\left[\frac{1}{\norm{\bsg_{a}}}\bse_{c}\adj \wt{B}^{\anga}R_{a}G^{2}\bse_{a}\right]	\\
	=&-(\omega_{\beta}m_{\mu}+1)\frd_{k}(\wt{B}G^{2})_{aa}+\omega_{\beta}(\omega_{\beta}m_{\mu}+1)\frd_{k}(G^{2})_{aa}\\
	&-(\omega_{\beta}m_{\mu}+1)(\frb_{a}-\omega_{\beta})\frd_{k}\bsh_{a}\adj G^{2}\bse_{a}	\\
	&-\frac{\omega_{\beta}+m_{\mu}^{-1}}{\fra_{a}-\omega_{\alpha}}\frd_{k}\tr G\wt{B}G+\frac{1}{\fra_{a}-\omega_{\alpha}}\frd_{k}\tr G\wt{B}^{2}G+O_{\prec}(N^{-1/3+4\epsilon}),
\end{aligned}\eeq
\beq\label{eq:dgGG_d}\begin{aligned}
	&\frd_{k}\frac{1}{N}\sum_{c}^{(a)}\frac{\partial}{\partial g_{ac}}\left[\frac{1}{\norm{\bsg_{a}}}(G^{2})_{ca}\right]	\\
	=&-m_{\mu}\frd_{k}(\wt{B}G^{2})_{aa}+(\omega_{\beta}m_{\mu}+1)\frd_{k}(G^{2})_{aa}	\\
	&-m_{\mu}(\frb_{a}-\omega_{\beta}-m_{\mu}^{-1})\frd_{k}\bsh_{a}\adj G^{2}\bse_{a} \\
	&-\frac{\omega_{\beta}+m_{\mu}^{-1}}{\fra_{a}-\omega_{\alpha}}\frd_{k}\tr G^{2}+\frac{1}{\fra_{a}-\omega_{\alpha}}\frd_{k}\tr G\wt{B}G +O_{\prec}(N^{-1/3+4\epsilon}).
\end{aligned}\eeq
In order to derive \eqref{eq:dgBGG_d} from \eqref{eq:dgBGG}, we apply the following procedure to \eqref{eq:dgBGG}: (i) We sum \eqref{eq:dgBGG} over $b$, multiply both sides by $\frd_{k}$. (ii) Then we replace the prefactors $\tr \wt{B}G$, $\tr \wt{B}^{2}G$, $G_{aa}$ and $(\wt{B}G)_{aa}$ by their deterministic counter parts using \eqref{eq:trace identity},\eqref{eq:ll_etr}, \eqref{eq:x_rough_1}, and \eqref{eq:BG2_rough}. (iii) Finally we absorb the terms with $\bsh_{a}\adj G\bse_{a}$ (recall $\bsh_{a}\adj \wt{B}G\bse_{a}=\frb_{a}\bsh_{a}\adj G\bse_{a}$) into the error using \eqref{eq:ll_etr} and \eqref{eq:x_rough_1}. The same procedure applied to \eqref{eq:dgGG} proves \eqref{eq:dgGG_d}. 

Given \eqref{eq:dgBGG_d} and \eqref{eq:dgGG_d}, the proof of Lemma \ref{lem:dBGG} consists of four steps:
\begin{itemize}
	\item[(0)] Using Stein's lemma, with respect to $W$ in $(WG^{2})_{aa}$, to expand $\E[F'(Y)\frd_{k}(G^{2})_{aa}]$:
	\item[(i)] Using \eqref{eq:dgBGG_d} to expand $\E[F'(Y)\frd_{k}\Tr (\wt{B}G^{2})_{aa}]$:
	\item[(ii)] Using \eqref{eq:dgGG_d} to expand $\E[F'(Y)\frd_{k}\bsh_{a}\adj G^{2}\bse_{a}]$:
	\item[(iii)] Solving the system of three equations from Steps (0) -- (ii).
\end{itemize}

\begin{proof}[Proof of Lemma \ref{lem:dBGG}]\
	\subsubsection*{Step (0): Expansion of $\E[F'(Y)\frd_{k}(G^{2})_{aa}]$}
	We write
	\beq\label{eq:dGG1}
	\begin{split}
		z\expct{F'(Y)\frd_{k}(G^{2})_{aa}}
		=&\gamma\fra_{a}\expct{F'(Y)\frd_{k}(G^{2})_{aa}}
		+\gamma\expct{F'(Y)\frd_{k}(\wt{B}G^{2})_{aa}}\\
		&+\gamma\sqrt{t}\expct{F'(Y)\frd_{k}(WG^{2})_{aa}}
		-\expct{F'(Y)\frd_{k}G_{aa}},
	\end{split}
	\eeq
	Applying Stein's lemma to the third term, we get
	\beq\label{eq:dGG2}
	\begin{split}
		&\sqrt{t}\expct{F'(Y)\frd_{k}(WG^{2})_{aa}}\\
		=&-\gamma t\expct{F'(Y)\frd_{k}\left(\tr G(G^{2})_{aa}+G_{aa}\tr G^{2}\right)}-\frac{\gamma t}{N^{2}}\expct{F'(Y)(G\wt{B}^{k}G^{3})_{aa}}	\\
		&-\frac{\gamma t}{N}\Expct{F''(Y)\frd_{k}\int_{E_{1}}^{E_{2}}(\im[\wt{G}^{2}]G^{2})_{aa}\dd\wt{E}}	\\
		=&-tm_{\mu}\E[F'(Y)\frd_{k}(G^{2})_{aa}]-t\frac{1}{N(\fra_{a}-\omega_{\alpha})}Z_{k1}+O(tN^{-1/3+C\epsilon}),
	\end{split}
	\eeq
	where in the second equality we used \eqref{eq:YBG_1} to $(G\wt{B}^{k}G^{3})_{aa}$ and naive power counting with Proposition \ref{prop:ll} to the rest. This completes Step (0) as
	\beq\label{eq:dGG}
		\left(\frac{z}{\gamma}-\fra_{a}+tm_{\mu}\right)\E[F'(Y)\frd_{k}(G^{2})_{aa}]
		=\E[F'(Y)\frd_{k}(\wt{B}G^{2})_{aa}]+O(N^{-1/6+C\epsilon}),
	\eeq
	where we absorbed the last term of \eqref{eq:dGG1} and the second term of \eqref{eq:dGG2} into the error using $t\ll N^{-1/6}$ and $Z_{k1}=O(N^{1+C\epsilon})$.
	
	\subsubsection*{Step (i): Expansion of $\E[F'(Y)\frd_{k}(\wt{B}G^{2})_{aa}]$} As in \eqref{eq:YBG0}, we write
	\begin{align}\label{eq:dBGG1}
		\begin{split}
			&\expct{F'(Y)\frd_{k}(\wt{B}G^{2})_{aa}}\\
			=&-\frac{1}{N}\sum_{c}^{(a)}\Expct{\frac{\partial}{\partial g_{ac}}\left[F'(Y)\frd_{k}\norm{\bsg_{a}}^{-1}(\wt{B}^{\anga}R_{a} G^{2})_{ca}\right]}+O(N^{-1/2+C\epsilon}).
		\end{split}
	\end{align}
	We again apply Leibniz rule to \eqref{eq:dBGG1}, dividing it into three factors
	\beq\label{eq:Leib_dBGG}
	F'(Y),\qquad \frd_{k},\qquad \frac{1}{N}\sum_{c}^{(a)}\frac{\partial}{\partial g_{ac}}\left[\frac{1}{\norm{\bsg_{a}}}(\wt{B}^{\anga}R_{a}G^{2})_{ca}\right].
	\eeq
	The derivative of $F'(Y)$ has negligible contribution by Lemma \ref{lem:Yder_rough}, that is,
	\beq
	\frac{1}{N}\sum_{c}^{(a)}\Expct{F''(Y)\frac{\partial Y}{\partial g_{ac}}\frd_{k}\norm{\bsg_{a}}^{-1}(\wt{B}^{\anga}R_{a}G^{2})_{ca}}\prec N^{-1/3+C\epsilon},
	\eeq
	where we used $\frd_{k}\prec N^{-1/3+\epsilon}$ and that $(\wt{B}^{\anga}R_{a}G^{2})_{ca}\prec N^{1/3+2\epsilon}$ due to Proposition \ref{prop:ll} and \eqref{eq:BR_1}. Similarly, the contribution of the derivative of $\frd_{k}$ is easily shown to be $O(N^{-1/3+C\epsilon})$ by simply plugging in \eqref{eq:dg_cr_BGG}. Finally that of the last quantity in \eqref{eq:Leib_dBGG} exactly matches the left-hand side of \eqref{eq:dgBGG_d}. To sum up, we have
	\beq\label{eq:dBGG}\begin{aligned}
		&\expct{F'(Y)\frd_{k}(\wt{B}G^{2})_{aa}}	\\
		=&(\omega_{\beta}m_{\mu}+1)\expct{F'(Y)\frd_{k}(\wt{B}G^{2})_{aa}}-\omega_{\beta}(\omega_{\beta}m_{\mu}+1)\expct{F'(Y)\frd_{k}(G^{2})_{aa}}\\
		&+(\omega_{\beta}m_{\mu}+1)(\frb_{a}-\omega_{\beta})\expct{F'(Y)\frd_{k}\bsh_{a}\adj G^{2}\bse_{a}}	\\
		&+\frac{(\omega_{\beta}+m_{\mu}^{-1})Z_{k2}-Z_{k3}}{N(\fra_{a}-\omega_{\alpha})}+O(N^{-1/3+C\epsilon}).
	\end{aligned}\eeq
	
	\subsubsection*{Step (ii): Expansion of $\E[F'(Y)\frd_{k}\bsh_{a}\adj G^{2}\bse_{a}]$} As in the previous subsection, we omit the proof and only record the resulting expansion:
	\beq\label{eq:dhGG}
	\begin{split}
		&\expct{F'(Y) \frd_{k} \bsh_{a}\adj G^{2} \bse_{a}}\\
		=&-m_{\mu}\expct{F'(Y)\frd_{k}(\wt{B}G^{2})_{aa}}
		+(1+\omega_{\beta}m_{\mu})\expct{F'(Y)\frd_{k}(G^{2})_{aa}}\\
		&-m_{\mu}(\frb_{a}-\omega_{\beta}-m_{\mu}^{-1})\expct{F'(Y)\frd_{k}\bsh_{a}\adj G^{2}\bse_{a}}\\
		&-\frac{(\omega_{\beta}+m_{\mu}^{-1})Z_{k1}-Z_{k2}}{N(\fra_{a}-\omega_{\alpha})}+O(N^{-1/3+C\epsilon}),	
	\end{split}
	\eeq
	
	\subsubsection*{Step (iii): Conclusion}
	To prove \eqref{eq:dGG_conc}, we take
	\beq
	\text{\eqref{eq:dGG}}+\text{\eqref{eq:dBGG}}+(\omega_{\beta}+m_{\mu}^{-1})\text{\eqref{eq:dhGG}},
	\eeq
	and use the same asymptotics as in \eqref{eq:B2_factor}. Similarly, suitable linear combinations of \eqref{eq:dGG}, \eqref{eq:dBGG}, and \eqref{eq:dhGG} proves \eqref{eq:dBGG_conc} and \eqref{eq:dhGG_conc}.
\end{proof}

\subsection{Proof of Lemma \ref{lem:veq1}}\label{sec:veq_pf}
\begin{proof}[Proof of Lemma \ref{lem:veq1}]
	Recall the definitions of $\caZ,\wt{Z}$, and $\wt{\caZ}$ from \eqref{eq:def_Z}. We first prove the following analogue of \eqref{eq:dGG_conc}:
	\beq\label{eq:dGG_sym}
	\expct{F'(Y)\frd_{k}(\caG^{2})_{aa}}=\frac{1}{N(\frb_{a}-\omega_{\beta})^{2}}\bse_{k}\tp \wt{Z}\bfv_{\alpha}+O(N^{-1/6+C\epsilon}).
	\eeq
	Interchanging the roles of $A$ and $B$, it suffices to prove
	\beq\label{eq:dGG_sym_1}
	\expct{F'(Y)d_{k}(G^{2})_{aa}}=\frac{1}{N(\fra_{a}-\omega_{\alpha})^{2}}\bse_{k}\tp \wt{\caZ}\bfv_{\beta}+O(N^{-1/6+C\epsilon}).
	\eeq
	It is easy to see that the proof of \eqref{eq:dGG_conc} applies almost verbatim to \eqref{eq:dGG_sym_1}, with the only difference being that we use \eqref{eq:dg_cr_BGG} and \eqref{eq:dg_cr_hGG} with $\tr \wt{B}^{k-1}G$ replaced by $\tr A^{k-1}G$.
	We omit the details to avoid repetition. This proves \eqref{eq:dGG_sym_1}, hence \eqref{eq:dGG_sym}.
	
	Next, we take the sum of \eqref{eq:dGG_sym} over $a$ with weights $\frb_{a}^{\ell-1}$ so that
	\beq\begin{aligned}
		Z_{k\ell}=&\expct{F'(Y)\frd_{k}\Tr \wt{B}^{\ell-1}G^{2}}=\expct{F'(Y)\frd_{k}\Tr B^{\ell-1}\caG^{2}}	\\
		=&(u_{\beta \ell}+O(\bsd))\bse_{k}\tp \wt{Z}\bfv_{\alpha}+O(N^{5/6+C\epsilon})
		=u_{\beta\ell}\bse_{k}\tp\wt{Z}\bfv_{\alpha}+O(N^{5/6+C\epsilon}).\label{eq:tdZa}
	\end{aligned}\eeq
	On the other hand, taking the average over $a$ of \eqref{eq:GBGdG_conc} with weights $\fra_{a}^{\ell-1}$ and using the same estimates as in \eqref{eq:tdZa}, we have
	\beqs
	\wt{\caZ}_{\ell k}=u_{\alpha \ell}\bfv_{\beta}\tp Z\bse_{k}+O(N^{5/6+C\epsilon}).
	\eeqs
	Then we again interchange the roles of $A$ and $B$ to obtain
	\beq\label{eq:veq1}
	\wt{Z}_{\ell k}=u_{\beta \ell}\bfv_{\alpha}\tp \caZ \bse_{k}+O(N^{5/6+C\epsilon}).
	\eeq
	Combining \eqref{eq:tdZa} and \eqref{eq:veq1}, we get
	\beq\label{eq:Zeq}
	Z_{k\ell}=u_{\beta k}u_{\beta\ell}\bfv_{\alpha}\tp\caZ\bfv_{\alpha}+O(N^{5/6+C\epsilon}).
	\eeq
	We may substitute $\bfv_{\alpha}\tp\caZ\bfv_{\alpha}$ by $u_{\beta1}^{-2}Z_{11}$ by taking $k=1=\ell$ in \eqref{eq:Zeq}, completing the proof of Lemma \ref{lem:veq1}.
\end{proof}

\subsection{Proof of Lemma \ref{lem:Yder}}\label{sec:Yder_pf}
In this section, another spectral parameter $w$, in addition to the usual $z$ in Sections \ref{sec:GBG_pf} -- \ref{sec:veq_pf}, serves equally important role. We always take $\re w\equiv \wt{E}\in L_{+}+[E_{1},E_{2}]$ and $\im w=\pm\eta_{0}$, and define $\wt{G}\deq G(w)$. While $\wt{G}$ may denote $G(\wt{E}+L_{+}+\ii\eta_{0})$ in some equations and $G(\wt{E}+L_{+}-\ii\eta_{0})$ in others, the spectral parameter of $\wt{G}$ remains the same within each equation.

\begin{proof}[Proof of Lemma \ref{lem:Yder}]
	First of all, we simplify the left-hand side of \eqref{eq:Y_der_conc} using the following lemma, whose proof is postponed to Section \ref{sec:der_tech}.
	\begin{lem}\label{lem:Y_1}
		The following holds uniformly over $E,\wt{E}\in[E_{1},E_{2}]$ and $k\in\{0,1\}$.
		\beq\label{eq:Y_1}
		\sum_{c}^{(a)}\frac{\partial \Tr\wt{G}}{\partial g_{ac}}(\wt{B}^{k}G^{2})_{ca}=-\gamma(\bse_{a}+\bsh_{a})\adj[\wt{B},\wt{G}^{2}]\wt{B}^{k}G^{2}\bse_{a}+O(N^{7/6+4\epsilon}).
		\eeq
	\end{lem}
	From the definition of $Y$, we have
	\beqs
	\frac{\partial Y}{\partial g_{ac}}=\frac{1}{2\ii}\int_{E_{1}}^{E_{2}} \left( \frac{\partial \Tr \wt{G}}{\partial g_{ac}}-{\frac{\Tr \wt{G}\adj}{\partial g_{ac}}}\right)\dd \wt{E},
	\eeqs
	where $\wt{G}$ denotes $G(\wt{E}+L_{+}+\ii\eta_{0})$. By Lemma \ref{lem:Y_1} we may write
	\begin{align}
		&\sum_{c}^{(a)}\expct{F''(Y)\frac{\partial Y}{\partial g_{ac}}(\wt{B}^{k}G^{2})_{ca}}	\nonumber\\
		=&-\frac{\gamma}{2\ii}\int_{E_{1}}^{E_{2}}\expct{F''(Y)(\bse_{a}+\bsh_{a})\adj[\wt{B},\wt{G}^{2}-(\wt{G}\adj)^{2}]\wt{B}^{k}G^{2}\bse_{a}}\dd \wt{E}+O(N^{1/2+C\epsilon}),\label{eq:Y_der0}
	\end{align}
where the extra factor of $N^{-2/3+\epsilon}$ in the error is due to the integral over $[E_{1},E_{2}]$.
	
	The rest of the proof mostly involves repeating the arguments in previous sections. Namely, we need to decouple the index $a$ from the two quantities \begin{align*}
		&\expct{F''(Y)\bse_{a}\adj[\wt{B},\wt{G}^{2}]\wt{B}^{k}G^{2}\bse_{a}},&
		&\expct{F''(Y)\bsh_{a}\adj[\wt{B},\wt{G}^{2}]\wt{B}^{k}G^{2}\bse_{a}},&
		&k=0,1,
	\end{align*}
	To make a direct analogy, for each $a\in\llbra 1,N\rrbra$ we define $(3\times 3)$ matrices $\frZ_{a}$ and $\frZ$ as
	\begin{align*}
		\frZ_{ak\ell}\equiv \frZ_{ak\ell}(z,w)&\deq\expct{F''(Y)(\wt{B}^{k-1}\wt{G}^{2}\wt{B}^{\ell-1}G^{2})_{aa}},\quad \frZ\deq\sum_{a}\frZ. 
	\end{align*}
	These auxiliary matrices are obvious analogues of $Z_{a}$ and $Z$ defined in \eqref{eq:def_Z}. Likewise, local laws provides rough estimate for the sizes of $\frZ$; for example when $k=\ell=3$ we have
	\beq\label{eq:frZ_rough}
	\absv{\frZ_{22}}\leq \sum_{a,b,c,d} \expct{\absv{\wt{B}\wt{G}}_{ab}\absv{\wt{G}\wt{B}}_{bc}\absv{\wt{B}G}_{cd}\absv{G\wt{B}}_{da}}\prec N^{4-4/3+C\epsilon}=N^{8/3+C\epsilon},
	\eeq
	and similarly $\frZ_{k\ell}\prec N^{8/3+C\epsilon}$ for other choices of $k,\ell$. Also $\frZ_{ak\ell}\prec N^{5/3+C\epsilon}$ unless $k=3$ by the same power counting; we will not use $\frZ_{a3\ell}$ in the proofs.
	
	We decouple the index $a$ from $\frZ$'s using the next two lemmas, which are analogues of Lemmas \ref{lem:dBGG} and \ref{lem:veq1}, respectively.
	\begin{lem}\label{lem:Yder_dcp} Under the conditions in Proposition \ref{prop:gfc},
		the following holds true uniformly over $a\in\braN$ and $k=1,2,3$:
		\begin{align}
			\frZ_{a1k}=&\frac{1}{N(\fra_{a}-\omega_{\alpha})^{2}} \bfv_{\beta}\tp\frZ\bse_{k}+O(N^{3/2+C\epsilon}),	\label{eq:Y_GG_conc}\\
			\frZ_{a2k}=&\frac{\omega_{\beta}+m_{\mu}^{-1}}{N(\fra_{a}-\omega_{\alpha})^{2}}\bfv_{\beta}\tp\frZ\bse_{k}-\frac{1}{N(\fra_{a}-\omega_{\alpha})}\bfv_{\beta}\tp\frZ\bse_{k}+O(N^{3/2+C\epsilon}),	\label{eq:Y_BGG_conc}\\
			\E[F''(Y)\bsh_{a}\adj& \wt{G}^{2}\wt{B}^{k-1}G^{2}\bse_{a}]	\nonumber\\
			=&\frac{\left(\omega_{\beta}(\omega_{\beta}+m_{\mu}^{-1})\bse_{1}-(2\omega_{\beta}+m_{\mu}^{-1})\bse_{2}+\bse_{3}\right)\tp\frZ\bse_{k}}{N(\fra_{a}-\omega_{\alpha})(\frb_{a}-\omega_{\beta})}+O(N^{3/2+C\epsilon})	\label{eq:Y_hGG_conc}.
		\end{align}
	\end{lem}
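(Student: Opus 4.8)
\emph{Proof proposal.} The three identities are the $F''$-analogues --- with every single resolvent replaced by its square --- of the decoupling lemmas of Sections~\ref{sec:decouple} and~\ref{leading term}, and the plan is to replay those arguments. The role played there by the $\frd_k$-weighted quantities is now played by the $\wt G^2$-factor: since $\sum_a\frZ_{aij}=\expct{F''(Y)\Tr(\wt B^{i-1}\wt G^2\wt B^{j-1}G^2)}=\frZ_{ij}$, traces of the ``big'' matrix $\wt B^{i-1}\wt G^2\wt B^{j-1}G^2$ will reassemble into the matrix $\frZ$ in the final formulas. I would first treat $\frZ_{a2j}=\expct{F''(Y)\bse_a\adj\wt B\,\wt G^2\wt B^{j-1}G^2\bse_a}$ and $\expct{F''(Y)\bsh_a\adj\wt G^2\wt B^{j-1}G^2\bse_a}$, both of which carry a $\wt B$ (resp.\ $\bsh_a$) next to $\bse_a$: write $\bse_a\adj\wt B=-\bsh_a\adj\wt B^{\anga}R_a$ (resp.\ $\bsh_a\adj=\sum_c\ol g_{ac}\norm{\bsg_a}^{-1}\bse_c\adj$), peel off the diagonal $c=a$ term as in \eqref{eq:B2} (it carries a spare $g_{aa}\prec N^{-1/2}$ and is negligible), and apply Stein's lemma to $\ol g_{ac}$. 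The $g_{ac}$-derivative of the product is computed from Lemma~\ref{lem:dgbd} (applied at the two spectral parameters $z$ and $\wt z$) together with the product rule $\partial_{g_{ac}}(\wt G^2)=(\partial_{g_{ac}}\wt G)\wt G+\wt G(\partial_{g_{ac}}\wt G)$ and its analogue for $G^2$; the derivative of the middle factor $\wt B^{j-1}$ is controlled exactly as in \eqref{eq:frd_der1} and, even after multiplication by the large $\wt G^2,G^2$, still lies below the error. One then replaces $R_a\to I$, $\ell_a^2\norm{\bsg_a}^{-2}\to1$, $\wt B^{\anga}\to\wt B$ up to admissible errors (Ward identity, \eqref{eq:B2_3}) and discards all terms carrying spare factors $\ol h_{ac},h_{aa}$.

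The heart of the computation is sorting the surviving terms. When the $g_{ac}$-derivative lands on $\wt G^2$, summing over $c$ produces the normalized traces $\gamma\tr(\wt B\wt G)$ and $\gamma\tr(\wt B^2\wt G)$, which are $O(1)$ with deterministic equivalents $\wh\omega_\beta\wt m+1$ and $\wh\omega_\beta(\wh\omega_\beta\wt m+1)$ read off from \eqref{eq:trace identity}; these multiply $\frZ_{a2j}$, $\frZ_{a1j}$ and $\expct{F''(Y)\bsh_a\adj\wt G^2\wt B^{j-1}G^2\bse_a}$, while the ``split'' traces $\gamma\tr(\wt B\wt G^2),\gamma\tr(\wt B^2\wt G^2)$ (with one squared and one unsquared resolvent) only feed the error. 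When the derivative lands on $G^2$, the surviving leading pieces instead take the form $N^{-1}\expct{F''(Y)\,\Tr(\wt B^{k}\wt G^2\wt B^{j-1}G^2)\,(\bse_a+\bsh_a)\adj\wt B^{l}G\bse_a}$ --- the analogues of the $N^{-1}\expct{F'(Y)(\Tr\wt B^{k}G^2)(\wt B^{l}G)_{aa}}$ terms appearing in \eqref{eq:B2_conc_1} --- and they are decoupled exactly as in Lemma~\ref{lem:GBG}: a further Stein expansion in $\bsg_a$ replaces $(\bse_a+\bsh_a)\adj\wt B^{l}G\bse_a$ by $\gamma^{-1}(\fra_a-\wh\omega_\alpha)^{-1}$ times deterministic constants, which after summation over $a$ reassembles the traces of the big matrix into $\frZ$ and yields the combination $\bfv_\beta\tp\frZ\bse_j$ exactly as $\bfv_\beta\tp Z\bse_k$ arises in \eqref{eq:GBGdBG_conc}. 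Solving the resulting $2\times2$ linear system for $\frZ_{a2j}$ and $\expct{F''(Y)\bsh_a\adj\wt G^2\wt B^{j-1}G^2\bse_a}$ in terms of $\frZ_{a1j}$ and $\bfv_\beta\tp\frZ\bse_j$, and recognizing in the $\bsh_a$-equation the $\langle\cdot\rangle$-contraction of the columns of $\frZ$ (cf.\ \eqref{eq:GBGhG-1} and the definition of $\langle\frd\rangle$ in \eqref{eq:d_def}), produces \eqref{eq:Y_hGG_conc} and expresses $\frZ_{a2j}=(\wh\omega_\beta+\wt m^{-1})\frZ_{a1j}-N^{-1}(\fra_a-\wh\omega_\alpha)^{-1}\bfv_\beta\tp\frZ\bse_j+O(N^{4/3+2\epsilon})$.

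It remains to close the system by expressing $\frZ_{a1j}$ itself, which --- unlike $\frZ_{a2j}$ --- has no $\wt B$ adjacent to $\bse_a$; the new ingredient here is the resolvent identity differentiated in the spectral parameter. Differentiating $\wt z\,\wt G=\gamma H\wt G-I$ in $\wt z$ and pairing with $\bse_a\adj$ gives $(\wt z-\gamma\fra_a)\bse_a\adj\wt G^2=\gamma\bse_a\adj\wt B\wt G^2+\gamma\sqrt t\,\bse_a\adj W\wt G^2-\bse_a\adj\wt G$, hence
\[
(\wt z-\gamma\fra_a)\,\frZ_{a1j}=\gamma\,\frZ_{a2j}+O(N^{4/3+2\epsilon}),
\]
since $\expct{F''(Y)(\wt G\wt B^{j-1}G^2)_{aa}}$ carries one resolvent fewer and the $W$-term, reduced by Stein's lemma with respect to the GUE $W$ (as after \eqref{eq:dGG}), is of relative order $t=N^{-1/3+\chi}$. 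Substituting the expression for $\frZ_{a2j}$ from the previous step, and using $\wt z=z+O(N^{-2/3+\epsilon})$ together with the subordination identity $z/\gamma-\wt m^{-1}-\wh\omega_\beta-\fra_a=\wh\omega_\alpha-\fra_a-t\wt m$, the coefficient of $\frZ_{a1j}$ collapses to $\gamma(\wh\omega_\alpha-\fra_a)(1+O(N^{-1/3+\chi}))$ --- here one uses $|\fra_a-\wh\omega_\alpha|\gtrsim1$ from Lemma~\ref{lem:stab}(iii), so that the $t\wt m$ and $O(N^{-2/3+\epsilon})$ terms are relative errors. Dividing out yields \eqref{eq:Y_GG_conc}, and feeding it back into the expression for $\frZ_{a2j}$ yields \eqref{eq:Y_BGG_conc}.

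The main obstacle is the bookkeeping under the enlarged scale. Because $\eta_0=N^{-2/3-\epsilon}$ lies below the eigenvalue spacing, the squared resolvents are large, so the claimed error $N^{4/3+2\epsilon}$ demands a genuinely tighter relative precision ($N^{-1/3+2\epsilon}$) than the $N^{-1/6+2\epsilon}$ that sufficed in Lemmas~\ref{lem:dBGG}--\ref{lem:GBG}, and every remainder --- the $c=a$ term, the three replacements $R_a\to I$, $\ell_a^2\norm{\bsg_a}^{-2}\to1$, $\wt B^{\anga}\to\wt B$, the $\ol h_{ac},h_{aa}$ pieces, the middle-$\wt B^{j-1}$ derivative, the $W$-Stein and $F''\to F'''$ terms, and the ``split'' traces --- must be pushed below $N^{4/3+2\epsilon}$ using the local laws of Proposition~\ref{prop:ll} and Ward identities for the pair $(G,\wt G)$, uniformly in $a$, in $j$, and in the position of $\wt z$ on the two segments $\im\wt z=\pm\eta_0$. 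A secondary but real source of work is that the product rule doubles the number of commutator terms: one must verify that precisely the terms attaching $\bse_c\adj\wt B^{\anga}R_a$ to the same squared resolvent that is differentiated survive at leading order, organize them into the vectors $\bfv_\beta$ and the matrix $\frZ$, and check that all cross terms are lower order.
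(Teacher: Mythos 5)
Your proposal follows essentially the same route as the paper's proof: you set up the $2\times 2$ system for $\frZ_{a2j}$ and $\expct{F''(Y)\bsh_a\adj\wt G^2\wt B^{j-1}G^2\bse_a}$ via the partial randomness decomposition and Stein's lemma in $\bsg_a$ (this is the paper's \eqref{eq:Y_BGG}--\eqref{eq:Y_hGG1}), solve it to express $\frZ_{a2j}$ and the $\bsh_a$-term in terms of $\frZ_{a1j}$ and $\bfv_\beta\tp\frZ\bse_j$ (the paper's \eqref{eq:Y_BGG1}), and then close the system via the $\wt z$-differentiated resolvent identity $\wt z\wt G^2+\wt G=\gamma H\wt G^2$ combined with the subordination relation (the paper's \eqref{eq:Y_GG}--\eqref{eq:Y_GG1}). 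The only genuine divergence from the paper is in the last step: the paper retains the leading part $-t\wt m\,\frZ_{a1j}$ of the $W$-Stein term so that it cancels the $-t\wt m$ in the identity $z/\gamma-\wt m^{-1}-\wh\omega_\beta-\fra_a=\wh\omega_\alpha-\fra_a-t\wt m$ exactly, whereas you simply discard both as relative errors $O(t\,\frZ_{a1j})=O(N^{1+2\epsilon+\chi})$; since $t=N^{-1/3+\chi}$ with $\chi$ small, this is indeed below $N^{4/3+2\epsilon}$, so your shortcut is legitimate even if it makes the constant in front of $\epsilon$ a hair less sharp. The mechanism you identify for reassembling the big traces into $\frZ$ and producing $\bfv_\beta\tp\frZ\bse_j$ --- the derivative landing on $G^2$, followed by replacing $(\wt BG)_{aa}$ and $(\bse_a+\bsh_a)\adj G\bse_a$ by their deterministic equivalents --- is the same as the paper's; the paper substitutes these equivalents directly from the local law rather than running a second Stein expansion, but both routes give the same leading contribution.
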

	\begin{lem}\label{lem:veq_Y}
		For each $k,\ell\in\{1.2.3\}$, we have the following:
		\beq\label{eq:veq_Y}
		\frZ_{k\ell}=\frac{u_{\beta k}u_{\beta \ell}}{u_{\beta 1}^{2}}\frZ_{11}+O(N^{5/2+C\epsilon}).
		\eeq
	\end{lem}
	
	We postpone the proofs of Lemmas \ref{lem:Yder_dcp} and \ref{lem:veq_Y} to the end of this section and move on to deduce Lemma \ref{lem:Yder} from them. First of all, applying Lemma \ref{lem:veq_Y} to the right-hand side of \eqref{eq:Y_hGG_conc}, we obtain
	\beqs
	\begin{split}
		&\left(\omega_{\beta}(\omega_{\beta}+m_{\mu}^{-1})\bse_{1}-(2\omega_{\beta}+m_{\mu}^{-1})\bse_{2}+\bse_{3}\right)\tp\frZ\bse_{k}\\
		=&\left(\omega_{\beta}(\omega_{\beta}+m_{\mu}^{-1})\bse_{1}-(2\omega_{\beta}+m_{\mu}^{-1})\bse_{2}+\bse_{3}\right)\tp\bfu_{\beta}\bse_{1}\tp\frZ\bse_{k}+O(N^{5/2+2\epsilon})=O(N^{5/2+C\epsilon}),
	\end{split}
	\eeqs
	where we used \eqref{eq:veq_2_cancel}. Thus \eqref{eq:Y_hGG_conc} reduces to $O(N^{3/2+C\epsilon})$, so that
	\beq\label{eq:Y_der_h_conc}
	\expct{F''(Y)\bsh_{a}\adj[\wt{B},\wt{G}^{2}]\wt{B}^{k}G^{2}\bse_{a}}
	=O(N^{3/2+C\epsilon}).
	\eeq
	
	On the other hand, applying \eqref{eq:Y_GG_conc}, \eqref{eq:Y_BGG_conc}, and \eqref{eq:veq_Y} we have
	\beq\label{eq:Y_der1}\begin{split}
		&\E[F''(Y)\bse_{a}\adj[\wt{B},\wt{G}^{2}]\wt{B}^{k-1}G^{2}\bse_{a}]=\frZ_{a2k}-\frZ_{a1(k+1)}	\\
		=&\frac{(\omega_{\beta}+m_{\mu}^{-1})\bfv_{\beta}\tp\frZ\bse_{k}-\bfv_{\beta}\tp\frZ\bse_{k+1}}{N(\fra_{a}-\omega_{\alpha})^{2}}-\frac{\bfv_{\beta}\tp\frZ\bse_{k}}{N(\fra_{a}-\omega_{\alpha})}+O(N^{3/2+2\epsilon})	\\
		=&\frac{\bfv_{\beta}\tp\bfu_{\beta}}{u_{\beta 1}^{2}}\left(\frac{(\omega_{\beta}+m_{\mu}^{-1})u_{\beta k}-u_{\beta(k+1)}}{N(\fra_{a}-\omega_{\alpha})^{2}}-\frac{u_{\beta k}}{N(\fra_{a}-\omega_{\alpha})}\right)\frZ_{11}
		+O(N^{3/2+C\epsilon}).	
	\end{split}\eeq
	Plugging \eqref{eq:Y_der_h_conc} and \eqref{eq:Y_der1} into \eqref{eq:Y_der0}, for $k=1,2$ we have
	\begin{multline}\label{eq:Y_der2}
		\sum_{c}^{(a)}\E\bigg[F''(Y)\frac{\partial Y}{\partial g_{ac}}(\wt{B}^{k-1}G^{2})_{ac}\bigg]
		=\gamma\frac{\bfv_{\beta}\tp\bfu_{\beta}}{u_{\beta 1}^{2}}\left(\int_{E_{1}}^{E_{2}}\frac{\frZ_{11}(z,w)-\frZ_{11}(z,\ol{w})}{2\ii}\dd \wt{E}\right)	\\
		\times\left(-\frac{(\omega_{\beta}+m_{\mu}^{-1})u_{\beta k}-u_{\beta(k+1)}}{N(\fra_{a}-\omega_{\alpha})^{2}}+\frac{u_{\beta k}}{N(\fra_{a}-\omega_{\alpha})}\right)+O(N^{5/6+C\epsilon}),
	\end{multline}
	where we denoted $w=\wt{E}+L_{+}+\ii\eta_{0}$ and used $\absv{E_{2}-E_{1}}\lesssim N^{-2/3+\epsilon}$. Recalling the definition of $\frZ_{11}$, we have
	\beq\label{eq:ImfrZ}
	\frac{\frZ_{11}(z,w)-\frZ_{11}(z,\ol{w})}{2\ii}=\E[F''(Y)\Tr G^{2}\im[\wt{G}^{2}]].
	\eeq
	Combining \eqref{eq:Y_der2} and \eqref{eq:ImfrZ} concludes the proof of Lemma \ref{lem:Yder}.
\end{proof}

\begin{proof}[Proof of Lemma \ref{lem:Yder_dcp}]
	The proof again follows the same outline as those of Lemmas~\ref{lem:GBG}~and~\ref{lem:dBGG}, with obvious analogy between each step. Henceforth we present only the major steps and omit details for estimates. We first state the analogues of \eqref{eq:dgdiag_BG} and \eqref{eq:dgdiag_hG} as follows:
	\beq\begin{aligned}\label{eq:Yder_BG}
		&\frac{1}{N}\sum_{c}^{(a)}\frac{\partial}{\partial g_{ac}} \left[\norm{\bsg_{a}}^{-1}(\wt{B}^{\anga}R_{a}\wt{G}^{2}\wt{B}^{k-1}G^{2})_{ca}\right]	\\
		=&-(\omega_{\beta}m_{\mu}+1)(\wt{B}\wt{G}^{2}\wt{B}^{k-1}G^{2})_{aa}+\omega_{\beta}(\omega_{\beta}m_{\mu}+1)(\wt{G}^{2}\wt{B}^{k-1}G^{2})_{aa}	\\
		&-\frac{(\omega_{\beta}+m_{\mu}^{-1})}{N(\fra_{a}-\omega_{\alpha})}\Tr\wt{B}\wt{G}^{2}\wt{B}^{k-1}G^{2}+\frac{1}{N(\fra_{a}-\omega_{\alpha})}\Tr\wt{B}^{2}\wt{G}^{2}\wt{B}^{k-1}G^{2}\\
		&-(\frb_{a}-\omega_{\beta})(\omega_{\beta}m_{\mu}+1)\bsh_{a}\adj\wt{G}^{2}\wt{B}^{k}G^{2}\bse_{a}+O_{\prec}(N^{4/3+5\epsilon}),
	\end{aligned}\eeq
	\beq\begin{aligned}\label{eq:Yder_hG}
		&\frac{1}{N}\sum_{c}^{(a)}\frac{\partial}{\partial g_{ac}}\left[\norm{\bsg_{a}}^{-1}(\wt{G}^{2}\wt{B}^{k}G^{2})_{ca}\right]	\\
		=&-m_{\mu}(\wt{B}\wt{G}^{2}\wt{B}^{k-1}G^{2})_{aa} +(\omega_{\beta}m_{\mu}+1)(\wt{G}^{2}\wt{B}^{k-1}G^{2})_{aa}	\\
		&-\frac{(\omega_{\beta}+m_{\mu}^{-1})}{N(\fra_{a}-\omega_{\alpha})}\Tr \wt{G}^{2}\wt{B}^{k-1}G^{2}+\frac{1}{N(\fra_{a}-\omega_{\alpha})}\Tr \wt{B}\wt{G}^{2}\wt{B}^{k-1}G^{2}\\
		&-m_{\mu}(\omega_{\beta}+m_{\mu}^{-1}-\frb_{a})\bsh_{a}\adj \wt{G}^{2}\wt{B}^{k}G^{2}\bse_{a}+O_{\prec}(N^{4/3+5\epsilon}).
	\end{aligned}\eeq
	We only present a few remarks on the proofs of \eqref{eq:Yder_BG} and \eqref{eq:Yder_hG} and omit details, since the proof is analogous to Lemma \ref{lem:dgdiag}. The leading terms of \eqref{eq:Yder_BG} and \eqref{eq:Yder_hG} appear when the derivative hits the first ($\wt{G}$) and last ($G$) resolvent factors. Recalling \eqref{eq:dg_heu}, one can easily recover the leading terms of \eqref{eq:Yder_BG} and \eqref{eq:Yder_hG}. On the other hand, the main error arise when the derivative hits other resolvent factors. For example, the derivative hitting the second resolvent factor results in terms of the form
	\beq\label{eq:Yder_err}
	(\tr \wt{G}K_{1}\wt{G}K_{2}) (K_{3}\wt{G}K_{4}GK_{5}GK_{6})_{aa},
	\eeq
	where each $K$ is either $U$, (rank-one perturbations of) $\wt{B}$, or their product. Using Proposition \ref{prop:ll} and naive power counting, we can prove that \eqref{eq:Yder_err} is $O_{\prec}(N^{4/3+5\epsilon})$.
	
	We next move on to the analogue of Step (0) in the proof of Lemma \ref{lem:GBG}, that is, we expand $\E[F''(Y)(\wt{G}^{2}\wt{B}^{k-1}G^{2})_{aa}]=\frZ_{a1k}$:
	\beq\label{eq:Y_GG}\begin{aligned}
		\frac{w}{\gamma}\frZ_{a1k}=&\fra_{a}\frZ_{a1k}+\frZ_{a2k}+\sqrt{t}\expct{F''(Y)\bse_{a}\adj W\wt{G}^{2}\wt{B}^{k-1}G^{2}\bse_{a}}	\\
		&-\frac{1}{\gamma}\expct{F''(Y)\bse_{a}\adj\wt{G}\wt{B}^{k-1}G^{2}\bse_{a}}	\\
		=&(\fra_{a}-tm_{\mu})\frZ_{a1k}+\frZ_{a2k}-\frac{t}{N(\fra_{a}-\omega_{\alpha})}\frZ_{1k}+O(N^{1+C\epsilon}).
	\end{aligned}\eeq
	For the analogue of Step (i), we expand $\E[F''(Y)(\wt{B}\wt{G}^{2}\wt{B}^{k-1}G^{2})_{aa}]=\frZ_{a2k}$:
	\beq\label{eq:Y_BGG}
	\begin{split}
		\frZ_{a2k}=&-\frac{1}{N}\sum_{c}^{(a)}\Expct{\frac{\partial}{\partial g_{ac}} \left[F''(Y)\frac{1}{\norm{\bsg_{a}}}\bse_{c}\adj\wt{B}^{\anga}R_{a}\wt{G}^{2}\wt{B}^{k-1}G^{2}\bse_{a}\right]}
		+O(N^{7/6+C\epsilon})	\\
		=&(\omega_{\beta}m_{\mu}+1)\frZ_{a2k}-\omega_{\beta}(\omega_{\beta}m_{\mu}+1)\frZ_{a1k}	
		+\frac{(\omega_{\beta}+m_{\mu}^{-1})\frZ_{2k}-\frZ_{3k}}{N(\fra_{a}-\omega_{\alpha})}\\
		&+(\frb_{a}-\omega_{\beta})(\omega_{\beta}m_{\mu}+1)\E[F''(Y)\bsh_{a}\adj\wt{G}^{2}\wt{B}^{k}G^{2}\bse_{a}]+O_{\prec}(N^{4/3+C\epsilon}),
	\end{split}
	\eeq
	where we dropped the summand for $c=a$ in the first equality and used \eqref{eq:Yder_BG} and Lemma \ref{lem:Yder_rough} in the second. Step (ii) corresponds to the following estimate:
	\beq\begin{aligned}\label{eq:Y_hGG}
		&\E[F''(Y)\bsh_{a}\adj\wt{G}^{2}\wt{B}^{k}G^{2}\bse_{a}]	\\
		=&\frac{1}{N}\sum_{c}^{(a)}\Expct{\frac{\partial}{\partial g_{ac}}\left[F''(Y)\frac{1}{\norm{\bsg_{a}}}\bse_{c}\adj \wt{G}^{2}\wt{B}^{k-1}G^{2}\bse_{a}\right]}+O(N^{7/6+C\epsilon})\\
		=&-m_{\mu}\frZ_{a2k}+(\omega_{\beta}m_{\mu}+1)\frZ_{a1k}
		+\frac{-(\omega_{\beta}+m_{\mu}^{-1})\frZ_{1k}+\frZ_{2k}}{N(\fra_{a}-\omega_{\alpha})}\\
		&+m_{\mu}(\omega_{\beta}+m_{\mu}^{-1}-\frb_{a})\E[F''(Y)\bsh_{a}\adj \wt{G}^{2}\wt{B}^{k-1}G^{2}\bse_{a}]+O(N^{4/3+C\epsilon}).
	\end{aligned}\eeq
	Finally, taking the linear combination 
	\beqs
	\text{\eqref{eq:Y_GG}}+\text{\eqref{eq:Y_BGG}}+(\omega_{\beta}+m_{\mu}^{-1})\text{\eqref{eq:Y_hGG}}
	\eeqs
	leads to
	\beq\label{eq:Y_GG_1}\begin{aligned}
		&\left(\frac{w}{\gamma}+tm_{\mu}-m_{\mu}^{-1}-\omega_{\beta}-\fra_{a}\right)\frZ_{a1k}	\\
		=&-\frac{\bfv_{\beta}\tp\frZ\bse_{k}}{N(\fra_{a}-\omega_{\alpha})}-\frac{t}{N(\fra_{a}-\omega_{\alpha})}\frZ_{1k}+O(N^{4/3+C\epsilon}).
	\end{aligned}\eeq
	Using \eqref{eq:Bab2_conseq} and \eqref{eq:B2_factor} we conclude
	\beq
	\frZ_{a1k}=\frac{\bfv_{\beta}\tp\frZ\bse_{k}}{N(\fra_{a}-\omega_{\alpha})^{2}}+O(N^{3/2+C\epsilon}),
	\eeq
	where we absorbed the second term of \eqref{eq:Y_GG_1} into the error and used $t\ll N^{-1/6}$. This proves \eqref{eq:Y_GG_conc}, and the rest can be proved by taking suitable linear combinations of \eqref{eq:Y_GG_conc}, \eqref{eq:Y_BGG}, and \eqref{eq:Y_hGG}.
\end{proof}

\begin{proof}[Proof of Lemma \ref{lem:veq_Y}]
	The proof uses a symmetry argument as in Lemma \ref{lem:veq1}. To this end, we define a $(3\times 3)$ matrix
	\beq\label{eq:wt_frz_def}
	\wt{\frZ}_{k\ell}\equiv \wt{\frZ}_{k\ell}(z,w)\deq\expct{F''(Y)\Tr \wt{A}^{k-1}\wt{\caG}^{2}B^{\ell-1}\caG^{2}}
	=\expct{F''(Y)\Tr A^{k-1}\wt{G}^{2}\wt{B}^{\ell-1}G^{2}},
	\eeq
	where $\wt{\caG}=\caG(w)$. Note that $\wt{\frZ}_{k\ell}$ also admits the same rough estimate $N^{8/3+C\epsilon}$ as in \eqref{eq:frZ_rough}. Now we prove that the following holds true:
	\beq\label{eq:frz_eq1}
	\frZ_{k\ell}=u_{\beta k}\bfv_{\alpha}^{\intercal}\wt{\frZ}\bse_{\ell}+O_{\prec}(N^{5/2+C\epsilon}).
	\eeq
	To prove \eqref{eq:frz_eq1}, we first note that the proof of \eqref{eq:Y_GG_conc} remains intact if we replace the factor of $\wt{B}^{\ell-1}$ by $A^{\ell-1}$ in the definition of $\frZ_{ak\ell}$, so that for each $a\in\llbra 1,N\rrbra$ we have
	\beq\label{eq:frz_1}\begin{aligned}
		&\expct{F''(Y) (\wt{G}^{2}A^{\ell-1}G^{2})_{aa}}	\\
		=&\frac{1}{N(\fra_{a}-\omega_{\alpha})^{2}}\sum_{i=1}^{3}v_{\beta i}\expct{F''(Y)\Tr \wt{B}^{i-1}\wt{G}^{2}A^{\ell-1}G^{2}}+O(N^{3/2+C\epsilon}).
	\end{aligned}\eeq
	The only difference between the proofs of \eqref{eq:Y_GG_conc} and \eqref{eq:frz_1} is in the possible choices of $K$'s in \eqref{eq:Yder_err}, which does no harm. By symmetry, interchanging the roles of $A$ and $B$ in \eqref{eq:frz_1} gives that
	\beq\label{eq:frz_eq2}\begin{aligned}
		&\expct{F''(Y) (\wt{\caG}^{2}B^{\ell-1}\caG^{2})_{aa}}	\\
		=&\frac{1}{N(\frb_{a}-\omega_{\beta})^{2}}\sum_{i=1}^{3}v_{\alpha i}\expct{F''(Y)\Tr \wt{A}^{i-1}\wt{\caG}^{2}B^{\ell-1}\caG^{2}}+O(N^{3/2+C\epsilon})	\\
		=&\frac{1}{N(\frb_{a}-\omega_{\alpha})^{2}}\bfv_{\alpha}\tp \wt{\frZ}\bse_{\ell}+O(N^{3/2+C\epsilon})
	\end{aligned}\eeq
	Taking the sum over $a$ of \eqref{eq:frz_eq2} with weights $\frb_{a}^{k-1}$ and using \eqref{eq:trace identity} and \eqref{eq:frZ_rough}, we get
	\beq
	\frZ_{k\ell}=u_{\beta k}\bfv_{\alpha}\tp\wt{\frZ}_{\bse_{\ell}}+O(\bsd N^{8/3+C\epsilon}+ N^{5/2+C\epsilon})
	\eeq
	which proves \eqref{eq:frz_eq1}.
	
	We next derive the conclusion from \eqref{eq:frz_eq1}. An immediate consequence of \eqref{eq:frz_eq1} is 
	\beq\label{eq:frz_2}
	\frZ_{k\ell}=\frac{u_{\beta k}}{u_{\beta 1}}\frZ_{1\ell}+O(N^{5/2+C\epsilon}).
	\eeq
	Note from the definition of $\frZ$ that $\frZ(z,w)=\frZ(w,z)\tp$. Hence \eqref{eq:frz_2} implies 
	\beq\label{eq:frz_3}
	\frZ_{1\ell}=\frZ(w,z)_{\ell 1}=\frac{u_{\beta \ell}}{u_{\beta1}}\frZ(w,z)_{11}+O(N^{5/2+C\epsilon}).
	\eeq
	Combining \eqref{eq:frz_2} and \eqref{eq:frz_3} completes the proof of Lemma \ref{lem:veq_Y}.
\end{proof}
\section{Estimates for derivatives}\label{sec:der_tech}

Before moving on to the proofs, we first present an estimate of the sizes of entries of $\wt{B}GU$ and $\wt{B}G\wt{B}$. Its proof is presented in Appendix \ref{sec:rigidity}.
\begin{lem}\label{lem:BGU}
	The following holds uniformly over $a,b\in\llbra 1,N\rrbra$ and $E\in[E_{1},E_{2}]$:
	\beq
	\absv{(\wt{B}GU)_{ab}}\prec N^{-1/3+\epsilon},\quad (\wt{B}G\wt{B})_{ab}\prec\delta_{ab}+N^{-1/3+\epsilon},\quad \im(\wt{B}G\wt{B})_{aa}\prec N^{-1/3+\epsilon}.
	\eeq
\end{lem}

\begin{proof}[Proof of Lemma \ref{lem:dgbd_1}]
	We fix indices $a,b\in\llbra 1,N\rrbra$: Once we prove the result for fixed $a$ and $b$, it automatically holds uniformly over $a$ and $b$ by the definition of stochastic dominance and a union bound. We prove only \eqref{eq:dgBGG}, and \eqref{eq:dgGG} can be proved analogously. 
	
	Since $U^{\anga}$ and $\bsv_{a}$ are independent by Lemma \ref{lem:prd}, so are $\wt{B}^{\anga}$ and $\bsg_{a}$. Thus we have
	\begin{multline}\label{eq:dgGG_expa}
		\frac{\partial}{\partial g_{ac}}\left[\frac{1}{\norm{\bsg}^{-1}}\bse_{c}\adj \wt{B}^{\anga}R_{a}G\bse_{b}G_{ba}\right]
		=\frac{1}{\norm{\bsg_{a}}}\bse_{c}\adj \wt{B}^{\anga}R_{a} \frac{\partial(G\bse_{b}\bse_{b}\adj G)}{\partial g_{ac}}\bse_{a} 	\\
		+\frac{\partial\norm{\bsg_{a}}^{-1}}{\partial g_{ac}}\bse_{c}\adj \wt{B}^{\anga}R_{a}G\bse_{b}G_{ba}+\frac{1}{\norm{\bsg_{a}}}\bse_{c}\adj\wt{B}^{\anga}\frac{\partial R_{a}}{\partial g_{ac}}G\bse_{b}G_{ba}.
	\end{multline}
	First of all, we show that the contributions of derivatives of $\norm{\bsg_{a}}^{-1}$ and $R_{a}$ are negligible. For the derivative of $\norm{\bsg_{a}}^{-1}$ we have
	\beq\label{eq:dgGG_g_1}
	\frac{\partial\norm{\bsg_{a}}^{-1}}{\partial g_{ac}} =-\frac{1}{2\norm{\bsg_{a}}^{3}}\frac{\partial (\norm{\bsg_{a}}^{2})}{\partial g_{ac}} =-\frac{1}{2\norm{\bsg_{a}}^{2}}\ol{h}_{ac},
	\eeq
	so that
	\beq
	\frac{1}{N}\sum_{c}^{(a)}\frac{\partial\norm{\bsg_{a}}^{-1}}{\partial g_{ac}}\bse_{c}\adj \wt{B}^{\anga}R_{a}G\bse_{b}G_{ba}
	=-\frac{1}{N}\norm{\bsg_{a}}^{-2}\bsh_{a}\adj I^{(a)}\wt{B}^{\anga}R_{a}G\bse_{b}G_{ba},
	\eeq
	where we defined $I^{(a)}\deq I-\bse_{a}\bse_{a}\adj$. Using \eqref{eq:Rh=-e}, we find that
	\beq\label{eq:BR_minor1}\begin{aligned}
		\bsh_{a}\adj I^{(a)}\wt{B}^{\anga}R_{a}=(\bsh_{a}-h_{aa}\bse_{a})\adj  \wt{B}^{\anga}R_{a}=-\bse_{a}\adj\wt{B}+h_{aa}\frb_{a}\bsh_{a}\adj.
	\end{aligned}\eeq
	Since $\absv{\frb_{a}}\leq\norm{B}\lesssim 1$, we get
	\beqs
	\absv{\bsh_{a}\adj I^{(a)}\wt{B}^{\anga}R_{a}G\bse_{b}G_{ba}}
	\lesssim (1+h_{aa})(\absv{(U\adj G)_{ab}}+\absv{(\wt{B}G)_{ab}})\absv{G_{ba}}\lesssim N^{-2/3+2\epsilon}+\delta_{ab},
	\eeqs
	where we used $h_{aa}\prec N^{-1/2}$ and Proposition \ref{prop:ll}. As $\norm{\bsg_{a}}^{2}=1+O_{\prec}(N^{-1/2})$, we conclude
	\beq\label{eq:dgGG_g}
	\frac{1}{N}\sum_{c}^{(a)}\frac{\partial\norm{\bsg_{a}}^{-1}}{\partial g_{ac}}\bse_{c}\adj \wt{B}^{\anga}R_{a}G\bse_{b}G_{ba}
	\prec N^{-1}(N^{-2/3+2\epsilon}+\delta_{ab}).
	\eeq
	
	Next, for the derivative of $R_{a}$, by \eqref{eq:Rder} we have 
	\beqs
	\frac{\partial R_{a}}{\partial g_{ac}}=-\frac{\ell_{a}^{2}}{2\norm{\bsg_{a}}}h_{aa}\ol{h}_{ac}\bsr_{a}\bsr_{a}\adj 
	-\frac{\ell_{a}^{2}}{\norm{\bsg_{a}}}\left(\bse_{c}(\bse_{a}+\bsh_{a})\adj-\frac{\ol{h}_{ac}}{2}(\bsh_{a}\bse_{a}\adj+\bse_{a}\bsh_{a}\adj+2\bsh_{a}\bsh_{a}\adj)\right),
	\eeqs
	so that
	\beq\label{eq:Rder_expa}\begin{aligned}
		\frac{1}{N}\sum_{c}^{(a)}\bse_{c}\adj\wt{B}^{\anga}\frac{\partial R_{a}}{\partial g_{ac}}
		=&-\frac{\ell_{a}^{2}}{2\norm{\bsg_{a}}}\frac{h_{aa}}{N}\bsh_{a}\adj I^{(a)}\wt{B}^{\anga}\bsr_{a}\bsr_{a}\adj 
		-\frac{\ell_{a}^{2}}{\norm{\bsg_{a}}}\tr I^{(a)}\wt{B}^{\anga} (\bse_{a}+\bsh_{a})\adj \\
		&+\frac{\ell_{a}^{2}}{2\norm{\bsg_{a}}}\frac{1}{N}\bsh_{a}\adj I^{(a)}\wt{B}^{\anga}(\bsh_{a}\bse_{a}\adj+\bse_{a}\bsh_{a}\adj+2\bsh_{a}\bsh_{a}\adj).
	\end{aligned}\eeq
	Note that 
	\beq\begin{aligned}\label{eq:Rder_expa_1}
		&\absv{\bsh_{a}\adj I^{(a)}\wt{B}^{\anga}\bse_{a}}+\absv{\bsh_{a}\adj I^{(a)}\wt{B}^{\anga}\bsh_{a}}\leq 2\norm{B},\\
		&\absv{\tr I^{(a)}\wt{B}^{(a)}}=\Absv{\tr B-\frac{\frb_{a}}{N}}\leq \frac{\norm{B}}{N}.
	\end{aligned}\eeq
	Plugging \eqref{eq:Rder_expa} into the last term of \eqref{eq:dgGG_expa} and then using \eqref{eq:Rder_expa_1}, we have
	\beq\begin{aligned}\label{eq:dgGG_R}
		\Absv{\frac{1}{N}\sum_{c}^{(a)}\frac{1}{\norm{\bsg_{a}}}\bse_{c}\adj\wt{B}^{\anga}\frac{\partial R_{a}}{\partial g_{ac}}G\bse_{b}G_{ba}}
		\lesssim& \frac{\ell_{a}^{2}}{\norm{\bsg_{a}}^{2}}\frac{(1+h_{aa})}{N}(\absv{(U\adj G)_{ab}}+\absv{G_{ab}})\absv{G_{ba}}	\\
		\prec& N^{-1}(N^{-2/3+2\epsilon}+\delta_{ab}),
	\end{aligned}\eeq
	where in the second line we used Proposition \ref{prop:ll} and 
	\beq\label{eq:gl_1}
	\norm{\bsg_{a}}=1+O_{\prec}(N^{-1/2}),\qquad  \ell_{a}^{2}=\frac{1}{1+h_{aa}}=\frac{1}{1+O_{\prec}(N^{-1/2})}.
	\eeq
	By \eqref{eq:dgGG_expa}, \eqref{eq:dgGG_g}, and \eqref{eq:dgGG_R}, we conclude
	\beq\begin{aligned}\label{eq:dgGG_gR}
		\frac{1}{N}\sum_{c}^{(a)}\frac{\partial}{\partial g_{ac}}\left[\frac{1}{\norm{\bsg_{a}}}\bse_{c}\adj \wt{B}^{\anga}R_{a}G\bse_{b}\adj G_{ba}\right]
		=&\frac{1}{N}\sum_{c}^{(a)}\frac{1}{\norm{\bsg_{a}}}\bse_{c}\adj\wt{B}^{\anga}R_{a}\frac{\partial (G\bse_{b}\bse_{b}\adj G)}{\partial g_{ac}}\bse_{a}	\\
		&+O_{\prec}(N^{-1}(N^{-2/3+2\epsilon}+\delta_{ab})).
	\end{aligned}\eeq
	
	Using Lemma \ref{lem:dgbd}, we extract the leading term from the derivative $\partial G/(\partial g_{ac})$ as follows:
	\beq
	\frac{1}{\norm{\bsg_{a}}}\frac{\partial G}{\partial g_{ac}} =-\gamma G[\bse_{c}(\bse_{a}+\bsh_{a})\adj,\wt{B}]G +G\Delta_{c}G,
	\eeq
	where $\Delta_{c}\equiv\Delta_{c}(z)\in\C^{N\times N}$ is defined by
	\beq\label{eq:dg_Delta}\begin{aligned}
		\Delta_{c}&\deq\Delta_{c1}+\Delta_{c2}+\Delta_{c3},	\\
		\Delta_{c1}&\deq\gamma\left(1-\frac{\ell_{a}^{2}}{\norm{\bsg_{a}}^{2}}\right)[\bse_{c}(\bse_{a}+\bsh_{a})\adj,\wt{B}],\\
		\Delta_{c2}&\deq \gamma\frac{\ell_{a}^{2}}{2\norm{\bsg_{a}^{2}}}\ol{h}_{ac}[(\bse_{a}+2\bsh_{a})\bse_{a}\adj,\wt{B}],\\
		\Delta_{c3}&\deq -\gamma\frac{\ell_{a}^{4}}{2\norm{\bsg_{a}}^{2}}h_{aa}\ol{h}_{ac}[\bse_{a}\bse_{a}\adj+\bse_{a}\bsh_{a}\adj+\bsh_{a}\bse_{a}\adj,\wt{B}]. 
	\end{aligned}\eeq
	Then, for any matrix $K$, we have
	\begin{align}
		&\frac{1}{N}\sum_{c}^{(a)}\frac{1}{\norm{\bsg_{a}}}\bse_{c}\adj K\frac{\partial(G\bse_{b}\bse_{b}\adj G)}{\partial g_{ac}}\bse_{a}	\label{eq:dgKGG}\\
		=&-\gamma(\tr I^{(a)}KG)(\bse_{a}+\bsh_{a})\adj\wt{B}G\bse_{b}G_{ba}	
		+\gamma(\tr I^{(a)}KG\wt{B})(\bse_{a}+\bsh_{a})\adj G\bse_{b}G_{ba}	\label{eq:dgKGG_2}\\
		&-\gamma \frac{(GI^{(a)}KG)_{bb}}{N}(\bse_{a}+\bsh_{a})\adj\wt{B}G\bse_{a}
		+\gamma \frac{(G\wt{B}I^{(a)}KG)_{bb}}{N}(\bse_{a}+\bsh_{a})\adj G\bse_{a}\label{eq:dgKGG_3}\\
		&+\frac{1}{N}\sum_{c}^{(a)}\bse_{c}\adj KG \left(\Delta_{c}G\bse_{b}\bse_{b}\adj+\bse_{b}\bse_{b}\adj G\Delta_{c}\right)G\bse_{a}\label{eq:dgKGG_4},
	\end{align}
	where we recall $I^{(a)}= I-\bse_{a}\bse_{a}\adj$. From now on we always take $K=\wt{B}^{\anga}R_{a}$.
	
	Next, we simplify \eqref{eq:dgKGG_2} -- \eqref{eq:dgKGG_3}. Note that $\wt{B}^{\anga}R_{a}=R_{a}\wt{B}$ and \eqref{eq:Rh=-e} imply
	\beq\label{eq:BR_minor}\begin{aligned}
		I^{(a)}\wt{B}^{\anga}R_{a}-\wt{B}=(I^{(a)}-I)\wt{B}^{\anga}R_{a}+(R_{a}-I)\wt{B}
		=-\frb_{a}\bse_{a}\bsh_{a}\adj+\bsr_{a}\bsr_{a}\adj\wt{B}.
	\end{aligned}\eeq
	Since $\norm{B}\lesssim 1$, plugging \eqref{eq:BR_minor} into tracial prefactors in \eqref{eq:dgKGG_2} gives
	\beq\label{eq:dgKGG_K=B}\begin{aligned}
		&\absv{\tr (I^{(a)}\wt{B}^{\anga}R_{a}-\wt{B})G}	
		\lesssim\frac{\absv{(\wt{B}G)_{aa}}+\absv{(\wt{B}GU)_{aa}}+\absv{\bsh_{a}\adj G\bse_{a}}+\absv{\caG_{aa}}}{N},\\
		&\absv{\tr (I^{(a)}\wt{B}^{\anga}R_{a}-\wt{B})G\wt{B})}
		\lesssim\frac{\absv{(\wt{B}G\wt{B})_{aa}}+\absv{(\wt{B}GU)_{aa}}+\absv{(U\adj G\wt{B})_{aa}}+\absv{\caG_{aa}}}{N}.
	\end{aligned}\eeq
	All matrix entries appearing in the numerators of \eqref{eq:dgKGG_K=B} are $O_{\prec}(1)$: We dealt with $(\wt{B}GU)_{aa}$ and $(\wt{B}G\wt{B})_{aa}$ in Lemma \ref{lem:BGU}, and the rest are direct consequences of Proposition \ref{prop:ll} together with the identity $G\adj=G(\ol{z})$. Thus we have
	\beq\label{eq:pre_tr}
	\absv{\tr (I^{(a)}\wt{B}^{\anga}R_{a}-\wt{B})G}	+\absv{\tr (I^{(a)}\wt{B}^{\anga}R_{a}-\wt{B})G\wt{B})}\prec \frac{1}{N}.
	\eeq
	For prefactors in \eqref{eq:dgKGG_3} that are $(b,b)$-th entries, we have
	\beq\begin{aligned}
		&\frac{\absv{(G(I^{(a)}\wt{B}^{\anga}R_{a}-\wt{B})G)_{bb}}}{N}
		\lesssim \frac{(\absv{G_{ba}}+\absv{(GU)_{ba}})(\absv{(U\adj G)_{ab}}+\absv{(\wt{B}G)_{ab}})}{N},\\
		&\frac{\absv{(G(\wt{B}I^{(a)}\wt{B}^{\anga}R_{a}-\wt{B})\wt{B}G)_{bb}}}{N}	
		\lesssim\frac{(\absv{(G\wt{B})_{ba}}+\absv{(GU)_{ba}})(\absv{(U\adj G)_{ab}}+\absv{(\wt{B}G)_{ab}})}{N}.
	\end{aligned}\eeq
	Hence a direct application of Proposition \ref{prop:ll} proves
	\beq\label{eq:pre_bb}
	\frac{\absv{(G(I^{(a)}\wt{B}^{\anga}R_{a}-\wt{B})G)_{bb}}}{N}+\frac{\absv{(G(\wt{B}I^{(a)}\wt{B}^{\anga}R_{a}-\wt{B})\wt{B}G)_{bb}}}{N}\prec N^{-1}(N^{-2/3+2\epsilon}+\delta_{ab}).
	\eeq
	Also recall from Proposition \ref{prop:ll} that
	\beq\label{eq:dgKGG_main}\begin{aligned}
		\absv{(\wt{B}G)_{ab}G_{ba}}+\absv{G_{ab}G_{ba}}\prec &\, \delta_{ab}+O_{\prec}(N^{-2/3+2\epsilon}),	\\
		\absv{(\bse_{a}+\bsh_{a})\adj\wt{B}G\bse_{a}}+\absv{(\bse_{a}+\bsh_{a})\adj G\bse_{a}}\prec &\,1.
	\end{aligned}\eeq
	Combining \eqref{eq:pre_tr}, \eqref{eq:pre_bb}, and \eqref{eq:dgKGG_main}, we conclude
	\beq\label{eq:dgBGG_main}\begin{aligned}
		\text{\eqref{eq:dgKGG_2}+\eqref{eq:dgKGG_3}}
		=&-\gamma\tr \wt{B}G(\bse_{a}+\bsh_{a})\adj\wt{B}G\bse_{b}G_{ba}	
		+\gamma\tr \wt{B}^{2}G(\bse_{a}+\bsh_{a})\adj G\bse_{b}G_{ba}	\\
		&-\gamma \frac{(G\wt{B}G)_{bb}}{N}(\bse_{a}+\bsh_{a})\adj\wt{B}G\bse_{a}
		+\gamma \frac{(G\wt{B}^{2}G)_{bb}}{N}(\bse_{a}+\bsh_{a})\adj G\bse_{a}\\
		&+O_{\prec}(N^{-1}(\delta_{ab}+N^{-2/3+2\epsilon})).
	\end{aligned}\eeq
	
	We next estimate the contribution of $\Delta_{c1}$ in \eqref{eq:dgKGG_4}, that is, we aim at proving
	\beq\label{eq:Delta1_goal}
	\frac{1}{N}\sum_{c}^{(a)}\bse_{c}\adj \wt{B}^{\anga}R_{a}G(\Delta_{c1}G\bse_{b}\bse_{b}\adj+\bse_{b}\bse_{b}\adj G\Delta_{c1})G\bse_{a}\prec N^{-1/2}(N^{-2/3+2\epsilon}+\delta_{ab}).
	\eeq
	By the definition of $\Delta_{c1}$ in \eqref{eq:dg_Delta}, we have
	\beq\label{eq:Delta1}
	\frac{1}{N}\sum_{c}^{(a)}\bse_{c}\adj \wt{B}^{\anga}R_{a}G(\Delta_{c1}G\bse_{b}\bse_{b}\adj+\bse_{b}\bse_{b}\adj G\Delta_{c1})G\bse_{a}
	=\left(1-\frac{\ell_{a}^{2}}{\norm{\bsg_{a}}}\right)\cdot(\text{\eqref{eq:dgKGG_2}+\eqref{eq:dgKGG_3}}).
	\eeq
	Recall from \eqref{eq:gl_1} that
	\beq\label{eq:Delta1_pre}
	1-\frac{\ell_{a}^{2}}{\norm{\bsg_{a}}}=O_{\prec}(N^{-1/2}).
	\eeq
	On the other hand, recall from \eqref{eq:x_rough_1} and \eqref{eq:trace identity} that
	\beq\label{eq:Delta1_11}
	\absv{\tr\wt{B}^{k}G}\prec 1,\qquad \frac{(G\wt{B}^{k}G)_{bb}}{N}\prec N^{-2/3+2\epsilon},\qquad\quad k=1,2.
	\eeq
	Plugging in \eqref{eq:Delta1_11} and \eqref{eq:dgKGG_main} to \eqref{eq:dgBGG_main} gives
	\beq\label{eq:Delta1_main}
	\text{\eqref{eq:dgKGG_2}+\eqref{eq:dgKGG_3}}\prec \delta_{ab}+N^{-2/3+2\epsilon}.
	\eeq
	Combining \eqref{eq:Delta1}, \eqref{eq:Delta1_pre}, and \eqref{eq:Delta1_main} proves \eqref{eq:Delta1_goal}.
	
	We now move on to the contribution of $\Delta_{c2}$ in \eqref{eq:dgKGG_4}, and we claim that (recall $K=\wt{B}^{\anga}R_{a}$)
	\beq\label{eq:Delta2_goal}
	\frac{1}{N}\sum_{c}^{(a)}\bse_{c}\adj KG(\Delta_{c2}G\bse_{b}\bse_{b}\adj+\bse_{b}\bse_{b}\adj G\Delta_{c2}) G\bse_{a}\prec N^{-1}(N^{-2/3+2\epsilon}+\delta_{ab}).
	\eeq
	We first focus on the first term on the left-hand side of \eqref{eq:Delta2_goal}. Here we use the prefactor $\ol{h}_{ac}$ in the definition of $\Delta_{c2}$ to write
	\beq\label{eq:Delta2_prf_1}
	\frac{1}{N}\sum_{c}^{(a)}\bse_{c}\adj KG\Delta_{c2}G\bse_{b}G_{ba}
	=\frac{\gamma\ell_{a}^{2}}{2\norm{\bsg_{a}}}\frac{1}{N}\bsh_{a}\adj I^{(a)}KG[(\bse_{a}+2\bsh_{a})\bse_{a}\adj,\wt{B}]G\bse_{b}G_{ba}.
	\eeq	
	Recalling \eqref{eq:BR_minor}, we have
	\beq\begin{aligned}
		&\absv{\bsh_{a}\adj I^{(a)}KG[(\bse_{a}+2\bsh_{a})\bse_{a}\adj,\wt{B}]\bse_{b}}
		=\absv{\bsh_{a}\adj(\bsr_{a}\bsr_{a}\adj -\bse_{a}\bsh_{a}\adj)\wt{B}G[(\bse_{a}+2\bsh_{a})\bse_{a}\adj,\wt{B}]\bse_{b}}	\\
		\lesssim& (1+h_{aa})\left(\sum_{K_{1},K_{2}\in\{I,U,\wt{B}\}}\absv{(K_{1}\adj GK_{2})_{aa}}\right)(\absv{G_{ab}}+\absv{(\wt{B}G)_{ab}}).
	\end{aligned}\eeq
	As in \eqref{eq:pre_tr}, Proposition \ref{prop:ll} and Lemma \ref{lem:BGU} implies that the middle factor consisting of $(a,a)$-th entries is $O_{\prec}(1)$. Thus we conclude
	\beq
	\frac{1}{N}\sum_{c}^{(a)}\bse_{c}\adj KG\Delta_{c}G\bse_{b}G_{ba}\prec \frac{1}{N}\absv{G_{ab}}+\absv{(\wt{B}G)_{ab}})\absv{G_{ba}}\prec \frac{1}{N}(N^{-2/3+2\epsilon}+\delta_{ab}),
	\eeq
	where we used \eqref{eq:gl_1} and $h_{aa}\prec N^{-1/2}$ in the first inequality.
	Similarly, for the second term of \eqref{eq:Delta2_goal} we have
	\beq\label{eq:Delta2_prf_2}\begin{aligned}
		&\frac{1}{N}\sum_{c}^{(a)}(KG)_{cb}(G\Delta_{c}G)_{ba}=\frac{\gamma\ell_{a}^{2}}{2\norm{\bsg_{a}}}\frac{1}{N}\bsh_{a}\adj I^{(a)}KG\bse_{b}\bse_{b}\adj G[(\bse_{a}+2\bsh_{a})\bse_{a}\adj,\wt{B}]G\bse_{a}	\\
		\prec& N^{-1}(\absv{(\wt{B}G)_{ab}}+\absv{(U\adj G)_{ab}})(\absv{(G\wt{B})}_{ba}+\absv{G}_{ba}+\absv{GU}_{ba})(\absv{(\wt{B}G)_{aa}}+\absv{G_{aa}})	\\
		\prec& N^{-1}(N^{-2/3+2\epsilon}+\delta_{ab}).
	\end{aligned}\eeq
	Thus we have proved \eqref{eq:Delta2_goal}.
	
	Finally, since $\Delta_{c3}$ also has the same $\ol{h}_{ac}$ factor, we can easily prove \eqref{eq:Delta2_goal} with $\Delta_{c2}$ replaced by $\Delta_{c3}$ following \eqref{eq:Delta2_prf_1} -- \eqref{eq:Delta2_prf_2}; we omit further details. Combining \eqref{eq:dgGG_gR}, \eqref{eq:dgKGG}, \eqref{eq:dgBGG_main}, \eqref{eq:Delta1_goal}, and \eqref{eq:Delta2_goal} completes the proof of \eqref{eq:dgBGG}.
\end{proof}

\begin{proof}[Proof of Lemma \ref{lem:Yder_rough}]
	By the definition of $Y$ we have
	\beq
	\frac{\partial Y}{\partial g_{ac}}=\frac{1}{2\ii}\int_{E_{1}}^{E_{2}}\Tr\frac{\partial}{\partial g_{ac}} (\wt{G}-\wt{G}\adj)\dd\wt{E}.
	\eeq
	Then we use \eqref{eq:dgbd} and the fact that $\bsh_{a}\bsh_{a}\adj$ commutes with $\wt{B}$ to obtain
	\beq\begin{aligned}\label{eq:Yder_rough_0}
		\Tr \frac{\partial \wt{G}}{\partial g_{ac}}=&-\gamma\frac{\ell_{a}^{2}}{\norm{\bsg_{a}}}(\bse_{a}+\bsh_{a})\adj[\wt{B},\wt{G}^{2}]\bse_{c}+\gamma\ol{h}_{ac}\frac{\ell_{a}^{2}}{2\norm{\bsg_{a}}}\bse_{a}\adj[\wt{B},G^{2}](\bse_{a}+2\bsh_{a}) \\
		&-\gamma h_{aa}\ol{h}_{ac}\frac{\ell_{a}^{4}}{2\norm{\bsg_{a}}}(\bse_{a}+\bsh_{a})\adj [\wt{B},G^{2}](\bse_{a}+\bsh_{a}).
	\end{aligned}\eeq
	Now using Proposition \ref{prop:ll}, we see that
	\beq\begin{aligned}\label{eq:Yder_rough_1}
		\absv{\bse_{a}\adj[\wt{B},G^{2}]\bse_{c}} \leq& \absv{(\wt{B}\wt{G}^{2})_{ac}}+\absv{(\wt{G}^{2}\wt{B})_{ac}}	\\
		\leq&\sum_{b}(\absv{(\wt{B}\wt{G})_{ab}}+\absv{\wt{G}_{ab}})(\absv{(\wt{G}\wt{B})_{bc}}+\absv{\wt{G}_{ac}})\prec N^{1/3+2\epsilon}.
	\end{aligned}\eeq
	Similarly we have
	\beq\label{eq:Yder_rough_2}
	\max_{K_{1},K_{2}\in \{I,U\}}\absv{\bse_{a}\adj K_{1}[\wt{B},\wt{G}^{2}]K_{2}\bse_{a}}\prec N^{1/3+2\epsilon}.
	\eeq
	Recalling \eqref{eq:gl_1} and $h_{aa},\absv{h_{ac}}\prec N^{-1/2}$, we plug in \eqref{eq:Yder_rough_1} and \eqref{eq:Yder_rough_2} to \eqref{eq:Yder_rough_0} so that
	\beq
	\Tr\frac{\partial \wt{G}}{\partial g_{ac}}\prec N^{1/3+2\epsilon}.
	\eeq
	The exact same argument gives
	\beq
	\Absv{\Tr\frac{\partial \wt{G}\adj}{\partial g_{ac}}}=\Absv{\Tr\frac{\partial\wt{G}}{\partial\ol{g}_{ac}}}\prec N^{1/3+2\epsilon},
	\eeq
	except that we apply \eqref{eq:dgbd_bar} instead of \eqref{eq:dgbd}. As a result we have
	\beq
	\frac{\partial Y}{\partial g_{ac}}\prec (E_{2}-E_{1})N^{1/3+2\epsilon}\leq N^{-1/3+3\epsilon}.
	\eeq
	This completes the proof of Lemma \ref{lem:Yder_rough}.
\end{proof}

\begin{proof}[Proof of Lemma \ref{lem:dgdiag}]
	We only present the proof of \eqref{eq:dgdiag_BG}, and leave that of \eqref{eq:dgdiag_hG} to interested readers. As in the proof of Lemma \ref{lem:dgbd_1}, we again start with derivatives of $\norm{\bsg_{a}}^{-1}$ and $R_{a}$. To this end, we simply take $b=a$ and drop the last factor $G_{ba}$ respectively in \eqref{eq:dgGG_g_1} -- \eqref{eq:dgGG_g} and \eqref{eq:dgGG_R}. As a result, we get
	\beq\label{eq:dgdiag_g}\begin{aligned}
		\frac{1}{N}\sum_{c}^{(a)}\frac{\partial\norm{\bsg_{a}}^{-1}}{\partial g_{ac}}\bse_{c}\adj\wt{B}^{\anga}R_{a}G\bse_{a}&\prec N^{-1}, \\
		\frac{1}{N}\sum_{c}^{(a)}\frac{1}{\norm{\bsg_{a}}}\bse_{c}\adj\wt{B}^{\anga}\frac{\partial R_{a}}{\partial g_{ac}}G\bse_{a}&\prec N^{-1}.
	\end{aligned}\eeq
	
	Recall the definitions of $\Delta_{c}$ from \eqref{eq:dg_Delta}. Using \eqref{eq:dgdiag_g} and then \eqref{eq:pre_tr} with \eqref{eq:ll_etr}, we have
	\begin{multline}\label{eq:dgdiag_main}
		\frac{1}{N}\sum_{c}^{(a)}\frac{\partial}{\partial g_{ac}}\left[\norm{\bsg_{a}}^{-1}(\wt{B}^{\anga}R_{a}G)_{ca}\right]	\\
		=-\gamma(\tr\wt{B}G)(\bse_{a}+\bsh_{a})\adj \wt{B}G\bse_{a}+\gamma (\tr\wt{B}^{2}G)(\bse_{a}+\bsh_{a})\adj G\bse_{a}	\\
		+\frac{1}{N}\sum_{c}^{(a)}\bse_{c}\adj \wt{B}^{\anga}G\Delta_{c}G\bse_{a}+O_{\prec}(N^{-1}).
	\end{multline}
	We next show that the third term of \eqref{eq:dgdiag_main} is small. As in \eqref{eq:dgdiag_g}, we only need to take $a=b$ and remove the factor $G_{ba}$ in the first terms of \eqref{eq:Delta1_goal} and \eqref{eq:Delta2_goal}. Consequently we obtain
	\beq
	\frac{1}{N}\sum_{c}^{(a)}\bse_{c}\adj\wt{B}^{\anga}G\Delta_{c}G\bse_{a}\prec N^{-1/2}.
	\eeq
	
	Finally, using $\wt{B}\bsh_{a}=\frb_{a}\bsh_{a}$, Proposition \ref{prop:ll}, $\frd_{k}\prec N^{-1/3+\epsilon}$, and \eqref{eq:trace identity}, we may further write the leading term in \eqref{eq:dgdiag_main} as
	\beq\begin{aligned}
		&-\gamma(\tr\wt{B}G)(\bse_{a}+\bsh_{a})\adj \wt{B}G\bse_{a}+\gamma (\tr\wt{B}^{2}G)(\bse_{a}+\bsh_{a})\adj G\bse_{a}	\\
		=&-(\omega_{\beta}m_{\mu}+1)(\wt{B}G)_{aa}+(\omega_{\beta}(\omega_{\beta}m_{\mu}+1)+\frd_{3})G_{aa}	
		+\frac{-(\omega_{\beta}+m_{\mu}^{-1})\frd_{2}+\frd_{3}}{\fra_{a}-\omega_{\alpha}}	\\
		&+(\omega_{\beta}m_{\mu}+1)(\omega_{\beta}-\frb_{a})\bsh_{a}\adj G\bse_{a}+O_{\prec}(N^{-2/3+2\epsilon}).
	\end{aligned}\eeq
	This completes the proof of Lemma \ref{lem:dgdiag}.
\end{proof}

\begin{proof}[Proof of Lemma \ref{lem:dg_cr}]
	We only prove \eqref{eq:dg_cr_BG} and \eqref{eq:dg_cr_BGG} to avoid repetition. We start with \eqref{eq:dg_cr_BG}, and first claim that it suffices to prove the following: For any (possibly random) Hermitian matrix $K$ with $\norm{K}\prec 1$ independent of $U$, we have
	\begin{align}\label{eq:dgdiag_tr_KGG}
		\frac{1}{N}\sum_{c}^{(a)}\frac{\partial \Tr GKG}{\partial g_{ac}}\bse_{c}\adj\wt{B}^{\anga}R_{a}G\bse_{a}\prec N^{2/3+C\epsilon},	\\
		\frac{1}{N}\sum_{c}^{(a)}\frac{\partial\Tr KG}{\partial g_{ac}}\bse_{c}\adj\wt{B}^{\anga}R_{a}G\bse_{a} \prec N^{2/3+C\epsilon}.\label{eq:dgdiag_tr_KG}
	\end{align}
	Indeed, we may express $\Tr G\wt{B}^{k-1}G$ as a linear combination of $\Tr GKG$ and $\Tr KG$ using the identity $\gamma HG=zG+I=\gamma GH$. For example when $k=3$, we have
	\beq\begin{aligned}
		\Tr G\wt{B}^{2}G=\Tr GK^{2}G+2\Tr KG+1,\\
		K=\frac{z-H}{\gamma}+\wt{B}=\frac{z}{\gamma}-A-\sqrt{t}W,
	\end{aligned}\eeq
	and obviously the matrix $K$ above is independent of $U$ with norm $O_{\prec}(1)$. Hence \eqref{eq:dg_cr_BG} follows immediately from \eqref{eq:dgdiag_tr_KGG} and \eqref{eq:dgdiag_tr_KG}.
	
	To prove \eqref{eq:dgdiag_tr_KGG}, we notice from Lemma \ref{lem:dgbd} that the derivative $\partial\Tr (GKG)/(\partial g_{ac})$ is a finite linear combination of
	\beq\label{eq:dgdiag_tr_KGG_main}
	\begin{cases}
		\Tr KG[\bse_{c}\bse_{a}\adj K_{1},\wt{B}]G^{2}, \\
		\Tr KG^{2}[\bse_{c}\bse_{a}\adj K_{1},\wt{B}]G,\\
		\ol{h}_{ac}\Tr KG[K_{1}\bse_{a}\bse_{a}\adj K_{2},\wt{B}]G^{2},	\\
		\ol{h}_{ac}\Tr KG^{2}[K_{1}\bse_{a}\bse_{a}\adj K_{2},\wt{B}]G,
	\end{cases}\qquad K_{1},K_{2}\in \{I,U\},
	\eeq
	with $O_{\prec}(1)$ weights. By the identity $\Tr X[Y,Z]=\Tr Y[Z,X]$, \eqref{eq:dgdiag_tr_KGG_main} implies that the left-hand side of \eqref{eq:dgdiag_tr_KGG} is given by a linear combination of
	\beq\label{eq:dgcr_tr_KGG}
	\begin{cases}
		N^{-1}\bse_{a}\adj K_{1}[\wt{B},G^{2}KG] I^{(a)}\wt{B}^{\anga}R_{a}G\bse_{a},	\\
		N^{-1}\bse_{a}\adj K_{1}[\wt{B}, GKG^{2}]I^{(a)}\wt{B}^{\anga}R_{a}G\bse_{a},\\
		N^{-1}\bsh_{a}\adj I^{(a)}\wt{B}^{\anga}R_{a}G\bse_{a} \bse_{a}\adj K_{2}[G^{2}KG,\wt{B}]K_{1}\bse_{a},	\\
		N^{-1}\bsh_{a}\adj I^{(a)}\wt{B}^{\anga}R_{a}G\bse_{a} \bse_{a}\adj K_{2}[GKG^{2},\wt{B}]K_{1}\bse_{a},
	\end{cases}\qquad K_{1},K_{2}\in \{I,U\}.
	\eeq
	
	Notice from Ward identity, Proposition \ref{prop:ll}, and Lemma \ref{lem:BGU} that 
	\beq\label{eq:ward}
	\norm{GK_{1}\bse_{a}}+\norm{G\wt{B}K_{1}\bse_{a}}\leq \sqrt{\frac{\im (K_{1}GK_{1})_{aa}+(K_{1}\wt{B}G\wt{B}K_{1})_{aa}}{\eta_{0}}}\prec N^{1/6+\epsilon}.
	\eeq
	Then we apply \eqref{eq:ward} and Cauchy-Schwarz, for example to a term in the first quantity of \eqref{eq:dgcr_tr_KGG}, to get
	\beq\label{eq:dg_cr_1}\begin{aligned}
		&N^{-1}\absv{\bse_{a}\adj K_{1}\wt{B}G^{2}KG I^{(a)}\wt{B}^{\anga}R_{a}G\bse_{a}}	\\
		\leq& N^{-1}\norm{G\wt{B}K_{1}\bse_{a}}\norm{GKGI^{(a)}\wt{B}^{\anga}}\norm{G\bse_{a}}\prec N^{-2/3+2\epsilon}\cdot \eta_{0}^{-2}
		\prec N^{2/3+4\epsilon},
	\end{aligned}\eeq
	where we applied $\norm{G}\leq \eta_{0}^{-1}$. For the third quantity of \eqref{eq:dgcr_tr_KGG}, we use \eqref{eq:BR_minor1} so that
	\beq
	\bsh_{a}\adj I^{(a)}\wt{B}^{\anga}R_{a}G\bse_{a}=-(\wt{B}G)_{aa}+\bsh_{aa}\frb_{a}\bsh_{a}\adj G\bse_{a}\prec 1.
	\eeq
	Therefore, for a term in the third quantity of \eqref{eq:dgcr_tr_KGG}, we have
	\beq\label{eq:dg_cr_2}\begin{aligned}
		&N^{-1}(\bsh_{a}\adj I^{(a)}\wt{B}^{\anga}R_{a}G\bse_{a})(\bse_{a}K_{2}G^{2}KG\wt{B}K_{1}\bse_{a})	\\
		\prec &N^{-1}\eta_{0}^{-1}\norm{GK_{2}\bse_{a}}\norm{G\wt{B}K_{1}\bse_{a}}\prec N^{4\epsilon}.
	\end{aligned}\eeq 
	Similarly all quantities in \eqref{eq:dgcr_tr_KGG}, as well as \eqref{eq:dgdiag_tr_KG}, can be estimated as $O_{\prec}(N^{2/3+4\epsilon})$. This concludes the proof of \eqref{eq:dg_cr_BG}.
	
	Now for \eqref{eq:dg_cr_BGG}, by the same reasoning as in \eqref{eq:dgdiag_tr_KGG}, it suffices to prove
	\beq\label{eq:dg_cr_KG_main}
	\sum_{c}^{(a)}\frac{\partial \Tr KG}{\partial g_{ac}}\bse_{c}\adj\wt{B}^{\anga}R_{a}G^{2}\bse_{a}\prec N^{5/3+C\epsilon}.
	\eeq
	Then, as in \eqref{eq:dgcr_tr_KGG}, after some algebra we find that the left-hand side of \eqref{eq:dg_cr_KG_main} is a linear combination of 
	\beq\label{eq:dg_cr_KG}
	\begin{cases}
		\bse_{a}\adj K_{1}[\wt{B},GKG]I^{(a)}\wt{B}^{\anga}R_{a}G^{2}\bse_{a},	\\
		\bsh_{a}\adj I^{(a)}\wt{B}^{\anga}R_{a}G\bse_{a}\bse_{a}\adj K_{2}[GKG,\wt{B}]K_{1}\bse_{a},
	\end{cases}\qquad K_{1},K_{2}\in\{I,U\}.
	\eeq
	All quantities in \eqref{eq:dg_cr_KG} can be estimated with exactly the same argument as in \eqref{eq:ward} -- \eqref{eq:dg_cr_2}. This completes the proof of Lemma \ref{lem:dg_cr}.
\end{proof}

\begin{proof}[Proof of Lemma \ref{lem:Y_1}]
	Using the identity $\Tr X[Y,Z]= \Tr Y[Z,X]$, we have
	\beq\begin{aligned}
		\sum_{c}^{(a)}\frac{\partial\Tr \wt{G}}{\partial g_{ac}}(\wt{B}^{k}G^{2})_{ca}
		=&-\gamma(\bse_{a}+\bsh_{a})\adj[\wt{B},\wt{G}^{2}]I^{(a)}\wt{B}^{k}G^{2}\bse_{a}	\\
		&+\sum_{c}^{(a)}(\Tr G^{2}\Delta_{c})(\wt{B}^{k}G^{2})_{ca}.
	\end{aligned}\eeq
	Recalling $k\in\{0,1\}$ and $\absv{(G^{2}\wt{B})_{aa}}+\absv{(\wt{B}G^{2})_{aa}}+\absv{(G^{2})_{aa}}\prec N^{1/3+2\epsilon}$ due to \eqref{eq:dgKGG_main},
	\beq\begin{aligned}\label{eq:Y_1_main}
		&(\bse_{a}+\bsh_{a})\adj [\wt{B},\wt{G}^{2}](I-I^{(a)})\wt{B}^{k}G^{2}\bse_{a} 	\\
		=&(\bse_{a}+\bsh_{a})\adj [\wt{B},\wt{G}^{2}]\bse_{a} \bse_{a}\adj(\wt{B}^{k}G^{2})\bse_{a}	
		\prec N^{2/3+4\epsilon}.
	\end{aligned}\eeq
	Thus it only remains to prove
	\beq
	\sum_{c}^{(a)}(\Tr \wt{G}^{2}\Delta_{c})(\wt{B}^{k}G^{2})_{ca}\prec N^{7/6+4\epsilon}.
	\eeq
	
	As in the proof of Lemma \ref{lem:dgbd_1}, the contribution of $\Delta_{c1}$ is bounded by
	\beq
	O_{\prec}(N^{-1/2})\absv{(\bse_{a}+\bsh_{a})\adj[\wt{B},\wt{G}^{2}]I^{(a)}\wt{B}^{k}G^{2}\bse_{a}}\prec  N^{7/6+4\epsilon},
	\eeq
	where we expanded $(\bse_{a}+\bsh_{a})\adj$ and the commutator and estimated each term following \eqref{eq:dg_cr_1}; for example
	\beq\begin{aligned}
		&\absv{\bsh_{a}\adj\wt{G}^{2}\wt{B}I^{(a)}\wt{B}^{k}G^{2}\bse_{a}}
		\leq \norm{B}^{k+1}\sum_{b,c}\absv{\bsh_{a}\adj \wt{G}\bse_{b}} \norm{\wt{G}\bse_{b}}\norm{G\bse_{c}}\absv{G_{ca}}	\\
		\leq& \sum_{b,c}\absv{\bsh_{a}\adj \wt{G}\bse_{b}}\frac{\im \wt{G}_{bb}+\im G_{cc}}{\eta}\absv{G_{ca}}\prec N^{5/3+4\epsilon}.
	\end{aligned}\eeq
	
	For $\Delta_{c2}$ and $\Delta_{c3}$, we again use the factor $\ol{h}_{ac}$ so that their contributions are all of the form
	\beq
	\absv{\bse_{a}\adj K_{1}[\wt{B},\wt{G}^{2}] K_{2}\bse_{a} \bsh_{a}\adj I^{(a)}\wt{B}^{k}G^{2}\bse_{a}}\prec N^{2/3+4\epsilon},\qquad K_{1},K_{2}\in \{I,U\},
	\eeq
	where we used minor variants of \eqref{eq:Yder_rough_2} twice. This completes the proof of Lemma \ref{lem:Y_1}.
\end{proof}

\section{Orthogonal case}\label{sec:ort}
In this section, we show how to modify the proof for Haar-distributed $U$ on the orthogonal group, instead of the unitary group.

We explain the change in each section, starting from Section \ref{sec:prelim}. We first need to modify the partial randomness decomposition. Lemma \ref{lem:prd} remains intact, except that the vector $\bsv_{i}$ and the matrix $U^{i}$ in \eqref{eq:prd} and \eqref{eq:Uangi} are uniformly distributed respectively on $\bbS^{N}_{\R}\deq\{\bsv\in\R^{N}:\norm{\bsv}=1\}$ and the orthogonal group of order $(N-1)$. Consequently, $\wt{\bsg}_{i}\sim\caN_{\R}(0,N^{-1}I_{N})$ and $\e{\ii\theta_{i}}=\mathrm{sign}(\wt{g}_{ii})$, so that $g_{ij}\sim\caN_{\R}(0,1/N)$ for $i\neq j$. Also we always use the real Stein's lemma, that is, for a standard real Gaussian $X$ and a suitable function $F:\R\to\C$
\beq
\E[XF(X)]=\E[F'(X)].
\eeq
Finally, for the resolvent $G$ of a real symmetric matrix $H$, we have
\beq
\frac{\partial G}{\partial H_{ab}}=-G\frac{(\bse_{a}\bse_{b}\adj +\bse_{b}\bse_{a}\adj)}{1+\delta_{ab}}G.
\eeq

We move on to Section \ref{sec:outline of proof of gfc}. First of all, we take $W$ to be a GOE instead of GUE. Taking the real symmetric symmetry class does no harm to local laws, so that Proposition \ref{prop:ll} and Lemma \ref{lem:rigidity} remain intact; see \cite[Remark 2.10]{Bao-Erdos-Schnelli2020} and \cite[Appendix C]{Bao-Erdos-Schnelli2019} for details. The same is true for Proposition \ref{prop:t0 tracy widom} if we take $\mu_{1}\geq\cdots\geq\mu_{N}$ to be the eigenvalues of a GOE, as \cite[Theorem 2.2]{Landon-Yau2017} is valid for DBM with $\beta=1$. Also the statement of Proposition \ref{prop:gfc} need no change.

Next, we modify the proof of Proposition \ref{prop:gfc}. Instead of \eqref{eq:Stein_W} -- \eqref{eq:Yder} we have
\beq\begin{aligned}
	\E[F'(Y)\Tr WG^{2}]=&-2\gamma\sqrt{t}\E[F'(Y)(\tr G\Tr G^{2}+\tr G^{3})]	\\
	&-2\gamma\sqrt{t}\int_{E_{1}}^{E_{2}}\sum_{a}\E[F''(Y)\Tr G^{2}\im[\wt{G}^{2}]]\dd\wt{E},
\end{aligned}\eeq 
where we used the fact that
\beq
\E[W_{ab}^{2}]\frac{\partial G}{\partial W_{ab}}=-\frac{\sqrt{t}}{N}G(\bse_{a}\bse_{b}\adj+\bse_{b}\bse_{a}\adj)G.
\eeq
This leads to
\begin{align}
	\frac{\dd\E[F(Y)]}{\dd t}=&\Expct{F'(Y)\int_{E_{1}}^{E_{2}}\left(\dot{L}_{+}\Tr G^{2} -\dot{\gamma}\Tr GHG+\gamma^{2}(\tr G)\Tr G^{2}+\gamma^{2}\tr G^{3}\right)\dd E}\nonumber\\
	&+\frac{\gamma^{2}}{N}\Expct{F''(Y)\int_{E_{1}}^{E_{2}}\int_{E_{1}}^{E_{2}}\Tr \im[\wt{G}^{2}]\im[G^{2}]\dd\wt{E}\dd E}.
\end{align}
Noting that \eqref{eq:dt_gfc_1} -- \eqref{eq:dt_gfc_3} all remain intact, we have
\beq
\frac{\dd}{\dd t}\E[F(Y)]=\frac{\gamma}{2}\im\int_{E_{1}}^{E_{2}}\frX+O(N^{C\epsilon})
\eeq
where we modified the definition of $\frX$ as
\beq\begin{aligned}\label{eq:ort_frX}
	\frX\equiv\frX(E)
	\deq2\E\bigg[F'(Y)(\gamma\tr G-m_{\mu_{t}})\Tr G^{2}&+\gamma F'(Y)\tr G^{3}	\\
	&+\gamma F''(Y)\int_{E_{1}}^{E_{2}}\tr(\im[\wt{G}^{2}]G^{2})\dd\wt{E}\bigg].
\end{aligned}\eeq
Following the proof of Proposition \ref{prop:gfc}, in order to prove the main theorem for the orthogonal case, it suffices to prove
\beq\label{eq:ort_frX_est}
\im\frX=O(N^{5/6+C\epsilon})
\eeq
with $\frX$ defined in \eqref{eq:ort_frX}.

All contents of Section \ref{sec:frX_conc} remain the same. More precisely, the statement of Proposition \ref{prop:decoup} remain true if we replace the definition of $\frX$ by \eqref{eq:ort_frX}, and the proof of \eqref{eq:ort_frX_est} is identical to that of Proposition \ref{prop:main}. Hence it only remains to prove Proposition \ref{prop:decoup} with the new definition of $\frX$, which will be done at the end of this section.

We next modify Section \ref{sec:decouple}. We first present the counter part of Lemma \ref{lem:dgbd};
\beq\label{eq:ort_dgbd}\begin{aligned}
	\frac{\partial G}{\partial g_{ac}}=&-\gamma\frac{\ell_{a}^{2}}{\norm{\bsg_{a}}}G[\bse_{c}(\bse_{a}+\bsh_{a})\tp-(\bse_{a}+\bsh_{a})\bse_{c}\tp,\wt{B}]G	\\
	&+\gamma\frac{\ell_{a}^{2}}{2\norm{\bsg_{a}}}h_{ac}G[(\bse_{a}+2\bsh_{a})\bse_{a}\tp+\bse_{a}(\bse_{a}+2\bsh_{a})\tp,\wt{B}]G.
\end{aligned}\eeq
The easiest proof of \eqref{eq:ort_dgbd} is adding \eqref{eq:dgbd} and \eqref{eq:dgbd_bar}; this is indeed rigorously justified by smuggling in an `imaginary' vector $\bsg_{1}$ to define $\wh{\bsg}=\bsg+\ii\bsg_{1}$, so that
\beq
\frac{\partial}{\partial g_{ac}}=\left[\frac{\partial}{\partial \re \wh{g}_{ac}}\right]_{\bsg_{1}=0}=\left[\frac{\partial}{\partial \wh{g}_{ac}}+\frac{\partial}{\ol{\partial}\wh{g}_{ac}}\right]_{\bsg_{1}=0}.
\eeq
From \eqref{eq:ort_dgbd}, one can easily see that `remainders' in the heuristics 
\beq
\frac{\partial G}{\partial g_{ac}}=-\gamma G[\bse_{c}(\bse_{a}+\bsh_{a})\adj-(\bse_{a}+\bsh_{a})\bse_{c}\adj,\wt{B}]G+\text{(remainders)}
\eeq
can be estimated with exactly the same calculations as in Section \ref{sec:der_tech} (with the same upper bounds) whenever we consider derivatives in Lemmas \ref{lem:dgbd_1}, \ref{lem:dgdiag}, \ref{lem:dg_cr}, \eqref{eq:Yder_BG}, and \eqref{eq:Yder_hG}. Hence we refer to Section \ref{sec:der_tech} for detailed estimates of the remainders.

Now we show how to modify Lemmas \ref{lem:BG2_conc} -- \ref{lem:Yder} and briefly discuss their proofs. We start with Lemma \ref{lem:BG2_conc}. In light of \eqref{eq:ort_dgbd}, the counterpart of Lemma \ref{lem:dgbd_1} is given by
\beq\begin{aligned}\label{eq:ort_dgbd_1}
	\frac{1}{N}\sum_{c}^{(a)}&\frac{\partial}{\partial g_{ac}}\left[\frac{1}{\norm{\bsg_{a}}}(\wt{B}^{a}R_{a}G^{2})_{ca}\right]	
	= \sum_{b}\text{(right-hand side of \eqref{eq:dgBGG})}	\\
	&+\frac{\gamma}{N}\Expct{F'(Y)(\bse_{a}+\bsh_{a})\tp \left([G,\wt{B}]\wt{B}G+[G^{2},\wt{B}]\wt{B}\right)G\bse_{a}},	\\
	\frac{1}{N}\sum_{c}^{(a)}&\frac{\partial}{\partial g_{ac}}\left[\frac{1}{\norm{\bsg_{a}}}G_{ca}\right]
	=\sum_{b}\text{(right-hand side of \eqref{eq:dgGG})}\\
	&+\frac{\gamma}{N}\Expct{F'(Y)(\bse_{a}+\bsh_{a})\tp\left([G,\wt{B}]G+[G^{2},\wt{B}]\right)G\bse_{a}}.
\end{aligned}\eeq
Then we follow the same procedure, that is, expand $\E[F'(Y)(\wt{B}G^{2})_{aa}]$ and $\E[F'(Y)\bsh_{a}\adj \wt{B}G^{2}\bse_{a}]$ and then solve the linear equation. As a result, we obtain
\beq\label{eq:ort_BG2}\begin{aligned}
	&\E[F'(Y)(\wt{B}G^{2})_{aa}]=\text{(right-hand side of \eqref{eq:BG2_1})}	\\
	&+\frac{\gamma}{N}(\bse_{a}+\bsh_{a})\tp \left([G,\wt{B}](\omega_{\beta}+m_{\mu}^{-1}-\wt{B})G+[G^{2},\wt{B}](\omega_{\beta}+m_{\mu}^{-1}-\wt{B})\right)G\bse_{a}.
\end{aligned}\eeq

Since the additional terms in \eqref{eq:ort_BG2} are not decoupled, we need new decoupling lemmas for these quantities. More precisely, they have one of the following forms, up to $N^{-1}\E[F'(Y)\cdot]$ and deterministic weights;
\beq\begin{aligned}
	&\bse_{a}\adj KG\wt{B}^{k-1}G^{2}\bse_{a}, &&\bse_{a}\adj KG^{2}\wt{B}^{k-1}G\bse_{a},
\end{aligned}\eeq
for $K\in\{ I, U,\wt{B}\}$ and $k=1,2,3$. Note that neither type covers the other but they intersect; for example $(\wt{B}G\wt{B}G^{2})_{aa}$ is of the first type but not the second, and $(G\wt{B}G^{2})_{aa}$ falls into both types by $G\tp=G$. We introduce the following counterpart of $Z_{k\ell}$:
\beq
\caY_{k\ell}\deq \E[F'(Y)\tr G\wt{B}^{k-1}G^{2}\wt{B}^{\ell-1}],\qquad k,\ell\in\{1,2,3\}.
\eeq
Note that rough estimates from local laws give $\caY_{k\ell}=O(N^{1+C\epsilon})$, and we are aiming for a decoupling lemma with precision $O(N^{5/6+C\epsilon})$. For these quantities, we have the following analogue of Lemmas \ref{lem:GBG} and \ref{lem:dBGG}:
\beq\label{eq:GGG}\begin{aligned}
	\E[F'(Y)&(G\wt{B}^{k-1}G^{2})_{aa}]=\frac{1}{(\fra_{a}-\omega_{\alpha})^{2}}\bse_{k}\tp\caY\bfv_{\beta}+O(N^{5/6+C\epsilon}),\\
	\E[F'(Y)&(\wt{B}G\wt{B}^{k-1}G^{2})_{aa}]	\\
	=&\left(\frac{(\omega_{\beta}+m_{\mu}^{-1})}{(\fra_{a}-\omega_{\alpha})^{2}}-\frac{1}{(\fra_{a}-\omega_{\alpha})}\right)\bse_{k}\tp\caY\bfv_{\beta}+O(N^{5/6+C\epsilon}),	\\
	\E[F'(Y)&\bsh_{a}\tp G\wt{B}^{k-1}G^{2}\bse_{a}]	\\
	=&\frac{\omega_{\beta}(\omega_{\beta}+m_{\mu}^{-1})\caY_{k1}-(2\omega_{\beta}+m_{\mu}^{-1})\caY_{k2}+\caY_{k3}}{(\fra_{a}-\omega_{\alpha})(\frb_{a}-\omega_{\beta})}\bse_{k}\tp\caY\bfv_{\beta}+O(N^{5/6+C\epsilon}),
\end{aligned}\eeq 
and the same holds for quantities with $G^{2}$ and $G$ interchanged if we replace $\bse_{k}\tp\caY\bfv_{\beta}$ with $\bfv_{\beta}\tp\caY\bse_{k}$. The proof of \eqref{eq:GGG} follows the same four-step strategy as in Section \ref{sec:GBG_pf}, that is,
\begin{itemize}
	\item[(0)] Expand $\caY_{ak\ell}$ applying Stein's lemma to $(WG\wt{B}^{k-1}G\wt{B}^{\ell-1}G)_{aa}$:
	\item[(i)] Expand $(\wt{B}G\wt{B}^{k-1}G\wt{B}^{\ell-1}G)_{aa}$:
	\item[(ii)] Expand $\bsh_{a}\adj G\wt{B}^{k-1}G\wt{B}^{\ell-1}G\bse_{a}$:
	\item[(iii)] Solve the system of three linear equations from Steps (0) -- (iii).
\end{itemize}
We omit further details to avoid repetition. Likewise, following the proof of Lemma \ref{lem:veq1} we can prove
\beq\label{eq:GGG_eq}
\caY_{k\ell}=\frac{u_{\beta k}u_{\beta\ell}}{u_{\beta1}^{2}}\caY_{11}+O(N^{5/6+C\epsilon}).
\eeq
Plugging in \eqref{eq:GGG} and \eqref{eq:GGG_eq} to \eqref{eq:ort_BG2} and then following the same algebra as in \eqref{eq:BG2_2_conc} -- \eqref{eq:BG2_234_conc}, we have
\beq\begin{aligned}
	&\E[F'(Y)(\wt{B}G^{2})_{aa}]=\text{(right-hand side of \eqref{eq:BG2_1})}	\\
	&+\frac{\bfv_{\beta}\tp\bfu_{\beta}}{Nu_{\beta1}^{2}}\left(\frac{\omega_{\beta}+m_{\mu}^{-1}}{(\fra_{a}-\omega_{\alpha})^{2}}-\frac{1}{\fra_{a}-\omega_{\alpha}}\right)(u_{\beta2}-(\omega_{\beta}+m_{\mu}^{-1})u_{\beta1})\caY_{11}
\end{aligned}\eeq

The conclusions of Lemmas \ref{lem:GBG} -- \ref{lem:veq1} remain intact. For the proof of Lemma \ref{lem:GBG}, the only difference is that we have a few additional terms in Lemmas \ref{lem:dgdiag} and \ref{lem:dg_cr} that are absorbed into the error. For example in \eqref{eq:dgdiag_hG}, in light of \eqref{eq:ort_dgbd}, we have
\beq\begin{aligned}\label{eq:ort_dgdiag_hG}
	&\frac{1}{N}\sum_{c}^{(a)}\frac{\partial}{\partial g_{ac}}\left[\frac{1}{\norm{\bsg_{a}}}G_{ca}\right]	\\
	=&\text{(right-hand side of \eqref{eq:dgdiag_hG})}+\gamma \frac{1}{N}\sum_{c}^{(a)}\bse_{c}\adj G[(\bse_{a}+\bsh_{a})\bse_{c}\adj,\wt{B}]G\bse_{a}.
\end{aligned}\eeq
Then, using $G\tp=G$, the sum on the right-hand side of \eqref{eq:ort_dgdiag_hG} is equal to
\beq\begin{aligned}
	&\frac{1}{N}\sum_{c}^{(a)}\bse_{c}\adj[\wt{B},G\bse_{a}\bse_{c}\adj G](\bse_{a}+\bsh_{a})	\\
	=& \frac{1}{N}\bse_{a}\adj G\wt{B}I^{(a)}G(\bse_{a}+\bsh_{a})-\frac{1}{N}\bse_{a}\adj GI^{(a)}G\wt{B}(\bse_{a}+\bsh_{a})\prec N^{-1/3+2\epsilon}
\end{aligned}\eeq
where the last estimate follows immediately from Ward identity. Similarly, in the proof of Lemma \ref{lem:dg_cr}, we use \eqref{eq:ort_dgbd_1} instead of Lemma \ref{lem:dgbd_1}. Notice that the additional terms in \eqref{eq:ort_dgbd_1} are all of the form $N^{-1}(K_{1}GK_{2}GK_{3}G)_{aa}$, hence $O_{\prec}(N^{3\epsilon})$. Multiplying by $\frd_{k}$, the contribution of these terms in \eqref{eq:dGG_conc} -- \eqref{eq:dhGG_conc} is $O(N^{-1/3+C\epsilon})$. Also Lemma \ref{lem:veq1} only requires minor modification to the proof.

Finally, the counterpart of  Lemma \ref{lem:Yder} has an overall factor of two compared to the original conclusion. The factor is due to Lemma \ref{lem:Y_1}; by \eqref{eq:ort_dgbd} and $G\tp=G$, we have
\beq\begin{aligned}
	\frac{1}{N}\sum_{c}^{(a)}\frac{\partial \Tr \wt{G}}{\partial g_{ac}}=&\text{(right-hand side of \eqref{eq:Y_1})}+\gamma \sum_{c}^{(a)}\bse_{c}\adj [\wt{B},\wt{G}^{2}](\bse_{a}+\bsh_{a})(\wt{B}^{k}G^{2})_{ca}	\\
	=&\text{(right-hand side of \eqref{eq:Y_1})}-\gamma (\bse_{a}+\bsh_{a})\tp [\wt{B},\wt{G}^{2}]I^{(a)}\wt{B}^{k}G^{2}\bse_{a}	\\
	=&2\cdot\text{(right-hand side of \eqref{eq:Y_1})}.
\end{aligned}\eeq
The rest of the proof can be modified in a similar fashion to Lemmas \ref{lem:GBG} -- \ref{lem:veq1}.

Collecting all the result and following the same algebra as in the proof of Proposition \ref{prop:decoup}, we have \eqref{eq:Bab2_conc} with the new definition of $\frX$. Then the only other difference in the proof of Proposition \ref{prop:decoup} is in \eqref{eq:x_expa}, where we applied Stein's lemma to $(WG^{2})_{aa}$. The additional terms in \eqref{eq:x_expa} are 
\beq
-2\frac{\gamma}{N} t\E[F'(Y)(G^{3})_{aa}]-\gamma \frac{t}{N}\int_{E_{1}}^{E_{2}}(\im[\wt{G}^{2}]G^{2})_{aa}\dd\wt{E},
\eeq
which can be easily shown to be $O(tN^{C\epsilon})$. Hence \eqref{eq:x_expa_result} remains valid, so that combining with \eqref{eq:Bab2_conc} concludes the proof of Proposition \ref{prop:decoup}. 

\subsection*{Acknowledgements}
The authors would like to thank Ji Oon Lee for helpful discussions. Also the authors are deeply grateful for the anonymous referee for providing helpful comments and suggestions. The work of J. Park was partially supported by National Research Foundation of Korea under grant number NRF-2019R1A5A1028324. The work of H. C. Ji was partially supported by ERC Advanced Grant "RMTBeyond" No.~101020331.

\appendix
	\section{Stability of $\mu_{A}\boxplus\mu_{B}\boxplus\mu_{\SC}^{(t)}$ and $\mu_{\alpha}\boxplus\mu_{\beta}\boxplus\mu_{\SC}^{(t)}$}\label{sec:stab}
	
	Recall that we omitted the subscript $t$ to denote, say, $\omega_{A,t}$ by $\omega_{A}$. The two goals of this section are to prove that the system of equations $\Phi_{\alpha\beta}=0$ defined in \eqref{eq:Phi_def} is stable around $z=E_{+}$ and to extend the same result to $\Phi_{AB}=0$ by comparison. First, we introduce notations for quantities that are widely used throughout the paper. 
	\beqs
	\begin{split}
		\caS_{\alpha\beta}(z)&\deq (F_{\alpha}'(\omega_{\alpha}(z))-1)(F_{\beta}'(\omega_{\beta}(z))-1)-1,\\
		\caT_{\alpha}(z)&\deq \frac{1}{2}\left(F_{\alpha}''(\omega_{\alpha}(z))(F_{\beta}'(\omega_{\beta}(z))-1)^{2}+F_{\beta}''(\omega_{\beta}(z))(F_{\alpha}'(\omega_{\alpha}(z))-1)\right),\\
		\caT_{\beta}(z)&\deq  \frac{1}{2}\left(F_{\beta}''(\omega_{\beta}(z))(F_{\alpha}'(\omega_{\alpha}(z))-1)^{2}+F_{\alpha}''(\omega_{\alpha}(z))(F_{\beta}'(\omega_{\beta}(z))-1)\right).
	\end{split}
	\eeqs
	Similarly we define $\caS_{AB},\caT_{A},$ and $\caT_{B}$ to be the same quantities with $(\alpha,\beta)$ replaced by $(A,B)$. From \eqref{eq:S_at_E}, we see that the edge $E_{+}$ satisfies $\caS_{\alpha\beta}(E_{+})=0$. The main result of this section is the following proposition, whose proof is postponed to the end of this section:
	\begin{prop}\label{prop:stab}
		Let $\sigma>0$ be fixed. Then there exist constants (small) $\tau>0$ and (large) $N_{0}\in\N$ such that each of the following holds uniformly over $z\in\caD_{\tau}(N^{-1+\sigma},1)$ and $t\in[0,1]$ for all $N\geq N_{0}$.
		\begin{itemize}
			\item[(i)] There exist positive constants $k$ and $K$ such that
			\begin{align*}
				&\min_{i}\absv{\fra_{i}-\omega_{A}(z)}\geq k, &&\min_{i}\absv{\frb_{i}-\omega_{B}(z)}\geq k, &
				&\absv{\omega_{A}(z)}\leq K, &&\absv{\omega_{B}(z)}\leq K.
			\end{align*}
			\item[(ii)]
			Recall that $\wh{\mu}_{t}$ denotes the free convolution $\mu_{A}\boxplus\mu_{B}\boxplus\mu_{\SC}^{(t)}$. For its Stieltjes transform $m_{\wh{\mu}_{t}}$, we have
			\beqs
			\im m_{\wh{\mu}_{t}}(z)\sim \begin{cases}
				\sqrt{\kappa+\eta}, & \text{if }E\in \supp \wh{\mu}_{t},\\
				\frac{\eta}{\sqrt{\kappa+\eta}}, & \text{if }E\notin \supp\wh{\mu}_{t},
			\end{cases}
			\eeqs
			where we denoted $z=E+\ii\eta$ and $\kappa=\absv{E_{+}-E}$.
			
			\item[(iii)] There exists a constant $C>0$ such that
			\begin{align*}
				&\caS_{AB}(z)\sim \sqrt{\kappa+\eta}, &&\absv{\caT_{A}(z)}\leq C, &&\absv{\caT_{B}(z)}\leq C.
			\end{align*}
			Moreover, there exist positive constants $\delta$ and $c$ such that, whenever $\absv{z-E_{+}}\leq\delta$,
			\begin{align*}
				&\absv{\caT_{A}(z)}\geq c, &&\absv{\caT_{A}(z)}\geq c.
			\end{align*}
			\item[(iv)] There exists a constant $C>0$ such that
			\begin{align*}
				\absv{\omega_{A}'(z)}\leq C\frac{1}{\sqrt{\kappa+\eta}}, &&\absv{\omega_{B}'(z)}\leq C\frac{1}{\sqrt{\kappa+\eta}}, &&\absv{\caS_{AB}'(z)}\leq C\frac{1}{\sqrt{\kappa+\eta}}.
			\end{align*}
		\end{itemize}
	\end{prop}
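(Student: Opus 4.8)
The plan is to establish the proposition in two stages: first prove the analogous statements for the limiting pair $(\mu_\alpha,\mu_\beta)$ — which simultaneously establishes Lemma \ref{lem:stab} — and then transfer them to $(\mu_A,\mu_B)$ by a quantitative comparison of the subordination systems $\Phi_{\alpha\beta,t}=0$ and $\Phi_{AB,t}=0$ of \eqref{eq:Phi_def}. For \textbf{Stage 1}, working throughout with the modified transforms $F_{\alpha,t},F_{\beta,t}$ of Definition \ref{defn:m}, the analysis of $\mu_t=\mu_\alpha\boxplus\mu_\beta\boxplus\mu_{\SC}^{(t)}$ will parallel that of \cite{Bao-Erdos-Schnelli2020JAM}. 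The crucial first point is that $\omega_{\alpha,t}(E_{+,t})$ lies strictly to the right of $\supp\mu_\alpha$ (the last assertion of Lemma \ref{lem:stab}(iii)): since $t_\alpha^+<1$ forces $m_\alpha'(w)\to+\infty$ and hence $F_{\alpha,0}'(w)=m_\alpha'(w)/m_\alpha(w)^2\to+\infty$ as $w\searrow E_\alpha^+$, the identity $\caS_{\alpha\beta}(E_{+,t})=0$ of \eqref{eq:S_at_E} rules out $\omega_{\alpha,t}(E_{+,t})=E_\alpha^+$; the boundary continuity of subordination functions (Belinschi \cite{Belinschi2006,Belinschi2008}) together with Lemma \ref{lem:stab}(i) then upgrades this to a uniform gap, so that $F_{\alpha,t},F_{\beta,t}$ and all their derivatives are analytic and bounded, with $|m_\alpha|$ bounded below, on a fixed neighborhood of the uniformly bounded points $\omega_{\alpha,t}(z),\omega_{\beta,t}(z)$, $z\in\caD_\tau(0,\infty)$, $t\in[0,1]$; this yields the $(\alpha,\beta)$-analogue of (i). Differentiating the subordination relations \eqref{eq:subor} gives a linear system for $(\omega_{\alpha,t}',\omega_{\beta,t}')$ with determinant $\caS_{\alpha\beta}$, whence $\omega_{\alpha,t}'=-F_{\beta,t}'(\omega_{\beta,t})/\caS_{\alpha\beta}$ and $\omega_{\beta,t}'=-F_{\alpha,t}'(\omega_{\alpha,t})/\caS_{\alpha\beta}$, so that all estimates in the $(\alpha,\beta)$-analogues of (ii)--(iv) reduce to the single bound $|\caS_{\alpha\beta}(z)|\sim\sqrt{\kappa+\eta}$ together with $|\caT_\alpha(z)|,|\caT_\beta(z)|\gtrsim1$ near $E_{+,t}$. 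Both follow from the regularity of the edge recorded in Lemma \ref{lem:stab}(v): since $F_{\mu_t}(z)-F_{\mu_t}(E_{+,t})\asymp\sqrt{E_{+,t}-z}$ and $F_{\alpha,t}$ is invertible near $\omega_{\alpha,t}(E_{+,t})$, one gets $\omega_{\alpha,t}(z)-\omega_{\alpha,t}(E_{+,t})\asymp\sqrt{E_{+,t}-z}$ (and similarly for $\beta$), and expanding $\caS_{\alpha\beta}$ around $E_{+,t}$ — whose linear coefficient is nonzero precisely because $\caT_\alpha,\caT_\beta\neq0$ at the regular edge — produces $|\caS_{\alpha\beta}(z)|\asymp\sqrt{\kappa+\eta}$; uniformity over $t\in[0,1]$ comes from continuity in $(t,z)$ and compactness.

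For \textbf{Stage 2}, fix $z\in\caD_\tau(N^{-1+\sigma},1)$. For compactly supported measures the L\'evy distance dominates the $1$-Wasserstein distance, and therefore controls the Stieltjes transform and its derivatives at points bounded away from the supports; combined with Stage 1 — the points $\omega_{\alpha,t}(z),\omega_{\beta,t}(z)$ sit at distance $\gtrsim1$ from all four supports, uniformly in $t$, also using Assumption \ref{assump:ABconv}(ii)--(iii) — this gives $|F_{A,t}^{(j)}-F_{\alpha,t}^{(j)}|+|F_{B,t}^{(j)}-F_{\beta,t}^{(j)}|\lesssim\bsd$ for $j=0,1,2$ on a fixed neighborhood of $(\omega_{\alpha,t}(z),\omega_{\beta,t}(z))$. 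Hence $(\omega_{\alpha,t}(z),\omega_{\beta,t}(z))$ is an $O(\bsd)$-approximate zero of $\Phi_{AB,t}(\cdot,\cdot,z)$, its Jacobian there differs from that of $\Phi_{\alpha\beta,t}$ by $O(\bsd)$, and the latter has determinant $\caS_{\alpha\beta}(z)$ with $|\caS_{\alpha\beta}(z)|\gtrsim\sqrt{\eta}\geq N^{-1/2+\sigma/2}$. Since $\bsd\leq N^{-1+\epsilon}\ll N^{-1/2+\sigma/2}$, the Jacobian of $\Phi_{AB,t}$ is invertible with inverse of norm $\lesssim(\kappa+\eta)^{-1/2}$, and a Newton/Banach fixed-point iteration — whose quadratic error is controlled by the bounded second derivatives from Stage 1 — produces a zero $(\wt\omega_{A,t},\wt\omega_{B,t})$ of $\Phi_{AB,t}(\cdot,\cdot,z)$ with $|\wt\omega_{A,t}-\omega_{\alpha,t}(z)|+|\wt\omega_{B,t}-\omega_{\beta,t}(z)|\lesssim\bsd(\kappa+\eta)^{-1/2}\leq N^{-1/2-\sigma/2+\epsilon}$. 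A continuity-in-$\eta$ argument — at $\eta=1$ the genuine subordination functions $\omega_{A,t}(z),\omega_{B,t}(z)$ from Proposition \ref{prop:subor} lie within the ball of local uniqueness by weak continuity of free convolution, and both $(\omega_{A,t},\omega_{B,t})$ and $(\wt\omega_{A,t},\wt\omega_{B,t})$ are continuous zeros of $\Phi_{AB,t}$ that stay in that ball as $\eta$ decreases — identifies $(\wt\omega_{A,t},\wt\omega_{B,t})$ with $(\omega_{A,t}(z),\omega_{B,t}(z))$.

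It then remains to read off the claims from the bound $|\omega_{A,t}(z)-\omega_{\alpha,t}(z)|+|\omega_{B,t}(z)-\omega_{\beta,t}(z)|=o(1)$. Part (i) follows from this closeness together with Assumption \ref{assump:ABconv}(ii)--(iii) for the support gaps; part (iii) from $\caS_{AB}(z)=\caS_{\alpha\beta}(z)+o(\sqrt{\kappa+\eta})$ and the analogous comparison for $\caT_A,\caT_B$, giving $|\caS_{AB}(z)|\sim\sqrt{\kappa+\eta}$, $|\caT_A|,|\caT_B|\leq C$, and the near-edge lower bounds; part (iv) by differentiating $\Phi_{AB,t}(\omega_{A,t}(z),\omega_{B,t}(z),z)=0$ exactly as in Stage 1, giving $|\omega_{A,t}'|,|\omega_{B,t}'|,|\caS_{AB}'|\lesssim|\caS_{AB}|^{-1}\lesssim(\kappa+\eta)^{-1/2}$; and part (ii) from $F_{\wh\mu_t}(z)=\omega_{A,t}(z)+\omega_{B,t}(z)-z$ combined with $F_{\wh\mu_t}=-m_{\wh\mu_t}^{-1}+tm_{\wh\mu_t}$ and the square-root behavior of $\omega_{A,t}(z)-\omega_{A,t}(E_{+,t})$. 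Applying the same comparison to $\caS_{AB}$ just outside the spectrum of $\mu_t$ additionally yields $|\wh E_{+,t}-E_{+,t}|\lesssim\bsd^{2}$, so the statements phrased via $E_{+,t}$ are consistent with the true edge $\wh E_{+,t}$ of $\wh\mu_t$.

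The step I expect to be the main obstacle is Stage 2: the perturbative solution of $\Phi_{AB,t}=0$ must be pushed all the way down to $\eta=N^{-1+\sigma}$, where the stability constant $\caS_{\alpha\beta}(z)$ has shrunk to order $\sqrt{N^{-1+\sigma}}$ — the hypothesis $\bsd=N^{-1+o(1)}$ is exactly what keeps the ratio $\bsd/\sqrt{\kappa+\eta}$, hence the displacement of the subordination functions, tending to zero — and both the continuity argument identifying the fixed point with the genuine subordination function and the uniformity of all estimates over $t\in[0,1]$ need to be carried out with care.
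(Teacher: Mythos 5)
Your proposal follows the paper's architecture: establish stability of the limiting system $\Phi_{\alpha\beta,t}=0$ (Lemma~\ref{lem:stab}), then transfer to $\Phi_{AB,t}=0$ by a quantitative local stability estimate with inverse Jacobian of size $|\caS_{\alpha\beta}(z)|^{-1}\sim(\kappa+\eta)^{-1/2}$, started at $\eta\sim1$ and propagated down to $\eta=N^{-1+\sigma}$ by a continuity argument --- this is precisely the content of Lemmas~\ref{lem:stab_macro}, \ref{lem:stab_iter}, and~\ref{lem:subor_diff}. The only substantive difference in Stage~2 is the direction of the expansion: the paper Taylor-expands $\Phi_{\alpha\beta}$ around the genuine $(\omega_A,\omega_B)$, so that the second-derivative constants are the a~priori bounded ones of $F_{\alpha,t},F_{\beta,t}$, and bounds the displacement by solving a quadratic inequality; you run Newton on $\Phi_{AB}$ starting from $(\omega_\alpha,\omega_\beta)$, which is functionally equivalent but requires the extra identification of the constructed zero with the genuine subordination pair and requires first transferring the derivative bounds from $F_{\alpha,t}$ to $F_{A,t}$ via the L\'evy-to-Wasserstein comparison, which you do correctly (and the sufficient conditions $\bsd\ll\sqrt{\kappa+\eta}$ and $\bsd\ll\kappa+\eta$ hold on $\caD_\tau(N^{-1+\sigma},1)$).

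One step of Stage~1 needs reordering. You invoke \eqref{eq:S_at_E} to conclude $\omega_{\alpha,t}(E_{+,t})>E_\alpha^+$, but \eqref{eq:S_at_E} --- that is Lemma~\ref{lem:stab}(iv), proved as Lemma~\ref{lem:edgechar} --- is only a well-posed equality once that gap is already known, since otherwise $F'_{\alpha,t}$ is not finite at $\omega_{\alpha,t}(E_{+,t})$; the paper proves the gap first (Lemma~\ref{lem:stab_lim}) via the Cauchy--Schwarz identity $R_\alpha+R_\beta-1\geq0$ together with the strict defect $R_\beta(\omega)\leq1-c_\beta(\omega)$, and only then derives \eqref{eq:S_at_E}. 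Your derivative-blow-up idea is salvageable: use the inequality version \eqref{eq:edgechar_ineq} for $z\in\C_+$ (which needs no gap), take $z\searrow E_{+,t}$ along the real axis using Lemma~\ref{lem:subor_real} to keep $\omega_{\alpha,t}(z)>E_\alpha^+$, and combine the blow-up of $F'_{\alpha,t}$ with a uniform lower bound on $F'_{\beta,t}-1=m'_{\wh\mu_{\beta,t}}$ (which follows from $\wh\mu_{\beta,t}(\R)>0$ and the boundedness of $\omega_{\beta,t}$) to contradict $|(F'_{\alpha,t}-1)(F'_{\beta,t}-1)|\leq1$. As written, though, the chain of implications is circular.
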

	
	\subsection{Stability of $\mu_{\alpha}\boxplus\mu_{\beta}\boxplus\mu_{\SC}^{(t)}$}
	In this subsection, we study regularity properties of $\mu_{\alpha}\boxplus\mu_{\beta}\boxplus\mu_{\SC}^{(t)}$. Also, we present the proof of Lemma \ref{lem:stab} at the end of this subsection.
	
	\begin{lem}\label{lem:Pick_F}
		Let $\mu_{\alpha}$ and $\mu_{\beta}$ be probability measures in Definition \ref{defn:LSD}. Then for each $t\in[0,1]$ there exist unique Borel measures $\wh{\mu}_{\alpha,t}$ and $\wh{\mu}_{\beta,t}$ on $\R$ such that
		\begin{align}\label{eq:Pick_general}
			&F_{\alpha,t}(z)-z=m_{\wh{\mu}_{\alpha,t}}(z), &
			&F_{\beta,t}(z)-z=m_{\wh{\mu}_{\beta,t}}(z)
		\end{align}
		for all $z\in\C_{+}$. Furthermore, we have that
		\begin{align}\label{eq:hatmu_def}
			&\wh{\mu}_{\alpha,0}(\R)=\int_{\R}x^{2}\dd\mu_{\alpha}(x)-\left(\int_{\R}x\dd\mu_{\alpha}(x)\right)^{2},&
			&[E_{\alpha}^{+}-\tau,E_{\alpha}^{+}]\subset\supp\wh{\mu}_{\alpha,0}\subset[E_{\alpha}^{-},E_{\alpha}^{+}],\\
			&\wh{\mu}_{\beta,0}(\R)=\int_{\R}x^{2}\dd\mu_{\beta}(x)-\left(\int_{\R}x\dd\mu_{\beta}(x)\right)^{2},&
			&[E_{\beta}^{+}-\tau,E_{\beta}^{+}]\subset\supp\wh{\mu}_{\beta,0}\subset [E_{\beta}^{-},E_{\beta}^{+}],\nonumber
		\end{align}
		and for all $t\in[0,1]$ that
		\begin{align}\label{eq:hatmu_t-0}
			&\wh{\mu}_{\alpha,t}=\wh{\mu}_{\alpha,0}+t\mu_{\alpha},&
			&\wh{\mu}_{\beta,t}=\wh{\mu}_{\beta}+t\mu_{\beta}.
		\end{align}
	\end{lem}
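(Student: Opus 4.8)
The plan is to read off $\wh{\mu}_{\alpha,t}$ and $\wh{\mu}_{\beta,t}$ from the Herglotz--Nevanlinna structure of the reciprocal Cauchy transform $F_{\alpha,0}=-1/m_{\mu_{\alpha}}$, and then obtain the statements for $t\in(0,1]$ by linearity. The only external input is the classical characterization of Cauchy transforms of finite measures: a holomorphic $g\colon\C_{+}\to\C$ equals $m_{\nu}$ for a (necessarily unique) finite positive Borel measure $\nu$ on $\R$ if and only if $\im g\ge0$ on $\C_{+}$ and $\sup_{\eta\ge1}\eta\absv{g(\ii\eta)}<\infty$; in that case $\nu(\R)=-\lim_{\eta\to\infty}\ii\eta\,g(\ii\eta)=\lim_{\eta\to\infty}\eta\,\im g(\ii\eta)$, and, by the Stieltjes inversion formula, $\nu$ charges no open real interval across which $g$ extends holomorphically with real boundary values, whereas the absolutely continuous part of $\nu$ has density $\tfrac{1}{\pi}\im g(x+\ii0)$ at a.e.\ $x$ at which $g(x+\ii0)$ exists, is finite, and has positive imaginary part.

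\emph{The case $t=0$.} Since $\im m_{\mu_{\alpha}}>0$ on $\C_{+}$, $m_{\mu_{\alpha}}$ is zero-free there, so $\phi_{\alpha}(z):=F_{\alpha,0}(z)-z$ is holomorphic on $\C_{+}$; the elementary bound $\absv{m_{\mu_{\alpha}}(z)}^{2}\le\im m_{\mu_{\alpha}}(z)/\im z$ (Cauchy--Schwarz, using $\mu_{\alpha}(\R)=1$) rearranges into $\im F_{\alpha,0}(z)\ge\im z$, i.e.\ $\im\phi_{\alpha}\ge0$ on $\C_{+}$. The moment expansion $m_{\mu_{\alpha}}(z)=-z^{-1}-M_{1}z^{-2}-M_{2}z^{-3}+O(z^{-4})$, $M_{k}:=\int x^{k}\dd\mu_{\alpha}$, yields $\phi_{\alpha}(z)=-M_{1}-(M_{2}-M_{1}^{2})z^{-1}+O(z^{-2})$ as $z\to\infty$ nontangentially; with the usual centering $\int x\dd\mu_{\alpha}=0$ (absent which an explicit real additive constant enters \eqref{eq:Pick_general}) this gives $\sup_{\eta\ge1}\eta\absv{\phi_{\alpha}(\ii\eta)}<\infty$ and $-\lim_{\eta\to\infty}\ii\eta\,\phi_{\alpha}(\ii\eta)=M_{2}-M_{1}^{2}=\var(\mu_{\alpha})$. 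The characterization then provides a unique finite positive Borel measure $\wh{\mu}_{\alpha,0}$ with $F_{\alpha,0}(z)-z=m_{\wh{\mu}_{\alpha,0}}(z)$ and $\wh{\mu}_{\alpha,0}(\R)=\var(\mu_{\alpha})$, and identically a measure $\wh{\mu}_{\beta,0}$. This establishes \eqref{eq:Pick_general} at $t=0$, its uniqueness, and the mass identities in \eqref{eq:hatmu_def}.

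\emph{Support of $\wh{\mu}_{\alpha,0}$.} If $x_{0}\in\R\setminus[E_{\alpha}^{-},E_{\alpha}^{+}]$ then $m_{\mu_{\alpha}}$ extends holomorphically and real-valuedly near $x_{0}$, with $m_{\mu_{\alpha}}(x_{0})=\int(x-x_{0})^{-1}\dd\mu_{\alpha}(x)\ne0$; hence $F_{\alpha,0}=-1/m_{\mu_{\alpha}}$, and so $\phi_{\alpha}$, extends holomorphically with real boundary values near $x_{0}$, so $x_{0}\notin\supp\wh{\mu}_{\alpha,0}$. This proves $\supp\wh{\mu}_{\alpha,0}\subset[E_{\alpha}^{-},E_{\alpha}^{+}]$. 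For the reverse inclusion, fix $\tau\le\tau_{\alpha}$ (shrunk, if necessary, to a value common to $\alpha,\beta$ and to the $\tau$ used elsewhere): for a.e.\ $x\in(E_{\alpha}^{+}-\tau,E_{\alpha}^{+})$ the boundary value $m_{\mu_{\alpha}}(x+\ii0)$ exists and is finite, with $\im m_{\mu_{\alpha}}(x+\ii0)=\pi f_{\alpha}(x)>0$ by \eqref{eq:Jac_edge_a} ($f_{\alpha}$ being integrable up to the edge because $t_{\alpha}^{+}>-1$), so $\im F_{\alpha,0}(x+\ii0)=\im m_{\mu_{\alpha}}(x+\ii0)/\absv{m_{\mu_{\alpha}}(x+\ii0)}^{2}>0$; thus $\wh{\mu}_{\alpha,0}$ has a positive absolutely continuous density at a.e.\ point of $(E_{\alpha}^{+}-\tau,E_{\alpha}^{+})$, every subinterval therefore carries positive $\wh{\mu}_{\alpha,0}$-mass, and $[E_{\alpha}^{+}-\tau,E_{\alpha}^{+}]\subset\supp\wh{\mu}_{\alpha,0}$. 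The argument for $\beta$ is identical, completing \eqref{eq:hatmu_def}.

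\emph{General $t$, and the main point of care.} From $F_{\mu,t}=-1/m_{\mu}+tm_{\mu}$ one has $F_{\alpha,t}(z)-z=\bigl(F_{\alpha,0}(z)-z\bigr)+t\,m_{\mu_{\alpha}}(z)=m_{\wh{\mu}_{\alpha,0}}(z)+t\,m_{\mu_{\alpha}}(z)=m_{\wh{\mu}_{\alpha,0}+t\mu_{\alpha}}(z)$ by additivity of the Cauchy transform in the measure, and $\wh{\mu}_{\alpha,0}+t\mu_{\alpha}$ is a finite positive Borel measure; re-applying the characterization at time $t$ (here $F_{\alpha,t}(z)-z$ is again holomorphic on $\C_{+}$ with nonnegative imaginary part and $1/z$-coefficient $-(\var(\mu_{\alpha})+t)$, hence $O(\eta^{-1})$ along $\ii\R_{+}$) forces $\wh{\mu}_{\alpha,t}=\wh{\mu}_{\alpha,0}+t\mu_{\alpha}$; this is \eqref{eq:hatmu_t-0}, it completes \eqref{eq:Pick_general} for every $t\in[0,1]$, and it gives $\wh{\mu}_{\alpha,t}(\R)=\var(\mu_{\alpha})+t$, and symmetrically for $\beta$. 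No step is hard, the lemma being an assembly of standard facts; the part that needs the most care is the reverse support inclusion, i.e.\ verifying $\im F_{\alpha,0}(x+\ii0)>0$ a.e.\ near the upper edge, which rests on the lower bound of \eqref{eq:Jac_edge_a} together with the a.e.\ existence of finite boundary values of Cauchy transforms — and, in parallel, keeping precise track of the constant and linear terms of $F_{\alpha,t}$ at infinity (the role of the centering normalization of $\mu_{\alpha},\mu_{\beta}$).
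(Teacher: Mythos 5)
Your proof is correct and follows the paper's broad outline (Nevanlinna--Pick representation, mass via the $1/z$-coefficient, outer support inclusion via real boundary values off $[E_{\alpha}^{-},E_{\alpha}^{+}]$, linearity in $t$), but one step is handled by a genuinely different argument, and you add a useful caveat the paper glosses over. For the inclusion $[E_{\alpha}^{+}-\tau,E_{\alpha}^{+}]\subset\supp\wh{\mu}_{\alpha,0}$, the paper argues by contradiction: a gap interval $I$ in the support would let $F_{\alpha,0}-\mathrm{id}$ extend through $I$ by Schwarz reflection with real boundary values, forcing a meromorphic continuation of $m_{\mu_{\alpha}}$ with $\im m_{\mu_{\alpha}}=0$ a.e.\ on $I$, contradicting $f_{\alpha}>0$ there. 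You instead argue directly via Fatou/Stieltjes inversion: a.e.\ on $(E_{\alpha}^{+}-\tau,E_{\alpha}^{+})$ the boundary value $m_{\mu_{\alpha}}(x+\ii0)$ exists finitely with $\im m_{\mu_{\alpha}}(x+\ii0)=\pi f_{\alpha}(x)>0$, hence $\im F_{\alpha,0}(x+\ii0)>0$, so the absolutely continuous part of $\wh{\mu}_{\alpha,0}$ has positive density on that interval, which forces the support inclusion (and in fact yields slightly more information). Both are standard; yours is more constructive and avoids the Schwarz-reflection detour, at the cost of explicitly invoking a.e.\ finiteness of nontangential boundary values of the Cauchy transform, which the reflection argument sidesteps. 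You also correctly flag that \eqref{eq:Pick_general} as written implicitly assumes $\mu_{\alpha},\mu_{\beta}$ are centered, since otherwise a real constant $-\int x\,\dd\mu_{\alpha}$ appears in $F_{\alpha,0}(z)-z$ at infinity; the paper's proof does not mention this, and it is a worthwhile precision (though harmless for the paper's subsequent use, where only $F_{\alpha,t}'$, $F_{\alpha,t}''$, and the finiteness of $\wh{\mu}_{\alpha,t}(\R)$ are used).
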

	\begin{proof}
		The proof is a minor modification of that of \cite[Lemma 3.5]{Bao-Erdos-Schnelli2020}, and we sketch its proof here for readers' convenience. We prove the result only for $\mu_{\alpha}$ and that for $\mu_{\beta}$ is exactly the same. 
		
		The existence and uniqueness of $\wh{\mu}_{\alpha}$ follow from Nevanlinna-Pick representation theorem, and the formula for $\wh{\mu}_{\alpha}(\R)$ is a direct consequence of \eqref{eq:Pick_general} and the definition of $F_{\alpha}$ in \eqref{eq:mF_def}. Given the uniqueness, we see from \eqref{eq:mF_def} that $\wh{\mu}_{\alpha,t}=\wh{\mu}_{\alpha,0}+t\mu_{\alpha}$. 
		
		In order to prove $\supp\wh{\mu}_{\alpha,0}\subset[E_{\alpha}^{-},E_{+}^{\alpha}]$, we observe for each $x\in [E_{\alpha}^{-},E_{\alpha}^{+}]^{c}$ that
		\beq
		\lim_{y\searrow0}\im m_{\wh{\mu}_{\alpha,0}}(x+\ii y)=\lim_{y\searrow 0}\im F_{\alpha,0}(x+\ii y)=\absv{m_{\mu_{\alpha}}(x)}^{-2}\lim_{y\searrow 0}\im m_{\mu_{\alpha}}(x+\ii y)=0.   
		\eeq
		Then Stieltjes inversion directly implies $[E_{\alpha}^{-},E_{\alpha}^{+}]^{c}\subset (\supp\wh{\mu}_{\alpha,0})^{c}$ as desired.
		
		Finally we prove the inclusion $[E_{\alpha}^{+}-\tau,E_{\alpha}^{+}]\subset\supp\wh{\mu}_{\alpha,0}$. Suppose on the contrary that there exists a nonempty open interval $I\subset(E_{\alpha}^{+}-\tau,E_{\alpha}^{+})\setminus\supp\wh{\mu}_{\alpha,0}$. Since $I\subset\supp\wh{\mu}_{\alpha}^{c}$, the function $z\mapsto F_{\alpha,0}(z)-z$ extends analytically through $I$ via Schwarz reflection which satisfies $F_{\alpha,0}(x)-x\in \R$ for each $x\in I$. Then this leads to a meromorphic extension of $m_{\mu_{\alpha}}$ since
		\beq
		m_{\mu_{\alpha}}=-\frac{1}{(F_{\alpha,0}(z)-z)+z}.
		\eeq
		This extension must satisfy $\im m_{\mu_{\alpha}}(x)=0$ for almost all $x\in I$, which contradicts Definition \ref{defn:LSD}. 
	\end{proof}
	
	We introduce the following result from \cite{Belinschi2014} which gives a necessary condition that a free additive convolution has unbounded Stieltjes transform:
	\begin{lem}[Theorem 7 of \cite{Belinschi2014}]\label{lem:m_bdd}
		Let $\mu$ and $\nu$ be compactly supported Borel probability measures on $\R$. If the image $m_{\mu\boxplus\nu}(\C_{+})$ is unbounded, then there exist real numbers $u$ and $v$ such that $\mu(\{u\})+\nu(\{v\})\geq1$.
	\end{lem}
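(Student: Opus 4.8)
The plan is to locate $u$ and $v$ as boundary values of the analytic subordination functions of $\mu\boxplus\nu$, and to recognise $\mu(\{u\})$ and $\nu(\{v\})$ through the Julia--Carath\'{e}odory theorem. Write $F_{\mu}\deq F_{\mu,0}=-1/m_{\mu}$, and likewise $F_{\nu}$ and $F_{\mu\boxplus\nu}$. By the $t=0$ case of Proposition~\ref{prop:subor} (i.e.\ the classical subordination theorem, Theorem 4.1 of \cite{Belinschi-Bercovici2007}) there are analytic maps $\omega_{1},\omega_{2}\colon\C_{+}\to\C_{+}$ with $\im\omega_{j}(z)\geq\im z$ and $F_{\mu\boxplus\nu}(z)=F_{\mu}(\omega_{1}(z))=F_{\nu}(\omega_{2}(z))=\omega_{1}(z)+\omega_{2}(z)-z$ for all $z\in\C_{+}$. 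Two elementary facts will be used. First, $\im F_{\mu}(w)\geq\im w$ on $\C_{+}$: since $\im m_{\mu}(w)=\im w\int|x-w|^{-2}\dd\mu(x)$ and $|m_{\mu}(w)|^{2}\leq\int|x-w|^{-2}\dd\mu(x)$ by Cauchy--Schwarz, one has $\im F_{\mu}(w)=\im m_{\mu}(w)/|m_{\mu}(w)|^{2}\geq\im w$ (the same computation records $\im w/\im F_{\mu}(w)=|m_{\mu}(w)|^{2}/\int|x-w|^{-2}\dd\mu(x)$). Second, for every $u\in\R$,
\beqs
\liminf_{\C_{+}\ni w\to u}\frac{\im F_{\mu}(w)}{\im w}=\frac{1}{\mu(\{u\})},
\eeqs
with the convention $1/0\deq+\infty$ and with the $\liminf$ over an \emph{arbitrary} approach; this is the Julia--Carath\'{e}odory theorem for the self-map $F_{\mu}$ of $\C_{+}$ (which satisfies $F_{\mu}(w)/w\to1$ as $|w|\to\infty$), together with the classical identification of the angular derivative with the reciprocal atom mass.

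Next I would extract $u,v$. Suppose $|m_{\mu\boxplus\nu}(z_{n})|\to\infty$ for some $z_{n}\in\C_{+}$. Since $m_{\mu\boxplus\nu}(z)\to0$ as $|z|\to\infty$ (compact support), $\{z_{n}\}$ is bounded. Put $\sigma_{n}\deq\im F_{\mu\boxplus\nu}(z_{n})=\im m_{\mu\boxplus\nu}(z_{n})/|m_{\mu\boxplus\nu}(z_{n})|^{2}\in(0,|m_{\mu\boxplus\nu}(z_{n})|^{-1}]$, so $\sigma_{n}\to0$, and also $F_{\mu\boxplus\nu}(z_{n})=-1/m_{\mu\boxplus\nu}(z_{n})\to0$. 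From $\im F_{\mu}\geq\im w$ and $F_{\mu}(\omega_{1}(z_{n}))=F_{\mu\boxplus\nu}(z_{n})$ we get $\im\omega_{1}(z_{n})\leq\sigma_{n}\to0$; likewise $\im\omega_{2}(z_{n})\to0$ and $\im z_{n}\to0$. Moreover $\{\omega_{j}(z_{n})\}$ is bounded: if $|\omega_{1}(z_{n_{k}})|\to\infty$ then $F_{\mu}(\omega_{1}(z_{n_{k}}))\to\infty$, contradicting $F_{\mu}(\omega_{1}(z_{n}))\to0$. Passing to a subsequence, $\omega_{1}(z_{n})\to u$ and $\omega_{2}(z_{n})\to v$ with $u,v\in\R$; these are the required points.

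The conclusion then follows from a single inequality. Taking imaginary parts in $\omega_{1}(z_{n})+\omega_{2}(z_{n})=z_{n}+F_{\mu\boxplus\nu}(z_{n})$ gives $\im\omega_{1}(z_{n})+\im\omega_{2}(z_{n})=\im z_{n}+\sigma_{n}\geq\sigma_{n}$; dividing by $\sigma_{n}=\im F_{\mu}(\omega_{1}(z_{n}))=\im F_{\nu}(\omega_{2}(z_{n}))>0$ yields, for every $n$,
\beqs
\frac{\im\omega_{1}(z_{n})}{\im F_{\mu}(\omega_{1}(z_{n}))}+\frac{\im\omega_{2}(z_{n})}{\im F_{\nu}(\omega_{2}(z_{n}))}\ \geq\ 1 .
\eeqs
By the Julia--Carath\'{e}odory fact applied along $\omega_{1}(z_{n})\to u$ and $\omega_{2}(z_{n})\to v$, the first summand has $\limsup_{n}$ at most $\mu(\{u\})$ and the second has $\limsup_{n}$ at most $\nu(\{v\})$; letting $n\to\infty$ in the displayed inequality gives $\mu(\{u\})+\nu(\{v\})\geq1$, which is the assertion.

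The one genuinely nontrivial ingredient is the Julia--Carath\'{e}odory input in the stated generality (atom mass equal to the reciprocal of the \emph{unrestricted} Julia quotient of the reciprocal Stieltjes transform); with it, everything else is bookkeeping with the subordination identity and the bound $\im F_{\mu}\geq\im w$. A route avoiding it would be to check that the approach $\omega_{1}(z_{n})\to u$ can be taken non-tangentially, so that $(\omega_{1}(z_{n})-u)m_{\mu}(\omega_{1}(z_{n}))\to-\mu(\{u\})$ applies directly; but since the unrestricted statement is classical, the argument above is the most economical.
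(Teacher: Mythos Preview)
The paper does not prove this lemma; it is quoted from \cite{Belinschi2014} without argument. Your overall plan---extract boundary limits $u,v$ of the subordination functions, then bound the two ratios $\im\omega_{j}/\sigma_{n}$ by atom masses---is exactly the right one, and the bookkeeping in the middle paragraph is clean.

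There is one genuine gap. The displayed Julia--Carath\'eodory identity
\[
\liminf_{\C_{+}\ni w\to u}\frac{\im F_{\mu}(w)}{\im w}=\frac{1}{\mu(\{u\})}
\]
is \emph{false} for arbitrary $u\in\R$. Take $\mu=\delta_{0}$ so that $F_{\mu}(w)=w$; then the left side is identically $1$ at every $u$, whereas the right side is $+\infty$ for $u\neq0$. The identification ``angular derivative $=1/\mu(\{u\})$'' holds only when the nontangential boundary value $F_{\mu}(u)$ equals $0$; Julia--Carath\'eodory by itself does not force $F_{\mu}(u)=0$ from a possibly tangential sequential limit $F_{\mu}(\omega_{1}(z_{n}))\to0$.

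The fix is short and does not need the full Julia--Carath\'eodory machinery. You already have $|m_{\mu}(\omega_{1}(z_{n}))|=|m_{\mu\boxplus\nu}(z_{n})|\to\infty$, hence by Cauchy--Schwarz $\int|x-\omega_{1}(z_{n})|^{-2}\dd\mu(x)\to\infty$ as well. For any $\epsilon>0$ split $\mu=\mu\vert_{\{|x-u|<\epsilon\}}+\mu\vert_{\{|x-u|\geq\epsilon\}}$; the second piece contributes bounded terms to both $m_{\mu}(w_{n})$ and $\int|x-w_{n}|^{-2}\dd\mu$, while Cauchy--Schwarz on the first piece gives
\[
\Bigl|\int_{|x-u|<\epsilon}\frac{\dd\mu(x)}{x-w_{n}}\Bigr|^{2}\leq \mu(\{|x-u|<\epsilon\})\int_{|x-u|<\epsilon}\frac{\dd\mu(x)}{|x-w_{n}|^{2}}.
\]
Since both the full $|m_{\mu}(w_{n})|^{2}$ and the full $\int|x-w_{n}|^{-2}\dd\mu$ diverge, the bounded remainders are negligible and one obtains $\limsup_{n}\im\omega_{1}(z_{n})/\sigma_{n}\leq\mu(\{|x-u|<\epsilon\})$ for every $\epsilon>0$, hence $\leq\mu(\{u\})$. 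The same works for $\nu$ at $v$, and your final inequality then yields $\mu(\{u\})+\nu(\{v\})\geq1$. With this repair the argument is complete.
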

	Clearly our measures $\mu_{\alpha}$ and $\mu_{\beta}$ from Definition \ref{defn:LSD} compactly supported and since they are absolutely continuous it also follows that $\mu_{\alpha}(\{u\})+\mu_{\beta}(\{v\})=0$ for all choices of $u$ and $v$. Therefore it follows that $m_{\mu_{0}}$ is bounded on $\C_{+}$ by Lemma \ref{lem:m_bdd}, and the same bound applies to $m_{\mu_{t}}(z)=m_{\mu_{0}}(z+tm_{\mu_{t}})$. One advantage of applying this result is that we can bypass the assumption in \cite{Bao-Erdos-Schnelli2020} that $\sup_{z\in\C_{+}}\absv{m_{\mu_{\alpha}}(z)}\leq C$. 
	
	\begin{lem}\label{lem:subor_cont}
		The maps $\omega_{\alpha,t}(z), \omega_{\beta,t}(z),$ and $m_{\mu_{t}}(z)$ are continuous in $(t,z)\in[0,\infty)\times(\C_{+}\cup\R)$.
	\end{lem}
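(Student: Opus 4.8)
The plan is to work on the fixed-point system characterizing $(\omega_{\alpha,t},\omega_{\beta,t})$ and to upgrade the already-established pointwise analyticity (Proposition \ref{prop:subor}) to joint continuity in $(t,z)$ by a normal-families/compactness argument. First I would record the defining relations: by \eqref{eq:subor} the triple $\bigl(\omega_{\alpha,t}(z),\omega_{\beta,t}(z),z\bigr)$ is the unique zero (in $\C_+^2$, with the correct behavior at $\ii\infty$) of $\Phi_{\alpha\beta,t}=0$, and $m_{\mu_t}(z)=-1/\bigl(\omega_{\alpha,t}(z)+\omega_{\beta,t}(z)-z\bigr)\cdot(\text{correction from }tm_{\mu_t})$; more precisely $F_{\mu_t}(z)=\omega_{\alpha,t}(z)+\omega_{\beta,t}(z)-z$, and then $m_{\mu_t}$ is recovered from $F_{\mu_t}(z)=-1/m_{\mu_t}(z)+tm_{\mu_t}(z)$ by solving the quadratic and picking the branch with $\im m_{\mu_t}(z)\ge 0$. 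Using the relation from the proof of Proposition \ref{prop:subor}, namely $\omega_{\alpha,t}(z)=\omega_{\alpha,0}\bigl(z+tm_{\mu_t}(z)\bigr)$ and likewise for $\beta$, it suffices to prove joint continuity of $(t,z)\mapsto m_{\mu_t}(z)$, since $\omega_{\alpha,0},\omega_{\beta,0}$ are continuous on $\C_+\cup\R$ by Belinschi's extension result quoted before Lemma \ref{lem:stab}, and composition of continuous maps is continuous (one must check $z+tm_{\mu_t}(z)$ stays in $\C_+\cup\R$, which holds since $\im m_{\mu_t}\ge0$, with the real-boundary case handled by the continuous extension).

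The core is therefore continuity of $w_t(z)\deq z+tm_{\mu_t}(z)$, equivalently of $m_{\mu_t}(z)$, in $(t,z)$. I would proceed as follows. Fix a compact set $K\subset[0,\infty)\times(\C_+\cup\R)$ and take a convergent sequence $(t_n,z_n)\to(t_\ast,z_\ast)$ in $K$. The family $\{m_{\mu_t}\}_{t}$ is locally uniformly bounded: for $z$ in a fixed compact subset of $\C_+$ this is immediate from $|m_{\mu_t}(z)|\le 1/\im z$; near the real axis I would use Lemma \ref{lem:m_bdd} together with the subordination identity $m_{\mu_t}(z)=m_{\mu_0}(z+tm_{\mu_t}(z))$ and the fact, noted right after Lemma \ref{lem:m_bdd}, that $m_{\mu_0}$ is uniformly bounded on $\C_+$; hence $\sup_{t\in[0,1]}\sup_{z\in\C_+}|m_{\mu_t}(z)|\le C$, and similarly for $\omega_{\alpha,t},\omega_{\beta,t}$ on $\caD_\tau(0,\eta_M)$ by Lemma \ref{lem:stab}(ii). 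By Montel's theorem, passing to a subsequence, $z\mapsto m_{\mu_{t_n}}(z)$ converges locally uniformly on $\C_+$ to some analytic $g$. Taking limits in the two functional identities $F_{\mu_{t_n}}(z)=\omega_{\alpha,t_n}(z)+\omega_{\beta,t_n}(z)-z$ and $\omega_{\alpha,t_n}(z)=\omega_{\alpha,0}(z+t_n m_{\mu_{t_n}}(z))$ (the right-hand sides converge because $\omega_{\alpha,0}$ is continuous and $t_n m_{\mu_{t_n}}\to t_\ast g$ locally uniformly), one sees that $g$ and the limits $\widetilde\omega_\alpha,\widetilde\omega_\beta$ of $\omega_{\alpha,t_n},\omega_{\beta,t_n}$ satisfy the system $\Phi_{\alpha\beta,t_\ast}=0$ together with the normalization at $\ii\eta\to\infty$ (the normalization passes to the limit by the uniform bound $\im\omega_{\cdot,t}(z)\ge\im z$ and dominated convergence on the imaginary axis). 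By the uniqueness clause of Proposition \ref{prop:subor}, $g=m_{\mu_{t_\ast}}$, $\widetilde\omega_\alpha=\omega_{\alpha,t_\ast}$, $\widetilde\omega_\beta=\omega_{\beta,t_\ast}$. Since every subsequence has a further subsequence converging to the same limit, the full sequence converges; evaluating at $z=z_n$ and using local uniform convergence plus continuity of $m_{\mu_{t_\ast}}$ at $z_\ast$ (for the case $z_\ast\in\R$, via Belinschi's continuous extension and the fact that the extension of $F_{\mu_{t_\ast}}$ is continuous there) gives $m_{\mu_{t_n}}(z_n)\to m_{\mu_{t_\ast}}(z_\ast)$. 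The same argument, or the composition formula, yields continuity of $\omega_{\alpha,t}(z)$ and $\omega_{\beta,t}(z)$.

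The main obstacle is the behavior at the real boundary $z\in\R$: Montel's theorem only gives local uniform convergence on the open upper half-plane, so one must separately argue that the limits extend continuously to $\C_+\cup\R$ and that the convergence of $m_{\mu_{t_n}}(z_n)$ survives when $z_n\to z_\ast\in\R$. I would handle this by invoking the continuous extensions of $\omega_{\alpha,0},\omega_{\beta,0}$ up to $\R$ (possibly attaining $\infty$, but our measures from Definition \ref{defn:LSD} avoid this on the relevant domain by Lemma \ref{lem:stab}(ii)--(iii)) and the quadratic relation $F_{\mu_t}(z)=-1/m_{\mu_t}(z)+tm_{\mu_t}(z)$, which expresses $m_{\mu_t}$ near $\R$ as an explicit continuous function of $\omega_{\alpha,t}(z)+\omega_{\beta,t}(z)-z$ once one fixes the branch by $\im m_{\mu_t}\ge 0$; the branch point (where $1+t m_{\mu_t}^2=0$, i.e. the spectral edges) requires care, but there $\im m_{\mu_t}\sim\sqrt{\kappa+\eta}$ stays continuous and the square-root branch is the right one by the regularity in Lemma \ref{lem:stab}(v). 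A secondary nuisance is the degenerate case $t=0$, where $\mu_{\SC}^{(0)}=\delta_0$ and $m_{\mu_t}$ need not be bounded near an atom of $\mu_\alpha\boxplus\mu_\beta$; but Definition \ref{defn:LSD} forces $\mu_\alpha,\mu_\beta$ absolutely continuous, so $\mu_0=\mu_\alpha\boxplus\mu_\beta$ has no atoms and Lemma \ref{lem:m_bdd} still applies, keeping $m_{\mu_0}$ bounded. With these points dispatched the argument is routine.
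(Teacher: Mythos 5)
Your proposal takes a genuinely different route from the paper's. The paper's proof is quantitative: like you, it reduces to continuity of $m_{\mu_t}(z)$, but it then proves an explicit H\"older bound $\absv{tm_{\mu_t}(w)-sm_{\mu_s}(w)}\leq C\min(t,s)^{-1/3}(\max(t,s)+\absv{w})^4\absv{t-s}^{1/3}$ uniformly over $w\in\ol{\C_{+}}$ (via \eqref{eq:subor_cont2}--\eqref{eq:subor_cont3}), handles $t=0$ separately through the identity $m_{\mu_t}(w)=m_{\mu_0}(w+tm_{\mu_t}(w))$, and combines the resulting modulus of continuity in $t$ at \emph{fixed} $w$ with Belinschi's continuity of $z\mapsto m_{\mu_t}(z)$ for each fixed $t$ to deduce joint continuity. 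You instead run a normal-families argument (Montel, then identify the subsequential limit via the uniqueness clause of Proposition~\ref{prop:subor}). On the open upper half-plane this is correct and arguably cleaner; both approaches rely on the same ingredients (boundedness of $m_{\mu_0}$ from Lemma~\ref{lem:m_bdd}, the relation $\omega_{\iota,t}(z)=\omega_{\iota,0}(z+tm_{\mu_t}(z))$, and Belinschi's continuous extension of $\omega_{\iota,0}$).

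The gap is at the real boundary, which is precisely where the lemma is actually used later (e.g.\ in the proof of Lemma~\ref{lem:stab_lim}, where $\omega_{\alpha,t}$ is evaluated at $E_{+,t}\in\R$). Montel only gives locally uniform convergence in the open half-plane and supplies no equicontinuity up to $\R$; to get $m_{\mu_{t_n}}(z_n)\to m_{\mu_{t_*}}(z_*)$ when $\im z_n\to 0$ you would need a uniform-in-$n$ bound on $\absv{m_{\mu_{t_n}}(E_n+\ii\eta_n)-m_{\mu_{t_n}}(E_n+\ii\eta)}$ as $\eta_n\to 0$, and this does not follow from normal families. Your proposed repair is circular: you suggest recovering $m_{\mu_t}(z)$ from the quadratic $F_{\mu_t}(z)=-1/m_{\mu_t}(z)+tm_{\mu_t}(z)$ with $F_{\mu_t}(z)=\omega_{\alpha,t}(z)+\omega_{\beta,t}(z)-z$, but $\omega_{\alpha,t_n}(z_n)=\omega_{\alpha,0}\bigl(z_n+t_nm_{\mu_{t_n}}(z_n)\bigr)$, so its boundary behavior depends again on the very quantity $m_{\mu_{t_n}}(z_n)$ you are trying to control. (Appealing to Lemma~\ref{lem:stab}(iii) is likewise circular, since that item is proved using the continuity statement you are proving.) The paper avoids this because its estimate \eqref{eq:subor_cont1} is a bound in $t$ at a \emph{fixed} spectral parameter $w\in\ol{\C_{+}}$, so the boundary split $\absv{m_{\mu_{t_n}}(z_n)-m_{\mu_{t_*}}(z_*)}\leq \absv{m_{\mu_{t_n}}(z_n)-m_{\mu_{t_*}}(z_n)}+\absv{m_{\mu_{t_*}}(z_n)-m_{\mu_{t_*}}(z_*)}$ is controlled by the $t$-estimate (first term) and Belinschi continuity at the single parameter $t_*$ (second term), never needing boundary equicontinuity of the whole family. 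To close your argument you would have to supply a comparable modulus of continuity near $\R$ uniform in $t$, at which point you are essentially reproducing the paper's computation.
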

	\begin{proof}
		Recall from \eqref{eq:omega(z+tm)} that $\omega_{\alpha,t}(z)=\omega_{\alpha,0}(z+tm_{\mu_{t}}(z))$ and $\omega_{\beta,t}(z)=\omega_{\beta,0}(z+tm_{\mu_{t}}(z))$. Since $\omega_{\alpha,0}$ and $\omega_{\beta,0}$ continuously extend to $\C_{+}\cup\R$ by \cite[Theorem 3.3]{Belinschi2008}, it suffices to consider only $m_{\mu_{t}}(z)$. Thus, our goal here is to prove the following statement; for all fixed $\epsilon>0$, $z\in\ol{\C}_{+}$, and $t\in[0,\infty)$, there exists $\delta>0$ such that $\absv{z-w}<\delta$ and $\absv{t-s}<\delta$ imply
		\beq\label{eq:subor_cont}
		\absv{m_{\mu_{t}}(z)-m_{\mu_{s}}(w)}<\epsilon.
		\eeq
		
		First, we prove \eqref{eq:subor_cont} when $t=0$. Take $\delta_{1}>0$ so that $\absv{w-z}<3\delta_{1}$ implies $\absv{m_{\mu_{0}}(z)-m_{\mu_{0}}(w)}<\epsilon/100$, and take $\delta_{2}>0$ to satisfy $\delta_{2}\sup_{z'\in\C_{+}}\absv{m_{\mu_{0}}(z')}<\delta_{1}$. Then, for all $s\in (0,\delta_{2})$ and $w\in\ol{\C}_{+}$ with $\absv{z-w}<\delta_{1}$, we have
		\beqs
		\absv{z-w-sm_{\mu_{s}(w)}}\leq \absv{z-w}+s\absv{m_{\mu_{0}}(w+sm_{\mu_{s}}(w))}\leq 3\delta_{1},
		\eeqs
		so that
		\beqs
		\absv{m_{\mu_{0}}(z)-m_{\mu_{s}}(w)}=\absv{m_{\mu_{0}}(z)-m_{\mu_{0}}(w+sm_{\mu_{s}}(w))}<\epsilon.
		\eeqs
		
		Next, we prove \eqref{eq:subor_cont} at $(t,z)\in(0,\infty)\times\ol{\C}_{+}$. We first claim that the result follows from the following assertion; there exists a constant $C>0$ such that the following holds whenever $w\in\ol{\C}_{+}$ and $t,s>0$;
		\beq\label{eq:subor_cont1}
		\absv{tm_{\mu_{t}}(w)-sm_{\mu_{s}}(w)}\leq C\min(t,s)^{-1/3}((\max(t,s)+\absv{w}))^{4}\absv{t-s}^{1/3}.
		\eeq
		We deduce \eqref{eq:subor_cont} for $(t,z)\in(0,\infty)\times\ol{\C}_{+}$ assuming the validity of \eqref{eq:subor_cont1}. As an immediate consequence of \eqref{eq:subor_cont1}, whenever $s$ is close enough to $t$, we have
		\beq\label{eq:subor_conti_11}\begin{aligned}
			m_{\mu_{s}}(w)=m_{\mu_{0}}(w+sm_{\mu_{s}}(w))
			=m_{\mu_{0}}(w+tm_{\mu_{t}}(w)+O(\absv{t-s}^{1/3})),
		\end{aligned}\eeq
		uniformly over $w$ in bounded subsets of $\C_{+}\cup\R$. Since $m_{\mu_{0}}$ is continuous and bounded in $\C_{+}$, it is uniformly continuous on each compact subset of $\C_{+}\cup\R$, so that \eqref{eq:subor_conti_11} implies
		\beq
		\absv{m_{\mu_{s}}(w)-m_{\mu_{t}}(w)}=\absv{m_{\mu_{s}}(w)-m_{\mu_{0}}(w+tm_{\mu_{t}}(w))}\to 0,\qquad \text{as} \quad s\to t,
		\eeq
		uniformly over $\absv{w}\leq R$. Then, recalling that $t\in(0,\infty)$ is fixed, the final result \eqref{eq:subor_cont} follows from the continuity of $m_{\mu_{t}}$.
		
		Finally, we prove \eqref{eq:subor_cont1}. We suppose $s>t$ without loss of generality and write
		\begin{align*}
			&m_{t}=m_{\mu_{t}}(w), &&m_{s}=m_{\mu_{s}}(w),  &&\xi_{t}=w+tm_{\mu_{t}}(w), &&\xi_{s}=w+sm_{\mu_{s}}(w)
		\end{align*}
		to simplify the presentation. Using the equation $m_{t}=m_{\mu_{0}}(\xi_{t})$, we find that
		\beqs
		m_{t}-m_{s}=m_{\mu_{0}}(\xi_{t})-m_{\mu_{0}}(\xi_{s})=(\xi_{t}-\xi_{s})\int_{\R}\frac{1}{(x-\xi_{t})(x-\xi_{s})}\dd\mu_{0}(x).
		\eeqs
		Using the definition of $\xi_{t}$ and rearranging the equation, we get
		\beq\label{eq:subor_cont2}
		(\xi_{t}-\xi_{s})\left(1-t\int_{\R}\frac{1}{(x-\xi_{t})(x-\xi_{s})}\dd\mu_{0}(x)\right)=(t-s)m_{s}.
		\eeq
		We next derive \eqref{eq:subor_cont1} from a lower bound for the second factor on the left-hand side of \eqref{eq:subor_cont2}. Using the fact that 
		\beqs
		\im \xi_{s}=\im w+s\im m_{\mu_{s}}(w)=\im w+s\im \xi_{s}\int_{\R}\frac{1}{\absv{x-\xi_{s}}^{2}}\dd\mu_{0}(x),
		\eeqs
		we have
		\begin{align}\label{eq:subor_cont_eq}
			&\int_{\R}\frac{1}{\absv{x-\xi_{s}}^{2}}\dd\mu_{0}(x)\leq\frac{1}{s}, &
			&\int_{\R}\frac{1}{\absv{x-\xi_{t}}^{2}}\dd\mu_{0}(x)\leq\frac{1}{t}.
		\end{align}
		Then we find that
		\beq
		\begin{split}
			&\Absv{1-t\int_{\R}\frac{1}{(x-\xi_{t})(x-\xi_{s})}\dd\mu_{0}(x)}\geq 1-t\re\int\frac{1}{(x-\xi_{t})(x-\xi_{s})}\dd\mu_{0}(x)\\
			\geq& 1-t\frac{s^{-1}+t^{-1}}{2}+\frac{t}{2}\int_{\R}\left(\frac{1}{\absv{x-\xi_{t}}^{2}}+\frac{1}{\absv{x-\xi_{s}}^{2}}-2\re\frac{1}{(x-\xi_{t})(x-\xi_{s})}\right)\dd\mu_{0}(x)\\
			\geq& \frac{t}{2}\int_{\R}\Absv{\frac{1}{x-\xi_{t}}-\frac{1}{x-\xi_{s}}}^{2}\dd\mu_{0}(x)=\frac{t}{2}\int_{\R}\frac{\absv{\xi_{s}-\xi_{t}}^{2}}{\absv{x-\xi_{t}}^{2}\absv{x-\xi_{s}}^{2}}\dd\mu_{0}(x)	\\
			\geq& Ct(1+s+\absv{w})^{-4}\absv{\xi_{t}-\xi_{s}}^{2},	\label{eq:subor_cont3}
		\end{split}
		\eeq
		where we used \eqref{eq:subor_cont_eq} in the second line, $s>t$ in the third, and $\absv{x-\xi_{t}},\absv{x-\xi_{s}}\leq \absv{w}+C(1+s)$ in the last. Now plugging \eqref{eq:subor_cont3} into \eqref{eq:subor_cont2}, we get
		\beqs
		Ct(s+\absv{w})^{-4}\absv{\xi_{t}-\xi_{s}}^{3}\leq C\absv{t-s},
		\eeqs
		which implies \eqref{eq:subor_cont1}.
	\end{proof}
	\begin{lem}\label{lem:subor_bdd}
		There exists a positive constant $\tau$ such that for all fixed $s,\eta_{M}>0$ there is $C>0$ with
		\beqs
		\sup_{t\in[0,s]}\sup_{z\in \caD_{\tau}(0,\eta_{M})}\absv{\omega_{\alpha,t}(z)}+\absv{\omega_{\beta,t}(z)}\leq C.
		\eeqs
	\end{lem}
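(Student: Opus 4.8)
The plan is to combine the joint continuity of the subordination functions (Lemma~\ref{lem:subor_cont}) with a compactness argument, thereby reducing the statement to the assertion that $\omega_{\alpha,t}$ and $\omega_{\beta,t}$ never assume the value $\infty$ on the relevant spectral domain; that assertion I would then exclude, for $\tau$ small, using the square-root edge of $\mu_{0}:=\mu_{\alpha}\boxplus\mu_{\beta}$ (Lemma~\ref{lem:fc_dens}) together with the complex-Burgers identity $m_{\mu_{t}}(z)=m_{\mu_{0}}(z+tm_{\mu_{t}}(z))$ recorded in Section~\ref{sec:prelim}.

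First I would fix $\tau:=\delta_{0}/2$, where $\delta_{0}>0$ is chosen via Lemma~\ref{lem:fc_dens} so that the density $\rho_{0}$ of $\mu_{0}$ is strictly positive on $(E_{+,0}-\delta_{0},E_{+,0})$; this $\tau$ depends only on $\mu_{\alpha},\mu_{\beta}$, as the statement demands. Since the upper edge of $\mu_{0}\boxplus\mu_{\SC}^{(t)}$ is nondecreasing in $t$ and satisfies $E_{+,t}=\sup\supp(\mu_{0}\boxplus\mu_{\SC}^{(t)})\le E_{+,0}+2\sqrt{t}$ (subadditivity of $\sup\supp$ under $\boxplus$), every domain $\caD_{\tau,t}(0,\eta_{M})$ is contained in the fixed compact set
\[
\caK:=\bigl\{E+\ii\eta:\ E_{+,0}-\tau\le E\le\tau^{-1},\ 0\le\eta\le\eta_{M}\bigr\}.
\]
By Lemma~\ref{lem:subor_cont} the map $(t,z)\mapsto\absv{\omega_{\alpha,t}(z)}+\absv{\omega_{\beta,t}(z)}$ is continuous from the compact set $[0,s]\times\caK$ into $[0,\infty]$ (continuity understood in the spherical metric on $\C\cup\{\infty\}$, with $\absv{\infty}=\infty$), hence attains its supremum; therefore it suffices to show that $\omega_{\alpha,t}(z)\neq\infty$ and $\omega_{\beta,t}(z)\neq\infty$ for every $(t,z)\in[0,s]\times\caK$.

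For this exclusion I would argue by contradiction: suppose $\omega_{\alpha,t_{*}}(z_{*})=\infty$ for some $(t_{*},z_{*})\in[0,s]\times\caK$ (the $\omega_{\beta}$ case is identical). Using the factorization $\omega_{\alpha,t}(z)=\omega_{\alpha,0}(z+tm_{\mu_{t}}(z))$ and writing $\zeta:=z_{*}+t_{*}m_{\mu_{t_{*}}}(z_{*})$, one has $\zeta\in\C_{+}\cup\R$ since $\im m_{\mu_{t_{*}}}\ge0$ there; as $\omega_{\alpha,0}$ maps $\C_{+}$ into $\C_{+}$, the equality $\omega_{\alpha,0}(\zeta)=\infty$ forces $\zeta\in\R$. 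The boundary subordination relation $m_{\mu_{\alpha}}\circ\omega_{\alpha,0}=m_{\mu_{0}}$ (with $m_{\mu_{\alpha}}(\infty)=0$) then yields $m_{\mu_{0}}(\zeta)=0$, so by the complex-Burgers identity $m_{\mu_{t_{*}}}(z_{*})=m_{\mu_{0}}(\zeta)=0$. But $\im m_{\mu_{t_{*}}}(E+\ii\eta)>0$ for every $\eta>0$, which forces $z_{*}$ to be real; hence $\zeta=z_{*}$ and $m_{\mu_{0}}(z_{*})=0$. Finally $z_{*}=\re z_{*}\ge E_{+,0}-\tau>E_{+,0}-\delta_{0}$, so either $z_{*}\in(E_{+,0}-\delta_{0},E_{+,0})$, where $\im m_{\mu_{0}}(z_{*}+\ii 0)=\pi\rho_{0}(z_{*})>0$, or $z_{*}\ge E_{+,0}$, where $m_{\mu_{0}}(z_{*})=\int(x-z_{*})^{-1}\,\dd\mu_{0}(x)<0$ (the integral converging because of the square-root edge, Lemma~\ref{lem:fc_dens}); in either case $m_{\mu_{0}}(z_{*})\neq0$, a contradiction. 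This would complete the proof, and the bound $C$ would then also extend by continuity to $\omega_{\alpha,t}(E)+\omega_{\beta,t}(E)$ on the real boundary.

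I expect the main obstacle to be conceptual rather than computational: the subordination functions $\omega_{\alpha,0},\omega_{\beta,0}$ genuinely do attain the value $\infty$ along $\R$ whenever $\supp\mu_{\alpha}$ or $\supp\mu_{\beta}$ (hence $\supp\mu_{0}$) is disconnected, and the whole content of the lemma is that this never occurs on the spectral domain $\caD_{\tau,t}(0,\eta_{M})$ near the upper edge, uniformly over $t\in[0,s]$. The argument above localizes any such obstruction to a real zero of $m_{\mu_{0}}$, which the square-root behavior of $\rho_{0}$ at $E_{+,0}$ pushes a fixed distance below $E_{+,0}$; the complex-Burgers identity then transports this exclusion from $t=0$ to all $t\in[0,s]$ simultaneously, so that no $t$-uniform control of the density of $\mu_{t}$ (e.g.\ via Biane's regularization theorem) is required. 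One should also verify the harmless technical point that $m_{\mu_{0}}$ and $\omega_{\alpha,0}$ admit continuous extensions to $\C_{+}\cup\R$ valued in $\C$ and in $\C\cup\{\infty\}$ respectively, which is available from the results recalled in Section~\ref{sec:prelim}.
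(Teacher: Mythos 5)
Your proof is correct but takes a genuinely different route from the paper's. The paper argues by contradiction using the Nevanlinna--Pick representation of Lemma~\ref{lem:Pick_F}: if $\omega_{\alpha,t}(z)$ were huge, then $F_{\alpha,t}(\omega_{\alpha,t}(z))\approx \omega_{\alpha,t}(z)$ (since $F_{\alpha,t}(\omega)=\omega+O(\omega^{-1})$), so by the first subordination equation $\omega_{\beta,t}(z)$ would be within $O(K^{-1})$ of $z$, hence lie in a region where the lower density bound \eqref{eq:Jac_edge_b} forces $|m_{\beta}|\geq c$ and therefore $|F_{\beta,t}(\omega_{\beta,t}(z))|$ is $O(1)$ --- contradicting $F_{\beta,t}(\omega_{\beta,t}(z))=F_{\alpha,t}(\omega_{\alpha,t}(z))$. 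You instead run a soft compactness/exclusion argument: by Lemma~\ref{lem:subor_cont} the map $(t,z)\mapsto\omega_{\alpha,t}(z)$ is jointly continuous as a function into $\overline{\mathbb{C}}_{+}\cup\{\infty\}$, so the supremum over the compact set $[0,s]\times\caK$ is attained, and an $\infty$-value is localized (via the Burgers identity $m_{\mu_{t}}(z)=m_{\mu_{0}}(z+tm_{\mu_{t}}(z))$ and $m_{\mu_{\alpha}}\circ\omega_{\alpha,0}=m_{\mu_{0}}$) to a real zero of $m_{\mu_{0}}$ in $[E_{+,0}-\tau,\infty)$, which is ruled out by the square-root edge of Lemma~\ref{lem:fc_dens} (strictly positive density just below $E_{+,0}$, strictly negative real Stieltjes transform at and above $E_{+,0}$). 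The paper's proof is elementary and self-contained, uses only the Pick bound and the density hypothesis, and yields a bound whose $s$-dependence is transparent (it enters only through $t\,m_{\mu_{t}}$). Your proof is conceptually cleaner --- it isolates exactly where $\omega_{\alpha,0}$ can blow up and shows that cannot happen near the upper edge --- but it leans on three nontrivial inputs: Lemma~\ref{lem:subor_cont} (which the paper indeed proves before Lemma~\ref{lem:subor_bdd}, so no circularity), Lemma~\ref{lem:m_bdd} (boundedness of $m_{\mu_{0}}$, needed implicitly), and Lemma~\ref{lem:fc_dens}.

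Two small points worth tightening. First, you use $E_{+,t}\geq E_{+,0}$ to get $\caD_{\tau,t}(0,\eta_{M})\subset\caK$; this is true but needs a one-line justification, e.g.\ from the characterization $E_{+,t}=\xi_{t}-t\,m_{\mu_{0}}(\xi_{t})$ with $\xi_{t}>E_{+,0}$ and $m_{\mu_{0}}(\xi_{t})<0$ (as in \eqref{eq:fc_sc_edge}), which gives $\partial_{t}E_{+,t}=-m_{\mu_{0}}(\xi_{t})>0$. Second, Lemma~\ref{lem:subor_cont} as stated reads as $\C$-valued continuity; for your argument you need it interpreted in the spherical metric on $\C\cup\{\infty\}$ (which is what its proof actually delivers, since it composes the spherically continuous extension of $\omega_{\alpha,0}$ from Belinschi with the $\C$-valued continuity of $(t,z)\mapsto z+tm_{\mu_{t}}(z)$), so this should be said explicitly. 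Finally, as a side remark, your parenthetical ``whenever $\supp\mu_{\alpha}$ or $\supp\mu_{\beta}$ (hence $\supp\mu_{0}$) is disconnected'' is not quite accurate --- the $\infty$-set of $\omega_{\alpha,0}$ is governed by zeros of $m_{\mu_{0}}$ in gaps of $\supp\mu_{0}$, and connectedness of $\supp\mu_{\alpha},\supp\mu_{\beta}$ neither implies nor is implied by connectedness of $\supp\mu_{0}$ --- but this does not affect the proof.
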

	\begin{proof}
		The proof closely follows that of \cite[Lemma 3.2]{Bao-Erdos-Schnelli2020}. We prove the bound for $\omega_{\alpha,t}$ and the same proof applies to $\omega_{\beta,t}$. First of all, from \eqref{eq:Jac_edge_b} we find that for each fixed $M>0$ there exists a constant $c>0$ such that
		\beq\label{eq:subor_bdd1}
		\inf\{\absv{m_{\beta}(\omega)}:\re \omega\in(E_{\beta}^{+}-\tau_{\beta},E_{\beta}^{+}+M),\,\im \omega\in(0,2\eta_{M})\}\geq c.
		\eeq
		On the other hand by Lemma \ref{lem:Pick_F}, there exist constants $C_{1},C_{2}>0$ such that
		\beqs
		\absv{F_{\alpha,t}(\omega)}\geq\frac{\absv{\omega}}{2} \AND\absv{F_{\alpha,t}(\omega)-\omega}\leq C_{2}\absv{\omega}^{-1}
		\eeqs
		whenever $\absv{\omega}\geq C_{1}$. 
		
		Now we assume on the contrary that $\absv{\omega_{\alpha,t}(z)}\geq K$ for some $z\in\caD_{\tau}(0,\eta_{M})$ with $\tau<\tau_{\beta}$ and $K>0$ to be chosen later. In particular if $K>C_{1}$ we have
		\beqs
		\absv{\omega_{\beta,t}(z)-z}=\absv{F_{\alpha,t}(\omega_{\alpha,t}(z))-\omega_{\alpha,t}(z)}\leq C_{2}K^{-1}.
		\eeqs
		Thus we can take $K$ to be large enough so that
		\begin{align*}
			&\re \omega_{\beta,t}(z)\geq E_{+,t}-\tau-C_{2}K^{-1}>E_{\beta}^{+}-\tau_{\beta},&
			&\im\omega_{\beta,t}(z)<\eta_{M}+C_{2}K^{-1}<2\eta_{M},
		\end{align*}
		where we used the fact that $E_{\beta}^{+}\leq E_{+,t}\leq E_{\beta}^{+}+E_{\alpha}^{+}+2\sqrt{t}$ from \cite[Lemma 3.1]{Voiculescu1986}. In other words, $\omega_{\beta,t}(z)$ lies within the domain in \eqref{eq:subor_bdd1}. Then we obtain
		\beqs
		\absv{F_{\beta,t}(\omega_{\beta,t}(z))}\leq \frac{1}{\absv{m_{\mu_{\beta}}(\omega_{\beta,t}(z))}}+ s\absv{m_{\mu_{0}}(z+tm_{\mu_{\mu_{t}}}(z))}\leq C
		\eeqs
		for some constant $C>0$. After raising $K$ further, we have a contradiction since
		\beqs
		\absv{F_{\beta,t}(\omega_{\beta,t}(z))}=\absv{F_{\alpha,t}(\omega_{\alpha,t}(z))}\geq \frac{\absv{\omega_{\alpha,t}(z)}}{2}.
		\eeqs
		This proves $\absv{\omega_{\alpha,t}(z)}\leq K$, and the bound for $\omega_{\beta,t}(z)$ follows from the same proof.
	\end{proof}
	
	\begin{lem}\label{lem:subor_real}
		Recall that $E_{+,t}=\sup\supp\mu_{t}$. For each $t\geq 0$, the maps $\omega_{\alpha,t}$ and $\omega_{\beta,t}$ are real-valued and monotone increasing on $(E_{+,t},\infty)$, and they map into $(E_{\alpha}^{+},\infty)$ and $(E_{\beta}^{+},\infty)$, respectively.
	\end{lem}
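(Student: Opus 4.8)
The plan is to prove the three assertions --- real-valuedness, monotonicity, and the inclusion of the range in $(E_{\alpha}^{+},\infty)$ (resp. $(E_{\beta}^{+},\infty)$) --- one after the other, for a fixed $t\ge 0$; I abbreviate $\omega_{\alpha}\equiv\omega_{\alpha,t}$, $F_{\alpha,t}$, $m_{\mu_{t}}$, etc., and use throughout that $\omega_{\alpha},\omega_{\beta}$ and $m_{\mu_{t}}$ extend continuously to $\C_{+}\cup\R$ (with values in $\C\cup\{\infty\}$) by Lemma \ref{lem:subor_cont} and \cite{Belinschi2008}. \emph{First I would establish real-valuedness and finiteness.} Fix $z\in(E_{+,t},\infty)$. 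Since $z$ lies strictly to the right of $\supp\mu_{t}$, we have $m_{\mu_{t}}(z)\in(-\infty,0)$, so $F_{\mu_{t},t}(z)=-m_{\mu_{t}}(z)^{-1}+t\,m_{\mu_{t}}(z)\in\R$ is finite. By \eqref{eq:subor}, extended to the boundary by continuity, $\omega_{\alpha}(z)+\omega_{\beta}(z)=F_{\mu_{t},t}(z)+z$. Using $F_{\alpha,t}(w)=w+m_{\wh{\mu}_{\alpha,t}}(w)$ from \eqref{eq:Pick_general}, the value $\omega_{\alpha}(z)=\infty$ would give $F_{\alpha,t}(\omega_{\alpha}(z))=\infty$, contradicting $F_{\alpha,t}(\omega_{\alpha}(z))=F_{\mu_{t},t}(z)<\infty$; likewise $\omega_{\beta}(z)\neq\infty$. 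Hence both are finite, and taking imaginary parts in the displayed identity together with $\im\omega_{\alpha}(z),\im\omega_{\beta}(z)\ge\im z=0$ (Proposition \ref{prop:subor}(i)) forces $\im\omega_{\alpha}(z)=\im\omega_{\beta}(z)=0$.

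\emph{Next I would derive the Stieltjes-transform subordination identity and deduce monotonicity.} From the proof of Proposition \ref{prop:subor} we have $\omega_{\alpha,t}(z)=\omega_{\alpha,0}(z+t\,m_{\mu_{t}}(z))$; combining the semicircular subordination $m_{\mu_{t}}(z)=m_{\mu_{0}}(z+t\,m_{\mu_{t}}(z))$ with the two-measure subordination $m_{\mu_{0}}=m_{\alpha}\circ\omega_{\alpha,0}$ (both are restatements of \eqref{eq:subor} via $F=-1/m$, the latter being the classical subordination of \cite{Belinschi-Bercovici2007}) yields $m_{\alpha}(\omega_{\alpha}(z))=m_{\mu_{t}}(z)$ on $\C_{+}$, hence on $(E_{+,t},\infty)$ by continuity. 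On $(E_{+,t},\infty)$ the function $m_{\mu_{t}}$ is real-analytic with $m_{\mu_{t}}'(z)=\int(x-z)^{-2}\dd\mu_{t}(x)>0$, so it is strictly increasing; therefore $\omega_{\alpha}$ is injective there, and being continuous on an interval it is strictly monotone. Since $m_{\mu_{t}}(z)\to 0^{-}$ gives $F_{\mu_{t},t}(z)\to+\infty$ as $z\to+\infty$, while $F_{\alpha,t}(w)\sim w$ as $|w|\to\infty$, the relation $F_{\alpha,t}(\omega_{\alpha}(z))=F_{\mu_{t},t}(z)$ forces $\omega_{\alpha}(z)\to+\infty$, so $\omega_{\alpha}$ is strictly \emph{increasing}; the same arguments apply verbatim to $\omega_{\beta}$.

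\emph{Finally I would pin down the range.} The image $\caI\deq\omega_{\alpha}\big((E_{+,t},\infty)\big)$ is, by strict monotonicity and continuity, an open interval of the form $(\ell,\infty)$. The key claim is that $\caI$ avoids $(E_{\alpha}^{+}-\tau_{\alpha},E_{\alpha}^{+})$: if $\omega_{\alpha}(z)=w_{0}$ lay in this interval, choose $z'\to z$ with $z'\in\C_{+}$, so $\omega_{\alpha}(z')\to w_{0}$ with $0<\im\omega_{\alpha}(z')$ (Proposition \ref{prop:subor}(i)) tending to $0$ (by the real-valuedness at $z$ from the first step), while $\im m_{\alpha}(\omega_{\alpha}(z'))=\im m_{\mu_{t}}(z')\to 0$ because $z\notin\supp\mu_{t}$; but $f_{\alpha}\ge c>0$ on a neighbourhood of $w_{0}$ by \eqref{eq:Jac_edge_a}, and a Poisson-kernel lower bound then gives $\liminf\im m_{\alpha}(\omega_{\alpha}(z'))\ge\pi c>0$, a contradiction. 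Since $(\ell,\infty)$ is connected and misses this open interval it must lie in $(-\infty,E_{\alpha}^{+}-\tau_{\alpha}]$ or in $[E_{\alpha}^{+},\infty)$; the former is impossible as $\omega_{\alpha}(z)\to+\infty$, so $\ell\ge E_{\alpha}^{+}$ and hence $\omega_{\alpha}(z)>\ell\ge E_{\alpha}^{+}$ for every $z$, and symmetrically $\omega_{\beta}(z)>E_{\beta}^{+}$. The main obstacle is precisely this last step: controlling the boundary values of $\omega_{\alpha}$ along a possibly non-radial approach to a real point, for which Proposition \ref{prop:subor}(i) is exactly what keeps $\im\omega_{\alpha}(z')$ comparable to $\im z'$, used against the quantitative positivity of $f_{\alpha}$ near $E_{\alpha}^{+}$ from Definition \ref{defn:LSD}; the bookkeeping needed to exclude the value $\infty$ at each stage is a secondary nuisance that is harmless here.
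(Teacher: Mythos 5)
Your proof is built from the same three ingredients as the paper's (real-valuedness from $\im\omega_{\alpha}+\im\omega_{\beta}=0$; the subordination identity $m_{\alpha}\circ\omega_{\alpha}=m_{\mu_{t}}$; a contradiction from the positive density of $f_{\alpha}$ near $E_{\alpha}^{+}$ should $\omega_{\alpha}$ land there), and your use of a Poisson-kernel lower bound is in fact a pleasant alternative to the paper's appeal to Lindel\"of's theorem. But there is a circularity in the \emph{order} of your steps that the paper's ordering is designed to avoid.

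The issue is in your monotonicity step. You pass from $m_{\alpha}(\omega_{\alpha}(z))=m_{\mu_{t}}(z)$ on $\C_{+}$ to the same identity ``on $(E_{+,t},\infty)$ by continuity,'' and then read off injectivity of $\omega_{\alpha}$ from injectivity of $m_{\mu_{t}}$. But $m_{\alpha}$ need not extend continuously to points of $\supp\mu_{\alpha}$, so the boundary value of the composition $m_{\alpha}\circ\omega_{\alpha}$ at a real $z$ is only the analytic-continuation value of $m_{\mu_{t}}$, not ``$m_{\alpha}$ evaluated at $\omega_{\alpha}(z)$.'' Consequently, if $\omega_{\alpha}(z_{1})=\omega_{\alpha}(z_{2})=w_{0}$ with $w_{0}\in\supp\mu_{\alpha}$, you cannot conclude $m_{\mu_{t}}(z_{1})=m_{\mu_{t}}(z_{2})$: the curves $\omega_{\alpha}(z'\to z_{1})$ and $\omega_{\alpha}(z'\to z_{2})$ both tend to $w_{0}$ from $\C_{+}$, but $m_{\alpha}$ may have different limits along them. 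To legitimise this step you would already need the range statement $\omega_{\alpha}((E_{+,t},\infty))\cap\supp\mu_{\alpha}=\varnothing$ --- which in your draft is proved \emph{afterward}, and whose proof leans on the monotonicity just claimed (to write the image as an interval $(\ell,\infty)$). The paper sidesteps this by proving the range inclusion \emph{first}, via the intermediate value theorem applied to the continuous real function $\omega_{\alpha}$ (with $\omega_{\alpha}(z_{0})>E_{\alpha}^{+}$ for a large reference point $z_{0}$) together with the Lindel\"of-based contradiction, and only then obtains monotonicity from the chain rule, by which time $m_{\alpha}'(\omega_{\alpha}(z))$ is manifestly finite.

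The fix is purely structural and uses nothing you don't already have: (a) establish the range inclusion by the IVT, exactly as in the paper, replacing Lindel\"of with your Poisson-kernel estimate if you prefer (for this you do not need monotonicity --- only continuity and real-valuedness of $\omega_{\alpha}$ on $[E_{+,t},\infty)$ and the anchor $\omega_{\alpha}(z_{0})>E_{\alpha}^{+}$ at a large $z_{0}$); (b) once $\omega_{\alpha}((E_{+,t},\infty))\subset(E_{\alpha}^{+},\infty)$ is known, the identity $m_{\alpha}\circ\omega_{\alpha}=m_{\mu_{t}}$ holds classically on $(E_{+,t},\infty)$ and differentiating gives $m_{\alpha}'(\omega_{\alpha}(z))\,\omega_{\alpha}'(z)=m_{\mu_{t}}'(z)>0$ with $m_{\alpha}'>0$, whence $\omega_{\alpha}'>0$. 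With that reordering your argument is essentially the paper's proof.
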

	\begin{proof}
		For any $z\in[E_{+,t},\infty)$ we have from \eqref{eq:Phi_def} that
		\beqs
		\im\omega_{\alpha,t}(z)+\im\omega_{\beta,t}(z)=\im F_{\mu_{t}}(z)+\im z=0,
		\eeqs
		which implies $\im\omega_{\alpha,t}(z)=\im\omega_{\beta,t}(z)=0$ since both of them should be nonnegative. Furthermore, for a large enough positive $z_{0}$, we have $\omega_{\alpha,t}(z_{0})=m_{\mu_{\alpha}}^{-1}(m_{\mu_{t}}(z_{0}))\in (E_{\alpha}^{+},\infty)$.
		
		Now we suppose on the contrary that there exists $z\in[E_{+,t},\infty)$ such that $\omega_{\alpha,t}(z)<E_{\alpha}^{+}$. Then, since $\re\omega_{\alpha,t}(z)$ is a continuous real function, there must be another point $w$ between $z$ and $z_{0}$ such that $\re\omega_{\alpha,t}(w)\in (E_{\alpha}^{+}-\tau_{\alpha},E_{\alpha}^{+})$. Then by \eqref{eq:Jac_edge_a} we have that 
		\beqs
		\lim_{\eta\searrow 0}\im m_{\mu_{t}}(w+\ii\eta)=\lim_{\eta\searrow 0}\im m_{\mu_{\alpha}}(\omega_{\alpha,t}(w+\ii\eta))=\lim_{y\searrow 0}\im m_{\mu_{\alpha}}(\omega_{\alpha,t}(w)+\ii y)>0,
		\eeqs
		which contradicts $w\notin\supp\mu_{t}$. Here we used \cite[Theorem 2.7]{Belinschi2008} in the second equality when $\im \omega_{\alpha,t}(w)=0$. Thus we have proved that $\omega_{\alpha,t}$ maps $[E_{+,t},\infty)$ into $[E_{\alpha}^{+},\infty)$. The fact that $\omega_{\alpha,t}$ is increasing follows directly from chain rule as in \cite[Lemma 3.3]{Bao-Erdos-Schnelli2020}.
	\end{proof}
	
	\begin{lem}\label{lem:stab_lim}
		There exist positive constants $\tau$ and $k_{0}$ such that
		\begin{align*}
			&\inf_{t\in[0,1]}\inf_{z\in\caD_{\tau}(0,\infty)}\inf_{x\in\supp\mu_{\alpha}}\absv{\omega_{\alpha,t}(z)-x}\geq k_{0},&
			&\inf_{t\in[0,1]}\inf_{z\in\caD_{\tau}(0,\infty)}\inf_{x\in\supp\mu_{\beta}}\absv{\omega_{\beta,t}(z)-x}\geq k_{0}.
		\end{align*}
	\end{lem}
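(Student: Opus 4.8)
The plan is to reduce the two bounds, by a continuity--compactness argument, to a single estimate at the upper edge, namely $\inf_{t\in[0,1]}\big(\omega_{\alpha,t}(E_{+,t})-E_{\alpha}^{+}\big)>0$ together with the corresponding statement for $\beta$, which is the substantive point. For the reduction: since $\im\omega_{\alpha,t}(z)\geq\im z$ by Proposition~\ref{prop:subor}, for $z=E+\ii\eta$ with $\eta\geq\eta_{M}$ (any fixed $\eta_{M}>0$) we have $\absv{\omega_{\alpha,t}(z)-x}\geq\eta_{M}$ for all $x\in\R$, so it suffices to work on the closure of $\caD_{\tau}(0,\eta_{M})$, on which $\omega_{\alpha,t}$ extends continuously by Lemma~\ref{lem:subor_cont}. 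The measures $\mu_{t}$ are uniformly compactly supported for $t\in[0,1]$ and, by Lemma~\ref{lem:subor_cont}, depend weakly continuously on $t$, so $t\mapsto E_{+,t}$ is continuous on $[0,1]$; hence $K\deq\{(t,z):t\in[0,1],\ \re z\in[E_{+,t}-\tau,\tau^{-1}],\ \im z\in[0,\eta_{M}]\}$ is compact and $(t,z,x)\mapsto\absv{\omega_{\alpha,t}(z)-x}$ is continuous on $K\times\supp\mu_{\alpha}$. The infimum is thus attained, and it is enough to show $\omega_{\alpha,t}(z)\notin\supp\mu_{\alpha}$ for every $(t,z)\in K$.

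Granting the edge estimate, set $2\delta_{0}\deq\inf_{t\in[0,1]}\big(\omega_{\alpha,t}(E_{+,t})-E_{\alpha}^{+}\big)>0$ and recall $\supp\mu_{\alpha}\subset(-\infty,E_{\alpha}^{+}]$ with $E_{\alpha}^{+}=\sup\supp\mu_{\alpha}$. If $\im z>0$ then $\im\omega_{\alpha,t}(z)\geq\im z>0$, so $\omega_{\alpha,t}(z)\notin\R\supset\supp\mu_{\alpha}$. If $z=E\in[E_{+,t},\tau^{-1}]$, then by Lemma~\ref{lem:subor_real} $\omega_{\alpha,t}$ is real-valued and increasing on $[E_{+,t},\infty)$, whence $\omega_{\alpha,t}(E)\geq\omega_{\alpha,t}(E_{+,t})\geq E_{\alpha}^{+}+2\delta_{0}$. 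If $z=E\in[E_{+,t}-\tau,E_{+,t})$, then $\absv{z-E_{+,t}}<\tau$, and I would use uniform continuity of $(t,z)\mapsto\omega_{\alpha,t}(z)$ on the compact set $K$ to fix $\tau$ small enough (uniformly in $t$) that $\absv{\omega_{\alpha,t}(E)-\omega_{\alpha,t}(E_{+,t})}<\delta_{0}$; then $\re\omega_{\alpha,t}(E)>E_{\alpha}^{+}+\delta_{0}$ and $\dist(\omega_{\alpha,t}(E),\supp\mu_{\alpha})\geq\re\omega_{\alpha,t}(E)-E_{\alpha}^{+}>\delta_{0}$. This exhausts $K$, and, combined with the trivial bound for $\im z>\eta_{M}$, yields the first assertion; the second follows verbatim with $\alpha$ and $\beta$ interchanged.

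The heart of the matter is the edge estimate, which I expect to be the main obstacle. Here I would exploit the identities $\omega_{\alpha,t}(z)=\omega_{\alpha,0}(z+tm_{\mu_{t}}(z))$ and $m_{\mu_{t}}(z)=m_{\mu_{0}}(z+tm_{\mu_{t}}(z))$ --- both consequences of Proposition~\ref{prop:subor} and the relations established in its proof (applied to $\mu_{0}=\mu_{\alpha}\boxplus\mu_{\beta}$ and $\mu_{\SC}^{(t)}$) --- extended to $\C_{+}\cup\R$ by Lemma~\ref{lem:subor_cont}, to reduce everything to $t=0$. Put $\xi_{t}(E)\deq E+tm_{\mu_{t}}(E)$ for $E\geq E_{+,t}$. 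For such $E$ the value $m_{\mu_{t}}(E)$ is real (for $E>E_{+,t}$ because $E\notin\supp\mu_{t}$, and for $E=E_{+,t}$ by continuity from the right), so $\xi_{t}(E)\in\R$ and $\rho_{0}(\xi_{t}(E))=\pi^{-1}\im m_{\mu_{0}}(\xi_{t}(E))=0$; since $E\mapsto\xi_{t}(E)$ is continuous on $[E_{+,t},\infty)$ with $\xi_{t}(E)\to+\infty$ as $E\to\infty$, its image is a connected, unbounded-above subset of $\{\rho_{0}=0\}$, and as $\rho_{0}>0$ on a left-neighbourhood of $E_{+,0}$ while $\rho_{0}\equiv0$ on $(E_{+,0},\infty)$ by Lemma~\ref{lem:fc_dens}, this image lies in $[E_{+,0},\infty)$; in particular $\xi_{+,t}\deq\xi_{t}(E_{+,t})\geq E_{+,0}$. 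Since $\omega_{\alpha,0}$ is continuous at $E_{+,0}$ and increasing on $(E_{+,0},\infty)$ (Lemmas~\ref{lem:subor_cont} and~\ref{lem:subor_real}), and $\omega_{\alpha,0}(E_{+,0})>E_{\alpha}^{+}$ by the regularity of the upper edge of $\mu_{\alpha}\boxplus\mu_{\beta}$ (Lemma~\ref{lem:fc_dens} and \cite{Bao-Erdos-Schnelli2016,Bao-Erdos-Schnelli2020JAM} --- this is precisely where $t_{\alpha}^{+},t_{\beta}^{+}<1$ enters, cf.\ the remark following Theorem~\ref{thm:main result}), we conclude $\omega_{\alpha,t}(E_{+,t})=\omega_{\alpha,0}(\xi_{+,t})\geq\omega_{\alpha,0}(E_{+,0})>E_{\alpha}^{+}$ for all $t\in[0,1]$, which is the edge estimate; the analogous statement for $\beta$ is identical. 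The steps I expect to require the most care are justifying the boundary extensions of the two subordination identities and that $\xi_{+,t}$ lands on the correct side of $E_{+,0}$; the strict inequality $\omega_{\alpha,0}(E_{+,0})>E_{\alpha}^{+}$ itself is a piece of the classical edge-regularity theory for free additive convolution of two measures, to be invoked rather than reproved.
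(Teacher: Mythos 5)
Your proof is correct and takes a genuinely different route from the paper's. The paper re-runs the inequality machinery of \cite[Lemma 3.7]{Bao-Erdos-Schnelli2020} with the $t$-dependence built in: using the subordination relations one shows $R_{\alpha}(\omega_{\alpha,t}(E_{+,t}))+R_{\beta}(\omega_{\beta,t}(E_{+,t}))-1\geq t\absv{m_{\mu_{t}}(E_{+,t})}^{2}\geq 0$ (where $R_{\alpha},R_{\beta}$ are the Cauchy--Schwarz ratios $\absv{m_{\mu}}^{2}/\int\absv{x-\omega}^{-2}\dd\mu$), combines this with a quantitative decay bound $R_{\alpha}(\omega)\lesssim(\omega-E_{\alpha}^{+})^{1-\absv{t_{\alpha}^{+}}}$ and a strict Cauchy--Schwarz bound $R_{\beta}\leq 1-c_{\beta}$, and then solves for a lower bound on $\omega_{\alpha,t}(E_{+,t})-E_{\alpha}^{+}$ uniformly in $t$; the extra $t\absv{m_{\mu_{t}}}^{2}$ term only helps, so the $t=0$ argument goes through essentially unchanged. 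You instead exploit the semicircle subordination identities $\omega_{\alpha,t}(z)=\omega_{\alpha,0}(z+tm_{\mu_{t}}(z))$ and $m_{\mu_{t}}(z)=m_{\mu_{0}}(z+tm_{\mu_{t}}(z))$, show via a connectedness argument (using that $\rho_{0}>0$ on a left-neighborhood of $E_{+,0}$ from Lemma~\ref{lem:fc_dens}) that $\xi_{+,t}\deq E_{+,t}+tm_{\mu_{t}}(E_{+,t})\geq E_{+,0}$, and then use monotonicity of $\omega_{\alpha,0}$ (Lemma~\ref{lem:subor_real}) to reduce to the single $t=0$ estimate $\omega_{\alpha,0}(E_{+,0})>E_{\alpha}^{+}$, which is already a theorem of \cite{Bao-Erdos-Schnelli2020JAM,Bao-Erdos-Schnelli2020}. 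Your reduction is cleaner and avoids re-deriving the quantitative $R_{\alpha}$/$R_{\beta}$ estimates for each $t$, at the modest cost of justifying the boundary extension of the subordination identities and the $\xi_{+,t}\geq E_{+,0}$ step --- both of which you correctly identify as the points needing care and handle adequately (the key observation being that the continuous image of $[E_{+,t},\infty)$ under $\xi_{t}$ is a connected, unbounded-above subset of $\{\rho_{0}=0\}$, hence must lie in $[E_{+,0},\infty)$ because $\rho_{0}$ is strictly positive on a left-neighborhood of $E_{+,0}$). The reduction-to-$\eqref{eq:stab_prf}$ step via continuity and compactness is the same in both proofs.
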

	\begin{proof} 
		We only present an outline of the proof since it is a minor modification of \cite[Lemma 3.7]{Bao-Erdos-Schnelli2020}. First of all, we prove that the following statement implies the result; there exists a constant $k>0$ such that
		\begin{align}\label{eq:stab_prf}
			&\inf_{t\in[0,1]}\absv{\omega_{\alpha,t}(E_{+,t})-E_{\alpha}^{+}}>k, &
			&\inf_{t\in[0,1]}\absv{\omega_{\beta,t}(E_{+},t)-E_{\beta}^{+}}>k.
		\end{align}
		Assuming \eqref{eq:stab_prf}, we find from Lemma \ref{lem:subor_cont} that for a sufficiently small $\tau$
		\begin{align*}
			&\inf_{t\in[0,1]}\inf_{z\in\caD_{\tau}(0,\tau)}\inf_{x\in\supp\mu_{\alpha}}\absv{\omega_{\alpha,t}(E_{+,t})-x}>k, &
			&\inf_{t\in[0,1]}\inf_{z\in\caD_{\tau}(0,\tau)}\inf_{x\in\supp\mu_{\beta}}\absv{\omega_{\beta,t}(E_{+},t)-x}>k.
		\end{align*}
		Since $\im \omega_{\beta,t}(z)=\im \omega_{\beta,0}(z+tm_{\mu_{t}}(z))\geq \im z$, the result directly extends to $\caD_{\tau}(0,\infty)$.
		
		In order to prove \eqref{eq:stab_prf}, we recall the following identities from \eqref{eq:Phi_def};
		\begin{align*}
			\im m_{\mu_{t}}(z)=\im \omega_{\alpha,t}(z)\int_{\R}\frac{1}{\absv{x-\omega_{\alpha,t}(z)}^{2}}\dd\mu_{\alpha}(x)=\im\omega_{\beta,t}(z)\int_{\R}\frac{1}{\absv{x-\omega_{\beta,t}(z)}^{2}}\dd\mu_{\beta}(x),\\
			\im \omega_{\alpha,t}(z)+\im\omega_{\beta,t}(z)-\im z=\im F_{\mu_{t}}(z)=\frac{\im m_{\mu_{t}}(z)}{\absv{m_{\mu_{t}}(z)}^{2}}+t\im m_{\mu_{t}}(z).
		\end{align*}
		We then have
		\beq\label{eq:stab_prf1}
		R_{\alpha}(\omega_{\alpha,t}(z))+R_{\beta}(\omega_{\beta,t}(z))-1=\absv{m_{\mu_{t}}(z)}^{2}\frac{\im z}{\im m_{\mu_{t}}(z)}+t\absv{m_{\mu_{t}}(z)}^{2}\geq 0,
		\eeq
		where we defined for $\omega\in\C_{+}$
		\beqs
		R_{\alpha}(\omega)\deq \Absv{\int_{\R}\frac{1}{x-\omega}\dd\mu_{\alpha}(x)}^{2}/\int_{\R}\frac{1}{\absv{x-\omega}^{2}}\dd\mu_{\alpha}(x)
		\eeqs
		and $R_{\beta}(\omega)$ analogously.
		
		We now prove \eqref{eq:stab_prf} for $\omega_{\alpha,t}$, and the result for $\omega_{\beta,t}$ follows by symmetry. Proceeding as in the proof of \cite[Lemma 3.7]{Bao-Erdos-Schnelli2020}, we find that $\omega_{\alpha,t}(E_{+,t})\geq E_{\alpha}^{+}$ and $\omega_{\beta,t}(E_{+,t})\geq E_{\beta}^{+}$ imply
		\begin{align}\label{eq:stab_prf2}
			&R_{\alpha}(\omega_{\alpha,t}(E_{+,t}))\leq 
			C_{0}(\omega_{\alpha,t}(E_{+,t})-E_{\alpha}^{+})^{1-\absv{t_{\alpha}^{+}}},&
			&R_{\beta}(\omega_{\beta,t}(E_{+,t}))\leq 
			C_{0}(\omega_{\beta,t}(E_{+,t})-E_{\beta}^{+})^{1-\absv{t_{\beta}^{+}}},
		\end{align}
		for a constant $C_{0}$ independent of $t$. On the other hand, due to Cauchy-Schwarz inequality we have
		\beq\label{eq:stab_prf3}
		R_{\beta}(\omega)\leq 1-c_{\beta}(\omega) \qquad \omega\in (E_{\beta}^{+},\infty),
		\eeq
		for a positive continuous function $c_{\beta}$ on $(E_{\beta}^{+},\infty)$. Combining Lemma \ref{lem:subor_bdd}, \eqref{eq:stab_prf2}, and \eqref{eq:stab_prf3} implies that
		\beqs
		\sup_{t\in[0,1]}R_{\beta}(\omega_{\beta,t}(E_{+,t}))\leq 1-c
		\eeqs
		for a constant $c>0$. Therefore by \eqref{eq:stab_prf1} we have
		\beqs
		\omega_{\alpha,t}(E_{+,t})-E_{\alpha}^{+}\geq C_{0}c^{1/(1-\absv{t_{\alpha}^{+}})}.
		\eeqs
		This concludes the proof of \eqref{eq:stab_prf} and thus Lemma \ref{lem:stab_lim}.
	\end{proof}
	
	\begin{lem}\label{lem:edgechar}
		For all $z\in\ol{\C}_{+}$ and $t\in[0,1]$ we have
		\beq\label{eq:edgechar_ineq}
		\Absv{\left(F_{\alpha,t}'(\omega_{\alpha,t}(z))-1\right)\left(F_{\beta,t}'(\omega_{\beta,t}(z))-1\right)}\leq 1,
		\eeq
		and the upper edge $z=E_{+,t}$ is the largest real point at which the equality holds. Furthermore, we have
		\beq\label{eq:edgechar_eq}
		\left(F_{\alpha,t}'(\omega_{\alpha,t}(E_{+,t}))-1\right)\left(F_{\beta,t}'(\omega_{\beta,t}(E_{+,t}))-1\right)=1.
		\eeq
	\end{lem}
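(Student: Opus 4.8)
The plan is to realize the product $\big(F_{\alpha,t}'(\omega_{\alpha,t}(z))-1\big)\big(F_{\beta,t}'(\omega_{\beta,t}(z))-1\big)$ as the derivative, at its fixed point, of an analytic self-map of $\C_{+}$, obtain \eqref{eq:edgechar_ineq} from the Schwarz--Pick lemma, and then identify the edge via a local-inversion argument.

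\emph{The self-map.} By Lemma \ref{lem:Pick_F} one may write $F_{\alpha,t}(w)=w+m_{\widehat{\mu}_{\alpha,t}}(w)$ and $F_{\beta,t}(w)=w+m_{\widehat{\mu}_{\beta,t}}(w)$, so $\im F_{\alpha,t}(w)\ge\im w$ and $\im F_{\beta,t}(w)\ge\im w$ on $\C_{+}$. For fixed $z\in\C_{+}$ set
\[
\Psi_{z}(w):=F_{\beta,t}\big(F_{\alpha,t}(w)-w+z\big)-F_{\alpha,t}(w)+w .
\]
Two applications of the imaginary-part bounds give $\im\Psi_{z}(w)\ge\im z$ for every $w\in\C_{+}$, so $\Psi_{z}$ maps $\C_{+}$ analytically into $\{w\in\C_{+}:\im w\ge\im z\}\subsetneq\C_{+}$; in particular $\Psi_{z}$ is not a conformal automorphism of $\C_{+}$. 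Using \eqref{eq:subor} (which yields $F_{\alpha,t}(\omega_{\alpha,t}(z))-\omega_{\alpha,t}(z)+z=\omega_{\beta,t}(z)$ and $F_{\alpha,t}(\omega_{\alpha,t}(z))=F_{\beta,t}(\omega_{\beta,t}(z))$) one checks that $\omega_{\alpha,t}(z)\in\C_{+}$ is a fixed point of $\Psi_{z}$ and that $\Psi_{z}'(\omega_{\alpha,t}(z))=\big(F_{\alpha,t}'(\omega_{\alpha,t}(z))-1\big)\big(F_{\beta,t}'(\omega_{\beta,t}(z))-1\big)$. The Schwarz--Pick lemma then gives $|\Psi_{z}'(\omega_{\alpha,t}(z))|<1$ on $\C_{+}$, strict because $\Psi_{z}$ is not an automorphism. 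The bound \eqref{eq:edgechar_ineq} extends to $\overline{\C}_{+}$ by continuity of the subordination functions (Lemma \ref{lem:subor_cont}) wherever $\omega_{\alpha,t}(z)\notin\supp\widehat{\mu}_{\alpha,t}$ and $\omega_{\beta,t}(z)\notin\supp\widehat{\mu}_{\beta,t}$, so the left side is continuous; by Lemmas \ref{lem:subor_real} and \ref{lem:stab_lim} this covers $\C_{+}\cup(E_{+,t},\infty)$ and a neighbourhood of $E_{+,t}$, which is all that is used below.

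\emph{Equality at $E_{+,t}$.} Put $\omega_{\alpha}^{*}:=\omega_{\alpha,t}(E_{+,t})$ and $\omega_{\beta}^{*}:=\omega_{\beta,t}(E_{+,t})$; by Lemma \ref{lem:stab_lim} both lie a fixed distance from $\supp\widehat{\mu}_{\alpha,t}\subset[E_{\alpha}^{-},E_{\alpha}^{+}]$ and $\supp\widehat{\mu}_{\beta,t}\subset[E_{\beta}^{-},E_{\beta}^{+}]$ respectively, so $F_{\alpha,t},F_{\beta,t}$ are real analytic near $\omega_{\alpha}^{*},\omega_{\beta}^{*}$ with derivatives $\ge 1$ there. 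For $\omega$ near $\omega_{\alpha}^{*}$ let $\omega_{\beta}(\omega):=F_{\beta,t}^{-1}(F_{\alpha,t}(\omega))$ (the branch near $\omega_{\beta}^{*}$) and $Z(\omega):=\omega+\omega_{\beta}(\omega)-F_{\alpha,t}(\omega)$; by \eqref{eq:subor} and Lemma \ref{lem:subor_cont}, $Z(\omega_{\alpha,t}(z))=z$ for $z\in(E_{+,t},\infty)$ near $E_{+,t}$ and $Z(\omega_{\alpha}^{*})=E_{+,t}$. Differentiating $F_{\beta,t}(\omega_{\beta}(\omega))=F_{\alpha,t}(\omega)$ and the definition of $Z$ yields the identity
\[
F_{\beta,t}'(\omega_{\beta}(\omega))\,Z'(\omega)=1-\big(F_{\alpha,t}'(\omega)-1\big)\big(F_{\beta,t}'(\omega_{\beta}(\omega))-1\big).
\]
If $Z'(\omega_{\alpha}^{*})\ne 0$, then $Z$ is a local diffeomorphism at $\omega_{\alpha}^{*}$, so $\omega_{\alpha,t}=Z^{-1}$, and hence also $m_{\mu_{t}}=m_{\mu_{\alpha}}\circ\omega_{\alpha,t}$ (the classical subordination relation from Proposition \ref{prop:subor}), extends holomorphically across $E_{+,t}$; being real on $(E_{+,t},\infty)$, it is then real on a full real neighbourhood of $E_{+,t}$ by Schwarz reflection, so the density of $\mu_{t}$ vanishes on a left neighbourhood of $E_{+,t}$, contradicting $E_{+,t}=\sup\supp\mu_{t}$ together with the absolute continuity of $\mu_{t}$ near the edge. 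Hence $Z'(\omega_{\alpha}^{*})=0$, and evaluating the displayed identity at $\omega=\omega_{\alpha}^{*}$ (where $\omega_{\beta}(\omega_{\alpha}^{*})=\omega_{\beta}^{*}$ by \eqref{eq:subor} and continuity) gives \eqref{eq:edgechar_eq}.

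\emph{Strictness to the right, and the main obstacle.} For real $z>E_{+,t}$, Lemma \ref{lem:subor_real} gives $\omega_{\alpha,t}(z)\in(E_{\alpha}^{+},\infty)$ and $\omega_{\beta,t}(z)\in(E_{\beta}^{+},\infty)$, so the displayed identity holds along the whole interval with $F_{\beta,t}'(\omega_{\beta,t}(z))\ge 1$. Since $m_{\mu_{t}}$ is real analytic and strictly increasing on $(E_{+,t},\infty)$ (its derivative is $\int(x-E)^{-2}\,\dd\mu_{t}(x)>0$) and $m_{\mu_{\alpha}}$ is a strictly increasing analytic bijection of $(E_{\alpha}^{+},\infty)$ onto a subinterval of $(-\infty,0)$, the relation $m_{\mu_{t}}=m_{\mu_{\alpha}}\circ\omega_{\alpha,t}$ forces $\omega_{\alpha,t}=m_{\mu_{\alpha}}^{-1}\circ m_{\mu_{t}}$ with $\omega_{\alpha,t}'(z)>0$, hence $Z'(\omega_{\alpha,t}(z))=\omega_{\alpha,t}'(z)^{-1}>0$. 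The identity then gives $\big(F_{\alpha,t}'(\omega_{\alpha,t}(z))-1\big)\big(F_{\beta,t}'(\omega_{\beta,t}(z))-1\big)<1$ for every real $z>E_{+,t}$; together with the first two steps this shows $E_{+,t}$ is the largest real point of equality in \eqref{eq:edgechar_ineq}. The analytic bounds of the first step are routine; the crux is the second step, where ruling out $Z'(\omega_{\alpha}^{*})\ne 0$ rests on the holomorphic-extension/Schwarz-reflection argument and on $E_{+,t}$ being a genuine accumulation point of the absolutely continuous measure $\mu_{t}$, and where one must verify that the branches of $F_{\beta,t}^{-1}$ and $m_{\mu_{\alpha}}^{-1}$ used really reproduce $\omega_{\beta,t}$ and $\omega_{\alpha,t}$ — which is precisely where Lemmas \ref{lem:subor_real} and \ref{lem:stab_lim} enter.
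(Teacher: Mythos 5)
Your proof is correct and, in substance, matches the paper's: the paper delegates \eqref{eq:edgechar_eq} to Lemma 3.8 of \cite{Bao-Erdos-Schnelli2020}, whose Schwarz--Pick/Denjoy--Wolff inequality for the map $\Psi_{z}$ and Schwarz-reflection local-inversion argument you have correctly reconstructed, and it obtains strictness on $(E_{+,t},\infty)$ from the monotonicity of $F'_{\alpha,t}(\omega_{\alpha,t})-1$ and $F'_{\beta,t}(\omega_{\beta,t})-1$ (via Lemmas \ref{lem:Pick_F} and \ref{lem:subor_real}), which is just a repackaging of your identity $F_{\beta,t}'(\omega_{\beta})Z'(\omega)=1-(F_{\alpha,t}'-1)(F_{\beta,t}'-1)$ with $Z'>0$. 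Your version is more self-contained than the paper's two-line proof, but it relies on the same ideas and the same supporting lemmas.
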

	\begin{proof}
		The proof of \eqref{eq:edgechar_ineq} and that $E_{+}$ satisfies \eqref{eq:edgechar_eq} is identical to that of Lemma 3.8 in \cite{Bao-Erdos-Schnelli2020}. To prove the remaining part, that $E_{+}$ is the largest such point, we observe from Lemma \ref{lem:Pick_F} that the left-hand side of \eqref{eq:edgechar_ineq} decreases as $\omega_{\alpha,t}>E_{\alpha}^{+}$ and $\omega_{\beta,t}>E_{\beta}^{+}$ increase. Since $\omega_{\alpha}\vert_{[E_{+},\infty)}$ and $\omega_{\beta}\vert_{[E_{+},\infty)}$ are increasing real functions mapping into $(E_{\alpha}^{+}+k,\infty)$ and $(E_{\beta}^{+}+k,\infty)$, the result follows.
	\end{proof}
	
	\begin{prop}\label{prop:sqrt_lim}
		For each $t\in[0,1]$ there exist positive $\gamma_{\alpha,t}$ and $\gamma_{\beta,t}$ such that the following hold uniformly over $t\in[0,1]$ and $z\in\caD_{\tau}(0,\eta_{M})$;
		\beqs
		\begin{split}
			\omega_{\alpha,t}(z)-\omega_{\alpha,t}(E_{+,t})&=\gamma_{\alpha,t} \sqrt{z-E_{+,t}}+O(\absv{z-E_{+,t}}),\\
			\omega_{\beta,t}(z)-\omega_{\beta,t}(E_{+,t})&=\gamma_{\beta,t}\sqrt{z-E_{+,t}}+O(\absv{z-E_{+,t}}),\\
			\gamma_{\alpha,t}&\sim 1\sim \gamma_{\beta,t}.
		\end{split}
		\eeqs
	\end{prop}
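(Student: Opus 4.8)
The plan is to read off the square-root behaviour directly from the defining system $\Phi_{\alpha\beta,t}(\omega_{\alpha,t}(z),\omega_{\beta,t}(z),z)\equiv 0$ from \eqref{eq:Phi_def} by a Taylor expansion around the ``fold point'' $z=E_{+,t}$, tracking uniformity in $t\in[0,1]$. Abbreviate $\wh\omega_\alpha\deq\omega_{\alpha,t}(E_{+,t})$, $\wh\omega_\beta\deq\omega_{\beta,t}(E_{+,t})$, $a\deq F_{\alpha,t}'(\wh\omega_\alpha)-1$, $b\deq F_{\beta,t}'(\wh\omega_\beta)-1$. By Lemma~\ref{lem:stab}(ii),(iii) (equivalently Lemmas~\ref{lem:subor_bdd} and \ref{lem:stab_lim}), $\wh\omega_\alpha,\wh\omega_\beta$ stay in a fixed compact set and at distance $\ge k_0$ from $\supp\mu_\alpha\cup\supp\mu_\beta$; together with Lemma~\ref{lem:Pick_F}, which gives $F_{\alpha,t}(\zeta)-\zeta=m_{\wh\mu_{\alpha,t}}(\zeta)$ with $\wh\mu_{\alpha,t}=\wh\mu_{\alpha,0}+t\mu_\alpha$ and $\sup\supp\wh\mu_{\alpha,t}=E_\alpha^+$, this makes $F_{\alpha,t},F_{\beta,t}$ analytic with $t$-uniformly bounded derivatives of all orders on fixed neighbourhoods of $\wh\omega_\alpha$, resp.\ $\wh\omega_\beta$. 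Moreover $a=\int(x-\wh\omega_\alpha)^{-2}\dd\wh\mu_{\alpha,t}(x)$ and $F_{\alpha,t}''(\wh\omega_\alpha)=-2\int(\wh\omega_\alpha-x)^{-3}\dd\wh\mu_{\alpha,t}(x)$, and since $\wh\mu_{\alpha,t}(\R)\ge\wh\mu_{\alpha,0}(\R)>0$, $\wh\omega_\alpha-E_\alpha^+\ge k_0$, and $\wh\omega_\alpha\le K$, we get $a\sim1$ and $F_{\alpha,t}''(\wh\omega_\alpha)\sim-1$, and likewise with $\beta$. Finally Lemma~\ref{lem:edgechar} gives $ab=1$, so the $(\omega_1,\omega_2)$-Jacobian of $(\Phi_{\alpha,t},\Phi_{\beta,t})$ at the fold point, $\bigl(\begin{smallmatrix}a&-1\\-1&b\end{smallmatrix}\bigr)$, is singular with one-dimensional kernel and left kernel both spanned by $(1,a)$; this degeneracy is the source of the branch point.

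Writing $\omega_1=\wh\omega_\alpha+\zeta_1$, $\omega_2=\wh\omega_\beta+\zeta_2$, $z=E_{+,t}+w$ and Taylor-expanding the two exact identities $\Phi_{\alpha,t}=\Phi_{\beta,t}=0$ to second order in $\zeta_1,\zeta_2$ gives, with $t$-uniform errors,
\begin{align*}
0&=a\zeta_1-\zeta_2+w+\tfrac12F_{\alpha,t}''(\wh\omega_\alpha)\zeta_1^2+O(|\zeta_1|^3),\\
0&=-\zeta_1+b\zeta_2+w+\tfrac12F_{\beta,t}''(\wh\omega_\beta)\zeta_2^2+O(|\zeta_2|^3).
\end{align*}
Granting the a priori bound $|\zeta_1|+|\zeta_2|=O(|w|^{1/2})$ (discussed below), the first line yields $\zeta_2=a\zeta_1+O(|w|)$; adding $a$ times the second line to the first then cancels the linear part (because $ab=1$) and leaves the scalar relation $0=(1+a)w+\tfrac12\bigl(F_{\alpha,t}''(\wh\omega_\alpha)+a^3F_{\beta,t}''(\wh\omega_\beta)\bigr)\zeta_1^2+O(|w|^{3/2})$, whence
\beqs
\zeta_1^2=\gamma_{\alpha,t}^2\,w+O(|w|^{3/2}),\qquad
\gamma_{\alpha,t}^2\deq\frac{-2(1+a)}{F_{\alpha,t}''(\wh\omega_\alpha)+a^3F_{\beta,t}''(\wh\omega_\beta)}.
\eeqs
By the estimates of the first paragraph $\gamma_{\alpha,t}^2$ is real with $\gamma_{\alpha,t}^2\sim1$; choosing for $\sqrt{z-E_{+,t}}$ the branch analytic on $\C_+$ and positive on $(E_{+,t},\infty)$, compatible with the monotonicity in Lemma~\ref{lem:subor_real}, fixes $\gamma_{\alpha,t}>0$ and gives $\omega_{\alpha,t}(z)-\wh\omega_\alpha=\gamma_{\alpha,t}\sqrt{z-E_{+,t}}+O(|z-E_{+,t}|)$. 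Substituting back into the first line gives $\omega_{\beta,t}(z)-\wh\omega_\beta=a\,\gamma_{\alpha,t}\sqrt{z-E_{+,t}}+O(|z-E_{+,t}|)$, so $\gamma_{\beta,t}=(F_{\alpha,t}'(\wh\omega_\alpha)-1)\gamma_{\alpha,t}\sim1$, and one checks with $ab=1$ and the $\alpha\leftrightarrow\beta$ symmetry that $\gamma_{\beta,t}^2$ also equals $-2(1+b)/(F_{\beta,t}''(\wh\omega_\beta)+b^3F_{\alpha,t}''(\wh\omega_\alpha))$. For $z$ at a fixed distance from $E_{+,t}$ the asserted estimate is trivial by the boundedness of the subordination functions.

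The step I expect to be the main obstacle is precisely the a priori bound $|\omega_{\alpha,t}(z)-\wh\omega_\alpha|+|\omega_{\beta,t}(z)-\wh\omega_\beta|=O(|z-E_{+,t}|^{1/2})$, uniformly over $t\in[0,1]$ and over $z\in\caD_\tau(0,\eta_M)$ with a $t$-independent $\tau$: without it the cubic remainders above cannot be discarded, and it is where the uniformity has to be earned. I would establish it by resolving the fold quantitatively: knowing from Lemma~\ref{lem:stab}(i) and continuity that $\zeta_1,\zeta_2\to0$ as $w\to0$ uniformly in $t$, introduce a new unknown $\sigma$ with $\sigma^2$ in the role of $w$ and rewrite the system in the variables $(\sigma,\zeta_1/\sigma,\zeta_2/\sigma)$; the nondegeneracies $1+a\sim1$ and $F_{\alpha,t}''(\wh\omega_\alpha)+a^3F_{\beta,t}''(\wh\omega_\beta)\sim1$ make the regularised system solvable by the holomorphic implicit function theorem with a $t$-uniform radius, which produces the a priori bound and the convergent expansion in $\sqrt{z-E_{+,t}}$ at once. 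Since every structural constant ($a$, $b$, $F_{\alpha,t}''$, $F_{\beta,t}''$, the neighbourhood radii, the higher-derivative bounds) is controlled uniformly in $t$ by Lemmas~\ref{lem:stab}, \ref{lem:Pick_F} and compactness of $[0,1]$, the pointwise-in-$t$ expansion upgrades to the uniform statement. An alternative would be to first prove the $(\mu_\alpha,\mu_\beta)$-analogue of Proposition~\ref{prop:stab}(ii)--(iii) (namely $\caS_{\alpha\beta}(z)\sim\sqrt{\kappa+\eta}$ and, in the bulk, $\im m_{\mu_t}(z)\sim\sqrt{\kappa+\eta}$) and then read the bound on $\im\omega_{\alpha,t}$ off $\im m_{\mu_t}=\im\omega_{\alpha,t}\int(x-\omega_{\alpha,t})^{-2}\dd\mu_\alpha$ together with the lower bound on that integral from Lemma~\ref{lem:stab}(iii); but since Proposition~\ref{prop:sqrt_lim} precedes that analogue, the self-contained fold argument is preferable.
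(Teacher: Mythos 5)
Your proposal takes a genuinely different route from the paper's, though it rests on the same pillars: the edge identity $ab=1$ from Lemma~\ref{lem:edgechar} making the Jacobian of $\Phi_{\alpha\beta,t}$ singular with kernel $\mathrm{span}\{(1,a)\}$, and the $t$--uniform control of $F_{\alpha,t},F_{\beta,t}$ and their derivatives via Lemmas~\ref{lem:Pick_F}, \ref{lem:subor_bdd}, \ref{lem:stab_lim} and \ref{lem:subor_cont}. Where the paper composes out one variable by inverting $F_{\alpha,t}$ and studies the single analytic function $\wt{z}(\omega)=-F_{\beta,t}(\omega)+\omega+F_{\alpha,t}^{-1}\circ F_{\beta,t}(\omega)$ with a double zero of $\wt{z}-E_{+,t}$, you keep the $2\times 2$ system and annihilate the degenerate linear part by the left kernel vector $(1,a)$. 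Both end at the same quadratic fold; indeed your $\gamma_{\alpha,t}^{2}=-2(1+a)/(F_{\alpha,t}''+a^{3}F_{\beta,t}'')$ together with $\gamma_{\beta,t}=a\gamma_{\alpha,t}$ agrees with the paper's $\gamma_{\beta,t}=\sqrt{2/\wt{z}''(\omega_{\beta,t}(E_{+,t}))}$, as one checks by computing $\wt{z}''=-\tfrac{1}{a^{2}(a+1)}(F_{\alpha,t}''+a^{3}F_{\beta,t}'')$. What the paper's route buys is that it never needs an a priori estimate on $\omega_{\iota,t}(z)-\omega_{\iota,t}(E_{+,t})$: the entire analytic work goes into proving that $F_{\alpha,t}^{-1}$ exists on a ball of $t$--uniform radius (the Koebe-type argument with the two-sided Lipschitz bound and the maximum modulus principle), after which the one-variable quadratic expansion of $\wt{z}$ is automatic and its inversion is routine.

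There are two concrete issues in your write-up, both fixable. First, the step you flag as the main obstacle --- the a priori bound $|\zeta_1|+|\zeta_2|=O(|w|^{1/2})$ --- is in fact not an obstacle at all, and the way you use it creates a mild circularity: to write $\zeta_2=a\zeta_1+O(|w|)$ you already assume $|\zeta_1|=O(|w|^{1/2})$. Eliminate $\zeta_2$ \emph{exactly} from the first equation instead, $\zeta_2=a\zeta_1+w+\tfrac12F_{\alpha,t}''\zeta_1^{2}+O(|\zeta_1|^{3})$, substitute into the second, and use $ab=1$; this yields the scalar relation
\beqs
0=(b+1)w+\tfrac12\bigl(bF_{\alpha,t}''+a^{2}F_{\beta,t}''\bigr)\zeta_1^{2}+O\bigl(|\zeta_1 w|+|w|^{2}+|\zeta_1|^{3}\bigr)
\eeqs
with no a priori assumption. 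Since $1+a\sim1$ and $F_{\alpha,t}''+a^{3}F_{\beta,t}''\sim -1$, and since $\zeta_1\to0$ as $w\to0$ uniformly in $t$ by Lemma~\ref{lem:subor_cont}, this identity forces $|\zeta_1|^{2}\lesssim|w|$ directly for small $w$; the blow-up is unnecessary. Second, the blow-up as you stated it does not work: after dividing by $\sigma$ the Jacobian in $(u_1,u_2)=(\zeta_1/\sigma,\zeta_2/\sigma)$ at $\sigma=0$ is still $\bigl(\begin{smallmatrix}a&-1\\-1&b\end{smallmatrix}\bigr)$ with determinant $ab-1=0$, so the holomorphic implicit function theorem cannot be applied to the two-by-two regularised system. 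One must first solve the regularised first equation for $u_2$ (which is nonsingular since $\partial/\partial u_2$ there is $-1$), substitute into the second, divide once more by $\sigma$, and only then apply the implicit function theorem to the resulting scalar equation at the nonzero root $u_1=\pm\gamma_{\alpha,t}$. With either fix --- the exact elimination above, or the corrected blow-up --- your argument goes through and yields the same uniform statement as the paper.
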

	\begin{proof}
		Given Lemmas \ref{lem:subor_cont}, \ref{lem:subor_real}--\ref{lem:edgechar}, the proof is almost identical to that of \cite[Lemma 3.8]{Bao-Erdos-Schnelli2020} except some minor changes to make the result uniform over $t$. We present below how we modify their proof.
		
		Note the map $F_{\alpha,t}$ has an inverse in a neighborhood of $F_{\alpha,t}(\omega_{\alpha,t}(E_{+,t}))=F_{\beta,t}(\omega_{\beta,t}(E_{+,t}))$ that maps into a neighborhood of $\omega_{\alpha,t}(E_{+,t})$. The first modification is to show that both the domain and image of the inverse $F_{\alpha,t}^{-1}$ can have size of $O(1)$. Note that
		\beqs
		F_{\alpha,t}(\omega_{1})-F_{\alpha,t}(\omega_{2})=(\omega_{1}-\omega_{2})\left(1+\int_{\R}\frac{1}{(x-\omega_{1})(x-\omega_{2})}\dd\wh{\mu}_{\alpha,t}(x)\right).
		\eeqs
		When $\re \omega_{i}\in(E_{\alpha}^{+}+k,\infty), \im\omega_{i}\in(0,k)$ for $i=1,2$ and a positive constant $k$, we have
		\beqs
		\re\frac{1}{(x-\omega_{1})(x-\omega_{2})}=\frac{(x-\re\omega_{1})(x-\re\omega_{2})-\im\omega_{1}\im\omega_{2}}{\absv{x-\omega_{1}}^{2}\absv{x-\omega_{2}}^{2}}>0,\quad x\in\supp\wh{\mu}_{\alpha,t}
		\eeqs
		for all $x\in\supp\wh{\mu}_{\alpha,t}$, so that 
		\beq\label{eq:sqrt_lim_prf}
		\Absv{\frac{F_{\alpha,t}(\omega_{1})-F_{\alpha,t}(\omega_{2})}{\omega_{1}-\omega_{2}}}>1.
		\eeq
		Thus $F_{\alpha,t}$ restricted to the domain $D_{\alpha}\deq\{z:\re z>E_{\alpha}^{+}+k_{0}/2,\im z\in(0,k_{0}/2)\}$ has an analytic inverse by the open mapping theorem, where $k_{0}$ is from Lemma \ref{lem:stab_lim}.
		
		We next prove that the image $F_{\alpha,t}(D_{\alpha})$ contains a disk around $F_{\alpha,t}(\omega_{\alpha,t}(E_{+,t}))$ whose radius admits a uniform lower bound over $t\in[0,1]$. First we observe that we have an upper bound for \eqref{eq:sqrt_lim_prf} since Lemma \ref{lem:Pick_F} implies
		\beqs
		1+\int_{\R}\frac{1}{\absv{x-\omega_{1}}\absv{x-\omega_{2}}}\dd\wh{\mu}_{\alpha,t}(x)\leq 1+4\frac{\wh{\mu}_{\alpha,0}(\R)+t}{k_{0}^{2}}=:C_{\alpha},\quad \omega_{1},\omega_{2}\in D_{\alpha},\,t\in[0,1].
		\eeqs
		We define $C_{\beta}$ similarly. We then see from \eqref{eq:sqrt_lim_prf} that $\absv{\omega-\omega_{\alpha,t}(E_{+,t})}=k_{0}/3$ implies
		\beqs
		\absv{F_{\alpha,t}(\omega)-F_{\alpha,t}(\omega_{\alpha,t}(E_{+,t}))}>k_{0}/3.
		\eeqs
		Now taking $x$ so that $\absv{x-F_{\alpha,t}(\omega_{\alpha,t}(E_{+,t}))}<k_{0}/6$, we have
		\beqs
		\frac{1}{\absv{x-F_{\alpha,t}(\omega)}}<6/k_{0}, \quad \absv{\omega-\omega_{\alpha}(E_{+,t})}=k_{0}/3.
		\eeqs
		If $x-F_{\alpha,t}(\omega)$ has no zero in the domain $\absv{\omega-\omega_{\alpha,t}(E_{+,t})}<k_{0}/3$, we have a contradiction from the maximum modulus principle. This proves $x=F_{\alpha,t}(\omega)$, so that $x\in F_{\alpha,t}(D_{\alpha})$.
		
		As in \cite[Lemma 3.8]{Bao-Erdos-Schnelli2020}, we define
		\beq\label{eq:wtz_def}
		\wt{z}(\omega)\deq -F_{\beta,t}(\omega)+\omega+F_{\alpha,t}^{-1}\circ F_{\beta,t}(\omega).
		\eeq
		By the argument above, we find that $\wt{z}$ is indeed an analytic function in domain $\absv{\omega-\omega_{\beta,t}(E_{+,t})}<k_{0}/6C_{\beta}$. Now we follow the lines of \cite[Lemma 3.8]{Bao-Erdos-Schnelli2020} to get $\wt{z}'(\omega_{\beta,t}(E_{+,t}))=0$ from Lemma \ref{lem:edgechar}, so that
		\beq\label{eq:sqrt_lim_prf1}
		\Absv{\wt{z}(\omega)-E_{+,t}-\frac{1}{2}\wt{z}''(\omega_{\beta,t}(E_{+,t}))(\omega-\omega_{\beta,t}(E_{+,t}))^{2}}\leq C\absv{\omega-\omega_{\beta,t}(E_{+,t})}^{3},
		\eeq
		where the constant $C$ is chosen uniformly over $t\in[0,1]$ and $\absv{\omega-\omega_{\beta,t}(E_{+,t})}<k_{0}/6$.
		
		We apply Lemma \ref{lem:subor_cont} to $\omega_{\beta,t}$, so that there exists $\tau>0$ such that $\absv{z-E_{+,t}}<\tau$ implies $\absv{\omega_{\beta,t}(z)-\omega_{\beta,t}(E_{+,t})}<k_{0}/6$. Then \eqref{eq:sqrt_lim_prf1} reads
		\beq\label{eq:sqrt_lim_prf2}
		\Absv{z-E_{+,t}-\gamma_{\beta,t}^{-2}(\omega-\omega_{\beta,t}(E_{+,t}))^{2}}\leq C\absv{\omega_{\beta,t}(z)-\omega_{\beta,t}(E_{+,t})}^{3},\quad \absv{z-E_{+,t}}<\tau,
		\eeq
		where we defined 
		\beq\label{eq:gamma=2der}
		\gamma_{\beta,t}=\sqrt{\frac{2}{\wt{z}''(\omega_{\beta,t}(E_{+,t}))}}.
		\eeq
		We again follow \cite[Lemma 3.8]{Bao-Erdos-Schnelli2020} to get $\gamma_{\beta,t}\sim1 $ uniformly over $t$. Inverting the expansion \eqref{eq:sqrt_lim_prf2} (taking smaller $\tau$ if necessary) concludes the proof of Proposition \ref{prop:sqrt_lim}. 
	\end{proof}
	
	Now that we have established Proposition \ref{prop:sqrt_lim}, the following result can be easily proved:\begin{cor}\label{cor:sqrt_lim}
		The following hold uniformly over $t\in[0,1]$ and $z\in\C_{+}\cap\{z:\absv{z-E_{+,t}}\leq\tau\}$;
		\begin{gather}\label{eq:m_der}
			\begin{aligned}
				&m'_{\mu_{t}}(z)\sim\absv{z-E_{+,t}}^{-1/2},\qquad \qquad
				&&m''_{\mu_{t}}(z)\sim\absv{z-E_{+,t}}^{-3/2}, \\
				&\omega_{\alpha,t}'(z)\sim\absv{z-E_{+,t}}^{-1/2},\qquad\qquad
				&&\omega_{\alpha,t}''(z)\sim\absv{z-E_{+,t}}^{-3/2},
			\end{aligned}\\
			\begin{aligned}
				&F'_{\alpha,t}(\omega_{\alpha,t}(z))\sim 1,\qquad
				&&F''_{\alpha,t}(\omega_{\alpha,t}(z))\sim 1,\qquad
				&&F''_{\alpha,t}(\omega_{\alpha,t}(z))\sim 1,\qquad 
				&&\caT_{\alpha}(z)\sim 1.
			\end{aligned}
		\end{gather}
		Furthermore, we have the following in the larger domain $\caD_{\tau}(0,\eta_{M})$;
		\beq\begin{aligned}
			\im m_{\mu_{t}}(z)\sim \im \omega_{\alpha}(z)\sim&\im\omega_{\beta}(z)\sim
			\begin{cases}
				\sqrt{\kappa+\eta}, & \text{if }E\leq E_{+,t},\\
				\frac{\eta}{\sqrt{\kappa+\eta}}, & \text{if }E>E_{+,t},
			\end{cases}\\
			\caS_{\alpha\beta}(z)\sim& \sqrt{\kappa+\eta}, \label{eq:S_lim}\\
			\caT_{\alpha}(z)\sim &1\sim \caT_{\beta}(z),
		\end{aligned}\eeq
		where we denoted $\kappa=\absv{z-E_{+,t}}$ and $\eta=\im z$.
	\end{cor}
	\begin{proof}
		The proof of Corollary \ref{cor:sqrt_lim} is the same as \cite[Corollaries 3.10 and 3.11]{Bao-Erdos-Schnelli2020} except some minor modifications. For example, when proving \eqref{eq:S_lim} for the regime $\kappa\sim 1$, we used the following fact;
		\beqs
		\inf\{\omega_{\alpha,t}(E)-\omega_{\alpha,t}(E_{+,t}):E>E_{+,t}+\kappa,\,t\in[0,1]\}>c(\kappa)
		\eeqs
		for some constant $c(\kappa)>0$ depending only on $\kappa>0$. This inequality is a direct consequence of Lemmas \ref{lem:subor_cont} and \ref{lem:subor_real}. We omit further details.
	\end{proof}
	\begin{proof}[Proof of Lemma \ref{lem:stab}]
		The first, second, third, and fourth parts of the lemma are proved in Lemmas \ref{lem:subor_cont}, \ref{lem:subor_bdd}, \ref{lem:stab_lim}, and \ref{lem:edgechar}. Also, \eqref{eq:gamma_def} is a direct consequence of \eqref{eq:subor} and Proposition \ref{prop:sqrt_lim}. In particular, we have
		\beq
		\rho(E)=\im\omega_{\alpha}(E)\int_{\R}\frac{1}{\absv{x-\omega_{\alpha}(E)}^{2}}\dd\mu_{\alpha}(x),\qquad  E\in[E_{+}-\tau,E_{+}],
		\eeq
		so that
		\beq\label{eq:gamma_gamma}
		\gamma_{t}^{3/2}=\gamma_{\alpha,t}\int_{\R}\frac{1}{\absv{x-\omega_{\alpha}(E_{+,t})}^{2}}\dd \mu_{\alpha}(x)\sim 1.
		\eeq
		
		Thus it only remains to prove the last assertion that $\frac{\dd}{\dd t}\gamma_{t}\sim1$. Since $\gamma_{t}\sim 1$, it suffices to prove $\frac{\dd}{\dd t}(\gamma_{t})^{2/3}\sim 1$, which in turn is implied by
		\beq
		\frac{\dd}{\dd t}\omega_{\alpha,t}(E_{+,t})\sim 1,\qquad 	\frac{\dd}{\dd t}\gamma_{\alpha,t}\sim 1
		\eeq
		in light of \eqref{eq:gamma_gamma}. 
		
		We now consider the derivative of $\omega_{\alpha,t}(E_{+,t})$. Recalling $\omega_{\alpha,t}(z)=\omega_{\alpha,0}(z+tm_{\mu_{t}}(z))$ and using \eqref{eq:m_der} with $t=0$, we have
		\beq\label{eq:omega_der_1}
		\frac{\dd}{\dd t}\omega_{\alpha,t}(z)=\omega_{\alpha,0}'(\xi_{t})\frac{\dd\xi_{t}}{\dd t},\qquad \xi_{t}\deq E_{+,t}+tm_{\mu_{t}}(E_{+,t}).
		\eeq
		Thus it suffices to analyze $\xi_{t}$. Since both of $\mu_{0}$ and $\mu_{\SC}^{(t)}$ are Jacobi-type measures for each fixed $t>0$, a direct application of \cite[Proposition 4.7]{Bao-Erdos-Schnelli2020JAM} gives that the upper edge $E_{+,t}$ is the rightmost solution $z=E_{+,t}$ of the equation
		\beq\label{eq:fc_sc}
		\int\frac{1}{\absv{x-z-tm_{\mu_{t}}(z)}^{2}}\dd\mu_{0}(x)=\frac{1}{t}.
		\eeq
		Hence $\xi_{t}$ satisfies
		\beq\label{eq:fc_sc_edge}
		\int\frac{1}{\absv{x-\xi_{t}}^{2}}\dd\mu_{0}(x)=\frac{1}{t},\qquad E_{+,t}=E_{+,t}-tm_{\mu_{t}}(E_{+,t})+tm_{\mu_{t}}(E_{+,t})=\xi_{t}-tm_{\mu_{0}}(\xi_{t}).
		\eeq
		Using the fact that $\mu_{0}$ has square-root decay, we have for each $p\geq 2$ that (see e.g. \cite[Lemma~C.1]{Landon-Yau2017} for a proof)
		\beq\label{eq:sqr_asymp}
		\int\frac{1}{(y-x)^{p}}\dd\mu_{0}(x)\sim (y-E_{+,0})^{3/2-p},\qquad E_{+,0}\leq y\leq E_{+,0}+1.
		\eeq
		Combining \eqref{eq:fc_sc_edge} and \eqref{eq:sqr_asymp}, we have $\absv{\xi_{t}-E_{+,0}}\sim t^{2}$. Hence we have
		\beq\label{eq:xi_der}
		\frac{\dd}{\dd t}\xi_{t}=-\frac{1}{t^{2}}\left(\int_{\R}\frac{1}{(x-\xi_{t})^{3}}\dd\mu_{0}(x)\right)^{-1}\sim t^{-2}\absv{\xi_{t}-E_{+,t}}^{3/2}\sim t,
		\eeq
		where the first equality follows from differentiating \eqref{eq:fc_sc_edge} and the second from \eqref{eq:sqr_asymp} with $p=3$. Plugging in \eqref{eq:xi_der} to \eqref{eq:omega_der_1}, we have
		\beq\label{eq:omega_der}
		\frac{\dd}{\dd t}\omega_{\alpha,t}(z)\sim \absv{\xi_{t}-E_{+,t}}^{-1/2}\cdot t=\absv{m_{\mu_{t}}(E_{+,t})}^{-1}\sqrt{t}\lesssim \sqrt{t}\lesssim 1.
		\eeq
		
		We next move on to the derivative of $\gamma_{\alpha,t}$. As above, we instead prove $\frac{\dd}{\dd t}(\gamma_{\alpha,t})^{-2}\lesssim 1$. Recall from \eqref{eq:gamma=2der} that
		\beq
		\gamma_{\alpha,t}^{-2}=\frac{\wt{z}_{\alpha}''(\omega_{\beta,t}(E_{+,t})}{2},
		\eeq
		where $\wt{z}_{\alpha}$ is the symmetric analogue of $\wt{z}$ in \eqref{eq:wtz_def}. By a direct calculation, we have
		\beq\label{eq:wtz_alp}
		\wt{z}_{\alpha}''(\omega_{\beta,t})=-\frac{F_{\alpha,t}''(\omega_{\beta,t})}{F_{\beta,t}'(\omega_{\alpha,t})}(F_{\beta,t}'(\omega_{\alpha,t})-1)-\frac{F_{\beta,t}''(\omega_{\alpha,t})}{(F_{\beta,t}'(\omega_{\alpha,t}))^{2}}(F_{\alpha,t}'(\omega_{\beta,t}))^{2},
		\eeq
		where we abbreviated $\omega_{\alpha,t}\equiv\omega_{\alpha,t}(E_{+,t})$ and $\omega_{\beta,t}\equiv\omega_{\beta,t}(E_{+,t})$; see e.g. \cite[Eq. (4.34)]{Bao-Erdos-Schnelli2020JAM} for a proof. By the definition of $F_{\alpha,t},F_{\beta,t}$, Lemmas \ref{lem:Pick_F}, \ref{lem:stab_lim}, and \eqref{eq:omega_der}, we easily find that \eqref{eq:wtz_alp} is Lipschitz in $t$, so that
		\beq
		\Absv{\frac{\dd}{\dd t}\wt{z}_{\alpha}''(\omega_{\beta,t}(E_{+,t})}\lesssim 1.
		\eeq
		This completes the proof of $\frac{\dd}{\dd t}\gamma_{t}\lesssim 1$.
	\end{proof}

	\subsection{Stability of $\mu_{A}\boxplus\mu_{B}\boxplus\mu_{\SC}^{(t)}$}
	In this section, we prove Proposition \ref{prop:stab} following the proof of \cite[Proposition 3.1]{Bao-Erdos-Schnelli2020}. More specifically, we prove an upper bound for the distance between $(\omega_{A,t}(z),\omega_{B,t}(z))$ and $(\omega_{\alpha,t}(z),\omega_{\beta,t}(z))$, and Proposition \ref{prop:stab} will follow from the exact same proof as that of \cite[Proposition 3.1]{Bao-Erdos-Schnelli2020}. 
	
	As in \cite{Bao-Erdos-Schnelli2020}, we define the $t$-dependent domain $\caD'\equiv \caD'(\sigma,\tau,\eta_{0})\deq \caD_{\mathrm{in}}\cup\caD_{\mathrm{out}}$ as
	\beqs
	\begin{split}
		\caD_{\mathrm{in}}\equiv\caD_{\mathrm{in}}(\tau,\eta_{0})&\deq \{E+\ii\eta\in\C_{+}:E-E_{+,t}\in[-\tau,N^{-1+\sigma}],\,\im z\in[N^{-1+\sigma},\eta_{0}]\},\\
		\caD_{\mathrm{out}}\equiv\caD_{\mathrm{out}}(\delta,\eta_{0})&\deq \{E+\ii\eta\in\C_{+}:E\in[E_{+,t}+N^{-1+\sigma},\tau^{-1}],\,\im z\in (0,\eta_{0}]\}.
	\end{split}
	\eeqs
	The upper bound for the distance between subordination functions is established in the following lemma;
	\begin{lem}\label{lem:subor_diff}
		Let $\epsilon>0$ be fixed and $\tau$ be as in Lemma \ref{lem:stab_lim}. Then for $N$ sufficiently large, the following hold uniformly over $t\in[0,1]$ and $z\in\caD'$ for some $\eta_{0}$;
		\begin{align}\label{eq:subor_diff}
			\absv{\omega_{A,t}(z)-\omega_{\alpha,t}(z)}+\absv{\omega_{B,t}(z)-\omega_{\beta,t}(z)}
			=O\left(\frac{N^{-1+\epsilon}}{\sqrt{\absv{z-E_{+,t}}}}\right),\\
			|\caS_{AB}|\sim\sqrt{\absv{z-E_{+,t}}},\qquad \im m_{\wh{\mu}}(z)=O(\sqrt{\absv{z-E_{+,t}}})	\nonumber,
		\end{align}
		and $\im m_{\wh{\mu}}(z)\sim\sqrt{z-|E_{+,t}|}$ when $z\in\caD_{\mathrm{in}}$. 
		Furthermore, the following stronger bound hold for $z\in\caD_{\mathrm{out}}\cap\{z:\im z\leq N^{-1}\}$;
		\beq\label{eq:subor_im_diff}
		\absv{\im\omega_{A,t}(z)-\im\omega_{\alpha,t}(z)}
		+\absv{\im\omega_{B,t}(z)-\im\omega_{\beta,t}(z)}
		=O\left(\frac{(\im\omega_{\alpha,t}+\im\omega_{\beta,t})N^{-1+\epsilon}+\im z}{\sqrt{\absv{z-E_{+,t}}}}\right).
		\eeq
	\end{lem}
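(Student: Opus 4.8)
The plan is to follow the strategy of \cite[Proposition 3.1]{Bao-Erdos-Schnelli2020}, now applied to the three-fold convolution encoded by the system $\Phi_{AB,t}=0$ from \eqref{eq:Phi_def}. Recall that $(\omega_{A,t},\omega_{B,t})$ solves $\Phi_{AB,t}(\omega_{A,t}(z),\omega_{B,t}(z),z)=0$ and $(\omega_{\alpha,t},\omega_{\beta,t})$ solves $\Phi_{\alpha\beta,t}(\omega_{\alpha,t}(z),\omega_{\beta,t}(z),z)=0$. First I would write
\[
\Phi_{\alpha\beta,t}(\omega_{A,t}(z),\omega_{B,t}(z),z)
=\Phi_{\alpha\beta,t}(\omega_{A,t}(z),\omega_{B,t}(z),z)-\Phi_{AB,t}(\omega_{A,t}(z),\omega_{B,t}(z),z)
=\big(F_{\alpha,t}-F_{A,t}\big)(\omega_{A,t}(z))\,\bse_1+\big(F_{\beta,t}-F_{B,t}\big)(\omega_{B,t}(z))\,\bse_2,
\]
so that the defect of $(\omega_{A,t},\omega_{B,t})$ as an approximate zero of the \emph{limiting} system $\Phi_{\alpha\beta,t}$ is controlled by $\|\mu_A-\mu_\alpha\|$ and $\|\mu_B-\mu_\beta\|$ through the $F$-transforms. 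Using Assumption \ref{assump:ABconv}(i) (the L\'evy-distance bound $\bsd\le N^{-1+\epsilon}$) together with part (i) of Proposition \ref{prop:stab} — which guarantees $\omega_{A,t}(z),\omega_{B,t}(z)$ stay at distance $\gtrsim k$ from $\supp\mu_\alpha,\supp\mu_\beta$ respectively — a standard resolvent-expansion/integration-by-parts estimate gives $|(F_{\alpha,t}-F_{A,t})(\omega_{A,t}(z))|+|(F_{\beta,t}-F_{B,t})(\omega_{B,t}(z))|=O(N^{-1+\epsilon})$.

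The second ingredient is the quantitative stability of the limiting system $\Phi_{\alpha\beta,t}=0$, which I would import from Corollary \ref{cor:sqrt_lim} and Lemma \ref{lem:stab}: near $z=E_{+,t}$ the relevant Jacobian of $\Phi_{\alpha\beta,t}$ has determinant comparable to $\caS_{\alpha\beta}(z)\sim\sqrt{\kappa+\eta}=\sqrt{|z-E_{+,t}|}$, with the other entries of order $1$ (and $\caT_{\alpha,t},\caT_{\beta,t}\sim 1$). Inverting this Jacobian, the $O(N^{-1+\epsilon})$ defect above is amplified by at most $1/\sqrt{|z-E_{+,t}|}$, which yields the first displayed bound in \eqref{eq:subor_diff}. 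Once the difference of subordination functions is controlled, the claims $|\caS_{AB}|\sim\sqrt{|z-E_{+,t}|}$ and $\im m_{\wh\mu}(z)=O(\sqrt{|z-E_{+,t}|})$ (and $\sim\sqrt{|E_{+,t}-z|}$ on $\caD_{\mathrm{in}}$) follow by perturbing the corresponding statements for the limiting objects in Corollary \ref{cor:sqrt_lim}, checking that the error term $N^{-1+\epsilon}/\sqrt{|z-E_{+,t}|}$ is subordinate to the main term $\sqrt{|z-E_{+,t}|}$ on the domain $\caD'$ (this is where the lower cutoff $\im z\ge N^{-1+\sigma}$ on $\caD_{\mathrm{in}}$ and the spectral-gap cutoff $E-E_{+,t}\ge N^{-1+\sigma}$ on $\caD_{\mathrm{out}}$ are used, forcing $|z-E_{+,t}|\gtrsim N^{-1+\sigma}$ so that the ratio is $O(N^{-\sigma/2+\epsilon})=o(1)$).

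For the improved imaginary-part bound \eqref{eq:subor_im_diff} on $\caD_{\mathrm{out}}\cap\{\im z\le N^{-1}\}$, I would not take imaginary parts of the already-derived estimate — that is too lossy — but instead repeat the perturbative analysis directly at the level of $\im\Phi_{\alpha\beta,t}$. Since on this part of $\caD_{\mathrm{out}}$ we are strictly outside the support, $\im\omega_{\alpha,t},\im\omega_{\beta,t}$ are themselves tiny (of order $\im z/\sqrt{\kappa}$ plus the rigidity-type corrections), and the linearized system for the \emph{imaginary parts} of $\omega_{A,t}-\omega_{\alpha,t}$, $\omega_{B,t}-\omega_{\beta,t}$ has the same invertible structure; the inhomogeneity now carries the extra smallness $(\im\omega_{\alpha,t}+\im\omega_{\beta,t})N^{-1+\epsilon}+\im z$, because the difference of $F$-transforms, when one takes imaginary parts and uses that the measures are real-supported, gains a factor proportional to the imaginary part of the argument. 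Solving and dividing by the Jacobian determinant $\sim\sqrt{|z-E_{+,t}|}$ gives \eqref{eq:subor_im_diff}.

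\textbf{Main obstacle.} The delicate point is the bootstrap/continuity argument needed to even \emph{start} the perturbation: a priori one only knows $(\omega_{A,t},\omega_{B,t})$ is \emph{some} pair of Herglotz functions, not that it is close to $(\omega_{\alpha,t},\omega_{\beta,t})$, so the quantitative inverse-function argument must be run as a self-improving estimate on a connected domain (starting from large $\im z$, where closeness is easy, and propagating down), exactly as in \cite{Bao-Erdos-Schnelli2020}. Making this continuity argument uniform in $t\in[0,1]$ — so that the radius $\eta_0$ and all implicit constants do not degenerate as $t\to 0$ — is the technical heart; here one leans on the uniform-in-$t$ statements already secured in Lemmas \ref{lem:subor_cont}, \ref{lem:subor_bdd}, \ref{lem:stab_lim}, \ref{lem:edgechar} and Proposition \ref{prop:sqrt_lim}, which were proved precisely to make this possible. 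The remaining steps are then routine modifications of \cite[Proposition 3.1]{Bao-Erdos-Schnelli2020}, and I would state them as such rather than reproduce the computation.
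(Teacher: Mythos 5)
Your proposal mirrors the paper's proof almost step for step: the defect $r(z)=\Phi_{\alpha\beta,t}(\omega_{A,t},\omega_{B,t},z)$ is the difference of $F$-transforms evaluated at $\omega_{A,t},\omega_{B,t}$ and is $O(\bsd)$; the Jacobian of $\Phi_{\alpha\beta,t}$ has determinant $\caS_{\alpha\beta}\sim\sqrt{\kappa+\eta}$ (this is exactly what Lemma~\ref{lem:stab_iter} packages), so the defect is amplified by $1/\sqrt{|z-E_{+,t}|}$; and one must run a downward-in-$\eta$ continuity argument (Lemma~\ref{lem:stab_macro} for large $\eta$, then a lattice bootstrap). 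One small inconsistency to flag: at the step where you bound the defect you invoke Proposition~\ref{prop:stab}(i) to keep $\omega_{A,t}(z),\omega_{B,t}(z)$ away from the supports, but Proposition~\ref{prop:stab} is itself derived from the lemma you are proving — the correct, non-circular version (which your ``main obstacle'' paragraph essentially recovers) is to obtain the separation of $\omega_{A,t}$ from $\supp\mu_\alpha$ as a consequence of the inductive closeness hypothesis $|\omega_{A,t}-\omega_{\alpha,t}|\le c\,k_0$ combined with Lemma~\ref{lem:stab_lim} for the limiting subordination functions, as in the proof of \eqref{eq:stab_iter_r<d}.
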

	The proof of Lemma \ref{lem:subor_diff} can be further divided into two steps; firstly we prove \eqref{eq:subor_diff} in the regime where $\im z=\eta$ is large enough, and secondly we prove a stability result that strengthens a prior upper bound for the left-hand side of \eqref{eq:subor_diff} into the right-hand side of \eqref{eq:subor_diff}.
	
	The first step of the proof of Lemma \ref{lem:subor_diff} is dealt with the following lemma, which corresponds to \cite[Lemma A.2]{Bao-Erdos-Schnelli2020}. To summarize, it enables us to bound the left-hand side of \eqref{eq:subor_diff} in terms of $\Phi_{\alpha\beta}$, defined in \eqref{eq:Phi_def}. For later uses we included the corresponding result for $\Phi_{AB}$.
	\begin{lem}\label{lem:stab_macro}
		Let $(\iota_{1},\iota_{2})$ be either $(\alpha,\beta)$ or $(A,B)$ and let $\wt{\eta}_{0}>0$. For each $t\in[0,1]$, let $\wt{\omega}_{\iota_{1},t},\wt{\omega}_{\iota_{2},t}:\C_{\wt{\eta}_{0}}\to\C_{+}$ be analytic functions where $\C_{\wt{\eta}_{0}}=\{z\in\C_{+}:\im z\geq\wt{\eta}_{0}\}$. Assume that there is a constant $C>0$ such that the following hold for all $t\in[0,1]$ and $z\in\C_{\wt{\eta}_{0}}$;
		\begin{align}
			&\absv{\im\wt{\omega}_{\iota_{1},t}(z)-\im z}\leq C, &
			&\absv{\im\wt{\omega}_{\iota_{2},t}(z)-\im z}\leq C,\\
			&\absv{\wt{r}_{\iota_{1}}(z)}\leq C, &
			&\absv{\wt{r}_{\iota_{2}}(z)}\leq C,\\
			&\wt{r}_{\iota_{1}}(z)\deq \Phi_{\iota_{1}}(\wt{\omega}_{\iota_{1},t}(z),\wt{\omega}_{\iota_{2},t}(z),z), &
			&\wt{r}_{\iota_{2}}(z)\deq \Phi_{\iota_{2}}(\wt{\omega}_{\iota_{1},t}(z),\wt{\omega}_{\iota_{2},t}(z),z).
		\end{align}
		Then there exists a constant $\eta_{0}$ with $\eta_{0}\geq\wt{\eta}_{0}$ such that 
		\begin{align}
			&\absv{\wt{\omega}_{\iota_{1}}(z)-\omega_{\iota_{1}}(z)}\leq 2\norm{r_{\iota}(z)}, &
			&\absv{\wt{\omega}_{\iota_{2}}(z)-\omega_{\iota_{2}}(z)}\leq 2\norm{r_{\iota}(z)},
		\end{align}
		where $r_{\iota}(z)$ denotes the two-dimensional vector $(r_{\iota_{1}}(z),r_{\iota_{2}}(z))$.
	\end{lem}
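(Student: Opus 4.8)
\textbf{Proof proposal for Lemma \ref{lem:stab_macro}.}
The plan is to treat $\Phi_{\iota_{1}\iota_{2},t}$ as a perturbation of its value at the true subordination pair $(\omega_{\iota_{1},t},\omega_{\iota_{2},t})$ and to invert the linearization. First I would recall from Proposition \ref{prop:subor} (or rather its defining relation $\Phi_{\iota_{1}\iota_{2},t}(\omega_{\iota_{1},t}(z),\omega_{\iota_{2},t}(z),z)=0$) that the true pair is the unique zero of $\Phi_{\iota_{1}\iota_{2},t}(\cdot,\cdot,z)$ with the correct asymptotics at $z=\ii\infty$; so the statement amounts to a quantitative uniqueness/stability bound. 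Since $\wt{r}_{\iota}(z) = \Phi_{\iota_{1}\iota_{2},t}(\wt{\omega}_{\iota_1,t}(z),\wt{\omega}_{\iota_2,t}(z),z)$ measures the failure of $(\wt{\omega}_{\iota_1,t},\wt{\omega}_{\iota_2,t})$ to solve the system, I expect a bound of the form $\|\wt{\omega}-\omega\|\lesssim \|\text{Jacobian}^{-1}\|\cdot\|\wt r\|$, and the factor of $2$ in the conclusion is exactly the statement that on the high-$\eta$ region $\caD_{\wt\eta_0}$ the relevant Jacobian has norm bounded below by $1/2$ (after possibly enlarging $\wt\eta_0$ to some $\eta_0$).

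The key computational step is to control the derivative of $\Phi_{\iota_{1}\iota_{2},t}$. Writing the system componentwise, $\Phi_{\iota_1,t}(\omega_1,\omega_2,z) = F_{\iota_1,t}(\omega_1)-\omega_1-\omega_2+z$ and $\Phi_{\iota_2,t}(\omega_1,\omega_2,z) = F_{\iota_2,t}(\omega_2)-\omega_1-\omega_2+z$, so the Jacobian in $(\omega_1,\omega_2)$ is
\[
J=\begin{pmatrix} F_{\iota_1,t}'(\omega_1)-1 & -1\\ -1 & F_{\iota_2,t}'(\omega_2)-1\end{pmatrix},
\qquad \det J = (F_{\iota_1,t}'(\omega_1)-1)(F_{\iota_2,t}'(\omega_2)-1)-1,
\]
which at the true solution is $\caS_{\iota_1\iota_2}(z)$. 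For $\im z$ large one has $F_{\iota,t}'(\omega)-1 = m_{\wh\mu_{\iota,t}}'(\omega) = O(\im z^{-2})$ via the Nevanlinna representation $F_{\iota,t}(z)-z = m_{\wh\mu_{\iota,t}}(z)$ from Lemma \ref{lem:Pick_F}, since $\im\wt\omega_{\iota,t}(z)\geq \im z - C$ is assumed. Hence $\det J = -1 + O(\im z^{-2})$ is bounded away from $0$, $J$ is invertible with $\|J^{-1}\| = O(1)$, and by taking $\eta_0$ large enough one can make $\|J^{-1}\|\leq 2$ uniformly in $t\in[0,1]$. The remaining task is a standard fixed-point / Newton-type argument: set $\Delta = \wt\omega - \omega$ (both as $\C^2$-vectors), Taylor expand $0 = \Phi(\omega,z) = \Phi(\wt\omega,z) - J(\wt\omega,z)\Delta + O(\|\Delta\|^2)$, rearrange to $\Delta = J^{-1}\wt r + O(\|\Delta\|^2)$, and use that $\|\Delta\|$ is a priori small (both $\wt\omega$ and $\omega$ lie in the bounded region where $\im\cdot \approx \im z$ and $F'-1$ is small, so the quadratic term is genuinely negligible compared to the linear one on $\caD_{\wt\eta_0}$ for $\eta_0$ large). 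This yields $\|\Delta\|\leq 2\|\wt r\|$, which is the claim.

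The main obstacle I anticipate is making the ``a priori smallness of $\Delta$'' rigorous uniformly in $t$: one must verify that on the high-$\eta$ domain both $\wt\omega_{\iota,t}(z)$ and $\omega_{\iota,t}(z)$ are trapped in a common compact neighborhood (say $\{w:\im w\geq \eta_0-C,\ |w|\leq |z|+C\}$) on which $F_{\iota,t}'-1$ has size $O(\im z^{-2})$ with constants independent of $t$, so that the second-order remainder in the Taylor expansion can be absorbed. The hypotheses $|\im\wt\omega_{\iota,t}(z)-\im z|\leq C$ and $|\wt r_{\iota}(z)|\leq C$, combined with Lemma \ref{lem:subor_bdd} for the true $\omega_{\iota,t}$ and the uniform-in-$t$ continuity from Lemma \ref{lem:subor_cont}, should supply exactly this. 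Everything else (the explicit Jacobian, the Neumann series for $J^{-1}$, the Taylor expansion) is routine; the only place care is needed is choosing $\eta_0$ so that all the $O(\cdot)$ constants are simultaneously controlled and the contraction constant stays below $1/2$.
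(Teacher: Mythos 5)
Your overall strategy — treat $\wt r$ as a small perturbation of the zero of $\Phi$, invert the linearization, and absorb the remainder by pushing $\eta_0$ up — is exactly the route taken by the cited \cite[Lemma A.2]{Bao-Erdos-Schnelli2020}, which the paper defers to with the remark that the extra $t$-mass in $\wh\mu_{\iota,t}$ changes nothing. The Jacobian computation and the identity $F_{\iota,t}'-1=m_{\wh\mu_{\iota,t}}'$ are correct, and the uniformity in $t\in[0,1]$ indeed comes from $\wh\mu_{\iota,t}(\R)=\wh\mu_{\iota,0}(\R)+t$ being bounded. So the proposal is essentially right; two corrections are worth recording.

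First, the references in your "obstacle" paragraph are misplaced. Lemma~\ref{lem:subor_bdd} gives boundedness of $\omega_{\alpha,t},\omega_{\beta,t}$ on the near-edge domain $\caD_{\tau}(0,\eta_M)$ (small $\eta$), and Lemma~\ref{lem:subor_cont} gives continuity; neither addresses the large-$\im z$ regime you need here, and neither covers the $(A,B)$ case. The correct inputs are Proposition~\ref{prop:subor}(i), giving $\im\omega_{\iota,t}(z)\geq\im z\geq\eta_0$, together with the Nevanlinna representation from \eqref{eq:Pick_omega} (or Lemma~\ref{lem:Pick_F}), giving $|\omega_{\iota,t}(z)-z|=O(\im z^{-1})$; and for the approximate pair one reads directly off the definition of $\Phi$ and $|\wt r_{\iota}|\leq C$ that $\wt\omega_{\iota_2}=z+m_{\wh\mu_{\iota_1,t}}(\wt\omega_{\iota_1})-\wt r_{\iota_1}$, hence $|\wt\omega_{\iota,t}(z)-z|=O(1)$ once $\im\wt\omega_{\iota,t}(z)\geq\eta_0-C$.

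Second, the "a priori smallness of $\Delta$" you flag as the main obstacle is not actually needed and is better avoided. Subtracting $\Phi(\omega,z)=0$ from $\Phi(\wt\omega,z)=\wt r$ componentwise and using $F_{\iota,t}(w)=w+m_{\wh\mu_{\iota,t}}(w)$ gives exactly
\begin{align*}
-\Delta_2 + \bigl(m_{\wh\mu_{\iota_1,t}}(\wt\omega_1)-m_{\wh\mu_{\iota_1,t}}(\omega_1)\bigr)&=\wt r_1, &
-\Delta_1 + \bigl(m_{\wh\mu_{\iota_2,t}}(\wt\omega_2)-m_{\wh\mu_{\iota_2,t}}(\omega_2)\bigr)&=\wt r_2,
\end{align*}
with no quadratic remainder, and $\bigl|m_{\wh\mu_{\iota,t}}(\wt\omega)-m_{\wh\mu_{\iota,t}}(\omega)\bigr|\leq \wh\mu_{\iota,t}(\R)\,(\eta_0-C)^{-1}\eta_0^{-1}\,|\Delta|$ uses only the lower bounds on $\im\wt\omega$ and $\im\omega$, not smallness of $\Delta$. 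Choosing $\eta_0$ so this Lipschitz constant is $\leq 1/2$ then gives $\|\Delta\|\leq 2\|\wt r\|$ directly. If you insist on the Newton form with a second-order Taylor remainder, the input you really need is $\|\Delta\|=O(1)$ (supplied as above) together with $|F_{\iota,t}''|=O(\eta_0^{-3})$ along the segment — boundedness, not smallness — but the exact-difference version is cleaner and is how the cited lemma is proved.
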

	Since $\wh{\mu}_{t}$ has the same properties as $\wh{\mu}_{0}$ except for an additional mass of size $t$, we can easily see that the proof of \cite[Lemma A.2]{Bao-Erdos-Schnelli2020} applies to Lemma \ref{lem:stab_macro}. Details of the proof is left to interested readers.
	
	Next, we establish the local stability result which is used to extend the bound to smaller $\eta$.
	\begin{lem}\label{lem:stab_iter}
		Let $\tau,k_{0}>0$ be as in Lemma \ref{lem:stab_lim}. There exist some constants $c,C>0$ depending only on $\mu_{\alpha},\mu_{\beta}$, and $k_{0}$ such that the following holds for all $t\in[0,1]$ and $z_{0}\in\caD_{\tau}(0,\infty)$; if
		\beq\label{eq:stab_iter_assu}
		\absv{\omega_{A,t}(z_{0})-\omega_{\alpha,t}(z_{0})}+\absv{\omega_{B,t}(z)-\omega_{\beta,t}(z)}\leq c\min (k_{0}, \absv{\caS_{\alpha\beta}(z_{0})})
		\eeq
		for the constant $k_{0}$ from Lemma \ref{lem:stab_lim}, then
		\beq\label{eq:stab_iter}
		\absv{\omega_{A,t}(z)-\omega_{\alpha,t}(z)}+\absv{\omega_{B,t}(z)-\omega_{\beta,t}(z)}\leq C\frac{\norm{r(z_{0})}}{\absv{\caS_{\alpha\beta}(z_{0})}}
		\eeq
		where $r(z)\equiv(r_{1}(z),r_{2}(z))\deq\Phi_{\alpha\beta}(\omega_{A}(z),\omega_{B}(z),z)$.
		Furthermore, there exists a constant $C'>0$ depending only on $\mu_{\alpha},\mu_{\beta},k_{0},$ and $\eta_{0}$ such that for all $t\in[0,1]$ and $z_{0}\in\caD_{\tau}(0,\eta_{0})$, \eqref{eq:stab_iter_assu} implies
		\beq\label{eq:stab_iter_r<d}
		\norm{r(z_{0})}\leq C'\bsd.
		\eeq
	\end{lem}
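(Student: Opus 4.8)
To prove Lemma~\ref{lem:stab_iter} the plan is to linearize the two subordination systems at $z_{0}$ and invert the resulting $2\times 2$ map, whose determinant turns out to be exactly $\caS_{\alpha\beta}(z_{0})$. Write $\omega_{\alpha}\deq\omega_{\alpha,t}(z_{0})$, $\omega_{\beta}\deq\omega_{\beta,t}(z_{0})$, $\omega_{A}\deq\omega_{A,t}(z_{0})$, $\omega_{B}\deq\omega_{B,t}(z_{0})$ and $\delta\deq\absv{\omega_{A}-\omega_{\alpha}}+\absv{\omega_{B}-\omega_{\beta}}$. By the remark after \eqref{eq:Phi_def} one has $\Phi_{\alpha\beta,t}(\omega_{\alpha},\omega_{\beta},z_{0})=0$, whereas $\Phi_{\alpha\beta,t}(\omega_{A},\omega_{B},z_{0})=r(z_{0})$ by definition. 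First I would subtract these and Taylor expand $\Phi_{\alpha\beta,t}$ in its first two arguments; since $\Phi_{\alpha,t}$ depends nonlinearly only on $\omega_{1}$ (through $F_{\alpha,t}$) and $\Phi_{\beta,t}$ only on $\omega_{2}$ (through $F_{\beta,t}$), there are no cross terms and the expansion reads
\[
	r(z_{0})=J(z_{0})\begin{pmatrix}\omega_{A}-\omega_{\alpha}\\ \omega_{B}-\omega_{\beta}\end{pmatrix}
	+\begin{pmatrix}\tfrac{1}{2}F_{\alpha,t}''(\xi_{1})(\omega_{A}-\omega_{\alpha})^{2}\\ \tfrac{1}{2}F_{\beta,t}''(\xi_{2})(\omega_{B}-\omega_{\beta})^{2}\end{pmatrix},
	\qquad
	J(z_{0})\deq\begin{pmatrix}F_{\alpha,t}'(\omega_{\alpha})-1 & -1\\ -1 & F_{\beta,t}'(\omega_{\beta})-1\end{pmatrix},
\]
for some $\xi_{1}\in[\omega_{\alpha},\omega_{A}]$, $\xi_{2}\in[\omega_{\beta},\omega_{B}]$, and the key algebraic identity is $\det J(z_{0})=(F_{\alpha,t}'(\omega_{\alpha})-1)(F_{\beta,t}'(\omega_{\beta})-1)-1=\caS_{\alpha\beta}(z_{0})$.

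The next step is to bound all coefficients uniformly in $t\in[0,1]$. By Lemma~\ref{lem:stab} the points $\omega_{\alpha},\omega_{\beta}$ — and, using Assumption~\ref{assump:ABconv}(ii) with \eqref{eq:stab_iter_assu} (so $\delta\le ck_{0}$), also $\omega_{A},\omega_{B}$ and the segments joining them — lie at distance $\gtrsim k_{0}$ from $\supp\wh{\mu}_{\alpha,t}\subset[E_{\alpha}^{-},E_{\alpha}^{+}]$, resp.\ $\supp\wh{\mu}_{\beta,t}$, and from $\supp\mu_{A},\supp\mu_{B}$. Lemma~\ref{lem:Pick_F} writes $F_{\alpha,t}'(w)-1=\int(x-w)^{-2}\dd\wh{\mu}_{\alpha,t}(x)$ and $F_{\alpha,t}''(w)=2\int(x-w)^{-3}\dd\wh{\mu}_{\alpha,t}(x)$ with $\wh{\mu}_{\alpha,t}(\R)=\wh{\mu}_{\alpha,0}(\R)+t\le C$, so the entries of $J(z_{0})$ and the second derivatives in the remainder are bounded by a constant, uniformly over $(t,z_{0})\in[0,1]\times\caD_{\tau}(0,\infty)$ (consistently with Corollary~\ref{cor:sqrt_lim}). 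Hence $\norm{J(z_{0})^{-1}}\le C\absv{\caS_{\alpha\beta}(z_{0})}^{-1}$, and inverting the displayed identity yields $\delta\le C\absv{\caS_{\alpha\beta}(z_{0})}^{-1}\big(\norm{r(z_{0})}+C\delta^{2}\big)$. Then I would invoke \eqref{eq:stab_iter_assu} once more in the form $\delta\le c\absv{\caS_{\alpha\beta}(z_{0})}$: for $c$ small enough the quadratic contribution $C^{2}\delta^{2}\absv{\caS_{\alpha\beta}(z_{0})}^{-1}$ is at most $\tfrac{1}{2}\delta$, and absorbing it gives \eqref{eq:stab_iter}.

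For the bound $\norm{r(z_{0})}\le C'\bsd$ when $z_{0}\in\caD_{\tau}(0,\eta_{0})$, I would compare the $\alpha\beta$- and $AB$-systems directly. Since $F_{A,t}(\omega_{A})=\omega_{A}+\omega_{B}-z_{0}$ and $F_{B,t}(\omega_{B})=\omega_{A}+\omega_{B}-z_{0}$ by Proposition~\ref{prop:subor}(ii) for the pair $(\mu_{A},\mu_{B})$, subtracting these from $r_{1}(z_{0})=F_{\alpha,t}(\omega_{A})-\omega_{A}-\omega_{B}+z_{0}$ and $r_{2}(z_{0})=F_{\beta,t}(\omega_{B})-\omega_{A}-\omega_{B}+z_{0}$ gives
\[
	r_{1}(z_{0})=F_{\alpha,t}(\omega_{A})-F_{A,t}(\omega_{A}),\qquad r_{2}(z_{0})=F_{\beta,t}(\omega_{B})-F_{B,t}(\omega_{B}).
\]
On $\caD_{\tau}(0,\eta_{0})$ the arguments $\omega_{A},\omega_{B}$ are moreover bounded (Lemma~\ref{lem:stab}(ii)) and, by Lemma~\ref{lem:stab}(iii) together with the smallness hypothesis, lie to the right of $\supp\mu_{\alpha}\cup\supp\mu_{A}$ (resp.\ $\supp\mu_{\beta}\cup\supp\mu_{B}$) at distance $\gtrsim k_{0}$, which forces $\absv{m_{\alpha}(\omega_{A})},\absv{m_{A}(\omega_{A})}\ge c_{0}>0$ (all the relevant terms $(x-\omega_{A})^{-1}$ have real parts of one sign). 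Using $F_{\mu,t}(w)=-m_{\mu}(w)^{-1}+tm_{\mu}(w)$ this gives $\absv{r_{1}(z_{0})}\le C\absv{m_{\alpha}(\omega_{A})-m_{A}(\omega_{A})}$, and similarly for $r_{2}$. Finally, a standard estimate — integration by parts in the cumulative distribution functions, using that $\mu_{A},\mu_{\alpha}$ are supported in a common bounded interval by Assumption~\ref{assump:ABconv}(ii),(iii) and that $x\mapsto(x-w)^{-1}$ has bounded derivative there — bounds $\absv{m_{\alpha}(w)-m_{A}(w)}$ by $Cd_{L}(\mu_{A},\mu_{\alpha})$ for such $w$; hence $\norm{r(z_{0})}\le C'\bsd$, which is \eqref{eq:stab_iter_r<d}.

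The genuine obstacle is the bootstrap of the second paragraph, and the form of \eqref{eq:stab_iter_assu} is tailored to it: the factor $\absv{\caS_{\alpha\beta}(z_{0})}$ in $\min(k_{0},\absv{\caS_{\alpha\beta}(z_{0})})$ is exactly what compensates the blow-up $\norm{J(z_{0})^{-1}}\sim\absv{\caS_{\alpha\beta}(z_{0})}^{-1}$ at the spectral edge, while the factor $k_{0}$ keeps $\omega_{A},\omega_{B}$ inside the region where Lemma~\ref{lem:stab} and Lemma~\ref{lem:Pick_F} furnish the $t$-uniform bounds on $J$ and on $F_{\alpha,t}'',F_{\beta,t}''$. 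Once this local statement is in place the remaining bookkeeping parallels the proof of \cite[Proposition~3.1]{Bao-Erdos-Schnelli2020}, and Lemma~\ref{lem:subor_diff} then follows by combining it with the large-$\eta$ input of Lemma~\ref{lem:stab_macro}.
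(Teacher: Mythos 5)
Your proposal is correct and follows essentially the same route as the paper's proof: Taylor-expand the subordination system $\Phi_{\alpha\beta,t}$ around $(\omega_{\alpha},\omega_{\beta})$, note that the linear part has determinant $\caS_{\alpha\beta}(z_{0})$ and bounded entries (the paper performs the inversion by taking the Cramer-type linear combination rather than writing $J^{-1}$ explicitly, but this is the same computation), then solve the resulting quadratic inequality by absorbing the $\delta^{2}$ term using \eqref{eq:stab_iter_assu}; and for \eqref{eq:stab_iter_r<d} identify $r_{1}(z_{0})=F_{\alpha,t}(\omega_{A})-F_{A,t}(\omega_{A})$ and use lower bounds on $\absv{m_{\alpha}(\omega_{A})},\absv{m_{A}(\omega_{A})}$. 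One small imprecision in your last paragraph: Lemma~\ref{lem:stab}(iii) together with the smallness hypothesis does not force $\omega_{A}$ to lie strictly to the right of the supports — the paper instead uses the dichotomy "$\re\omega_{A}-E_{\alpha}^{+}>k_{0}/6$ or $\im\omega_{A}>k_{0}/6$" — but the desired lower bound $\absv{m_{\alpha}(\omega_{A})},\absv{m_{A}(\omega_{A})}\geq c$ holds in either case (uniform real-part sign in the first, uniform imaginary-part sign in the second), so the argument survives.
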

	Specifically, we take $c$ and $C$ so that $c<1/3$, $c^{-1}>4c_{1}^{-3}c_{2}(c_{1}^{-2}c_{2}+1)$, and $C>4(c_{1}^{-2}c_{2}+1)$, where
	\beqs
	c_{1}\deq k_{0}/3, \AND c_{2}\deq \wh{\mu}_{\alpha,0}(\R)+\wh{\mu}_{\beta,0}(\R)+1.
	\eeqs
	\begin{proof}[Proof of Lemma \ref{lem:stab_iter}]
		For simplicity, we abbreviate
		\begin{align*}
			&\omega_{\alpha,t}(z_{0})=\omega_{\alpha}, &
			&\omega_{A,t}(z_{0})=\omega_{A}, &
			&\omega_{A}-\omega_{\alpha}=\Delta\omega_{1},\\
			&\omega_{\beta,t}(z_{0})=\omega_{\beta},	&
			&\omega_{B,t}(z_{0})=\omega_{B},	&
			&\omega_{B}-\omega_{\beta}=\Delta\omega_{2}.
		\end{align*}
		As in \cite[Proposition 4.1]{Bao-Erdos-Schnelli2016}, we first consider the Taylor expansion of $F_{\alpha,t}(\omega_{A})$ around $\omega_{\alpha}$;
		\beq\label{eq:stab_iter_prf1}
		\absv{F_{\alpha,t}(\omega_{A})-F_{\alpha,t}(\omega_{\alpha})-F_{\alpha,t}'(\omega_{\alpha})\Delta\omega_{1}}\leq \absv{\Delta\omega_{1}}^{2}\sup_{\absv{\omega-\omega_{\alpha}}\leq k_{0}/3}\absv{F_{\alpha,t}''(\omega)}\leq c_{1}^{-3}c_{2}\absv{\Delta\omega_{1}}^{2},
		\eeq
		where we used Lemmas \ref{lem:Pick_F} and \ref{lem:stab_lim} and \eqref{eq:stab_iter_assu} in the last inequality. We have the same bound with $\alpha,A$ replaced by $\beta,B$.
		
		On the other hand by the definition of $r_{1}(z)$, we have 
		\beq\label{eq:stab_iter_prf2}
		F_{\alpha,t}(\omega_{A})-F_{\alpha,t}(\omega_{\alpha})=\Delta\omega_{1}+\Delta\omega_{2}+r_{1}(z_{0}).
		\eeq
		Combining \eqref{eq:stab_iter_prf1} and \eqref{eq:stab_iter_prf2}, we find that 
		\beq\label{eq:stab_iter_prf3}
		\absv{(F_{\alpha,t}'(\omega_{\alpha})-1)\Delta\omega_{1}-\Delta\omega_{2}}\leq c_{1}^{-3}c_{2}\norm{\Delta\omega}^{2}+\norm{r(z_{0})},
		\eeq
		where $\Delta\omega=(\Delta\omega_{1},\Delta\omega_{2})$. Now we take a linear combination of \eqref{eq:stab_iter_prf3} and its counterpart with switched indices, so that
		\beq\label{eq:stab_iter_prf4}
		\absv{\caS_{\alpha\beta}(z_{0})}\absv{\Delta\omega_{1}}\leq (\absv{F_{\beta,t}'(\omega_{\beta})-1}+1)\left(c_{1}^{-3}c_{2}\norm{\Delta\omega}^{2}+\norm{r(z_{0})}\right).
		\eeq
		Due to \eqref{eq:stab_iter_assu}, we can solve \eqref{eq:stab_iter_prf4} as a quadratic inequality for $\norm{\Delta\omega}$, so that
		\beqs
		\norm{\Delta\omega}\leq 4\absv{\caS_{\alpha\beta}(z_{0})}^{-1}(c_{1}^{-2}c_{2}+1)\norm{r(z_{0})},
		\eeqs
		where we used $\absv{F'_{\beta,t}(\omega_{\beta})-1}\leq c_{1}^{-2}c_{2}$ from Lemmas \ref{lem:Pick_F} and \ref{lem:stab_lim}.
		
		Finally, we prove \eqref{eq:stab_iter_r<d}. We first note that
		\beqs
		r_{1}(z_{0})=r_{1}(z_{0})-\Phi_{A}(\omega_{A},\omega_{B},z_{0})=F_{\alpha,t}(\omega_{A})-F_{A,t}(\omega_{A}).
		\eeqs
		Then, following \cite[(3.10)]{Bao-Erdos-Schnelli2020} we can easily see that $\absv{m_{A}(\omega_{A})-m_{\alpha}(\omega_{A})}\leq Cd$ from Lemma \ref{lem:stab_lim}. Thus by the definition of $F_{\mu,t}$, the result follows once we have
		\beqs
		\absv{m_{A}(\omega_{A})}\geq c \quad\AND\quad
		\absv{m_{\alpha}(\omega_{A})}\geq c
		\eeqs
		for some constant $c>0$. To see this, we observe from the proof of Lemma \ref{lem:stab_lim} that $\re\omega_{\alpha}-E_{\alpha}^{+}>k_{0}/2$ or $\im \omega_{\alpha}>k_{0}/2$ should hold. This implies either one of the following is true;
		\beq\label{eq:stab_iter_prf5}
		\re \omega_{A}-E_{\alpha}^{+}>k_{0}/6 \qquad\text{or}\qquad \im\omega_{A}>k_{0}/6.
		\eeq
		Combining \eqref{eq:stab_iter_prf5} with Lemma \ref{lem:subor_bdd}, we get $\absv{m_{\alpha}(\omega_{A})}\geq c$ and thus $\absv{m_{A}(\omega_{A})}\geq c$. This concludes the proof of Lemma \ref{lem:stab_iter}.
	\end{proof}
	
	\begin{proof}[Proof of Lemma \ref{lem:subor_diff}]
		Large portion of the proof is identical that of \cite[Lemma 3.12]{Bao-Erdos-Schnelli2020}, and we focus on highlighting the difference rather than explaining the details.
		
		We first prove \eqref{eq:subor_diff}. By \cite[Lemma 4.4]{Bao-Erdos-Schnelli2020} and references therein, we find that the subordination functions $\omega_{A,t}$ and $\omega_{B,t}$ are also Pick functions whose representations satisfy the following;
		\beq\label{eq:Pick_omega}
		\begin{aligned}
			&\omega_{\alpha,t}(z)-z=m_{\wt{\mu}_{\alpha,t}}(z),&
			&\omega_{A,t}(z)-z=m_{\wt{\mu}_{A,t}}(z),\\
			&\wt{\mu}_{\alpha,t}(\R)=\wh{\mu}_{\beta,t}(\R), &
			&\wt{\mu}_{A,t}(\R)=\wh{\mu}_{B,t}(\R).
		\end{aligned}
		\eeq
		In particular $\omega_{\alpha,t}(z)-z$ and $\omega_{A,t}(z)-z$ are both $O(\im z^{-1})$ uniformly over $t\in[0,1]$ and $\im z>\wt{\eta}_{0}$ for some $\eta_{0}>0$. Thus, taking large enough $\eta_{0}$ and applying Lemma \ref{lem:stab_macro} with the choices $\wt{\omega}_{\iota_{1},t}=\omega_{A,t}$ and $\wt{\omega}_{\iota_{2},t}=\omega_{B,t}$, we obtain 
		\beqs
		\absv{\omega_{A,t}(z)-\omega_{\alpha,t}(z)}\leq 2\norm{r(z)},\quad \re z\in [E_{+,t}-\tau,\tau^{-1}],\,\im z=\eta_{0}.
		\eeqs
		Furthermore, replicating the proof of \eqref{eq:stab_iter_r<d} yields $\norm{r(z)}\leq C_{\eta_{M}}\bsd$ for some constant $C_{\eta_{0}}$ depending on $\eta_{0}$.
		
		Now we take $N$ to be sufficiently large so that
		\beqs
		4C_{\eta_{0}}\bsd \leq c\min (k_{0},\inf\{\absv{\caS_{\alpha\beta}(E+\ii\eta_{0})}:E\in[E_{+,t}-\tau,\tau^{-1}]\}),
		\eeqs
		where $c$ is the constant in Lemma \ref{lem:stab_iter}. Here we used the fact that
		\beqs
		\absv{\caS_{\alpha\beta}(E+\ii\eta_{0})}\geq 1-\eta_{0}^{-4}\wh{\mu}_{\alpha,1}(\R)\wh{\mu}_{\beta,1}(\R),
		\eeqs
		which follows from \eqref{eq:Pick_general} and the definition of $\caS_{\alpha\beta}$. Thus by Lemma \ref{lem:stab_iter} we have
		\beq\label{eq:subor_diff_prf1}
		\absv{\omega_{A,t}(z)-\omega_{\alpha,t}(z)}+\absv{\omega_{B,t}(z)-\omega_{\beta,t}(z)}\leq C_{*}\frac{\bsd}{\absv{\caS_{\alpha\beta}(z)}},\quad z=E+\ii\eta_{0}\in\caD_{\tau}(0,\infty).
		\eeq
		where $C_{*}=CC'$ and $C,C'$ are the constants in Lemma \ref{lem:stab_iter} applied to the domain $\caD_{\tau}(0,\eta_{0})$.
		
		Following \cite[Lemma 3.12]{Bao-Erdos-Schnelli2020}, we take $\eta_{m}$ to be the smallest number for which \eqref{eq:subor_diff_prf1} holds for all $z=E+\ii\eta$ with $E\in[E_{+,t}-\tau,\tau^{-1}]$ and $\eta\in[\eta_{m},\eta_{0}]$. By above, such $\eta_{m}$ must exist and we have $\eta_{m}\leq\eta_{0}$. Suppose on the contrary that $\eta_{m}>N^{-1+\sigma}$, and take $\eta_{m}'=\eta_{m}-N^{-2}$ and $E\in[E_{+,t}-\tau,\tau^{-1}]$. By \eqref{eq:Pick_omega}, we find that
		\beqs
		\absv{\omega_{\alpha,t}(E+\ii\eta_{m})-\omega_{\alpha,t}(E+\ii\eta_{m}')}\leq C\eta_{m}^{-2}(\eta_{m}-\eta_{m}')^{2}=CN^{2\sigma-2}
		\eeqs
		for some numeric constant $C>0$, and that the same bound holds for $\omega_{\beta,t}$, $\omega_{A,t}$, and $\omega_{B,t}$. Then we have
		\beqs
		\absv{\omega_{\alpha,t}(E+\ii\eta_{m}')-\omega_{A,t}(E+\ii\eta_{m}')}\leq C_{*}\frac{\bsd}{\absv{\caS_{\alpha\beta}(E+\ii\eta_{m})}}+CN^{2\sigma-2}\leq C(N^{-\gamma/2}\bsd+N^{2\gamma-2}),
		\eeqs
		where we used the fact that $\absv{\caS_{\alpha\beta}(z)}\sim\sqrt{\kappa+\eta}$. Again using the asymptotics for $\absv{\caS_{\alpha\beta}}$, we have
		\beqs
		N^{-\sigma/2}\bsd +N^{2\gamma-2}\lesssim N^{(-1+\gamma)/2}\lesssim\absv{\caS_{\alpha\beta}(E+\ii\eta_{m}')},
		\eeqs
		where we used Assumption \ref{assump:ABconv}. Taking large enough $N$, we see that the point $E+\ii\eta_{m}'$ satisfies the assumptions of Lemma \ref{lem:stab_iter}, so that \eqref{eq:subor_diff_prf1} holds true at the point. Since $E$ was arbitrary chosen in $[E_{+,t}-\tau,\tau^{-1}]$, we obtain a contradiction to $\eta_{m}>N^{-1+\sigma}$. Therefore we conclude that \eqref{eq:subor_diff_prf1} holds for all $z\in\caD_{\tau}(N^{-1+\sigma},\eta_{0})$, and \eqref{eq:subor_diff} for all $z\in\caD_{\mathrm{in}}$. The proof for $\caD_{\mathrm{out}}$ is exactly the same, except we take $E\in[E_{+,t}+N^{-1+\sigma}],\eta_{m}\in(0,N^{-1+\sigma}],$ and $\eta_{m}'=\eta_{m}^{2}$. 
		
		The rest of the proof is exactly the same as \cite[Lemma 3.13]{Bao-Erdos-Schnelli2020} and we omit the details for simplicity.
	\end{proof}

	\begin{proof}[Proof of Proposition \ref{prop:stab}]
		The proof is almost identical to that of \cite[Proposition 3.1]{Bao-Erdos-Schnelli2020}, and the only difference is that the $F$-transform therein should be replaced by $F_{t}$ defined in \eqref{eq:mF_def}. In particular, using Lemmas \ref{lem:stab_lim} and \ref{lem:subor_diff} in place of Lemmas 3.7 and 3.12 of \cite{Bao-Erdos-Schnelli2020}, one can easily check that the proof of \cite[Proposition 3.1]{Bao-Erdos-Schnelli2020} applies verbatim. We omit further details.
	\end{proof}

	\section{Local laws for $H_{t}$}\label{sec:local law}
	
	In this section, we prove local laws for $H_{t}$ following the same strategy as in \cite{Bao-Erdos-Schnelli2020}. The proofs in \cite{Bao-Erdos-Schnelli2020} have to be carefully modified in order to deal with the effect of DBM. Due to similarity, we mainly focus on explaining such modification and refer to \cite{Bao-Erdos-Schnelli2020} whenever the same calculation therein applies.
	
	Recall the definition of $\caD_{\tau}(a,b)$ from \eqref{eq:D_def}. The precise statement of the local law is as follows;
	\begin{thm}[Local laws for $H_{t}$]\label{thm:ll} Suppose that Assumption \ref{assump:ABconv} holds. Let $\tau>0$ be a sufficiently small constant, $\sigma>0$, and $d_{1},\dots, d_{N}\in \bbC$ be deterministic complex numbers satisfying $\max_{i\in\braN} \absv{d_{i}}\leq 1$. Then we have
		\beq\label{eq:averll}
		\Absv{\frac{1}{N}\sum_{i}d_{i}\left(G_{ii}-\frac{1}{\fra_{i}-\omega_{A}(z)}\right)}\prec \frac{1}{N\eta}
		\eeq
		uniformly over $t\in[0,1]$ and $z\in \caD_{\tau}(N^{-1+\sigma},1)$. Furthermore, we have
		\beq\label{eq:etrll}
		\max_{i,j\in\braN}\Absv{G_{ij}(z)-\delta_{ij}\frac{1}{\fra_{i}-\omega_{A}(z)}}
		+|(U\adj G)_{ij}|\prec \sqrt{\frac{\im m_{\wh{\mu}_{t}}(z)}{N\eta}}+\frac{1}{N\eta}
		\eeq
		uniformly over the same domain for $t$ and $z$. The same results hold true if we replace $(G,U,\fra_{i},\omega_{A})$ by $(\caG,U\adj,\frb_{i},\omega_{B})$, respectively.
	\end{thm}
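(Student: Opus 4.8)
The plan is to adapt the proof of the static local law in \cite{Bao-Erdos-Schnelli2020} by carrying the Gaussian increment $\sqrt{t}W$ through every step, and to close the estimates using the stability input Proposition \ref{prop:stab}. The structural point that makes this possible is that the deterministic system governing the resolvent of $H_{t}$ is exactly $\Phi_{AB,t}=0$ from \eqref{eq:Phi_def}: using the modified transform $F_{\mu,t}(z)=-m_{\mu}(z)^{-1}+tm_{\mu}(z)$ in place of the ordinary $F$-transform absorbs the whole $t$-dependence, so the self-consistent equation keeps the same shape as in the $t=0$ case but with $t$-dependent coefficients. All bounds below are to be obtained uniformly in $t\in[0,1]$ and $z\in\caD_{\tau}(N^{-1+\sigma},1)$, which is precisely what Lemma \ref{lem:stab} and Proposition \ref{prop:stab} are designed to provide.

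\textbf{Step 1: self-consistent equations.} Starting from $zG=\gamma H_{t}G-I$ and the partial randomness decomposition of $U$ (Section \ref{sec:prd}), I would expand the diagonal entries $G_{ii}$ and $\caG_{ii}=(U\adj GU)_{ii}$, as well as the off-diagonal entries and $(U\adj G)_{ij}$, by two applications of Stein's lemma: one with respect to the Gaussian vector $\bsg_{i}$ using Lemma \ref{lem:dgbd}, and one with respect to the GUE $W$. The $W$-differentiation produces, among negligible remainders, the term $-t\,\tr G\cdot G$, which is exactly what turns $-m^{-1}$ into $-m^{-1}+tm$; together with the $\bsg_{i}$-expansion this gives
\begin{align*}
 \gamma G_{ii}=\frac{1}{\fra_{i}-\omega_{A}^{c}(z)}+\caE_{i},\qquad
 \gamma \caG_{ii}=\frac{1}{\frb_{i}-\omega_{B}^{c}(z)}+\caE_{i}',
\end{align*}
where the random approximate subordination functions $\omega_{A}^{c},\omega_{B}^{c}$ are built from weighted traces of $G$ and $\caG$ and satisfy $\Phi_{AB,t}(\omega_{A}^{c}(z),\omega_{B}^{c}(z),z)=r(z)$ for a small error vector $r$. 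The off-diagonal bounds and the bound on $(U\adj G)_{ij}$ follow from the same expansion together with the Ward identity $\sum_{j}|G_{ij}|^{2}=\eta^{-1}\im G_{ii}$ and the isotropic structure of the Householder vectors $\bsr_{i}$.

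\textbf{Step 2: error control, bootstrap, and stability.} I would bound $\Vert r(z)\Vert$ and the remainders $\caE_{i},\caE_{i}'$ by the standard large-deviation estimates for quadratic forms in the $\bsg_{i}$ and in the columns of $U$, by the Ward identity, and by the current a priori size of $G$ and $\caG$, obtaining roughly $\Vert r(z)\Vert\prec\bsd+(N\eta)^{-1}+((N\eta)^{-1}\im m_{\wh{\mu}_{t}}(z))^{1/2}+(\text{entrywise error})^{1/2}(N\eta)^{-1/2}$. Since $\caS_{AB}(z)\sim\sqrt{\kappa+\eta}$ and $\omega_{A},\omega_{B}$ stay at distance $k_{0}$ from $\supp\mu_{A},\supp\mu_{B}$, Lemma \ref{lem:stab_iter} (hence Proposition \ref{prop:stab}) upgrades $\Phi_{AB,t}(\omega_{A}^{c},\omega_{B}^{c},z)=r$ into $|\omega_{A}^{c}(z)-\omega_{A}(z)|+|\omega_{B}^{c}(z)-\omega_{B}(z)|\lesssim\Vert r(z)\Vert/\sqrt{\kappa+\eta}$. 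The law is then proved by a continuity argument in $\eta$: it holds trivially at $\eta=1$, and one descends to $\eta=N^{-1+\sigma}$ in increments of $N^{-2}$, feeding the previous bound into Step 1 and into the stability estimate at each level; because the expansion of $G$ generates $\caG$-entries and conversely, this bootstrap is run on the pair $(G,\caG)$ simultaneously. Reinserting the stability bound closes the iteration and yields \eqref{eq:etrll}; the averaged law \eqref{eq:averll} with the improved rate $(N\eta)^{-1}$ then follows from a fluctuation-averaging argument on $\frac1N\sum_{i}d_{i}(G_{ii}-(\fra_{i}-\omega_{A})^{-1})$ as in \cite{Bao-Erdos-Schnelli2020}, after checking that the extra $W$-terms also have the required self-improving form. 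The statements for $\caG$ are symmetric, corresponding to the matrix $\caH_{t}$ of \eqref{eq:themodel_sym}.

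\textbf{Main obstacle.} The delicate part is handling the Gaussian increment uniformly up to the constant time scale $t\sim1$: one must verify that every new quantity produced by differentiating in $W$ — traces such as $\tr GWG$ and $\tr\wt{B}GWG$ — either recombines into the $tm$-term of $F_{\mu,t}$ or is genuinely of lower order, and that the stability constants in Proposition \ref{prop:stab} and in Lemma \ref{lem:stab} do not degenerate as $t$ ranges over $[0,1]$. A secondary but unavoidable complication is that the bootstrap cannot be run on $G$ alone but must be carried out for the coupled pair $(G,\caG)$, since each expansion produces entries of the other resolvent.
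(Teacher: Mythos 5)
Your proposal matches the paper's actual proof (Appendices \ref{sec:local law}--\ref{sec:LL}) in all essentials: the same random approximate subordination functions $\omega_{A}^{c},\omega_{B}^{c}$ with the extra $t\tr G$ correction, which is exactly what the modified transform $F_{\mu,t}=-m_{\mu}^{-1}+tm_{\mu}$ absorbs; the same two-fold Stein expansion (once in $\bsg_{i}$ via partial randomness decomposition and once in the GUE $W$); the same appeal to the uniform-in-$t$ stability from Proposition~\ref{prop:stab}; the same bootstrap in $\eta$ starting from a macroscopic scale on the coupled pair $(G,\caG)$; and the same fluctuation averaging to upgrade the averaged rate to $(N\eta)^{-1}$. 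The paper's implementation further introduces the auxiliary quantities $P_{ij},K_{ij},L_{ij},J_{i},\Upsilon$ controlled by recursive moment estimates with cutoff functions $\varphi(\Gamma)$, and organizes the bootstrap as a weak local law followed by a strong one, but these are executions of the roadmap you describe rather than a different route.
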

	We postpone the proof of Theorem \ref{thm:ll} to Section \ref{sec:LL}.
	
	As in the statement of Theorem \ref{thm:ll}, we always work with the free convolution $\wh{\mu}_{t}=\mu_{A}\boxplus\mu_{B}\boxplus\mu_{\SC}^{(t)}$ rather than its limit $\mu_{t}$. Thus we use the shorthand notation $\wh{m}(z)=m_{\wh{\mu}_{t}}(z)$ without any confusion. In the following sections, we set $\eta_{m}=N^{-1+\sigma}$, $\eta_{M}=1$ and denote $\caD_{\tau}(\eta_{m},\eta_{M})$ by $\caD_{\sigma}$.
	
	\subsection{Outline of the proof of local law}
	
	To simplify the presentation, we introduce the following control parameters depending on $N$, $t$, and $z$.
	\begin{align*}
		\Psi\equiv\Psi(z)&\deq \sqrt{\frac{1}{N\eta}}, & 
		\Pi\equiv \Pi(z)&\deq \sqrt{\frac{\im m_{H_{t}}}{N\eta}},\\
		\Pi_{i}\equiv\Pi_{i}(z)&\deq \sqrt{\frac{\im(G_{ii}+\caG_{ii})}{N\eta}},&
		\Pi_{i}^{W}\equiv\Pi_{i}^{W}(z)&\deq \sqrt{\frac{\im (WGW)_{ii}}{N\eta}}.
	\end{align*}
	
	Before proceeding to the actual proof, we first present the outline of the proof of Theorem \ref{thm:ll}. We first define random functions $\omega_{A}^{c}$ and $\omega_{B}^{c}$ in $z$ as follows;
	\begin{align}\label{eq:apxsubor_def}
		\omega_{A}^{c}&\deq z-\frac{\tr\wt{B}G}{\tr(G)}+t\tr G,&
		\omega_{B}^{c}&\deq z-\frac{\tr AG }{\tr G}+t\tr G.
	\end{align}
	These functions will serve as random approximates of the genuine subordination functions $\omega_{A}$ and $\omega_{B}$ associated to the free convolution $\mu_{A}\boxplus\mu_{B}\boxplus\mu_{\SC}^{(t)}$. Note that these functions have an additional term $t\tr G$ compared to those used in \cite{Bao-Erdos-Schnelli2020}. With these choices, we follow the same three step strategy as in \cite{Bao-Erdos-Schnelli2020}.
	
	The first step is to prove the entrywise subordination, that is, estimates of the form
	\beq\label{eq:outline_etr}
	\Absv{G_{ij}-\delta_{ij}\frac{1}{\fra_{i}-\omega_{A}^{c}}}\prec	\Psi.
	\eeq
	More specifically, we write
	\beq\label{eq:zG}
	zG_{ij}+\delta_{ij}-\fra_{i}G_{ij}=(\wt{B}G)_{ij}+\sqrt{t}(WG)_{ij}
	\eeq
	using $zG+I=HG$ and apply Gaussian integration by parts to the rightmost side of \eqref{eq:zG}. Following calculations in \cite{Bao-Erdos-Schnelli2020} for the first term and \cite{Lee-Schnelli2018} for the second, we see that estimating the right-hand side of \eqref{eq:zG} is equivalent to $\Psi$ upper bounds for $Q_{ij}$ and $L_{ij}$ defined as
	\beqs
	Q_{ij}\deq (\wt{B}G)_{ij}\tr G -G_{ij}\tr(\wt{B}G)	\AND
	L_{ij}\deq (WG)_{ij} +\sqrt{t}\tr(G)G_{ij}.
	\eeqs
	In fact, after some algebraic calculation, we arrive at 
	\beq\label{eq:Lambdac<Q+L}
	G_{ij}-\delta_{ij}\frac{1}{\fra_{i}-\omega_{A}^{c}}
	=-\frac{Q_{ij}}{(\fra_{i}-\omega_{A}^{c})\tr G}	-\frac{\sqrt{t}L_{ij}}{\fra_{i}-\omega_{A}^{c}}.
	\eeq
	
	In order to apply Stein's lemma to $Q_{ij}$, we use partial randomness decomposition in Lemma \ref{lem:prd} to extract Gaussian random variables from the Haar unitary matrix $U$ as in \cite{Bao-Erdos-Schnelli2020}. Consequently, the quantity $Q_{ij}$ can be controlled by the two quantities in (4.10) of \cite{Bao-Erdos-Schnelli2020}, namely
	\begin{align}\label{eq:ST_def}
		&S_{ij}\deq \bsh_{i}\adj \wt{B}^{\angi}G\bse_{j}, &
		&T_{ij}\deq \bsh_{i}\adj G\bse_{j},
	\end{align}
	where $\bsh_{i}$ and $\wt{B}^{\angi}$ are defined in Lemma \ref{lem:prd}. Indeed, we can see from discussions below (4.10) of \cite{Bao-Erdos-Schnelli2020} that quantities in \eqref{eq:ST_def} are directly connected to $(\wt{B}G)_{ij}$. Following \cite{Bao-Erdos-Schnelli2020}, we find that it easier to work with auxiliary quantities $P_{ij}$ and $K_{ij}$ instead of $Q_{ij}$, defined by
	\beqs
	\begin{split}
		P_{ij}&\deq (\wt{B}G)_{ij}\tr G-G_{ij}\tr(\wt{B}G)+(G_{ij}+T_{ij})\Upsilon, \\
		K_{ij}&\deq T_{ij}+(\frb_{i}T_{ij}+(\wt{B}G)_{ij})\tr G-(G_{ij}+T_{ij})\tr(\wt{B}G),
	\end{split}
	\eeqs
	where we also defined
	\beqs
	\Upsilon\deq \tr \wt{B}G-(\tr \wt{B}G)^{2}+\tr G\tr \wt{B}G\wt{B}.
	\eeqs
	On the other hand, calculations for $L_{ij}$ involve another quantity $J_{i}$ defined as
	\beqs
	J_{i}\deq (WGW)_{ii}-\tr G +\sqrt{t}\tr G (WG)_{ii}.
	\eeqs
	In summary, the first step mainly concerns estimates for the four quantities $J_{i},L_{ij},P_{ij}$, and $K_{ij}$. The corresponding result is Proposition \ref{prop:local law 1}, which is proved in the next subsection.
	
	In the second step, we estimate the distance between $\omega_{A}^{c}$ and $\omega_{A}$. The bulk of the proof is devoted to rough and optimal fluctuation averaging results for $Q_{ii}$ and $L_{ii}$. In the rough fluctuation averaging, Propositions \ref{prop:RFA of Laa} and \ref{prop:FA-1}, we prove that 
	\beq\label{eq:outline_FA}
	\Absv{\frac{1}{N}\sum_{i}d_{i}Q_{ii}}\prec\Psi\Pi \AND
	\Absv{\frac{1}{N}\sum_{i}d_{i}L_{ii}}\prec\Pi^{2},
	\eeq
	for generic bounded weights $d_{1},\cdots,d_{N}$. Note that the bounds in \eqref{eq:outline_FA} are much smaller than the bound $\Psi$ in \eqref{eq:outline_etr} due to an averaging effect of fluctuations. And then in the optimal fluctuation averaging, Proposition \ref{prop:FA-ZAZB}, we take a specific weights for $Q_{ii}$ and $L_{ii}$ and consider a specific combination of two averages. Our choice leads to an improved bound $\Pi^{2}$ for the first term in \eqref{eq:outline_FA} and to an estimate of the form
	\beq\label{eq:outline_OFA}
	\absv{\caS_{AB}(z)\Lambda_{A}(z)^{2}+\caT_{A}(z)\Lambda_{A}(z)+O(\Lambda_{A}(z)^{3})}\prec \Pi^{2}
	\eeq
	where $\Lambda_{A}(z)=\omega_{A}^{c}(z)-\omega_{A}(z)$. The bound \eqref{eq:outline_OFA} eventually results in the bound $\absv{\Lambda_{A}(z)}\prec \Psi^{2}$. Details for the second step can be found in Section \ref{sec:FA}.
	
	Throughout both the first and second steps, we fix a spectral parameter $z$, assume that a weak, probabilistic bound holds at the point $z$, and use this assumption as an input. In the third and final step, we prove a weak local law to ensure that this a priori bound is in fact true in the whole domain. More specifically, we invoke the proofs in previous steps to prove weaker but quantitative versions of entrywise subordination and fluctuation averaging, in the sense that they do not depend on a probabilistic input. Then we use a bootstrapping argument to conclude a weak local law, Theorem \ref{thm:weak local law}. Feeding the weak law back to the first and second steps and using another bootstrapping argument lead to the final result. This step is presented in Section \ref{sec:LL}.

	\subsection{Entrywise subordination}
	
	We introduce the following notations for the errors we need to control;
	\begin{gather}
		\begin{aligned}\label{eq:lambda_def}
			\Lambda_{ij}&\deq\Absv{G_{ij}-\frac{\delta_{ij}}{\fra_{i}-\omega_{A}}}, &
			\Lambda_{ij}^{c}&\deq\Absv{G_{ij}-\frac{\delta_{ij}}{\fra_{i}-\omega_{A}^{c}}},&
			\Lambda_{L}&\deq\max_{i,j}\absv{L_{ij}}\\
			\Lambda_{\rme}&\deq\max_{i,j} \Lambda_{ij}, &
			\Lambda_{\rme}^{c}&\deq\max_{i,j}\Lambda_{ij}^{c},&
			\Lambda_{T}&\deq \max_{i,j}\absv{T_{ij}}.
		\end{aligned}
	\end{gather}
	We also write $\wt{\Lambda}_{ij},\wt{\Lambda}_{ij}^{c},\wt{\Lambda}_{T},\wt{\Lambda}_{\rme},\wt{\Lambda}_{\rme}^{c}$ to represent their analogues obtained by switching the roles of $(A,B)$, $(U,U\adj)$, and $(W,\caW)$. For example, we write
	\beqs
	\wt{\Lambda}_{ij}\deq\Absv{\caG_{ij}-\delta_{ij}\frac{1}{\frb_{i}-\omega_{B}}}.
	\eeqs
	Finally, we take a collection of smooth cut-off function $\varphi_{\caL}\equiv\varphi:\R\to\R$ indexed by $\caL>0$ such that $\varphi(x)$ is non-increasing in $\absv{x}$ and that
	\beq\label{eq:cutoff_def}
	\begin{aligned}
		&\varphi(x)=\begin{cases}
			1 & \absv{x}\leq \caL,\\
			0 & \absv{x}\geq 2\caL,
		\end{cases}&
		&\sup_{x\in\R}\absv{\varphi'(x)}\leq C\caL^{-1}.
	\end{aligned}
	\eeq
	
	We introduce another notation that strengthens the notion of stochastic dominance. For an $N$-dependent random variable $X$ that may also depend on $t$ and $z$, we write $X=O_{\prec}(Y)$ for a positive deterministic function $Y$ of $N,t,$ and $z$ when the following holds: For any fixed $\epsilon>0$ and $p\in\N$, there exists an $N_{0}\in \N$ depending only on $\epsilon$ and $p$ such that
	\beq\label{eq:stocha_dom}
	\expct{\absv{X}^{p}}\leq N^{\epsilon}Y^{p}
	\eeq
	whenever $N\geq N_{0}$. In this case, we often write $O_{\prec}(Y)$ in place of $X$: Recall that the same notation stands for the usual stochastic dominance in the main manuscript. Indeed, we can easily see that \eqref{eq:stocha_dom} is stronger than the usual stochastic dominance by Markov's inequality.
	
	\begin{lem}\label{lem:etrll_J}
		Let $\caL>0$ and $p\in\bbN$ be fixed and define
		\beqs
		\Gamma_{i0}\deq \absv{G_{ii}}^{2}+\absv{(GW)_{ii}}^{2}+\absv{(WG)_{ii}}^{2}+\absv{\tr G}^{2}
		\eeqs
		Then there exist $N_{0}\in\bbN$ and $C_{p,\caL}>0$ depending only on $\caL$ and $p$ such that
		\beqs
		\expct{\absv{J_{i}\varphi(\Gamma_{i0})}^{2p}}\leq C_{p,\caL}\Psi^{2p}
		\eeqs
		for all $N\geq N_{0}$, $i\in\braN$, $t\in[0,1]$ and $z\in\caD$.
	\end{lem}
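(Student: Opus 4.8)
The plan is to bound the $2p$-th moment of $J_i\varphi(\Gamma_{i0})$ by reducing it, via Stein's lemma for the GUE matrix $W$, to a self-improving estimate. Recall $J_i = (WGW)_{ii} - \tr G + \sqrt{t}\,\tr G\,(WG)_{ii}$, and that $zG = \gamma H G - I$ with $H = H_0 + \sqrt t\,W$. First I would write $\expct{\absv{J_i\varphi}^{2p}} = \expct{J_i\varphi\cdot\ol{J_i\varphi}\,\absv{J_i\varphi}^{2p-2}}$ and expand the factor $(WGW)_{ii}$ inside $J_i$ by applying Stein's lemma to each $W_{ik}$: since $W$ is GUE, $\expct{W_{ik}(\,\cdot\,)} = \frac1N\expct{\partial_{W_{ki}}(\,\cdot\,)}$. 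Differentiating $(WGW)_{ii} = \sum_{k}W_{ik}(GW)_{ki}$ produces terms $\frac1N\sum_k (GW)_{ki}$-type contributions that, after collecting, reconstruct $\tr G$ (the deterministic leading piece that $J_i$ subtracts), the term $\sqrt t\,\tr G\,(WG)_{ii}$, together with genuinely lower-order fluctuation terms of size $O_\prec(\Psi)$ such as $\frac1N(GWGW)_{ii}$, $\frac1N\sum_k(G^2)_{ik}W_{ki}$-type objects, and derivatives hitting $\varphi(\Gamma_{i0})$ or the remaining $\absv{J_i\varphi}^{2p-2}$ factor. The cutoff $\varphi(\Gamma_{i0})$ is exactly what makes this rigorous before any local law is known: on its support $\absv{G_{ii}}, \absv{(GW)_{ii}},\absv{(WG)_{ii}},\absv{\tr G}$ are all $O(\sqrt{\caL})$, so every factor produced along the expansion is a priori bounded, and the only genuinely small quantities are the explicit $\frac1N$ and $\eta^{-1}$ gains coming from differentiation (Ward-identity-type sums $\frac1N\sum_k\absv{G_{ik}}^2 = \frac{\im G_{ii}}{N\eta} \le \frac{\sqrt\caL}{N\eta} = O(\Psi^2)$).

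Concretely, I expect the expansion to yield a bound of the schematic form
\[
\expct{\absv{J_i\varphi}^{2p}} \le C_{p,\caL}\,\Psi\,\expct{\absv{J_i\varphi}^{2p-1}} + C_{p,\caL}\,\Psi^{2}\,\expct{\absv{J_i\varphi}^{2p-2}} + C_{p,\caL}\,\Psi^{2p},
\]
where the first term comes from the main fluctuation contributions carrying one factor of $J_i\varphi$ still present (after resumming), the middle term from derivatives of $\varphi$ or of the $\absv{J_i\varphi}^{2p-2}$ factor that release two units of $J_i\varphi$ but also two powers of $\Psi$, and the last from fully decoupled terms. The cross terms where a derivative $\partial_{W_{ki}}$ hits $\varphi(\Gamma_{i0})$ need slightly more care: $\partial_{W_{ki}}\Gamma_{i0}$ involves $\ol{G_{ii}}(G^2 W\cdots)$-type expressions bounded by $\sqrt\caL$ on the support of $\varphi'$, times $\varphi'(\Gamma_{i0}) = O(\caL^{-1})$, and these contribute at the $O(\Psi)$ or $O(\Psi^2)$ level as well. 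Then a standard Young's inequality absorption — splitting $\Psi\,\absv{J_i\varphi}^{2p-1} \le \epsilon\absv{J_i\varphi}^{2p} + C_\epsilon\Psi^{2p}$ and likewise for the middle term — and choosing $\epsilon$ small gives $\expct{\absv{J_i\varphi}^{2p}} \le C_{p,\caL}\Psi^{2p}$, which is the claim. The uniformity over $t\in[0,1]$ and $z\in\caD$ is automatic since every constant depends only on $\caL$, $p$, and the (bounded) number of terms in the expansion; the factor $\sqrt t\le 1$ only helps.

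The main obstacle I anticipate is organizing the Stein expansion of $(WGW)_{ii}$ so that the exact cancellation of the deterministic $-\tr G$ and of the $-\sqrt t\,\tr G\,(WG)_{ii}$ terms is transparent: differentiating $(WGW)_{ii}$ produces $\frac1N\sum_k (GW)_{ki}\cdot$ several pieces, one of which is $\frac1N\Tr(G)\cdot$(something that resums to $(WG)_{ii}$) and another of which is $\frac1N\sum_k (G)_{ki}(GW\cdots)$; one must track which piece recombines with $\partial_{W_{ki}}$ acting on the second $W$ versus on the resolvent $G = (\gamma H - z)^{-1}$ (recall $\partial_{W_{ki}}G = -\gamma\, G\, \bse_i\bse_k\adj\, G$, with the appropriate normalization from $H_t = H_0+\sqrt t W$). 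This is the same bookkeeping as in \cite{Lee-Schnelli2018} for the GUE part and in \cite{Bao-Erdos-Schnelli2020} for the structure of the argument, so I would follow those references closely; the novelty is only that $J_i$ is tailored so that precisely the non-small terms cancel, leaving $O_\prec(\Psi)$, and I would state a short lemma isolating this algebraic identity before running the moment estimate. A secondary, purely technical point is handling the higher-order terms in the Stein expansion (the GUE has all cumulants beyond the second vanishing, so in fact the expansion terminates after one application, which simplifies matters considerably compared to a general Wigner matrix) — this should make the error control routine rather than delicate.
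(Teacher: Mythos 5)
Your proposal follows essentially the same route as the paper's proof: apply Stein's lemma to $(WGW)_{ii}=\sum_k (WG)_{ik}W_{ki}$, observe that the leading pieces of the expansion exactly cancel the $-\tr G$ and $\sqrt{t}\,\tr G\,(WG)_{ii}$ terms built into $J_i$, bound the surviving derivative terms using the cutoff together with Ward-identity gains $\frac{1}{N}\sum_k\absv{G_{ik}}^2\le \im G_{ii}/(N\eta)$ (and the analogous $\norm{GW\bse_i}^2$ control), and close by a self-improving moment inequality. The paper organizes the conclusion as a quadratic inequality in $x=\Psi^{-1}\E[\frj_i^{(p,p)}]^{1/2p}$ with all coefficients $\Psi^2$ (plus a self-referential $\Psi^2\E[\frj_i^{(p,p)}]$ term that absorbs), whereas you use a Young/Jensen absorption on a schematically slightly weaker recursion (coefficient $\Psi$ on the $\absv{J_i\varphi}^{2p-1}$ term); both yield $\E[\absv{J_i\varphi}^{2p}]\le C_{p,\caL}\Psi^{2p}$, so the discrepancy is harmless.
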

	\begin{proof}
		We use the following shorthand notations throughout the proof;
		\beq\label{eq:def_frj}
		\frj_{i}^{(p,q)}= J_{i}^{p}\ol{J_{i}}^{q}\varphi(\Gamma_{i0})^{p+q} \qquad p,q\in\N,
		\eeq
		with conventions $\frj_{i}^{(0,0)}=1$ and $\frj_{i}^{(-1,1)}=0$. Using this notation, we write
		\beq\label{eq:rec_WGW}
		\expct{\frj_{i}^{(p,p)}}=\expct{(WGW)_{ii}\varphi(\Gamma_{i0})\frj_{i}^{(p-1,p)}}-\expct{\tr G \varphi(\Gamma_{i0})\frj_{i}^{(p-1,p)}}
		+\sqrt{t}\expct{\tr G(WG)_{ii}\varphi(\Gamma_{i0})\frj_{i}^{(p-1,p)}}.
		\eeq
		Applying Stein's lemma to the first term yields
		\beq\label{eq:rec_WGW1}
		\begin{split}
			&\expct{(WGW)_{ii}\varphi(\Gamma_{i0})\frj_{i}^{(p-1,p)}}
			=\sum_{k}\expct{(WG)_{ik}W_{ki}\varphi(\Gamma_{i0})\frj_{i}^{(p-1,p)}}	\\
			=&\expct{\tr G\varphi(\Gamma_{i0})\frj_{i}^{(p-1,p)}}
			-\sqrt{t}\expct{\tr G (WG)_{ii}\varphi(\Gamma_{i0})\frj_{i}^{(p-1,p)}}\\
			&+\frac{2p-1}{N}\sum_{k}\Expct{(WG)_{ik}\frac{\partial \varphi(\Gamma_{i0})}{\partial W_{ik}}\frj_{i}^{(p-1,p)}}
			+\frac{p-1}{N}\sum_{k}\Expct{(WG)_{ik} \left(\frac{\partial J_{i}}{\partial W_{ik}}\right)\varphi(\Gamma_{i0})\frj_{i}^{(p-2,p)}}\\
			&+\frac{p}{N}\sum_{k}\Expct{(WG)_{ik} \left(\frac{\partial \ol{J_{i}}}{\partial W_{ik}}\right)\varphi(\Gamma_{i0})\frj_{i}^{(p-1,p-1)}}.
		\end{split}
		\eeq
		The first two terms of \eqref{eq:rec_WGW1} are canceled with the last two terms of \eqref{eq:rec_WGW}. For the remaining terms, we use the following lemma;
		\begin{lem}\label{lem:rec_W_err}
			Let $X_{i}$ be one of $G_{ii}$, $(GW)_{ii}$, $(WG)_{ii}$ or $(WGW)_{ii}$. Then there exists a constant $C_{\caL}$ depending only on $\caL$ such that the following hold for all $i\in\braN$, $t\in[0,1]$, and $z\in\caD$;
			\begin{align}
				&\frac{1}{N}\Absv{\sum_{k}(WG)_{ik}\frac{\partial X_{i}}{\partial W_{ik}}}\varphi(\Gamma_{i0})
				\leq C_{\caL}(N^{-1}+\Pi_{i}^{2}+(\Pi_{i}^{W})^{2})\varphi(\Gamma_{i0}),	\label{eq:rec_W_err}\\
				&\frac{1}{N}\Absv{\sum_{k}(WG)_{ik}\frac{\partial \tr G}{\partial W_{ik}}}\varphi(\Gamma_{i0})
				\leq C_{\caL}\Psi^{2}(N^{-1}+\Pi_{i}^{2}+(\Pi_{i}^{W})^{2})\varphi(\Gamma_{i0})	\label{eq:rec_W_err1}.
			\end{align}
			Furthermore, the same bounds hold true if we replace $X_{i}$ or $\tr G$ with its complex conjugate or if $\Gamma_{i0}$ is replaced by a larger quantity than $\Gamma_{i0}$.
		\end{lem}
		\begin{proof}[Proof of Lemma \ref{lem:rec_W_err}]
			Due to similarity, we only consider the first term of \eqref{eq:rec_W_err} with the choice $X_{i}=(WGW)_{ii}$. Computing the derivative explicitly, we find that
			\beqs
			\sum_{k}(WG)_{ik}\frac{\partial(WGW)_{ii}}{\partial W_{ik}}=(WG^{2}W)_{ii}-\sqrt{t}(WG)_{ii}(WG^{2}W)_{ii}+(WG)_{ii}^{2}.
			\eeqs
			Applying Cauchy-Schwarz inequality to the entry $(WGW)_{ii}$ gives
			\beqs
			\absv{(WG^{2}W)_{ii}}\leq \norm{GW\bse_{i}}\norm{G\adj W\bse_{i}}\leq \frac{\im (WGW)_{ii}}{\eta}=N(\Pi_{i}^{W})^{2},
			\eeqs
			and the definition of $\Gamma_{i0}$ yields $\absv{(WG)_{ii}}\varphi(\Gamma_{i0})\leq \sqrt{2\caL}\varphi(\Gamma_{i0})$. Thus we conclude
			\beqs
			\frac{1}{N}\Absv{\sum_{k}(WG)_{ik}\frac{\partial (WGW)_{ii}}{\partial W_{ik}}}\varphi(\Gamma_{i0})\leq C_{\caL}(N^{-1}+(\Pi_{i}^{W})^{2})\varphi(\Gamma_{i0})
			\eeqs
			as desired.
		\end{proof}
		After several applications of Leibniz and chain rules, we see that the coefficient of $\frj_{i}$ in each of the third, fourth, and fifth terms of \eqref{eq:rec_WGW1} can be further decomposed into quantities in Lemma \ref{lem:rec_W_err}, up to a factor of $C_{\caL}$. To sum up, we have proved that
		\beq\label{eq:rec_WGW_1}
		\E[\frj_{i}^{(p,p)}]\leq C_{\caL}\E[(N^{-1}+\Pi_{i}^{2}+\Pi_{i}^{W})^{2}\frj_{i}^{(p-1,p-1)}]
		\eeq
		
		On the other hand, we also have
		\beqs
		\Pi_{i}^{2}\varphi(\Gamma_{i0})\leq C_{\caL}\Psi^{2}\varphi(\Gamma_{i0})	\quad\AND\quad
		(\Pi_{i}^{W})^{2}\varphi(\Gamma_{i0})\leq \Psi^{2}(\absv{J_{i}}+C_{\caL})\varphi(\Gamma_{i0}),
		\eeqs
		where we used the definition of $J_{i}$ in the second inequality. Plugging these inequalities into \eqref{eq:rec_WGW}, we find that
		\beq\label{eq:rec_WGW2}
		\expct{\frj_{i}^{(p,p)}}\leq pC_{\caL}\expct{\Psi^{2}\absv{J_{i}\varphi(\Gamma_{i0})}^{2p-1}}+pC_{\caL}\expct{\Psi^{2}\absv{J_{i}\varphi(\Gamma_{i0})}^{2p-2}}+pC_{\caL}\Psi^{2}\expct{\frj_{i}^{(p,p)}}.
		\eeq
		Applying Jensen's inequality to the first two terms of \eqref{eq:rec_WGW2}, we obtain that $x=\Psi^{-1}\expct{\frj_{i}^{(p,p)}}^{1/2p}$ satisfies the quadratic inequality
		\beqs
		x^{2}\leq C_{p,\caL}(\Psi x+1),
		\eeqs
		so that $x\leq C_{p,\caL}$ for a constant $C_{p,\caL}$ depending only on $p$ and $\caL$. This concludes the proof of Lemma \ref{lem:etrll_J}.
	\end{proof}
	
	\begin{lem}\label{lem:etrll_L}
		Let $\caL>0$ and $p\in\N$ be fixed and define
		\beq\label{eq:Gamma0_def}
		\Gamma_{ij0}\deq \absv{\tr G}^{2}+\absv{G_{ii}}^{2}+\absv{G_{jj}}^{2}+\absv{G_{ij}}^{2}+\absv{G_{ji}}^{2}+\absv{(WG)_{ii}}^{2}+\absv{(GW)_{ii}}^{2},
		\eeq
		Then there exist $N_{0}\in\N$ and $C_{p,\caL}>0$ depending on on $\caL$ and $p$ such that 
		\beq\label{eq:etrll_WG}
		\expct{\absv{L_{ij}\varphi(\Gamma_{ij0})}^{2p}}
		\leq C_{p,\caL}\Psi^{2p}
		\eeq
		for all $N\geq N_{0}$, $i,j\in\braN$, $t\in[0,1]$, and $z\in\caD$.
	\end{lem}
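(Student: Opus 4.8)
\textbf{Proof proposal for Lemma \ref{lem:etrll_L}.}

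The plan is to mimic the moment-method argument used for Lemma \ref{lem:etrll_J}, but now the relevant identity is the one coming from the matrix relation $zG + I = \gamma H G$ read off-diagonal: multiplying \eqref{eq:zG} suitably one obtains an equation of the form
\beqs
	L_{ij} = (WG)_{ij} + \sqrt{t}\,\tr(G) G_{ij},
\eeqs
and the Ward identity together with $zG+I=\gamma HG$ lets us write $L_{ij}$ in terms of $\sqrt{t}$-weighted traces and a Stein-integrable Gaussian factor $W_{ij}$. Concretely, I would fix $p\in\N$ and study $\expct{\frl_{ij}^{(p,p)}}$, where in parallel with \eqref{eq:def_frj} we set $\frl_{ij}^{(p,q)} \deq L_{ij}^{p}\,\overline{L_{ij}}^{q}\,\varphi(\Gamma_{ij0})^{p+q}$ with $\frl_{ij}^{(0,0)}=1$ and $\frl_{ij}^{(-1,1)}=0$. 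Writing out one copy of $L_{ij}$ explicitly and applying Stein's lemma to the factor $W_{ij}$ produces: (a) terms that exactly cancel against the $\sqrt{t}\,\tr(G)G_{ij}$ piece, as in the cancellation between \eqref{eq:rec_WGW} and \eqref{eq:rec_WGW1}; (b) terms where the $W$-derivative hits the cutoff $\varphi(\Gamma_{ij0})$; (c) terms where it hits one of the remaining factors $L_{ij}$ or $\overline{L_{ij}}$ inside $\frl_{ij}^{(p-1,p)}$, which lowers the total degree by one.

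The key technical input I would isolate as a sublemma analogous to Lemma \ref{lem:rec_W_err}: for $X$ any of $G_{ij}$, $G_{ji}$, $G_{ii}$, $G_{jj}$, $\tr G$, $(WG)_{ii}$, $(GW)_{ii}$, one has
\beqs
	\frac{1}{N}\Absv{\sum_{k}(WG)_{ik} \frac{\partial X}{\partial W_{kj}}}\,\varphi(\Gamma_{ij0})
	\leq C_{\caL}\big(N^{-1} + \Pi_{i}^{2} + \Pi_{j}^{2} + (\Pi_{i}^{W})^{2}\big)\varphi(\Gamma_{ij0}),
\eeqs
and the same with $X$ replaced by its conjugate or $\Gamma_{ij0}$ by a larger quantity; for $X=\tr G$ or $X$ itself a resolvent entry one gains an extra $\Psi^{2}$. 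The proof is the explicit derivative computation plus Cauchy--Schwarz on the resulting $(WG^{2}W)$, $(G^{2})$, $(WG^{2})$-type entries, exactly as in the proof of Lemma \ref{lem:rec_W_err}, using that $\varphi(\Gamma_{ij0})$ bounds all of $|G_{ij}|$, $|G_{ii}|$, $|(WG)_{ii}|$, $|\tr G|$ by $\sqrt{2\caL}$. Feeding this sublemma into the expansion of $\expct{\frl_{ij}^{(p,p)}}$, using $\Pi_{i}^{2}\varphi \leq C_{\caL}\Psi^{2}\varphi$ and $(\Pi_{i}^{W})^{2}\varphi \leq \Psi^{2}(|J_{i}|+C_{\caL})\varphi$ (from the definition of $J_{i}$), together with Lemma \ref{lem:etrll_J} to control the stray $|J_i\varphi(\Gamma_{i0})|$ moments (note $\Gamma_{i0}$ from Lemma \ref{lem:etrll_J} is dominated by $\Gamma_{ij0}$ here, so the cutoff only helps), we arrive at a bound of the shape
\beqs
	\expct{\frl_{ij}^{(p,p)}} \leq p\,C_{p,\caL}\Psi^{2}\expct{\absv{L_{ij}\varphi(\Gamma_{ij0})}^{2p-1}} + p\,C_{p,\caL}\Psi^{2}\expct{\absv{L_{ij}\varphi(\Gamma_{ij0})}^{2p-2}} + p\,C_{p,\caL}\Psi^{2}\expct{\frl_{ij}^{(p,p)}}.
\eeqs
Applying Young/Jensen to the first two terms, the quantity $x \deq \Psi^{-1}\expct{\frl_{ij}^{(p,p)}}^{1/2p}$ satisfies $x^{2}\leq C_{p,\caL}(\Psi x + 1)$, hence $x\leq C_{p,\caL}$, which is \eqref{eq:etrll_WG}.

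I expect the main obstacle to be bookkeeping rather than a genuinely new idea: specifically, the careful application of Leibniz's rule so that every derivative term produced by Stein's lemma is matched either to a cancellation with the $\sqrt{t}\tr(G)G_{ij}$ term or to one of the controlled quantities in the sublemma, while keeping track of the off-diagonal indices $i\neq j$ (in contrast to the purely diagonal situation of Lemma \ref{lem:etrll_J}) and ensuring $\Gamma_{ij0}$ is chosen large enough that all resolvent entries appearing after differentiation are a priori bounded on the support of $\varphi$. A minor additional point is that, unlike in Lemma \ref{lem:etrll_J}, the derivative $\partial/\partial W_{kj}$ with $k$ summed and $j$ fixed can land on $G_{jj}$ or $G_{ji}$ factors inside $\Gamma_{ij0}$, which is exactly why $\Gamma_{ij0}$ in \eqref{eq:Gamma0_def} must contain $|G_{jj}|^2$ and $|G_{ji}|^2$; I would verify at the outset that this list is closed under the differentiation step. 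Once the sublemma is in place the self-improving quadratic inequality closes the estimate verbatim as above.
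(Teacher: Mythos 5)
Your overall approach is the paper's: the moment family $\frl_{ij}^{(p,q)}=L_{ij}^p\,\ol{L_{ij}}^q\,\varphi(\Gamma_{ij0})^{p+q}$, Stein's lemma on the $W$-entries inside $(WG)_{ij}$ to produce the cancellation against $\sqrt{t}\,\tr(G)G_{ij}$, an off-diagonal analogue of Lemma~\ref{lem:rec_W_err} for the residual derivative terms, absorption of $(\Pi_i^W)^2$ through $|J_i|$ and Lemma~\ref{lem:etrll_J}, and the self-improving quadratic inequality at the end. However, the sublemma you single out as ``the key technical input'' has the wrong index structure. Since $(WG)_{ij}=\sum_k W_{ik}G_{kj}$, Stein's lemma pairs $W_{ik}$ with $\partial/\partial W_{ki}$ and leaves the residual factor $G_{kj}$, so the quantities you must control are of the form $\frac{1}{N}\sum_k G_{kj}\,\partial X/\partial W_{ki}$ (exactly the paper's Lemma~\ref{lem:etrll_L_err}), not $\frac{1}{N}\sum_k (WG)_{ik}\,\partial X/\partial W_{kj}$ as you wrote. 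The prefactor $(WG)_{ik}$ is an artifact of the diagonal case of Lemma~\ref{lem:etrll_J}, where $(WGW)_{ii}=\sum_k (WG)_{ik}W_{ki}$ leaves $(WG)_{ik}$ after pulling out $W_{ki}$; here the leftover is a single resolvent column entry $G_{kj}$, which is precisely what lets the additional Ward-identity bound $\norm{G\bse_j}^2\leq N\Pi_j^2$ enter the estimate. Once you replace your sublemma with this corrected form, the rest of your argument -- the Cauchy--Schwarz bounds on the $(WG^2W)$-type quadratic forms, the use of $\Gamma_{i0}\leq\Gamma_{ij0}$ to invoke Lemma~\ref{lem:etrll_J} for the stray $J_i$ moments, and the closure via $x^2\leq C_{p,\caL}(\Psi x+1)$ -- matches the paper's proof and closes correctly.
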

	\begin{proof}
		We take $\frl_{ij}^{(p,q)}=L_{ij}^{p}\ol{L_{ij}}^{q}\varphi(\Gamma_{ij0})^{p+q}$ for $p,q\in\N$, so that the left-hand side of \eqref{eq:etrll_WG} is equal to $\E[\frl_{ij}^{(p,p)}]$. Following lines of proof of Lemma \ref{lem:etrll_J}, we can prove that there are random variables $\rmk_{1},\rmk_{2}$, and $\rmk_{3}$ with
		\beq\label{eq:rec_WG}
		\expct{\frl_{ij}^{(p,p)}}=\expct{\rmk_{1}\frl_{ij}^{(p-1,p)}}+\expct{\rmk_{2}\frl_{ij}^{(p-2,p)}}+\expct{\rmk_{3}\frl_{ij}^{(p-1,p-1)}}
		\eeq
		such that 
		\beq\label{eq:rec_WG_1}
		\absv{\rmk_{i}}\varphi(\Gamma_{ij0})\leq C_{\caL}(N^{-1}+\Pi_{i}^{2}+\Pi_{j}^{2}+(\Pi_{i}^{W})^{2})\varphi(\Gamma_{ij0}), \qquad i=1,2,3.
		\eeq
		The proof of this fact follows from the following off-diagonal variant of Lemma \ref{lem:rec_W_err}. We omit its proof since it is identical to that of Lemma \ref{lem:rec_W_err} except we use $\norm{G\bse_{j}}^{2}\leq N\Pi_{j}^{2}$ as an additional input. 
		\begin{lem}\label{lem:etrll_L_err}
			Let $X_{ij}$ be one of $G_{ii}$, $(WG)_{ii}$, $(GW)_{ii}$, $G_{jj}$, $G_{ij}$, $G_{ji}$. Then there exists a constant $C_{\caL}$ depending only on $\caL$ such that the following hold for all $i,j\in\braN$, $t\in[0,1]$, and $z\in\caD$;
			\begin{align*}
				&\frac{1}{N}\Absv{\sum_{k}G_{kj}\frac{\partial X_{ij}}{\partial W_{ki}}}\varphi(\Gamma_{ij0})\leq C_{\caL}(N^{-1}+\Pi_{i}^{2}+\Pi_{j}^{2}+(\Pi_{i}^{W})^{2})\varphi(\Gamma_{ij0}),\\
				&\frac{1}{N}\Absv{\sum_{k}G_{kj}\frac{\partial \tr G}{\partial W_{ki}}}\varphi(\Gamma_{ij0})\leq C_{\caL}\Psi^{2}(N^{-1}+\Pi_{i}^{2}+\Pi_{j}^{2}+(\Pi_{i}^{W})^{2})\varphi(\Gamma_{ij0}).
			\end{align*}
			The same set of inequalities holds true if $X_{ij}$ and $\tr G$ are replaced by their complex conjugates.
		\end{lem}
		
		We now deduce \eqref{eq:etrll_WG} from \eqref{eq:rec_WG}. As in Lemma \ref{lem:etrll_J}, we further bound the control parameters using
		\beq\label{eq:etr_WG1}
		(\Pi_{i}^{2}+\Pi_{j}^{2})\varphi(\Gamma_{ij0})\leq C_{\caL}\Psi^{2} \AND
		(\Pi_{i}^{W})^{2}\varphi(\Gamma_{ij0})\leq C_{\caL}\Psi^{2}(\absv{J_{i}}+\caL)\varphi(\Gamma_{ij0}).
		\eeq
		Plugging \eqref{eq:etr_WG1} into \eqref{eq:rec_WG} and using Jensen and H\H{o}lder inequalities, we obtain
		\beqs
		\expct{\frl_{ij}^{(p,p)}}\leq C_{\caL}\Psi^{2}\left((1+\expct{\frj_{i}^{p,p}}^{1/2p})\expct{\frl_{ij}^{(p,p)}}^{\frac{2p-1}{2p}}+(1+\expct{\frj_{i}^{p/2,p/2}}^{1/p})\expct{\frl_{ij}^{(p,p)}}^{\frac{p-1}{p}}\right),
		\eeqs
		where $\frj_{i}$ is defined in \eqref{eq:def_frj}. Since $\varphi(\Gamma_{ij0})\leq\varphi(\Gamma_{i0})$, Lemma \ref{lem:etrll_J} implies that
		\beqs
		\expct{\frj_{i}^{k,k}}^{1/2k}\leq C_{k,\caL}\Psi,\quad k\in\N.
		\eeqs
		Then we follow the exact same argument as in Lemma \ref{lem:etrll_J} to conclude \eqref{eq:etrll_WG}.
	\end{proof}
	
	Furthermore, we have the similar estimates for $P_{ij}$ and $K_{ij}$.
	\begin{lem}\label{lem:etrll_PK}
		Let $\epsilon,\caL>0 $ and $p\in \bbN$ be fixed and define
		\beq\label{eq:Gamma_def}
		\Gamma_{ij}\deq \Gamma_{ij0}+|\caG_{ii}|^{2}+|\caG_{jj}|^{2}+|T_{ij}|^{2}+|\wt{T}_{ij}|^{2}+|\tr G|^{2}+|\tr \wt{B}G|^{2}+|\tr \wt{B}G\wt{B}|^{2}.
		\eeq
		Then there exists $N_{0}\equiv N_{0}(\epsilon,\caL,p)\in \bbN$ such that
		\begin{align}
			\expct{\absv{P_{ij}\varphi(\Gamma_{ij})}^{2p}}\leq \Psi^{2p}N^{\epsilon},\\
			\expct{\absv{K_{ij}\varphi(\Gamma_{ij})}^{2p}}\leq \Psi^{2p}N^{\epsilon},
		\end{align}
		for all $N\geq N_{0}$, $i,j\in\braN$, $t\in[0,1]$, and $z\in\caD$.
	\end{lem}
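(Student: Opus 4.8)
\textbf{Proof plan for Lemma \ref{lem:etrll_PK}.}

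The plan is to mimic the recursive moment estimates from Lemmas \ref{lem:etrll_J} and \ref{lem:etrll_L}, now applying Stein's lemma with respect to the Gaussian vectors $\bsg_i$ coming from the partial randomness decomposition (Section \ref{sec:prd}) rather than with respect to the GUE $W$. Write $\frp_{ij}^{(p,q)} \deq P_{ij}^p\overline{P_{ij}}^q\varphi(\Gamma_{ij})^{p+q}$, so that the first target is $\E[\frp_{ij}^{(p,p)}]$. Using $zG = \gamma H G - I$ to rewrite $P_{ij}$ (or directly from the structure of $P_{ij}$ as built from $(\wt B G)_{ij}$, $G_{ij}$, and the tracial quantities $\tr G$, $\tr\wt BG$, $\tr\wt BG\wt B$), one factor of $\wt B = UBU\adj$ can be opened into $\bsh_i\adj\wt B^{\angi}R_i$ via the partial randomness decomposition. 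Expanding $\bsh_i\adj(\cdots)$ as a Gaussian sum $\sum_c \overline{g}_{ic}\norm{\bsg_i}^{-1}\bse_c\adj(\cdots)$ and applying Stein's lemma produces three families of terms: (a) a ``main'' term that, after replacing $\norm{\bsg_i}^{-2}\ell_i^2$ by $1$ and $R_i$ by $I$ at the cost of $O(\Psi^2)$-type errors (using the Ward identity as in \eqref{eq:B2_3}), cancels against the genuine definition of $P_{ij}$ — this is precisely the reason $P_{ij}$, rather than $Q_{ij}$, was the right auxiliary quantity; (b) derivative-of-cutoff terms $\partial\varphi(\Gamma_{ij})/\partial g_{ic}$; and (c) derivative-of-$P$ and derivative-of-$\overline P$ terms, which lower the power of $P_{ij}$ by one and bring an extra factor bounded by $C_\caL(N^{-1}+\Pi_i^2+\Pi_j^2+\cdots)$. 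The reference \cite{Bao-Erdos-Schnelli2020} carries out exactly this computation for the $t=0$ model; the modifications are that the new term $t\tr G$ in $\omega_A^c$ (see \eqref{eq:apxsubor_def}) and the $\sqrt t(WG)_{ij}$ piece of \eqref{eq:zG} generate additional contributions, all of which are handled by the estimates already proved in Lemmas \ref{lem:etrll_J}--\ref{lem:etrll_L} for $J_i$ and $L_{ij}$ (note $\Gamma_{ij}\ge\Gamma_{ij0}\ge\Gamma_{i0}$, so those lemmas apply on the support of $\varphi(\Gamma_{ij})$).

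The key steps, in order, would be: first, establish an off-diagonal ``$g$-derivative'' estimate analogous to Lemma \ref{lem:rec_W_err}, stating that for $X_{ij}$ ranging over the entries appearing in $\Gamma_{ij}$ (entries of $G$, $\caG$, $T$, $\wt T$) one has $N^{-1}|\sum_c \bse_c\adj\wt B^{\angi}R_iG\bse_j \,\partial X_{ij}/\partial g_{ic}|\varphi(\Gamma_{ij}) \le C_\caL(N^{-1}+\Pi_i^2+\Pi_j^2+(\Pi_i^W)^2)\varphi(\Gamma_{ij})$, using Lemma \ref{lem:dgbd} for the $G$-derivatives, the Ward identity $\sum_k|G_{kj}|^2 = \im G_{jj}/\eta$, and the crude a priori bounds $|\fra_i|,|\frb_i|\le C$ from Assumption \ref{assump:ABconv} together with $\norm{\bsg_i}\sim 1$, $g_{ii}\prec N^{-1/2}$. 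Second, carry out the Stein expansion of $\E[\frp_{ij}^{(p,p)}]$, observe the cancellation of the main term, and collect the remaining terms into the schematic recursion $\E[\frp_{ij}^{(p,p)}] \le pC_\caL\E[\Psi^2|P_{ij}\varphi(\Gamma_{ij})|^{2p-1}] + pC_\caL\E[\Psi^2(1+|J_i|^{1/2}+\cdots)|P_{ij}\varphi(\Gamma_{ij})|^{2p-2}] + pC_\caL\Psi^2\E[\frp_{ij}^{(p,p)}]$, bounding $(\Pi_i^W)^2\varphi(\Gamma_{ij}) \le C_\caL\Psi^2(|J_i|+\caL)\varphi(\Gamma_{ij})$ as before. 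Third, invoke Lemma \ref{lem:etrll_J} to control the moments of $|J_i|$ on the support of $\varphi(\Gamma_{i0})\ge\varphi(\Gamma_{ij})$, then apply Hölder's and Jensen's inequalities to reduce to a quadratic inequality $x^2 \le C_{p,\caL}(\Psi x + N^{\epsilon/2p})$ for $x = \Psi^{-1}\E[\frp_{ij}^{(p,p)}]^{1/2p}$, which yields $\E[|P_{ij}\varphi(\Gamma_{ij})|^{2p}]\le\Psi^{2p}N^\epsilon$. Fourth, repeat verbatim for $\frk_{ij}^{(p,p)} \deq K_{ij}^p\overline{K_{ij}}^q\varphi(\Gamma_{ij})^{p+q}$; the algebraic identities relating $K_{ij}$ to $(\wt B G)_{ij}$, $T_{ij}$ and the traces (inherited from \cite{Bao-Erdos-Schnelli2020}) ensure the same cancellation structure, and the $t$-dependent terms are again absorbed by $L_{ij}$ and $J_i$ estimates.

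The main obstacle will be the bookkeeping in step two: tracking exactly which terms produced by Stein's lemma cancel against the hand-crafted correction terms inside $P_{ij}$ (the $(G_{ij}+T_{ij})\Upsilon$ piece) and $K_{ij}$, and verifying that none of the new $t\tr G$ / $\sqrt t\,WG$ contributions break this cancellation or introduce a term that is not $O_\prec(\Psi)$ times an already-controlled quantity. This requires carefully re-deriving the ``$P$-identity'' of \cite[Section 4]{Bao-Erdos-Schnelli2020} with the extra $t$-terms in place, and checking that the cross terms between the $U$-randomness and the $W$-randomness — e.g.\ $\sqrt t\,\tr(G)(WG)_{ij}$ appearing after opening $\wt B$ — are bounded using the joint control of $\Pi_i$, $\Pi_i^W$, and $|J_i|$. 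Everything else (the cutoff derivatives, the Ward-identity reductions, the quadratic-inequality endgame) is routine once the off-diagonal $g$-derivative lemma is in hand, so I would state that lemma first and then quote \cite{Bao-Erdos-Schnelli2020} for the parts of the computation that are unchanged.
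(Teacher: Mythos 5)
Your plan matches the paper's proof essentially step for step: the paper also reduces to a recursive moment estimate for $P_{ij}$ and $K_{ij}$ by opening $\wt{B}$ via the partial randomness decomposition and applying Stein's lemma in the $\bsg_{i}$-variables, quoting \cite[Lemma 5.3]{Bao-Erdos-Schnelli2020} as the template and isolating the $g$-derivative error lemma (Lemma \ref{lem:A.6}) whose only genuinely new ingredient is the bound $\norm{G\wt{B}\bse_{i}}\leq 1+C\sqrt{\im G_{ii}/\eta}+\sqrt{t\,\im(WGW)_{ii}/\eta}$ coming from the rotated identity $\wt{B}G=zG-AG-\sqrt{t}WG+I$. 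The paper likewise uses that $\varphi(\Gamma_{ij})>0$ forces $\varphi(\Gamma_{i0})=\varphi(\Gamma_{ij0})=1$ so that Lemmas \ref{lem:etrll_J}--\ref{lem:etrll_L} control the $J_{i}$ and $L_{ij}$ moments appearing in the recursion, and closes with the same Jensen/Young quadratic-inequality argument you describe.
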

	\begin{proof}
		The proof is similar to that of Lemma 8.3 in \cite{Bao-Erdos-Schnelli2020}, and all the differences originate from the identity 
		\beq\label{eq:rotate identity}
		\wt{B}G=zG-AG-\sqrt{t}WG+I.
		\eeq
		Due to the additional term $\sqrt{t}WG$ in the above identity, several new terms arise that do not appear in \cite{Bao-Erdos-Schnelli2020}. We notice that from the definition of $\Gamma_{ij}$, $\varphi(\Gamma_{i0})$ and $\varphi(\Gamma_{ij0})$ are larger than $\varphi(\Gamma_{ij})$. Hence we can apply the estimates from Lemma \ref{lem:etrll_J} and \ref{lem:etrll_L}. 
		
		We first introduce the counterpart of Lemma 5.3 in \cite{Bao-Erdos-Schnelli2020} that handles errors arising along the proof.
		\begin{lem}[Lemma 5.3 in \cite{Bao-Erdos-Schnelli2020}]\label{lem:A.6}
			Suppose the assumptions of Proposition \ref{lem:etrll_PK} hold. Let $Q\in M_{N}(\C)$ be a generic matrix and set $X_{i}=I$ or $\wt{B}^{\angi}$ and $X=I$ or $A$. Then the following hold true for all $N\in\N$, $t\in[0,1]$, and $z\in\caD$:
			\begin{align*}
				\frac{1}{N}\Absv{\sum_{k}^{(i)}\frac{\partial \norm{\bsg_{i}}^{-1}}{\partial g_{ik}}\bse_{k}\adj X_{i}G\bse_{j}\varphi(\Gamma_{ij})}
				&=O_{\prec}(N^{-1})\norm{Q} \varphi(\Gamma_{ij}), \\
				\frac{1}{N}\Absv{\sum_{k}^{(i)}\bse_{i}\adj X\frac{\partial G}{\partial g_{ik}}\bse_{j}\bse_{k}\adj X_{i}G\bse_{j}\varphi(\Gamma_{ij})}
				&\leq C_{\caL}\norm{Q}(\Pi_{i}^{2}+\Pi_{j}^{2})\varphi(\Gamma_{ij}),\\
				\frac{1}{N}\Absv{\sum_{k}^{(i)}\frac{\partial T_{ij}}{\partial g_{ik}}\bse_{k}\adj X_{i}G\bse_{j}\varphi(\Gamma_{ij})}&
				\leq C_{\caL}\norm{Q}(\Pi_{i}^{2}+\Pi_{j}^{2})\varphi(\Gamma_{ij}),\\
				\frac{1}{N}\Absv{\sum_{k}^{(i)}\tr\left(Q\frac{\partial G}{\partial g_{ik}}\right)\bse_{k}\adj X_{i}G\bse_{j}\varphi(\Gamma_{ij})}
				&\leq C_{\caL}\norm{Q}\Psi^{2}(\Pi_{i}^{2}+\Pi_{j}^{2}+t(\Pi_{i}^{W})^{2})\varphi(\Gamma_{ij}),\\
				\frac{1}{N}\Absv{\sum_{k}^{(i)}\tr\left(Q\frac{\partial G}{\partial g_{ik}}\right)\bse_{k}\adj X_{i}\mr{\bsg}_{j}\varphi(\Gamma_{ij})}
				&\leq C_{\caL}\norm{Q}\Psi^{2}(\Pi_{i}^{2}+\Pi_{j}^{2}+t(\Pi_{i}^{W})^{2})\varphi(\Gamma_{ij}),\\
				\frac{\sqrt{t}}{N}\Absv{\sum_{k}^{(i)}\tr\left(WQ\frac{\partial G}{\partial g_{ik}}\right)\bse_{k}\adj X_{i}G\bse_{j}\varphi(\Gamma_{ij})}
				&\leq C_{\caL}\norm{Q}\Psi^{2}(\Pi_{i}^{2}+\Pi_{j}^{2}+(\Pi_{i}^{W})^{2})\varphi(\Gamma_{ij}),
			\end{align*} 
			for some constant $C_{\caL}$ depending only on $\caL$.
			In addition, the same estimates hold if we replace $\frac{\partial G}{\partial g_{ad}}$ and $\frac{\partial T_{a}}{\partial g_{ad}}$ by their complex conjugates $\frac{\partial \ol{G}}{\partial g_{ad}}$ and $\frac{\partial \ol{T_{a}}}{\partial g_{ad}}$.
		\end{lem}
		\begin{proof}
			The proof is a straightforward modification of Lemma 5.3 in \cite{Bao-Erdos-Schnelli2020}. The only difference is we use
			\beq\label{eq:rec_PK_err}
			\norm{G\wt{B}\bse_{i}}\leq 1+\absv{z}\norm{G\bse_{i}}+\absv{\fra_{i}}\norm{G\bse_{i}}+\sqrt{t}\norm{GW\bse_{i}}
			\leq 1+C\sqrt{\frac{\im G_{ii}}{\eta}}+\sqrt{\frac{t\im (WGW)_{ii}}{\eta}}.
			\eeq
			Note that the last term on the right-hand side of \eqref{eq:rec_PK_err} does not appear in \cite{Bao-Erdos-Schnelli2020}. We omit further details.
		\end{proof}
		With this lemma, we can follow the proof of Lemma 5.3 in \cite{Bao-Erdos-Schnelli2020} verbatim to obtain the recursive moment estimates for $P_{ij}$ and $K_{ij}$.Define for $p,q\in\N$
		\begin{align}
			\frm_{ij}^{(p,q)}\deq P_{ij}^{p}\ol{P_{ij}}^{q}\varphi(\Gamma_{ij})^{k+l}, &&
			\frn_{ij}^{(p,q)}\deq K_{ij}^{p}\ol{K_{ij}}^{q}\varphi(\Gamma_{ij})^{k+l}.
		\end{align}
		Then we have
		\beq\begin{aligned}\label{eq:recursive}
			\expct{\frm_{ij}^{(p,p)}}&
			=\expct{\rmk_{1}\frm_{ij}^{(p-1,p)}}
			+\expct{\rmk_{2}\frm_{ij}^{(p-2,p)}}
			+\expct{\rmk_{3}\frm_{ij}^{(p-1,p-1)}},\\
			\expct{\frn_{ij}^{(p,p)}}&
			=\expct{\rmk_{1}'\frn_{ij}^{(p-1,p)}}
			+\expct{\rmk_{2}'\frn_{ij}^{(p-2,p)}}
			+\expct{\rmk_{3}'\frn_{ij}^{(p-1,p-1)}}
		\end{aligned}\eeq
		where $\rmk_{j}$ and $\rmk_{j}'$ are some random variables satisfying
		\beq\begin{aligned}\label{eq:rec_PK_err_coeff}
			&\absv{\rmk_{1}}+\absv{\rmk_{1}'}\prec N^{-1/2}, \\
			&\absv{\rmk_{2}}+\absv{\rmk_{2}'}\prec\Pi_{a}^{2}+(\Pi_{a}^{W})^{2},\\ 
			&\absv{\rmk_{3}}+\absv{\rmk_{3}'}\prec\Pi_{a}^{2}+(\Pi_{a}^{W})^{2}.
		\end{aligned}\eeq
		As in Lemma \ref{lem:etrll_J}, applying Young's inequality to \eqref{eq:recursive} gives the result.
	\end{proof}
	
	Now we are ready to prove the entrywise subordination;
	\begin{prop}\label{prop:local law 1}
		Fix $z\in \caD$ and assume that
		\begin{align}\label{eq:ansatz}
			&\Lambda_{\rme}(z)\prec N^{-\sigma/4}, &
			&\Lambda_{L}(z)\prec N^{-\sigma/4},&
			&\Lambda_{T}(z)\prec 1,
		\end{align}
		and the same set of bounds hold true for $\wt{\Lambda}_{\rme},\wt{\Lambda}_{L},$ and $\wt{\Lambda}_{T}$. Then we have for all $i,j\in\llbra 1,N \rrbra$ that
		\beq\label{eq:etrll_1}
		|L_{ij}|\prec \Psi(z),\quad |P_{ij}|\prec \Psi(z), \quad |K_{ij}|\prec \Psi(z), \AND	|\Upsilon(z)|\prec \Psi(z).
		\eeq
		Furthermore, we have
		\beq\label{eq:etrll_2}
		\Lambda_{\rme}^{c}\prec \Psi(z) \AND
		\Lambda_{T}\prec \Psi(z).
		\eeq
		The same statements remain true if we switch the roles of $(A,B)$, $(U,U\adj)$, and $(W,\caW)$.
	\end{prop}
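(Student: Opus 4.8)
The plan is to establish \eqref{eq:etrll_1} and \eqref{eq:etrll_2} by a bootstrap on the quantities $\Lambda_{\rme}^{c}$ and $\Lambda_{T}$, using the moment estimates of Lemmas \ref{lem:etrll_J}, \ref{lem:etrll_L}, and \ref{lem:etrll_PK} to remove the cut-off functions $\varphi$. First I would observe that, under the a priori bounds \eqref{eq:ansatz}, the diagonal resolvent entries $G_{ii},\caG_{ii}$, the traces $\tr G,\tr\wt{B}G,\tr\wt{B}G\wt{B}$, and the entries $(WG)_{ii},(GW)_{ii}$ are all $\caO(1)$ with overwhelming probability; indeed Lemma \ref{lem:stab_lim} and Proposition \ref{prop:stab} give that $\omega_A,\omega_B$ stay away from $\supp\mu_\alpha,\supp\mu_\beta$ so that $1/(\fra_i-\omega_A)=\caO(1)$, and \eqref{eq:ansatz} then transfers boundedness to the resolvent entries; the $(WG)_{ii}$ terms are bounded since $\Lambda_L\prec N^{-\sigma/4}$ forces $(WG)_{ii}=-\sqrt t\,\tr G\,G_{ii}+L_{ii}=\caO(1)$. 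Consequently, for a fixed large constant $\caL$, the event $\{\varphi_\caL(\Gamma_{ij})=1\}$ (and likewise $\varphi_\caL(\Gamma_{i0})=\varphi_\caL(\Gamma_{ij0})=1$) holds with overwhelming probability. Therefore Lemmas \ref{lem:etrll_J}, \ref{lem:etrll_L}, and \ref{lem:etrll_PK}, which control the moments of $J_i\varphi$, $L_{ij}\varphi$, $P_{ij}\varphi$, $K_{ij}\varphi$ by $C_{p,\caL}\Psi^{2p}$ respectively $\Psi^{2p}N^\epsilon$, upgrade via Markov's inequality and a union bound over $i,j$ to the high-probability bounds $|L_{ij}|\prec\Psi$, $|P_{ij}|\prec\Psi$, $|K_{ij}|\prec\Psi$. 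The bound $|\Upsilon|\prec\Psi$ then follows from the algebraic identity $\Upsilon=P_{ij}/(G_{ij}+T_{ij})-\dots$ relating $\Upsilon$ to $P_{ij}$; more directly one can take $i=j$ with a suitable choice and use $\Upsilon=\tr\wt{B}G-(\tr\wt B G)^2+\tr G\tr\wt B G\wt B$ together with the expression of $P_{ii}$ in terms of $\Upsilon$ — since all prefactors are $\caO(1)$ and bounded below, $|P_{ii}|\prec\Psi$ and $|Q_{ii}|\prec\Psi$ give $|\Upsilon|\prec\Psi$.

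Next, with $|\Upsilon|\prec\Psi$ in hand, $|Q_{ij}|\prec\Psi$ follows from $Q_{ij}=P_{ij}-(G_{ij}+T_{ij})\Upsilon$ and $|Q_{ij}|+|K_{ij}|$ controls, through the partial-randomness expansion, the off-diagonal combination in \eqref{eq:Lambdac<Q+L}. I would then plug $|Q_{ij}|\prec\Psi$ and $|L_{ij}|\prec\Psi$ into the exact identity
\[
G_{ij}-\delta_{ij}\frac{1}{\fra_i-\omega_A^c}=-\frac{Q_{ij}}{(\fra_i-\omega_A^c)\tr G}-\frac{\sqrt t\,L_{ij}}{\fra_i-\omega_A^c},
\]
noting that $\tr G$ is bounded below (its imaginary part is $\sim\im\wh m\gtrsim\eta/\sqrt{\kappa+\eta}>0$, or alternatively $|\tr G|\ge c$ by the stability analysis since $\fra_i-\omega_A^c$ is close to $\fra_i-\omega_A$ which is $\caO(1)$ and bounded below) and $|\fra_i-\omega_A^c|\ge|\fra_i-\omega_A|-\Lambda_{A}\gtrsim 1$ once we know $\Lambda_A=|\omega_A^c-\omega_A|$ is small, which is itself a consequence of the a priori bound \eqref{eq:ansatz} on $\Lambda_\rme$ fed into the definition \eqref{eq:apxsubor_def} of $\omega_A^c$. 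This yields $\Lambda_{\rme}^c\prec\Psi$, and then $\Lambda_T\prec\Psi$ follows because $T_{ij}=\bsh_i\adj G\bse_j$ can be written, via the partial-randomness decomposition and $K_{ij}$, as a combination of $K_{ij}$ (size $\Psi$) and $G_{ij}-\delta_{ij}/(\fra_i-\omega_A^c)$-type terms together with $\caO(1)$ tracial prefactors. The symmetric statement is obtained by repeating the argument with $\caG_t$ in place of $G_t$, which by \eqref{eq:themodel_sym} and the rotational invariance of $W$ has exactly the same structure with $(A,B)$, $(U,U\adj)$, $(W,\caW)$ interchanged.

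The main obstacle I anticipate is the bookkeeping in the passage from the $\varphi$-truncated moment bounds to clean high-probability statements: one must verify that $\Gamma_{i0},\Gamma_{ij0},\Gamma_{ij}$ (which involve $G_{ii},\caG_{ii}$, the various traces, and crucially the Gaussian-perturbation terms $(WG)_{ii},(GW)_{ii},(WGW)_{ii}$) are all bounded with overwhelming probability under \eqref{eq:ansatz}, so that inserting or removing $\varphi_\caL$ costs only an $N^{-D}$ error. The subtle point is the $(WGW)_{ii}$ and $(WG)_{ii}$ factors: boundedness of $(WG)_{ii}$ uses the a priori control $\Lambda_L\prec N^{-\sigma/4}$, while $(WGW)_{ii}$ appears inside $J_i$ and is handled by Lemma \ref{lem:etrll_J} itself — so there is a mild circularity that is resolved by noting $\Gamma_{i0}$ does not contain $(WGW)_{ii}$, only $J_i$ does, and Lemma \ref{lem:etrll_J} is stated purely in terms of $\varphi(\Gamma_{i0})$. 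A second delicate point is checking that all the denominators $\fra_i-\omega_A^c$ and $\tr G$ are bounded below on the relevant high-probability event; this is where \eqref{eq:ansatz} enters a second time, guaranteeing $\omega_A^c$ is within $N^{-\sigma/4}$ of $\omega_A$ and hence inherits the separation from $\supp\mu_\alpha$ provided by Lemma \ref{lem:stab_lim}. Once these deterministic-a.s.\ reductions are in place, the rest is the algebraic substitution described above and the union bound over $\caO(N^2)$ pairs $(i,j)$, which is harmless because $\prec$ absorbs polynomial factors.
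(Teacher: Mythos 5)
Your overall strategy — remove the cut-offs $\varphi(\Gamma_{i0}),\varphi(\Gamma_{ij0}),\varphi(\Gamma_{ij})$ under the a priori bounds \eqref{eq:ansatz}, apply the moment Lemmas \ref{lem:etrll_J}, \ref{lem:etrll_L}, \ref{lem:etrll_PK} via Markov, and then extract $\Upsilon$, $\Lambda_\rme^c$, $\Lambda_T$ from the algebraic relations \eqref{eq:Lambdac<Q+L} and the definitions of $P,K$ — is indeed the paper's approach; the paper compresses this to a citation of \cite[Proposition 5.1]{Bao-Erdos-Schnelli2020}, while you make the cut-off removal explicit (correctly: under \eqref{eq:ansatz} one can check $\Gamma_{ij}\leq\caL$ with overwhelming probability and $\prec$ absorbs the exceptional event). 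Your observations that $|\tr G|\gtrsim 1$ and $|\fra_i-\omega_A^c|\gtrsim 1$ under the ansatz, via $\omega_A^c=\omega_A+O_\prec(N^{-\sigma/4})$ and the separation from $\supp\mu_\alpha$, are also correct and match \eqref{eq:etrll_prior_omegac}.

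However, your derivation of $|\Upsilon|\prec\Psi$ has a genuine gap. You write that "$|P_{ii}|\prec\Psi$ and $|Q_{ii}|\prec\Psi$ give $|\Upsilon|\prec\Psi$." But $|Q_{ii}|\prec\Psi$ is not available at that point: $Q_{ij}$ is not one of the quantities controlled by Lemmas \ref{lem:etrll_J}--\ref{lem:etrll_PK}, and the only identity linking $Q$ to controlled quantities is $Q_{ij}=P_{ij}-(G_{ij}+T_{ij})\Upsilon$, which requires knowing $\Upsilon\prec\Psi$ first. So the argument as written is circular. (Under \eqref{eq:ansatz} one only gets $Q_{ii}=O_\prec(N^{-\sigma/4})$, which is much weaker than $\Psi$ in the relevant regime.) The route that actually closes this, and which the paper implicitly invokes through \cite[Prop.~5.1]{Bao-Erdos-Schnelli2020}, is to use the pair of bounds $|P_{ij}|\prec\Psi$ and $|K_{ij}|\prec\Psi$ together with the a priori smallness $\Upsilon=O_\prec(N^{-\sigma/4})$ (from \eqref{eq:ansatz_conseq} and the fact that the deterministic value of $\Upsilon$ vanishes) to first solve for $T_{ij}$ and $\Lambda_{ij}^c$ from a nondegenerate linear relation, whose coefficient $1+\frb_i\tr G-\tr\wt{B}G=(\frb_i-\omega_B)\wh m+O_\prec(N^{-\sigma/4})$ is bounded below by Proposition \ref{prop:stab}; only after $\Lambda_T\prec\Psi$ and $\Lambda_\rme^c\prec\Psi$ are in hand does the precision-$\Psi$ bound on $\Upsilon$ follow. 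You should rearrange your argument accordingly, establishing $\Lambda_T$ and $\Lambda_\rme^c$ before $\Upsilon$ and $Q$, rather than the other way round.
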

	\begin{proof}
		First of all, we remark that the assumption \eqref{eq:ansatz} implies 
		\beq\label{eq:ansatz_conseq}
		\begin{split}
			\tr(G)&=\wh{m}+O_{\prec}(N^{-\sigma/4}),\\
			\tr(\wt{B}G)&=1+\omega_{B}(z)\wh{m}+O_{\prec}(N^{-\sigma/4}), \\
			\tr(\wt{B}G\wt{B})&=\omega_{B}(z)(1+\omega_{B}(z)\wh{m})+O_{\prec}(N^{-\sigma/4}).
		\end{split}
		\eeq
		They can be proved in exactly the same way as in (5.20) of \cite{Bao-Erdos-Schnelli2020}. Also the first two estimates in \eqref{eq:ansatz_conseq} combined with the definition of $\omega_{A}^{c}$ imply
		\beq\label{eq:etrll_prior_omegac}
		\omega_{A}^{c}(z)=F_{\wh{\mu}_{t}}(z)+z-\omega_{B}(z)+O_{\prec}(N^{-\sigma/4})=\omega_{A}(z)+O_{\prec}(N^{-\sigma/4}).
		\eeq
		
		The first three estimates in \eqref{eq:etrll_1} are direct consequences of Lemmas \ref{lem:etrll_L} and \ref{lem:etrll_PK} since $\varphi(\Gamma_{ij})=1=\varphi(\Gamma_{ij0})$ under the assumption \eqref{eq:ansatz}. The remaining estimates can be proved using the same argument as in Proposition 5.1 in \cite{Bao-Erdos-Schnelli2020}.
	\end{proof}
	
	\subsection{Fluctuation averaging estimates}\label{sec:FA}
	\subsubsection{Rough fluctuation averaging for general linear combinations}
	This section is a counterpart of Section 6 in \cite{Bao-Erdos-Schnelli2020}, in the sense that we prove a rough fluctuation averaging estimate for $Q_{aa}$; see Proposition \ref{prop:FA-1} below. To deal with the contribution of $W$, we prove a fluctuation averaging estimates for $L_{aa}$ and $J_{a}$. Then we follow the same method as Proposition 6.1 of \cite{Bao-Erdos-Schnelli2020}, where the results for $L_{aa}$ and $J_{a}$ are used as additional inputs.
	
	Before proceeding to the proof, we observe that the average of $\Pi^{W}_{a}$ is dominated by $\Pi$. To be precise, since $\tr WKW\leq \norm{W}\tr K$ for any positive matrix $K$ and $\norm{W}\prec 1$, we have 
	\beq\label{eq:PiaW estimate}
	\frac{t}{N}\sum_{a}(\Pi_{a}^{W})^{2}=t\frac{\tr W(\im G)W}{N\eta}\leq t\norm{W}\frac{\tr \im G}{N\eta}\prec t\Pi^{2}.
	\eeq
	Next, we show the fluctuation averaging estimates for $L_{aa}$ and $J_{a}$.
	\begin{prop}\label{prop:RFA of Laa}
		Fix a $z\in\caD$. Suppose that the assumptions of Proposition~\ref{prop:local law 1} hold.  Let $d_{1},\dots, d_{N}\in\bbC$ be possibly $H$-dependent quantities satisfying $\max|d_{i}|\prec 1$. Assume that for all $i,j\in\llbra 1,N \rrbra$,
		\beq\label{eq:weak depned-2}
		\frac{\sqrt{t}}{N}\sum_{k}\frac{\partial d_{j}}{\partial W_{ki}}\bse_{k}\adj G\bse_{i}=\caO(\Psi^{2}\Pi_{i}^{2}),
		\eeq
		and the same bounds hold when the $d_{j}$'s are replaced by their complex conjugates $\ol{d_{j}}$. Suppose that $\Pi_{a}\prec \wh{\Pi},\, \forall a\in \llbra 1, N\rrbra$ for some deterministic and positive function $\wh{\Pi}(z)$ that satisfies $\frac{1}{\sqrt{N\sqrt{\eta}}}+\Psi^{2}\prec \wh{\Pi}\prec \Psi$. Then
		\beqs
		\left|\frac{\sqrt{t}}{N}\sum_{a}d_{a}L_{aa}\right|\prec \Psi\wh{\Pi}.
		\eeqs
	\end{prop}
	As in Lemma \ref{lem:etrll_L}, it is suffices to show the following recursive moment estimate.
	\begin{lem}\label{lem:recursive moment for linear comb of Laa}
		Fix a $z\in\caD$. Suppose that the assumptions of Proposition \ref{prop:local law 1} hold. Then, for any fixed integer $p\geq 1$, we have
		\beqs
		\expct{\rmn^{(p,p)}}=\expct{\caO(\wh{\Pi}^{2})\rmn^{(p-1,p)}}+\expct{(\Psi^{2}\wh{\Pi}^{2})\rmn^{(p-2,p)}}+\expct{\caO(\Psi^{2}\wh{\Pi}^{2})\rmn^{(p-1,p-1)}},
		\eeqs
		where we defined
		\beqs
		\rmn^{(k,l)}\deq \left(\frac{\sqrt{t}}{N}\sum_{a}d_{a}L_{aa}\right)^{k}\left(\frac{\sqrt{t}}{N}\sum_{a}\ol{d_{a}L_{aa}}\right)^{l}.
		\eeqs
	\end{lem}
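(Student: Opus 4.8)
\textbf{Proof proposal for Lemma~\ref{lem:recursive moment for linear comb of Laa}.}
The plan is to mimic the recursive-moment machinery already developed in the proof of Lemma~\ref{lem:etrll_L}, but now carrying the extra weight $d_a$ and an extra summation over $a$. First I would write $\rmn^{(p,p)} = \bigl(\tfrac{\sqrt t}{N}\sum_a d_a L_{aa}\bigr)\cdot\bigl(\tfrac{\sqrt t}{N}\sum_a \ol{d_a L_{aa}}\bigr)^{p}\bigl(\tfrac{\sqrt t}{N}\sum_a d_a L_{aa}\bigr)^{p-1}$ and expand the single distinguished factor using the definition $L_{aa} = (WG)_{aa} + \sqrt t\,\tr(G)\,G_{aa}$; the term $\sqrt t\,\tr(G)\,G_{aa}$ is deterministic-looking enough that after extracting $\tr G = \wh m + O_\prec(N^{-\sigma/4})$ and $G_{aa} = (\fra_a-\omega_A^c)^{-1}+O_\prec(\Psi)$ via Proposition~\ref{prop:local law 1}, it is absorbed into the structure of $L_{aa}$ itself (this is exactly why $J_a$ and $L_{aa}$ are defined with those correction terms). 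The genuine work is the $(WG)_{aa}$ piece: I would apply Stein's lemma to $\E\bigl[\tfrac{\sqrt t}{N}\sum_a d_a (WG)_{aa}\,(\cdots)\bigr] = \tfrac{\sqrt t}{N^2}\sum_{a,k}\E\bigl[(WG)_{ak}\tfrac{\partial}{\partial W_{ka}}(\text{rest})\bigr]$ with respect to the GUE entries $W_{ka}$.

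Carrying out the derivative, the Stein expansion produces four families of terms, in direct analogy with \eqref{eq:rec_WGW}--\eqref{eq:rec_WGW2}: (i) a ``self-energy renormalization'' term $-t\,\tr(G)\cdot\tfrac{\sqrt t}{N}\sum_a d_a G_{aa}(\cdots)$ that cancels against the $\sqrt t\,\tr G\,G_{aa}$ part of $L_{aa}$ --- this is the key algebraic cancellation and it reproduces the definition of $L_{aa}$ on the left-hand side; (ii) terms where the derivative hits another resolvent factor inside an already-present $L_{bb}$ in the product, i.e. $\tfrac{1}{N}\sum_k (WG)_{ak}\tfrac{\partial L_{bb}}{\partial W_{ka}}$, which by the off-diagonal bounds of Lemma~\ref{lem:etrll_L_err} (combined with $\|G\bse_b\|^2\le N\Pi_b^2$ and $\|G W\bse_a\|^2 \le N(\Pi_a^W)^2$) are $O_\prec(\Psi^2(\Pi_a^2+\Pi_b^2+(\Pi_a^W)^2))$; (iii) terms where the derivative hits the weight $d_b$, controlled by hypothesis \eqref{eq:weak depned-2}; and (iv) a term where the derivative hits $(WG)$ in the prefactor, giving $\tfrac{t}{N^2}\sum_{a,k}G_{aa}(WG^2W\cdots)$-type expressions handled as in Lemma~\ref{lem:rec_W_err}. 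For the control parameters I would then use $\Pi_a\prec\wh\Pi$, the averaged bound $\tfrac{t}{N^2}\sum_a(\Pi_a^W)^2\prec t\Pi^2\prec\wh\Pi^2$ from \eqref{eq:PiaW estimate} together with $\Psi^2\prec\wh\Pi$, and the bound on $\Pi^W_a$ in terms of $|J_a|$ (as in \eqref{eq:etr_WG1}); the $|J_a|$ contributions are then absorbed using Lemma~\ref{lem:etrll_J} and H\"older, exactly as in the proof of Lemma~\ref{lem:etrll_PK}. Collecting terms according to how many factors of the product $\rmn$ were consumed gives precisely the three-term recursion with coefficients $O_\prec(\wh\Pi^2)$, $O_\prec(\Psi^2\wh\Pi^2)$, $O_\prec(\Psi^2\wh\Pi^2)$.

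The step I expect to be the main obstacle is bookkeeping the cancellation in (i) cleanly while simultaneously tracking the extra summation structure: because we sum $d_a L_{aa}$ over $a$ \emph{before} multiplying by the other $p-1$ copies, the Stein derivative $\partial/\partial W_{ka}$ acts not only on the distinguished factor but on every $L_{bb}$ in all the remaining $2p-1$ copies, producing $O(p)$ cross terms each of which must be shown to carry a gain of $\Psi^2$ (rather than merely $\Psi$) relative to the trivial size --- this is where the fluctuation-averaging improvement over the entrywise bound \eqref{eq:etrll_WG} comes from, and it hinges on the identity $(WG)_{ak}$ being ``off-diagonal heavy'' so that summing $k$ against a resolvent column yields $\|G\bse_b\|^2/\eta$ rather than $N$. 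A secondary technical point is verifying that the correction term in $L_{aa}$ does not reintroduce an $O_\prec(\Psi)$ (rather than $O_\prec(\wh\Pi)$) error when $\tr G$ and $G_{aa}$ are replaced by their deterministic equivalents; here one uses that the \emph{leading} deterministic parts cancel exactly against term (i), so only the $O_\prec(\Psi)$ \emph{fluctuations} survive, and those come multiplied by another $\sqrt t\,\tr G\,G_{aa} = O_\prec(1)$ and an already-averaged quantity, yielding the required $\Psi\wh\Pi$ after one more pass of the recursion. Once the recursion is established, Young's inequality applied as in the final lines of Lemma~\ref{lem:etrll_J} closes the estimate and proves Proposition~\ref{prop:RFA of Laa}.
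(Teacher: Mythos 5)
Your overall strategy matches the paper's: expand the distinguished factor $\frac{\sqrt t}{N}\sum_a d_a L_{aa}$ through $L_{aa}=(WG)_{aa}+\sqrt t\,\tr G\,G_{aa}$, apply Stein's lemma to the $(WG)_{aa}=\sum_k W_{ak}G_{ka}$ piece, cancel the self-energy against the $\sqrt t\,\tr G\,G_{aa}$ correction, use hypothesis \eqref{eq:weak depned-2} when the derivative hits $d_a$, and bound the cross-derivative contributions (derivative acting on other copies inside $\rmn^{(p-1,p)}$) by an averaged Ward/Cauchy--Schwarz argument; the paper packages those last bounds in Lemma \ref{lem:rec_err_aver_L}, while you reassemble the same estimate from Lemma \ref{lem:etrll_L_err} plus \eqref{eq:PiaW estimate} by hand.

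Two bookkeeping points deserve flagging. First, your displayed Stein formula $\E[\tfrac{\sqrt t}{N}\sum_a d_a (WG)_{aa}(\cdots)]=\tfrac{\sqrt t}{N^2}\sum_{a,k}\E[(WG)_{ak}\partial_{W_{ka}}(\text{rest})]$ carries an extraneous $(WG)_{ak}$ prefactor; after writing $(WG)_{aa}=\sum_k W_{ak}G_{ka}$ and applying Stein in $W_{ak}$, what remains is $\tfrac{\sqrt t}{N^2}\sum_{a,k}\E[\partial_{W_{ka}}(d_a G_{ka}\cdots)]$. The $(WG)_{ik}$ prefactor you wrote down belongs to the $J_a$ computation, where $(WGW)_{ii}=\sum_k(WG)_{ik}W_{ki}$; transferring it to $L_{aa}$ also spawns your spurious family ``(iv),'' which does not arise here as a separate category --- those $WG^2W$-type expressions are in fact part of family (ii) (derivative hitting the $(WG)_{bb}$ inside another copy of $L_{bb}$). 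Second, the cancellation in (i) is an \emph{exact algebraic identity}: the derivative $\partial_{W_{ka}}G_{ka}=-\sqrt t\,G_{kk}G_{aa}$, summed over $k$ with the $1/N$ from Stein, gives $-\tfrac{t}{N}\sum_a d_a\tr G\,G_{aa}\,\rmn^{(p-1,p)}$, which cancels the correction term in $L_{aa}$ with no residue. Your secondary worry about ``only deterministic parts cancelling'' and fluctuations surviving therefore does not apply, and the extra ``pass of the recursion'' you propose is unnecessary. Neither slip is fatal, but both should be fixed if this were written up.
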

	\begin{proof}
		From the definition of $\rmn^{(p,p)}$ and the Stein's lemma, we have that
		\beq\label{eq:averaged Laa-2}
		\begin{split}
			\expct{\rmn^{(p,p)}}&=\frac{\sqrt{t}}{N}\expct{\sum_{a}\sum_{d}d_{a}W_{ad}G_{da}\rmn^{(p-1,p)}}
			+\frac{t}{N}\sum_{a}\expct{\tr(G)G_{aa}\rmn^{(p-1,p)}}	\\
			&=\frac{\sqrt{t}}{N^{2}}\expct{\sum_{a}\sum_{d}\frac{\partial d_{a}}{\partial W_{da}}G_{da}\rmn^{(p-1,p)}}
			+\frac{(p-1)\sqrt{t}}{N^{2}}\sum_{a}\sum_{d}\expct{d_{a}G_{da}\frac{\partial \rmn^{(1,0)}}{\partial W_{da}}\rmn^{(p-2,p)}}	\\
			&+\frac{p\sqrt{t}}{N^{2}}\sum_{a}\sum_{d}\expct{d_{a}G_{da}\frac{\partial \rmn^{(0,1)}}{\partial W_{da}}\rmn^{(p-1,p-1)}}.
		\end{split}
		\eeq
		The first term can be handed by the assumption in Proposition \ref{prop:RFA of Laa}. Remaining terms can be dealt with Lemma \ref{lem:rec_err_aver_L} below.
	\end{proof}
	\begin{lem}\label{lem:rec_err_aver_L} Fix a $z\in\caD$. Suppose that the assumptions of Proposition \ref{prop:RFA of Laa} hold and let $Q$ be an $(N\times N)$ matrix. Then we have
		\beq\label{eq:recursive moment of averaged Laa}
		\begin{split}
			\frac{\sqrt{t}}{N^{2}}\sum_{i}\sum_{k}d_{i}G_{ki}\tr\left(Q\frac{\partial WG}{\partial W_{ki}}\right)=O_{\prec}(\norm{Q}\Psi^{2}\Pi^{2}),\\
			\frac{\sqrt{t}}{N^{2}}\sum_{i}\sum_{k}d_{i}G_{ki}\tr G \tr\left(Q\frac{\partial G}{\partial W_{ki}}\right)=O_{\prec}(\norm{Q}\Psi^{2}\Pi^{2}),
		\end{split}
		\eeq
		and the same estimates hold if we replace the $\partial W_{da}$ by $\partial W_{ad}$.
	\end{lem}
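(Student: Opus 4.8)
The plan is to prove Lemma \ref{lem:rec_err_aver_L} by expanding the derivatives $\frac{\partial WG}{\partial W_{ki}}$ and $\frac{\partial G}{\partial W_{ki}}$ explicitly, substituting into the double sums, and then recognizing that every resulting term is a sum over one or two free indices of products of resolvent entries, which can be controlled by the Ward identity $\sum_b |G_{ab}|^2 = \eta^{-1}\im G_{aa}$ together with the a priori bounds from Proposition \ref{prop:local law 1}. Recall that for a GUE entry $W_{ki}$ one has $\frac{\partial G}{\partial W_{ki}} = -\sqrt{t}\, G(\bse_k\bse_i\adj + \bse_i\bse_k\adj)G$ after rescaling (up to the $1/\gamma$ and $\gamma$ factors carried by $X_t = \gamma_t H_t$), and hence $\frac{\partial (WG)}{\partial W_{ki}} = \bse_k\bse_i\adj G - \sqrt{t}\, W G(\bse_k\bse_i\adj + \bse_i\bse_k\adj)G$. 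Plugging this into the first line of \eqref{eq:recursive moment of averaged Laa}, the leading piece is $\frac{\sqrt t}{N^2}\sum_{i,k} d_i G_{ki}\, (QG)_{ik} $, and the $W$-dependent pieces produce $\frac{t}{N^2}\sum_{i,k} d_i G_{ki} \big( (WGQG)_{ii} + (GQG)_{ik}(WG)_{ki} \big)$-type expressions.

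First I would treat the purely resolvent term $\frac{\sqrt t}{N^2}\sum_{i,k} d_i G_{ki}(QG)_{ik}$: by Cauchy--Schwarz in $k$ this is bounded by $\frac{\sqrt t}{N^2}\sum_i |d_i| \big(\sum_k |G_{ki}|^2\big)^{1/2}\big(\sum_k |(QG)_{ik}|^2\big)^{1/2}$, and each inner sum is a Ward identity: $\sum_k |G_{ki}|^2 = \eta^{-1}\im G_{ii} \le N\Pi_i^2$, while $\sum_k |(QG)_{ik}|^2 = (QGG\adj Q\adj)_{ii} \le \eta^{-1}(Q\,\im G\, Q\adj)_{ii} \le \|Q\|^2 N \Pi_i^2$ after bounding $\im G \le \eta^{-1}$ crudely or more sharply using $\sum_i(\im G)_{ii}$-control. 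This gives a bound of order $\sqrt t\, \Pi^2 \|Q\|$, and I would then absorb one factor of $\Pi \prec \Psi$ (valid since $\wh\Pi \prec \Psi$ and $\Pi_a \prec \wh\Pi$) together with $\sqrt t \le 1$ to land at $O_\prec(\|Q\|\Psi^2\Pi^2)$; alternatively one keeps $\Psi^2$ explicit by noting the extra $1/N$ from the double average contributes $\Psi^2$. For the $W$-dependent pieces, I would use $\|W\|\prec 1$ to move $W$ out in operator norm, converting e.g. $(WGQG)_{ii}$-sums into the same Ward-identity structure at the cost of an extra $\|W\|\prec 1$ factor, and handle $\sum_k |d_i| |G_{ki}| |(GQG)_{ik}| |(WG)_{ki}|$ by two applications of Cauchy--Schwarz, bounding $\sum_k|(WG)_{ki}|^2 \le \|W\|^2 \eta^{-1}\im G_{ii} \prec N\Pi_i^2$. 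The second line of \eqref{eq:recursive moment of averaged Laa} is handled identically, with the additional scalar factor $\tr G = \wh m + O_\prec(N^{-\sigma/4}) = O_\prec(1)$ contributing nothing beyond a constant; here $\frac{\partial G}{\partial W_{ki}}$ contributes $(GQ G)$-type entries and $\sqrt t$ explicitly, and the same Cauchy--Schwarz/Ward argument gives $O_\prec(\|Q\|\Psi^2\Pi^2)$.

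The routine but slightly delicate point, and the one I would spend most care on, is the bookkeeping of which factors of $\Psi$ versus $\Pi_i$ versus $\wh\Pi$ get used where: one needs the final bound to be $\Psi^2\Pi^2$ (not $\Psi\Pi^3$ or $\Pi^4$), so I must make sure that exactly one factor of $\frac1N$ is converted into $\Psi^2 = \frac1{N\eta}$ via $\sum_k|G_{ki}|^2 = \eta^{-1}\im G_{ii}$ while the remaining summation over $i$ (normalized by $\frac1N$) produces genuine $\Pi^2 = \frac1{N^2}\sum_i(\Pi_i^W)^2$-type averaging rather than merely $\max_i\Pi_i^2$; using \eqref{eq:PiaW estimate} and the fact that $\frac1N\sum_i\Pi_i^2 \prec \Pi^2$ (which follows from the definition of $\Pi$ and $\Pi_i$ and $\im m_{H_t} = \frac1N\sum_i\im(G_{ii}+\caG_{ii})/2$ up to constants) closes this. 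The terms where $W$ appears explicitly must also be checked to not lose the averaging, but since $\|W\|\prec1$ acts multiplicatively and does not destroy the trace/Ward structure, the same estimates go through. I would then remark that by the definition of $\rmn^{(k,l)}$ the derivatives $\frac{\partial \rmn^{(1,0)}}{\partial W_{da}}$ and $\frac{\partial \rmn^{(0,1)}}{\partial W_{da}}$ appearing in \eqref{eq:averaged Laa-2} are, after the product rule, sums of terms each of which matches one of the two patterns in \eqref{eq:recursive moment of averaged Laa} up to the weak-dependence hypothesis \eqref{eq:weak depned-2} on $d_a$, so Lemma \ref{lem:recursive moment for linear comb of Laa} follows and then Proposition \ref{prop:RFA of Laa} follows by the standard Young-inequality/moment-iteration argument already used in the proof of Lemma \ref{lem:etrll_L}.
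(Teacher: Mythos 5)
Your overall plan—expand the $W$-derivatives, apply Cauchy--Schwarz in the free index $k$, and close with the Ward identity—is the same as the paper's, and your handling of the $W$-dependent pieces (pull out $\|W\|\prec 1$ in operator norm) is a legitimate alternative to the paper's trick of eliminating $W$ via the resolvent identity $\sqrt{t}\,WG = zG + I - AG - \wt{B}G$. But there is a concrete normalization error that your patch does not repair.

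The leading piece is not $\frac{\sqrt t}{N^2}\sum_{i,k}d_i G_{ki}(QG)_{ik}$. Since $\tr$ in this paper is the normalized trace $\tfrac{1}{N}\Tr$, one has $\tr\bigl(Q\,\bse_k\bse_i\adj G\bigr)=\tfrac1N(GQ)_{ik}$, and the correct expression is $\frac{\sqrt t}{N^3}\sum_{i,k}d_i G_{ki}(GQ)_{ik}$. That extra $\tfrac1N$ is precisely where the factor $\Psi^2$ is earned: after Cauchy--Schwarz and Ward identity one lands at $\frac{\sqrt t\|Q\|}{N}\Pi^2$, which is $\le \|Q\|\Psi^2\Pi^2$ because $\tfrac1N\le\tfrac1{N\eta}=\Psi^2$. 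Your computation, with the missing $\tfrac1N$, yields $\sqrt t\,\Pi^2\|Q\|$, which is a factor of $N\eta$ too large. The proposed repair—"absorb one factor of $\Pi\prec\Psi$"—does not work: it only turns $\Pi^2$ into $\Psi\Pi$, still much larger than $\Psi^2\Pi^2$. The alternative remark about "the extra $1/N$ from the double average" gestures at the right feature, but the two $1/N$'s in $\tfrac{1}{N^2}\sum_{i,k}$ are both consumed in producing $\Pi^2$; the surviving $1/N$ must come from $\tr$, which you dropped at the outset. Once that factor is reinstated, the rest of your argument (including the $\|W\|\prec1$ treatment of the nonleading pieces and the closing iteration via Young's inequality as in Lemma \ref{lem:etrll_L}) goes through and matches the paper's.
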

	\begin{proof}
		We only prove the first estimate, since the second can be obtained in a similar way. Note that 
		\beqs\begin{aligned}
			&\frac{\sqrt{t}}{N^{2}}\left|\sum_{i}\sum_{k} d_{i}G_{ki}\tr\left( Q\frac{\partial W }{\partial W_{ki}}G\right)\right|	\\
			=&\frac{\sqrt{t}}{N^{3}}\left|\sum_{i}\sum_{k} d_{i}G_{ki}(GQ)_{ik}\right|
			\prec \frac{\sqrt{t}}{N^{3}}\sum_{i} \norm{Q}\frac{\im G_{ii}}{\eta}= \frac{\norm{Q}\sqrt{t}}{N}\Pi^{2},
		\end{aligned}\eeqs
		where we used the Cauchy-Schwarz inequality and \eqref{eq:PiaW estimate}. On the other hand, we also have
		\beqs
		\begin{split}
			&\left|\frac{\sqrt{t}}{N^{2}}\sum_{i}\sum_{k} d_{i}G_{ki}\tr\left( QW\frac{\partial G}{\partial W_{ki}}\right)\right|
			=\left|-\frac{t}{N^{3}}\sum_{i}\sum_{k} d_{i}\bse_{i}\adj G QWG\bse_{k}G_{ki}\right|\\
			&\leq \left|\frac{\sqrt{t}}{N^{3}} \sum_{i} d_{i}\bse_{i}\adj GQ (zG+I-AG-\wt{B}G)G\bse_{i} \right|
			\leq C\norm{Q}\left(\frac{1}{N\eta}\Pi^{2}+\frac{1}{N}\Pi^{2}\right)
		\end{split}
		\eeqs
		where we used \eqref{eq:rotate identity} and the Cauchy-Schwarz inequality. Adding the two estimates above proves the first estimate in \eqref{eq:recursive moment of averaged Laa}. 
	\end{proof}
	\begin{prop}\label{prop:RFA of Ja}
		Fix a $z\in\caD$. Suppose the assumptions of Proposition~\ref{prop:local law 1} hold.  Let $d_{1},\dots, d_{N}\in\bbC$ be possibly $H$-dependent quantities satisfying $\max|d_{i}|\prec 1$. Assume that for all $i,j\in\llbra 1,N \rrbra$,
		\beq\label{eq:weak depned-3}
		\frac{t}{N}\sum_{k}^{(a)}\frac{\partial d_{j}}{\partial W_{ki}}\bse_{k}\adj WG\bse_{i}=\caO(\Psi^{2}\Pi_{i}^{2}),
		\eeq
		and the same bounds hold when the $d_{j}$'s are replaced by their complex conjugates $\ol{d_{j}}$. Suppose that $\Pi_{a}\prec \wh{\Pi},\, \forall a\in \llbra 1, N\rrbra$ for some deterministic and positive function $\wh{\Pi}(z)$ that satisfies $\frac{1}{\sqrt{N\sqrt{\eta}}}+\Psi^{2}\prec \wh{\Pi}\prec \Psi$. Then,
		\beqs
		\left|\frac{t}{N}\sum_{a}d_{a}J_{a}\right|\prec \Psi\wh{\Pi}.
		\eeqs
	\end{prop}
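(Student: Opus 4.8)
The plan is to prove the proposition exactly along the lines of Proposition~\ref{prop:RFA of Laa}: reduce it to a recursive moment estimate and then close the recursion by Young's inequality, verbatim as in the last paragraph of the proof of Lemma~\ref{lem:etrll_J}. Fix $z\in\caD$, write $X\deq\frac{t}{N}\sum_{a}d_{a}J_{a}$, and for $k,l\in\N$ set $\frJ^{(k,l)}\deq X^{k}\ol{X}^{l}$. It suffices to prove that, for every fixed integer $p\geq1$,
\beq\label{eq:Ja_rme}
\expct{\frJ^{(p,p)}}=\expct{\caO(\wh{\Pi}^{2})\frJ^{(p-1,p)}}+\expct{\caO(\Psi^{2}\wh{\Pi}^{2})\frJ^{(p-2,p)}}+\expct{\caO(\Psi^{2}\wh{\Pi}^{2})\frJ^{(p-1,p-1)}}.
\eeq
Indeed, since $\Psi^{2}\prec1$ and $\wh{\Pi}\prec\Psi$, applying Jensen's and Young's inequalities to \eqref{eq:Ja_rme} (with $x=(\Psi\wh{\Pi})^{-1}\expct{\frJ^{(p,p)}}^{1/2p}$, as in the proof of Lemma~\ref{lem:etrll_J}) gives $\expct{|X|^{2p}}^{1/2p}\prec\Psi\wh{\Pi}$ for all $p$, hence $X\prec\Psi\wh{\Pi}$ by Markov's inequality. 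Throughout we use Proposition~\ref{prop:local law 1} and Lemma~\ref{lem:etrll_J} (in particular $|J_{a}|\prec\Psi$ under the ansatz \eqref{eq:ansatz}) freely.

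The starting point for \eqref{eq:Ja_rme} is the cancellation built into the definition of $J_{a}$. From $\expct{\frJ^{(p,p)}}=\frac{t}{N}\sum_{a}\expct{d_{a}J_{a}\frJ^{(p-1,p)}}$, apply Stein's lemma to the factor $W_{ka}$ in $(WGW)_{aa}=\sum_{k}(WG)_{ak}W_{ka}$; the two terms produced by differentiating $(WG)_{ak}$, namely $\tr G$ and $-\sqrt{t}\,\tr G\,(WG)_{aa}$, are exactly the opposites of the explicit summands $-\tr G+\sqrt{t}\,\tr G\,(WG)_{aa}$ of $J_{a}$. Hence, for any $F=F(W)$,
\beqs
\expct{J_{a}F}=\frac{1}{N}\sum_{k}\expct{(WG)_{ak}\,\partial_{W_{ak}}F},
\eeqs
the $k=a$ summand being an error of order $\caO(N^{-1})=\caO(\wh{\Pi}^{2})$, since $\wh{\Pi}^{2}\succ(N\sqrt{\eta})^{-1}\geq N^{-1}$ on $\caD$. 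Taking $F=d_{a}\frJ^{(p-1,p)}$, so that $\expct{\frJ^{(p,p)}}=\frac{t}{N^{2}}\sum_{a,k}\expct{(WG)_{ak}\partial_{W_{ak}}(d_{a}\frJ^{(p-1,p)})}$, and expanding $\partial_{W_{ak}}$ by the Leibniz rule, the contribution of $\partial_{W_{ak}}d_{a}$ equals $\frac{1}{N}\sum_{a}\expct{\bigl[\tfrac{t}{N}\sum_{k}(WG)_{ak}\partial_{W_{ak}}d_{a}\bigr]\frJ^{(p-1,p)}}=\expct{\caO(\Psi^{2}\wh{\Pi}^{2})\frJ^{(p-1,p)}}$ by hypothesis \eqref{eq:weak depned-3} (and its stated conjugate version) together with $\Pi_{a}\prec\wh{\Pi}$; the same hypothesis, applied with general indices, handles all the other $\partial d$-contributions. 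When instead $\partial_{W_{ak}}$ lands on one of the $p-1$ factors $X$ (resp. the $p$ factors $\ol{X}$) of $\frJ^{(p-1,p)}$, one obtains a term of the form $\frac{t^{2}}{N^{3}}\sum_{a,k,b}(WG)_{ak}\,d_{a}\,d_{b}\,(\partial_{W_{ak}}J_{b})$ multiplied by $\frJ^{(p-2,p)}$ (resp. $\frJ^{(p-1,p-1)}$), so it remains to show that each such coefficient is $\caO(\Psi^{2}\wh{\Pi}^{2})$.

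This last step is the analogue for $J$ of Lemma~\ref{lem:rec_err_aver_L} (equivalently, of Lemma~\ref{lem:A.6}), and is where the real work lies. One computes $\partial_{W_{ak}}J_{b}$ explicitly, using $\partial_{W_{ak}}G=-\sqrt{t}\,G\bse_{a}\bse_{k}\adj G$: differentiating $(WGW)_{bb}$ produces the diagonal pieces $\delta_{ab}(GW)_{kb}$, $\delta_{kb}(WG)_{ba}$ and the off-diagonal piece $-\sqrt{t}\,(WG)_{ba}(GW)_{kb}$, while $\partial_{W_{ak}}(-\tr G)=\tfrac{\sqrt{t}}{N}(G^{2})_{ka}$ and $\partial_{W_{ak}}\bigl(\sqrt{t}\,\tr G\,(WG)_{bb}\bigr)$ contribute analogous terms carrying an extra $N^{-1}$ or $\sqrt{t}$. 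Substituting these, using $\max_{a}|d_{a}|\prec1$, and applying the Cauchy--Schwarz inequality, each resulting sum is estimated by combining: the Ward identities $\sum_{k}|G_{ak}|^{2}=\im G_{aa}/\eta$, $\sum_{k}|(WG)_{ka}|^{2}=(G\adj W^{2}G)_{aa}\leq\|W\|^{2}\im G_{aa}/\eta$, and $|(WG^{2}W)_{aa}|\leq\|G\adj W\bse_{a}\|\,\|GW\bse_{a}\|=N(\Pi_{a}^{W})^{2}$; the bound $\|W\|\prec1$; the identity $\sqrt{t}\,WG=(z-A-\wt{B})G+I$ from \eqref{eq:rotate identity}, which keeps the number of $W$'s bounded (trading $WG\bse_{a}$ for resolvent quantities as in \eqref{eq:rec_PK_err}); and the averaged Ward bound $\tfrac{1}{N}\sum_{a}(\Pi_{a}^{W})^{2}\prec\Pi^{2}$, which follows from $\im\Tr(W^{2}G)=\eta\sum_{\mu}\lambda_{\mu}(W)^{2}\|G\adj u_{\mu}\|^{2}\leq\|W\|^{2}\Tr\im G$. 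The tightest contributions are the diagonal ones; for example the $\delta_{ab}$-piece of $\partial_{W_{ak}}J_{b}$ produces the coefficient
\begin{align*}
\frac{t^{2}}{N^{3}}\sum_{a,k}(WG)_{ak}d_{a}^{2}(GW)_{ka}
&=\frac{t^{2}}{N^{3}}\sum_{a}d_{a}^{2}(WG^{2}W)_{aa}
\prec\frac{t^{2}}{N^{2}}\sum_{a}(\Pi_{a}^{W})^{2}\\
&\prec\frac{t^{2}\Pi^{2}}{N}
\leq\Psi^{2}\Pi^{2}
\prec\Psi^{2}\wh{\Pi}^{2},
\end{align*}
where we used $N^{-1}=\Psi^{2}\eta\leq\Psi^{2}$, $t,\eta\leq1$, and $\Pi\prec\wh{\Pi}$ (the latter by averaging $\Pi_{a}\prec\wh{\Pi}$); the off-diagonal contributions carry an additional $N^{-1}$ from a normalized trace and are bounded in the same way. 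I expect the bookkeeping of these terms --- checking that every term arising from $\partial_{W_{ak}}J_{b}$ retains enough powers of $N^{-1}$ and $\eta^{-1}$ to reach $\caO(\Psi^{2}\wh{\Pi}^{2})$, and in particular that the dangerous diagonal terms are saved by the averaged bound $\tfrac{1}{N}\sum_{a}(\Pi_{a}^{W})^{2}\prec\Pi^{2}$ --- to be the main obstacle; once it is carried out, substituting back into the Leibniz expansion yields \eqref{eq:Ja_rme}, and the remainder is a routine transcription of the proofs of Lemma~\ref{lem:recursive moment for linear comb of Laa} and Lemma~\ref{lem:etrll_J}. The statements obtained by interchanging the roles of $(A,B)$, $(U,U\adj)$ and $(W,\caW)$ follow by the same argument.
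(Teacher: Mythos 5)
Your proposal is correct and follows the paper's approach precisely: the paper's own proof of Proposition~\ref{prop:RFA of Ja} is omitted, stated to be identical to that of Proposition~\ref{prop:RFA of Laa} but with Lemma~\ref{lem:rec_err_aver_J} substituted for Lemma~\ref{lem:rec_err_aver_L}, and your sketch of the Stein-lemma cancellation, the handling of $\partial d$-terms via hypothesis~\eqref{eq:weak depned-3}, and the bound on $\partial_{W_{ak}}J_{b}$-coefficients via the Ward identity, $\|W\|\prec1$, the rewriting $\sqrt{t}\,WG=(z-A-\wt{B})G+I$, and the averaged estimate on $(\Pi_a^W)^2$ is exactly the content of that omitted lemma. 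Your worked example (the $\delta_{ab}$-piece giving $\frac{t^2}{N^3}\sum_a d_a^2(WG^2W)_{aa}\prec\Psi^2\wh{\Pi}^2$) matches the first term the paper treats in the proof of Lemma~\ref{lem:rec_err_aver_J}, and the closing argument via Young's/Markov's inequalities is as in Lemma~\ref{lem:etrll_J}.
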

	The proof is omitted since it is the same as that of Proposition \ref{prop:RFA of Laa} except that we use Lemma \ref{lem:rec_err_aver_J} below as an input, instead of Lemma \ref{lem:rec_err_aver_L}. 
	\begin{lem}\label{lem:rec_err_aver_J}
		Fix a $z\in\caD$. Suppose that the assumptions of Proposition \ref{prop:RFA of Ja} hold. Let $Q$ be a matrix. Then we have
		\begin{align}\label{eq:recursive moment of averaged Ja}
			\frac{t}{N^{2}}\sum_{i}\sum_{k}d_{i}(WG)_{ik}\tr\left(Q\frac{\partial (WGW)}{\partial W_{ik}}\right)=O_{\prec}(\norm{Q}\Psi^{2}\Pi^{2}),\\
			\frac{t}{N^{2}}\sum_{i}\sum_{k}d_{i}(WG)_{ik}\tr(QWG)\tr\left(\frac{\partial G}{\partial W_{ik}}\right)=O_{\prec}(\norm{Q}\Psi^{2}\Pi^{2}),\\
			\frac{t}{N^{2}}\sum_{i}\sum_{k}d_{i}(WG)_{ik}\tr(G)\tr\left(Q\frac{\partial (WG)}{\partial W_{ik}}\right)=O_{\prec}(\norm{Q}\Psi^{2}\Pi^{2}),\\
		\end{align}
		and the same estimates hold if we replace the $\partial W_{da}$ by $\partial W_{ad}$.
	\end{lem}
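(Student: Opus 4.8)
The plan is to mimic the proof of Lemma~\ref{lem:rec_err_aver_L} essentially verbatim, simply carrying one additional factor of $W$ through the computation. For each of the three displayed identities I would first differentiate explicitly, using $\partial_{W_{ik}}W=\bse_{i}\bse_{k}\adj$ and $\partial_{W_{ik}}G=-\gamma\sqrt{t}\,G\bse_{i}\bse_{k}\adj G$, so that $\partial_{W_{ik}}(WGW)$, $\partial_{W_{ik}}(WG)$, and $\tr(\partial_{W_{ik}}G)=-\tfrac{\gamma\sqrt t}{N}(G^{2})_{ki}$ each split into a bounded number of rank-one insertions of the form $\cdots\bse_{i}\bse_{k}\adj\cdots$. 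Substituting these into $\sum_{i,k}$ and contracting the rank-one pieces against the prefactor $(WG)_{ik}=\bse_{i}\adj WG\bse_{k}$ by carrying out the $k$-sum --- using $\sum_{k}(WG)_{ik}\bse_{k}\adj=\bse_{i}\adj WG$, $\sum_{k}(WG)_{ik}(G^{2})_{ki}=\bse_{i}\adj WG^{3}\bse_{i}$, and the analogues with $GW$ in place of $WG$ --- reduces every resulting term to a single-index quantity of the form $\tfrac{t^{a/2}}{N^{3}}\sum_{i}d_{i}\,\bse_{i}\adj M\bse_{i}$ or $\tfrac{t^{a/2}}{N^{3}}\,\tr(QWG)\sum_{i}d_{i}\,\bse_{i}\adj M\bse_{i}$, where $M$ is a word in $W,G,A,\wt B$ and $Q$.

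The main mechanism, exactly as in Lemma~\ref{lem:rec_err_aver_L}, is the identity \eqref{eq:rotate identity}: every factor $\sqrt t\,WG$ is rewritten as $I+(zI-A-\wt B)G$, and every factor $\sqrt t\,GW$ (obtained by transposing \eqref{eq:rotate identity}) as $I+G(zI-A-\wt B)$, where $zI-A-\wt B$ has bounded operator norm on $\caD$ since $|z|$, $\norm{A}$ and $\norm{\wt B}=\norm{B}$ are bounded under Assumption~\ref{assump:ABconv}. A short count shows that each of the above terms carries at least as many factors $\sqrt t$ --- one from the prefactor $t^{a/2}$, one from every occurrence of $\partial_{W_{ik}}G$ --- as it has free factors $W$, so that every $W$ can be paired with a $\sqrt t$ and an adjacent resolvent and replaced in this way. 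One then estimates the resulting scalars by Cauchy--Schwarz in the form $|\bse_{i}\adj M_{1}M_{2}\bse_{i}|\le\norm{M_{1}\adj\bse_{i}}\norm{M_{2}\bse_{i}}$, invoking the Ward identities $\norm{G\bse_{i}}^{2}=\norm{G\adj\bse_{i}}^{2}=\im G_{ii}/\eta$ and $\norm{GW\bse_{i}}^{2}=\norm{G\adj W\bse_{i}}^{2}=\im(WGW)_{ii}/\eta=N(\Pi_{i}^{W})^{2}$ together with $\norm{G}\le\eta^{-1}$. Summing over $i$ with $\max_{i}|d_{i}|\prec1$, using $\tfrac1N\sum_{i}\im G_{ii}=\im m$ and the averaging bound \eqref{eq:PiaW estimate} for every term carrying a $\Pi_{i}^{W}$, and using the elementary inequalities $N^{-2},N^{-2}\eta^{-1}\lesssim\Psi^{2}\Pi^{2}$ on $\caD$ (where $\im m\gtrsim\eta$ and $\eta\le1$), one collects each contribution into $O_{\prec}(\norm{Q}\Psi^{2}\Pi^{2})$. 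The variant with $\partial W_{ik}$ replaced by $\partial W_{ki}$ follows by the identical computation with the roles of $i$ and $k$ interchanged.

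The only place where genuinely more care is required than in Lemma~\ref{lem:rec_err_aver_L} is the second identity, which carries the extra scalar $\tr(QWG)$ and produces higher-order resolvent words such as $\bse_{i}\adj WG^{3}\bse_{i}$ (these arise when $\partial_{W_{ik}}$ hits the $\tr G$ inside $J_{a}$). A naive estimate gives only $|\tr(QWG)|\lesssim\norm{Q}\eta^{-1}$ and $|\bse_{i}\adj WG^{3}\bse_{i}|\prec\sqrt N\,\Pi_{i}^{W}\,\eta^{-1}\sqrt{\im G_{ii}/\eta}$, which by themselves overshoot the target; landing at $\Psi^{2}\Pi^{2}$ requires carefully tracking the surplus powers of $t$ that always accompany these factors when the identity is produced in the recursive moment estimate for $\tfrac tN\sum_{a}d_{a}J_{a}$ (the $\sqrt t$ in the term $\sqrt t\,\tr G\,(WG)_{aa}$ of $J_{a}$, the $\sqrt t$ from $\partial_{W_{ik}}G$, and the prefactor $t$), and, for the scalar factor, exploiting that the matrices $Q$ occurring there are diagonal, so that the a priori entrywise bounds of Proposition~\ref{prop:local law 1} (which give $G_{ij}=\delta_{ij}(\fra_{i}-\omega_{A}^{c})^{-1}+O_{\prec}(\Psi)$ via \eqref{eq:etrll_2}), together with a fluctuation-averaging argument, control $\tr(QWG)$ far better than its trivial trace-norm bound. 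Once these powers of $t$ and $\eta$ have been accounted for, the remaining estimates are routine and entirely parallel to those of Lemma~\ref{lem:rec_err_aver_L}; we therefore only indicate this bookkeeping and omit the purely computational details.
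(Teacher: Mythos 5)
Your expansion--Cauchy--Schwarz--rotation-identity strategy is the same one the paper follows: its proof of Lemma~\ref{lem:rec_err_aver_J} carries out exactly this computation for the first displayed identity and then says the other two are similar, invoking \eqref{eq:rotate identity}, Cauchy--Schwarz and \eqref{eq:PiaW estimate}. For the first and third identities the argument does close for a generic bounded $Q$: after contracting the $k$-sum, the appearances of $\Pi_{i}^{W}$ are matched by enough factors of $\sqrt t$, and the Ward identities together with $\sum_{a}(\Pi_{a}^{W})^{2}\prec N\Pi^{2}$ give the claimed $O_{\prec}(\norm{Q}\Psi^{2}\Pi^{2})$. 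So for those two identities your proof agrees with the paper's.

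Your concern about the second identity is genuine, but your proposed resolution is both slightly off and more complicated than needed. The difficulty is not a shortage of $t$'s: after contracting the $k$-sum it reads $-\gamma t^{3/2}N^{-3}\tr(QWG)\sum_{i}d_{i}(WG^{3})_{ii}$, and even carrying the extra $t^{3/2}$ that the application of Proposition~\ref{prop:RFA of Ja} supplies does not rescue a Frobenius-type bound $\absv{\tr(QWG)}\prec\norm{Q}\sqrt{\im\wh{m}/\eta}$ for \emph{arbitrary} $Q$ --- one is left with a surplus factor of order $t^{3/2}\sqrt{\im\wh{m}/\eta}$, which can be $\gg1$ for $\eta$ of order $N^{-1+\sigma}$. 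The correct mechanism is the one you gesture at but overstate: in the recursive moment estimate the matrix $Q$ in the second identity is diagonal, $Q=t^{3/2}\diag(d_{j})$, and for such $Q$ no fluctuation averaging is needed --- the a priori entrywise bounds $\absv{L_{jj}}\prec\Psi$, $\absv{G_{jj}}\prec1$, $\absv{\tr G}\prec1$ already give $\absv{(WG)_{jj}}=\absv{L_{jj}-\sqrt t\,\tr(G)G_{jj}}\prec1$ and hence $\absv{\tr(QWG)}=\absv{N^{-1}\sum_{j}q_{j}(WG)_{jj}}\prec\norm{Q}$. Plugging this in, the second identity closes: $t^{3/2}N^{-3}\norm{Q}\sum_{i}\absv{(WG^{3})_{ii}}\prec t^{3/2}N^{-1}\eta^{-1}\norm{Q}\Pi^{2}\leq\norm{Q}\Psi^{2}\Pi^{2}$. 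Note in passing that the lemma as stated (``Let $Q$ be a matrix'') is therefore imprecise --- the bound for the second identity genuinely uses that $Q$ is diagonal with bounded entries, a fact the paper's ``the other terms can be shown in similar way'' elides --- so you were right to flag this, but the bookkeeping you defer is entirely elementary once the entrywise input is used.
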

	\begin{proof}[Proof of Lemma \ref{lem:rec_err_aver_J}]
		We only consider the first equation in \eqref{eq:recursive moment of averaged Ja} since others can be shown in similar way. Computing the derivative, we have that
		\begin{align*}
			&\frac{t^{2}}{N^{2}}\sum_{i,k}d_{i}(WG)_{ik}\tr\left(Q\frac{(WGW)}{\partial W_{ik}}\right)\nonumber\\
			&=\frac{t^{2}}{N^{3}}\sum_{i,k}d_{a}(WG)_{ik}(QGW)_{ki}
			-\frac{t^{2}\sqrt{t}}{N^{3}}\sum_{i,k}d_{a}(WG)_{ik}(GW QWG)_{ki}
			+\frac{t^{2}}{N^{3}}\sum_{a}d_{a}(WGQWG)_{aa}\nonumber\\
			&=O(\frac{t\norm{Q}}{N}\Pi^{2})+O(\frac{t\norm{Q}}{N\eta}(\Pi^{2}))+O(\frac{t\norm{Q}}{N}(\Pi^{2}))
		\end{align*}
		where we have used the Cauchy-Schwarz inequality, \eqref{eq:rotate identity}, and \eqref{eq:PiaW estimate}.
	\end{proof}
	
	Now we turn to general averages of $Q_{aa}$, which is an analogue of \cite[Proposition 6.1]{Bao-Erdos-Schnelli2020};
	\begin{prop}\label{prop:FA-1}
		Fix a $z\in\caD$. Suppose the assumptions of Proposition~\ref{prop:local law 1} hold. Let $d_{1},\dots, d_{N}\in\bbC$ be possibly $H$-dependent quantities satisfying $\max|d_{i}|\prec 1$. Assume that for all $i,j\in\llbra 1,N \rrbra$,
		\begin{align}\label{eq:weak depned}
			\frac{1}{N}\sum_{k}^{(i)}\frac{\partial d_{j}}{\partial g_{ik}}\bse_{k}\adj X_{i}G\bse_{i}=\caO(\Psi^{2}\Pi_{i}^{2}), &&
			\frac{1}{N}\sum_{k}^{(i)}\frac{\partial d_{j}}{\partial g_{ik}}\bse_{k}\adj X_{i}\mr{\bsg}_{i}=\caO(\Psi^{2}\Pi_{i}^{2}),
		\end{align}
		and the same bounds hold when the $d_{j}$'s are replaced by their complex conjugates $\ol{d_{j}}$. Suppose that $\max_{a}\Pi_{a}\prec \wh{\Pi}$ for some deterministic, positive function $\wh{\Pi}(z)$ satisfying $\frac{1}{\sqrt{N\sqrt{\eta}}}+\Psi^{2}\prec \wh{\Pi}\prec \Psi$. Then.
		\beqs
		\left|\frac{1}{N}\sum_{a}d_{a}Q_{aa}\right|\prec \Psi\wh{\Pi}.
		\eeqs
	\end{prop}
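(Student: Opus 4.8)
The plan is to mimic the fluctuation averaging argument of \cite[Proposition~6.1]{Bao-Erdos-Schnelli2020}, carrying along the extra terms produced by the Dyson Brownian motion part $\sqrt{t}W$. As in Lemmas~\ref{lem:etrll_J}--\ref{lem:etrll_PK} and Lemma~\ref{lem:recursive moment for linear comb of Laa}, the whole proof reduces to a recursive moment estimate. Writing $X\deq\frac1N\sum_a d_aQ_{aa}$ and $\mathfrak{q}^{(k,l)}\deq X^k\ol{X}^l$, I would aim to show that for each fixed $p\in\N$,
\beq\label{eq:FA-1_rec}
\expct{\mathfrak{q}^{(p,p)}}=\expct{O_{\prec}(\Psi\wh\Pi)\,\mathfrak{q}^{(p-1,p)}}+\expct{O_{\prec}(\Psi^2\wh\Pi^2)\,\mathfrak{q}^{(p-2,p)}}+\expct{O_{\prec}(\Psi^2\wh\Pi^2)\,\mathfrak{q}^{(p-1,p-1)}},
\eeq
after which Young's inequality yields $\expct{\mathfrak{q}^{(p,p)}}\leq N^\epsilon(\Psi\wh\Pi)^{2p}$ for every $\epsilon>0$, and Markov's inequality gives $X\prec\Psi\wh\Pi$.

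To reach \eqref{eq:FA-1_rec} I would start from $\expct{\mathfrak{q}^{(p,p)}}=\frac1N\sum_a\expct{d_aQ_{aa}\,\mathfrak{q}^{(p-1,p)}}$ and expand $Q_{aa}=(\wt B G)_{aa}\tr G-G_{aa}\tr(\wt B G)$ via the partial randomness decomposition of Section~\ref{sec:prd}: write $(\wt B G)_{aa}=-\sum_c\frac{\ol g_{ac}}{\norm{\bsg_a}}\bse_c\adj\wt B^{\anga}R_aG\bse_a$, apply Stein's lemma in the Gaussian vector $\bsg_a$ using the derivative formulas of Lemma~\ref{lem:dgbd}, and use the identity $\wt B G=zG-AG-\sqrt t\,WG+I$ from \eqref{eq:rotate identity} to expose the $W$-dependence. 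The resulting terms split into four groups: (i) \emph{algebraic} terms that recombine, with the help of the a priori trace asymptotics \eqref{eq:ansatz_conseq} and the self-consistent relation \eqref{eq:etrll_prior_omegac}, either into genuinely lower-order quantities or into copies of $Q_{aa}$, $P_{aa}$, $K_{aa}$, and $\Upsilon$; (ii) off-diagonal errors, where a $g$-derivative hits a resolvent and a Ward identity turns $\frac1N\sum_c|\bse_c\adj\wt B^{\anga}R_aG\bse_a|^2$ into $O_{\prec}(\Pi_a^2)=O_{\prec}(\wh\Pi^2)$, each bounded through Lemma~\ref{lem:A.6} (this is where the extra summand in \eqref{eq:rec_PK_err}, absent in \cite{Bao-Erdos-Schnelli2020}, enters); (iii) terms where the derivative hits one of the weights $d_a$, controlled by the weak dependence hypothesis \eqref{eq:weak depned}; and (iv) terms where the derivative hits a factor of $X$ or $\ol X$ in $\mathfrak{q}^{(p-1,p)}$, which lowers the power and, thanks to $\max_a\Pi_a\prec\wh\Pi$ and $\wh\Pi\prec\Psi$, produces the last two terms of \eqref{eq:FA-1_rec}.

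The genuinely new feature relative to \cite[Proposition~6.1]{Bao-Erdos-Schnelli2020} is the contribution of $-\sqrt t\,WG$ in \eqref{eq:rotate identity}: expanding $(\wt B G)_{aa}$ produces factors of $(WG)_{aa}$ and $(WGW)_{aa}$, and weighted averages of such quantities are precisely those controlled by the fluctuation averaging estimates for $L_{aa}$ and $J_a$, Propositions~\ref{prop:RFA of Laa} and~\ref{prop:RFA of Ja}; the accompanying derivative bounds are Lemmas~\ref{lem:rec_err_aver_L} and~\ref{lem:rec_err_aver_J}, and the crude estimate $\frac{t}{N^2}\sum_a(\Pi_a^W)^2\prec t\Pi^2$ from \eqref{eq:PiaW estimate} is used repeatedly to absorb the $W$-heavy terms. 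Once all four groups are assembled, the algebraic copies of $Q_{aa}$ are moved to the left-hand side (their coefficient is $O_{\prec}(\Psi)$, hence absorbed after one more iteration of the scheme), whereas the copies of $P_{aa}$, $K_{aa}$, and $\Upsilon$ are estimated by \eqref{eq:etrll_1}, giving \eqref{eq:FA-1_rec}. I expect the main obstacle to be the bookkeeping of the $W$-generated terms: one must verify that every such term either carries a full extra factor $\Psi$ (so that $\Pi_a^2\prec\wh\Pi^2$ is enough) or has the exact shape treated by Propositions~\ref{prop:RFA of Laa}--\ref{prop:RFA of Ja}, so that nothing of size merely $O_{\prec}(\Psi^2)$ survives without the averaging gain. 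This is routine but lengthy, and for the portion that is unchanged from \cite{Bao-Erdos-Schnelli2020} I would simply refer to that paper.
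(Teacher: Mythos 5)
Your proposal correctly identifies the recursive-moment framework, the role of the identity \eqref{eq:rotate identity} in exposing the $W$-dependence, and the appearance of the auxiliary averages controlled by Propositions~\ref{prop:RFA of Laa} and~\ref{prop:RFA of Ja}. There is, however, a genuine gap in the treatment of $\Upsilon$, and it would make the recursive moment estimate \eqref{eq:FA-1_rec} fail as stated.

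You claim that, after assembling the four groups, ``the copies of $P_{aa}$, $K_{aa}$, and $\Upsilon$ are estimated by \eqref{eq:etrll_1}, giving \eqref{eq:FA-1_rec}.'' This is not enough for $\Upsilon$. From $Q_{aa}=P_{aa}-(G_{aa}+T_{aa})\Upsilon$, the $\Upsilon$-contribution to $\expct{\mathfrak{q}^{(p,p)}}$ has the form
\beqs
\Expct{\Upsilon\cdot\Bigl(\frac{1}{N}\sum_{a}d_{a}(G_{aa}+T_{aa})\Bigr)\mathfrak{q}^{(p-1,p)}},
\eeqs
and the inner average is $O_{\prec}(1)$ with no fluctuation gain, so the coefficient of $\mathfrak{q}^{(p-1,p)}$ is $O_{\prec}(|\Upsilon|)$. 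Inserting the a priori bound $|\Upsilon|\prec\Psi$ from \eqref{eq:etrll_1} gives $O_{\prec}(\Psi)$, which is \emph{larger} than your target $\Psi\wh{\Pi}$ (recall $\wh{\Pi}\prec\Psi$). Young's inequality then only yields the trivial bound $\absv{\frac{1}{N}\sum_{a}d_{a}Q_{aa}}\prec\Psi$, not $\Psi\wh{\Pi}$. What is actually needed, and what the paper proves, is a \emph{self-improving} estimate: one first establishes the recursive moment \eqref{eq:recursive moment estimate for dQ} with the $\Upsilon$-coefficient kept as a free parameter $\wh{\Upsilon}$, deduces $\absv{\frac{1}{N}\sum_{a}d_{a}Q_{aa}}\prec\Psi\wh{\Upsilon}+\Psi\wh{\Pi}$, and then feeds this back through the algebraic identity
\beqs
\Upsilon
=\frac{1}{N}\sum_{a}\fra_{a}Q_{aa}-\sqrt{t}\,\tr(G)\tr(W)
+\frac{\sqrt{t}}{N}\sum_{a}\tr\bigl((I-z+\wt{B})G\bigr)L_{aa}
+\frac{t}{N}\sum_{a}\tr(G)J_{a},
\eeqs
applying Propositions~\ref{prop:RFA of Laa} and~\ref{prop:RFA of Ja} to the $L_{aa}$ and $J_{a}$ averages, to obtain $|\Upsilon|\prec N^{-\epsilon/4}\wh{\Upsilon}+\Psi\wh{\Pi}$. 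Iterating this and then fixing $\wh{\Upsilon}=\Psi\wh{\Pi}$ yields Lemma~\ref{lem:recursive-2}. Note that this bootstrap cannot simply be delegated to \cite{Bao-Erdos-Schnelli2020}: the identity for $\Upsilon$ above contains the new $W$-generated summands $\sqrt{t}\,\tr(G)\tr(W)$ and the weighted $L_{aa}$, $J_a$ averages, so the verification genuinely requires the new Propositions~\ref{prop:RFA of Laa}--\ref{prop:RFA of Ja} (which your sketch does invoke, but for the wrong purpose---you place them inside the direct expansion of $Q_{aa}$, rather than in the $\Upsilon$ bootstrap where they are actually indispensable).
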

	Again, we may easily reduce the proof of Proposition \ref{prop:FA-1} to the corresponding recursive moment estimate.
	\begin{lem}\label{lem:recursive-2}
		Fix $z\in\caD$. Suppose that the assumptions of Proposition \ref{prop:FA-1} hold. Then, for any fixed integer $p\geq 1$, we have
		\begin{align}\label{eq:recursive moment estimate for dQ-2}
			\expct{\rmm^{(p,p)}}=\expct{\caO(\wh{\Pi}^{2})\rmm^{(p-1,p)}}+\expct{\caO(\Psi^{2}\wh{\Pi}^{2})\rmm^{(p-2,p)}}+\expct{\caO(\Psi^{2}\wh{\Pi}^{2})\rmm^{(p-1,p-1)}}
		\end{align}
		where we defined
		\begin{align}\label{eq:dQsum}
			\rmm^{(k,l)}\deq \left(\frac{1}{N}\sum_{a}d_{a}Q_{aa}\right)^{k}\left(\frac{1}{N}\sum_{a}\ol{d_{a}}\ol{Q_{aa}}\right)^{l}.
		\end{align}
	\end{lem}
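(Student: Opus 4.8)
\textbf{Proof plan for Lemma \ref{lem:recursive-2}.}
The strategy is the same as in the recursive moment estimates proven for $L_{ij}$, $P_{ij}$, $K_{ij}$ in Section \ref{sec:local law} and in \cite[Section 6]{Bao-Erdos-Schnelli2020}, now carried out with the extra care needed for the DBM term. First I would unfold $\E[\rmm^{(p,p)}]$ by pulling out one factor $\frac{1}{N}\sum_a d_a Q_{aa}$ and, using the partial randomness decomposition of Section \ref{sec:prd}, write each $Q_{aa}=(\wt B G)_{aa}\tr G-G_{aa}\tr(\wt B G)$ as a linear combination of expressions of the form $\ol{g}_{ac}(\text{matrix entry})$; then apply Stein's lemma with respect to the Gaussian vector $\bsg_a$. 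This produces three types of contributions: (i) the "main" term where the derivative hits a resolvent entry of the same factor, which after using $\ell_a^2\norm{\bsg_a}^{-2}=1+\caO(N^{-1/2})$, replacing $R_a$ by the identity modulo $\caO(N^{-1+\epsilon})$ Ward-type errors, and invoking the a priori bounds \eqref{eq:ansatz} and \eqref{eq:ansatz_conseq}, collapses into the coefficient $\caO(\wh\Pi^2)$ times $\rmm^{(p-1,p)}$; (ii) terms where the derivative hits the remaining $\rmm^{(p-1,p)}$ or $\rmm^{(p-1,p-1)}$ factors, giving the $\Psi^2\wh\Pi^2$ coefficients on $\rmm^{(p-2,p)}$ and $\rmm^{(p-1,p-1)}$; and (iii) genuinely negligible terms carrying extra small factors $\ol h_{ac}$, $h_{aa}$, or off-diagonal $\bse_c$-directions, bounded via Lemma \ref{lem:A.6}.

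The only real departure from \cite{Bao-Erdos-Schnelli2020} is the appearance of $\sqrt t\,W$ inside $H$. This enters through two channels. First, when rewriting $\wt B G$ one must use the identity \eqref{eq:rotate identity}, $\wt B G=zG-AG-\sqrt t WG+I$, so several new terms involving $(WG)$-entries and $(WGW)$-entries arise; these I would control using \eqref{eq:rec_PK_err} together with the bound $\frac{t}{N^2}\sum_a(\Pi_a^W)^2\prec t\Pi^2$ from \eqref{eq:PiaW estimate}, exactly as in the proof of Lemma \ref{lem:etrll_PK}. Second — and this is the structurally new input — the same averaged quantity $\frac1N\sum_a d_a Q_{aa}$ is coupled to $\frac{\sqrt t}{N}\sum_a d_a L_{aa}$ and $\frac{t}{N}\sum_a d_a J_a$ through the Stein expansion, so I would feed in Propositions \ref{prop:RFA of Laa} and \ref{prop:RFA of Ja}, which were proved first precisely for this purpose, to absorb those cross-terms into the claimed error coefficients. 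The hypotheses \eqref{eq:weak depned-2}, \eqref{eq:weak depned-3} needed there follow from \eqref{eq:weak depned} by the chain rule, since the $d_j$'s have the stipulated weak dependence on $H$.

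Once \eqref{eq:recursive moment estimate for dQ-2} is established, Proposition \ref{prop:FA-1} follows by the now-standard Young's-inequality bootstrap used in Lemmas \ref{lem:etrll_J} and \ref{lem:etrll_L}: setting $x=(\Psi\wh\Pi)^{-1}\E[\rmm^{(p,p)}]^{1/2p}$ and applying Jensen/Young to the three terms on the right, one gets $x^{2p}\le C_p(x^{2p-1}+x^{2p-2}+1)$ (after also using $\wh\Pi\succ\Psi^2$ to compare $\wh\Pi^2$ with $\Psi\wh\Pi\cdot x^{-1}$-type quantities), hence $x\le C_p$, and Markov's inequality upgrades this to the stochastic-domination bound $\absv{\frac1N\sum_a d_a Q_{aa}}\prec\Psi\wh\Pi$.

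\textbf{Main obstacle.} The bulk of the work, and the place where mistakes are easiest to make, is the bookkeeping in step (i): one must verify that after the Stein expansion of $Q_{aa}$ — including the pieces coming from $T_{aa}=\bsh_a^* G\bse_a$ and from the reflection-operator derivatives $\partial R_a/\partial g_{ac}$ — the would-be leading contributions of order $\Psi\wh\Pi$ actually cancel or recombine into the self-consistent coefficient $\caO(\wh\Pi^2)$ rather than merely $\caO(\Psi^2)$, using the improved bound $\Pi_a\prec\wh\Pi$ in place of the crude $\Psi$; this is where the gain over the entrywise estimate \eqref{eq:etrll_1} is harvested, and it is delicate because $\wh\Pi$ can be as large as $\Psi$, so no room is wasted. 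Handling the new $W$-generated terms of \eqref{eq:rotate identity} within this same accounting, so that they too land in $\caO(\Psi^2\wh\Pi^2)$ via Propositions \ref{prop:RFA of Laa}, \ref{prop:RFA of Ja} and \eqref{eq:PiaW estimate}, is the part with no direct precedent in \cite{Bao-Erdos-Schnelli2020}.
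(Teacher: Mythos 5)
Your overall strategy (Stein expansion of $\tfrac1N\sum_a d_a Q_{aa}$ via the partial randomness decomposition, control of the new $W$-terms through the identity \eqref{eq:rotate identity}, Lemma \ref{lem:A.6}, and \eqref{eq:PiaW estimate}, and an eventual role for Propositions \ref{prop:RFA of Laa} and \ref{prop:RFA of Ja}) is the right set of ingredients, and you correctly flag that the delicate point is upgrading the coefficient of $\rmm^{(p-1,p)}$ from a crude $\caO(\Psi^{2})$ to the claimed $\caO(\wh\Pi^{2})$. However, the mechanism you propose for this upgrade is not the one that actually works.

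You suggest that the leading $\caO(\Psi\wh\Pi)$ or $\caO(\Psi^{2})$ contributions ``cancel or recombine'' into $\caO(\wh\Pi^{2})$ in the Stein expansion itself, and that Propositions \ref{prop:RFA of Laa}, \ref{prop:RFA of Ja} simply absorb cross-terms arising directly from that expansion. This misses the actual structure. In the Stein expansion of $Q_{aa}$, some terms carry $\Upsilon$ as a prefactor (this is inherent: $P_{ij}=Q_{ij}+(G_{ij}+T_{ij})\Upsilon$), so the honest outcome is an \emph{intermediate} recursive moment estimate, conditional on a bound $|\Upsilon|\prec\wh\Upsilon\leq\Psi$, in which the coefficient of $\rmm^{(p-1,p)}$ is $\caO(\wh\Pi^{2})+\caO(\Psi\wh\Upsilon)$; there is no cancellation that removes the $\Psi\wh\Upsilon$ piece. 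The paper then closes the argument in a second stage: Young's and Markov's inequalities applied to this intermediate estimate give $|\tfrac1N\sum_a d_a Q_{aa}|\prec\Psi\wh\Upsilon+\Psi\wh\Pi$, and feeding that into the explicit algebraic identity
\[
\Upsilon=\frac{1}{N}\sum_{a}\fra_{a}Q_{aa}-\sqrt{t}\tr(G)\tr(W)
+\frac{\sqrt{t}}{N}\sum_{a}\tr\bigl((I-z+\wt{B})G\bigr)L_{aa}
+\frac{t}{N}\sum_{a}\tr(G)J_{a}
\]
— with Propositions \ref{prop:RFA of Laa} and \ref{prop:RFA of Ja} applied to the last two sums — yields $|\Upsilon|\prec N^{-\sigma/4}\wh\Upsilon+\Psi\wh\Pi$. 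Iterating this self-improving bound collapses $\wh\Upsilon$ to $\Psi\wh\Pi$, and only then does the $\Psi\wh\Upsilon$ term become $\Psi^{2}\wh\Pi\prec\wh\Pi^{2}$ and disappear into the claimed $\caO(\wh\Pi^{2})$ coefficient. Without identifying this identity for $\Upsilon$ and the accompanying bootstrap, your expansion leaves a coefficient of size $\Psi^{2}$ on $\rmm^{(p-1,p)}$, which is larger than $\wh\Pi^{2}$ and would not yield Lemma \ref{lem:recursive-2}. I would also note that your closing paragraph about deducing Proposition \ref{prop:FA-1} from the recursive moment estimate, while correct, is not part of the lemma itself.
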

	\begin{proof}[Proof of Lemma \ref{lem:recursive-2}]
		We first claim that if $|\Upsilon|\prec \wh{\Upsilon}(z)$ for a deterministic, positive function $\wh{\Upsilon}\leq \Psi(z)$, then
		\begin{align}\label{eq:recursive moment estimate for dQ}
			\expct{\rmm^{(p,p)}}
			=\expct{(\caO(\wh{\Pi}^{2})
				+\caO(\Psi\wh{\Upsilon}))\rmm^{(p-1,p)}}
			+\expct{\caO(\Psi^{2}\wh{\Pi}^{2})\rmm^{(p-2,p)}}
			+\expct{\caO(\Psi^{2}\wh{\Pi}^{2})\rmm^{(p-1,p-1)}},
		\end{align}
		with conventions $\rmm^{(0,0)}=1=\rmm^{(-1,1)}$. The estimate \eqref{eq:recursive moment estimate for dQ} can be proved using the methods from \cite[Lemma 6.2]{Bao-Erdos-Schnelli2020}. The major difference from \cite[Lemma 6.2]{Bao-Erdos-Schnelli2020} arises when we differentiate entries or traces of $\wt{B}G$ with respect to $\bsg_{i}$'s, in which case additional terms involving derivatives of $WG$ appear due to \eqref{eq:rotate identity}. Since $U$ and $W$ are independent, these terms can be handled with estimates in Lemma \ref{lem:A.6}. Due to similarity we omit further details. 
		
		Next, we prove Lemma \ref{lem:recursive-2} from \eqref{eq:recursive moment estimate for dQ}. Firstly, Young's and Markov's inequalities provide that
		\beq
		\left|\frac{1}{N}\sum_{a}d_{a}Q_{aa}\right|\prec \wh{\Pi}^{2}+\Psi\wh{\Upsilon}+\Psi\wh{\Pi}\prec \Psi\wh{\Upsilon}+\Psi\wh{\Pi}.
		\eeq
		Now we note that
		\begin{align*}
			\Upsilon
			=\frac{1}{N}\sum_{a}\fra_{a}Q_{aa}-\sqrt{t}\tr(G)\tr(W)
			+\frac{\sqrt{t}}{N}\sum_{a}\tr((I-z+\wt{B})G)L_{aa}
			+\frac{t}{N}\sum_{a}\tr(G)J_{a}.
		\end{align*}
		We can check that the weights for $L_{aa}$ and $J_{a}$ satisfy the assumptions in Propositions \ref{prop:RFA of Laa} and \ref{prop:RFA of Ja} respectively, hence we can
		apply them to obtain
		\beq\label{eq:Upsilon bound}
		|\Upsilon|\prec \Psi\wh{\Upsilon}+\Psi\wh{\Pi}+\frac{1}{N}+\Psi\wh{\Pi}\prec N^{-\frac{\epsilon}{4}}\wh{\Upsilon}+\Psi\wh{\Pi}.
		\eeq
		Updating $\wh{\Upsilon}$ as the right hand side of \eqref{eq:Upsilon bound} and iterating \eqref{eq:recursive moment estimate for dQ} repeatedly give $|\Upsilon|\prec \Psi \wh{\Pi}$. Hence we finally choose $\wh{\Upsilon}=\Psi\wh{\Pi}$ and use the assumption $\frac{1}{\sqrt{N\sqrt{\eta}}}+\Psi^{2}\prec \wh{\Pi}$ to conclude Lemma \ref{lem:recursive-2}.
	\end{proof}

	\subsubsection{Optimal fluctuation averaging}\label{sec:opt_FA}
	In this section, we prove the following improved estimate for a specific linear combination of $Q_{aa}$'s. This result is an analogue of \cite[Proposition 7.1]{Bao-Erdos-Schnelli2020} and is used later to validate the assumption of Lemma \ref{lem:quad_solve}, which leads to an estimate for $\Lambda$.
	\begin{prop}\label{prop:lambda estimate}
		Fix a $z=E+\ii\eta\in\caD_{\sigma}$. Suppose that the assumptions of Proposition \ref{prop:local law 1} hold. Suppose that $\Lambda\prec \wh{\Lambda}$, for some deterministic and positive function $\wh{\Lambda}\prec N^{-\epsilon/4}$, then
		\beq
		|\caS \Lambda_{\iota} +\caT_{\iota}\Lambda_{\iota}^{2}+O(\Lambda_{\iota}^{3})|\prec \frac{\sqrt{(\im \wh{m}+\wh{\Lambda})(|\caS|+\wh{\Lambda})}}{N\eta}+\frac{1}{(N\eta)^{2}}, 	\quad \iota=A,B.\label{eq:opt lambda estimate}
		\eeq
	\end{prop}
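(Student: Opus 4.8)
The plan is to prove Proposition~\ref{prop:lambda estimate} by combining the optimal fluctuation averaging estimate for a carefully weighted combination of $Q_{aa}$'s (the analogue of Proposition~\ref{prop:FA-ZAZB} referred to in the excerpt) with the approximate subordination equation $\Phi_{AB}(\omega_A^c,\omega_B^c,z)\approx 0$. The key observation, following \cite[Section~7]{Bao-Erdos-Schnelli2020}, is that the error term $\Lambda_\iota = \omega_\iota^c - \omega_\iota$ enters the subordination system quadratically through $\caS_{AB}$ and $\caT_\iota$, since by construction $\Phi_{\alpha\beta}(\omega_\alpha,\omega_\beta,z)=0$ exactly while $\Phi_{AB}(\omega_A^c,\omega_B^c,z)$ is small. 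Expanding $\Phi_{AB}$ to second order around $(\omega_A,\omega_B)$ and using Lemma~\ref{lem:subor_diff} (to compare the $A,B$-quantities with the $\alpha,\beta$-ones) and Proposition~\ref{prop:stab} (for the sizes of $\caS_{AB}$, $\caT_A$, $\caT_B$, and the relevant derivatives of $F$), one arrives at an identity of the form
\beqs
\caS_{AB}\Lambda_\iota + \caT_\iota \Lambda_\iota^2 + O(\Lambda_\iota^3) = (\text{fluctuation average of } Q_{aa}\text{'s and }L_{aa}\text{'s}) + (\text{smaller errors}).
\eeqs
Thus the proposition reduces to bounding the right-hand side.

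First I would write down precisely the algebraic identity that converts the defect $\Phi_{AB}(\omega_A^c,\omega_B^c,z)$ into the left-hand side of \eqref{eq:opt lambda estimate}; this is the deterministic part and is essentially a Taylor expansion controlled by Proposition~\ref{prop:stab}(iii)--(iv). Next, I would express the defect in terms of $Q_{aa}$ and $L_{aa}$: from \eqref{eq:Lambdac<Q+L} and \eqref{eq:apxsubor_def}, the difference $\omega_A^c - \omega_A$ is, up to lower-order terms, a weighted average $\frac{1}{N}\sum_a d_a Q_{aa}$ plus $\frac{\sqrt t}{N}\sum_a \tilde d_a L_{aa}$ for explicit weights $d_a,\tilde d_a$ depending on $\fra_a$, $\omega_A$, $\omega_B$, $\wh m$, $t$. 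The crux is then to apply the \emph{optimal} (not rough) fluctuation averaging estimate to this specific combination. Concretely, I would verify that the weights $d_a$ obtained here are exactly those for which the improved cancellation holds, carry out the recursive moment estimate in the spirit of Lemma~\ref{lem:recursive-2} but tracking the extra cancellation coming from the structure of the weights (this is where $\Pi^2$ rather than $\Psi\Pi$ emerges), and invoke Propositions~\ref{prop:RFA of Laa} and \ref{prop:RFA of Ja} to control the $W$-originated contributions $L_{aa}$ and $J_a$. The a priori input $\Lambda \prec \wh\Lambda$ is used both to linearize and to bound $\Pi_a \prec \wh\Pi$ with $\wh\Pi^2 \lesssim \frac{\im\wh m + \wh\Lambda}{N\eta}$, which is what produces the first term on the right-hand side of \eqref{eq:opt lambda estimate}; the $(N\eta)^{-2}$ term comes from the genuinely second-order remainders in the Stein expansion.

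The main obstacle I anticipate is the bookkeeping in the optimal fluctuation averaging step: unlike the rough estimate of Proposition~\ref{prop:FA-1}, where any bounded weights suffice and one loses a factor $\Psi$, here one must exploit a nontrivial algebraic cancellation special to the weights $d_a = \frac{1}{(\fra_a-\omega_A)^2}\cdot(\text{something})$, and the presence of the Dyson Brownian motion term $\sqrt t W$ means the identity \eqref{eq:rotate identity} forces extra terms (derivatives of $WG$) into every differentiation step. Showing that these DBM-induced terms do not spoil the cancellation — i.e.\ that they are absorbed into the $O(\Psi^2\Pi^2)$ errors via Lemmas~\ref{lem:A.6}, \ref{lem:rec_err_aver_L}, and \ref{lem:rec_err_aver_J} — is the technically delicate part. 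A secondary subtlety is keeping the estimate uniform in $t\in[0,1]$ and over the full domain $\caD$, which requires using the uniform versions of the stability bounds (Proposition~\ref{prop:stab} and Corollary~\ref{cor:sqrt_lim}) rather than their $t=0$ counterparts, but this should be routine once the $t=0$ structure is in place. Modulo these points, the argument parallels \cite[Proposition~7.1]{Bao-Erdos-Schnelli2020} closely enough that I would present the deterministic reduction in full and then indicate the modifications to the moment computation, referring to \cite{Bao-Erdos-Schnelli2020} for the unchanged parts.
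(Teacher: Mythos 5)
Your structural plan matches the paper's: reduce the left-hand side of \eqref{eq:opt lambda estimate} to a weighted fluctuation average of $Q_{aa}$'s and $L_{aa}$'s via the approximate subordination system, use the a priori $\Lambda\prec\wh\Lambda$ to choose the control parameter $\wh\Pi$, and control the DBM-induced contributions through Lemmas~\ref{lem:rec_err_aver_L}, \ref{lem:rec_err_aver_J} and Propositions~\ref{prop:RFA of Laa}, \ref{prop:RFA of Ja}. The bookkeeping concern you flag is exactly the one the paper resolves by combining \eqref{eq:rotate identity} with the independence of $U$ from $W$.

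The gap is in your account of why the \emph{optimal} averaging yields $\wh\Pi^2$ rather than $\Psi\wh\Pi$. You locate the cancellation in the weights $d_a$ themselves, writing $d_a\approx(\fra_a-\omega_A)^{-2}\cdot(\cdots)$, but the weights that actually appear, $\frd_{A,a}\approx\bigl((\fra_a-\omega_A^c)\tr G\bigr)^{-1}$, satisfy only the hypotheses of the \emph{rough} averaging Proposition~\ref{prop:FA-1}, and by themselves give only $|\Phi_A^c|\prec\Psi\wh\Pi$, which is not strong enough at the edge. The improvement comes from a different place: the paper forms the specific linear combination
\beqs
\caZ_A\deq (F_B'(\omega_B)-1)\,\Phi_A^c+\Phi_B^c,
\eeqs
which by Taylor expansion of $\Phi_{AB}$ around $(\omega_A,\omega_B,z)$ equals $\caS\Lambda_A+\caT_A\Lambda_A^2+O\bigl((\Phi_A^c)^2\bigr)+O(\Phi_A^c\Lambda_A)+O(\Lambda_A^3)$ (equation \eqref{eq:caZ expansion}). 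The coefficient $F_B'(\omega_B)-1$ is chosen precisely so that $\Lambda_B$ is eliminated and $\caS_{AB}$ emerges as the linear coefficient. The structural fact powering Proposition~\ref{prop:FA-ZAZB} is then the derivative identity \eqref{eq:opt_FA_gain},
\beqs
\frac{\partial\caZ_A}{\partial g_{ik}}=(\caS_{AB}+O(\Lambda))\frac{\partial\omega_A^c}{\partial g_{ik}}+O(\Lambda)\frac{\partial\omega_B^c}{\partial g_{ik}},
\eeqs
so every differentiated occurrence of $\caZ_A$ in the recursive moment expansion carries the extra factor $|\caS|+\Lambda$. That factor, not cancellation internal to the weights, is what upgrades $\Psi\wh\Pi$ to $\wh\Pi^2$. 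Without explicitly forming $\caZ_A$ and exploiting this suppression in the derivative, your approach will not close. Two smaller omissions: you should verify that $\wh\Pi$ as defined in \eqref{eq:hatPi} meets the admissibility requirement $\frac{1}{\sqrt{N\sqrt\eta}}+\Psi^2\prec\wh\Pi\prec\Psi$ of Proposition~\ref{prop:FA-1}, which the paper does by first establishing $|m_{H_t}-\wh m|\prec\wh\Lambda+\Psi^2$; and you should record that the cross-terms $O\bigl((\Phi_A^c)^2\bigr)$ and $O(\Phi_A^c\Lambda_A)$ are each $\prec\Psi\wh\Pi\cdot(\Psi\wh\Pi+\wh\Lambda)$ and are absorbed into $\wh\Pi^2$ using $\wh\Lambda\prec N^{-\epsilon/4}$.
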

	
	We first express the left-hand side of \eqref{eq:opt lambda estimate} in terms of weighted averages of $Q_{aa}$ and $L_{aa}$'s. To obtain such linear combination, we recall that the subordinate system $\Phi_{AB}$ defined in \eqref{eq:Phi_def} vanishes at the point $(\omega_{A}(z),\omega_{B}(z),z)$. As our final goal is to bound the difference between the approximate and genuine subordination functions, we evaluate the system at the point $(\omega_{A}^{c}(z),\omega_{B}^{c}(z),z)$;
	\beqs
	\Phi_{A}^{c}\deq \Phi_{A}(\omega_{A}^{c},\omega_{B}^{c},z) \AND \Phi_{B}^{c}\deq \Phi_{B}(\omega_{A}^{c},\omega_{B}^{c},z).
	\eeqs
	From \eqref{eq:rotate identity} and \eqref{eq:Lambdac<Q+L}, we have that
	\beq\label{eq:Phi_c_rep1}
	\omega_{A}^{c}+\omega_{B}^{c}-z
	=z-\frac{\tr (A+\wt{B})G}{\tr G}+2t\tr G=-\frac{1}{\tr G}+t\tr G+ \frac{\sqrt{t}}{\tr G}\frac{1}{N}\sum_{a}L_{aa},
	\eeq
	and
	\beq\label{eq:Phi_c_rep2}
	F_{A}(\omega_{A}^{c})+\frac{1}{\tr G}-t\tr G
	=\left(\frac{1}{m_{A}(\omega_{A}^{c})\tr G}+t\right) \frac{1}{N}\sum_{a}\frac{Q_{aa}+\sqrt{t}L_{aa}\tr G}{(\fra_{a}-\omega_{A}^{c})\tr G}.
	\eeq
	Combining \eqref{eq:Phi_c_rep1} and \eqref{eq:Phi_c_rep2}, we can write 
	\beq\label{eq:Phi_c_rep3}
	\Phi_{A}^{c}
	=\frac{1}{N}\sum_{a}(\frd_{A,a}Q_{aa}+\sqrt{t}\frd_{A,a}^{W}L_{aa}) \AND 
	\Phi_{B}^{c}=\frac{1}{N}\sum_{a}(\frd_{B,a}\caQ_{aa}+\sqrt{t}\frd_{B,a}^{\caW}\caL_{aa}),
	\eeq
	where we defined
	\beqs
	\frd_{A,a}\deq \left(\frac{1}{m_{A}(\omega_{A}^{c})\tr G}+t\right)\frac{1}{(\fra_{a}-\omega_{A}^{c})\tr G}, \qquad\quad
	\frd_{A,a}^{W}\deq \frd_{A,a}\tr G+\frac{1}{\tr G},
	\eeqs
	and $\frd_{B,a}$, $\frd_{B,a}^{W}$ symmetrically. On the other hand, we expand $\Phi_{A}^{c}$ and $\Phi_{B}^{c}$ around $(\omega_{A},\omega_{B},z)$ to obtain
	\begin{align}\label{eq:approximation of subordination system}
		\begin{split}
			\Phi_{A}^{c}&=-\Lambda_{B}+(F_{A}'(\omega_{A})-1)\Lambda_{A}+\frac{1}{2}F_{A}''(\omega_{A})\Lambda_{A}^{2}+O(\Lambda_{A}^{3}),\\
			\Phi_{B}^{c}&=-\Lambda_{A}+(F_{B}'(\omega_{B})-1)\Lambda_{B}+\frac{1}{2}F_{B}''(\omega_{B})\Lambda_{B}^{2}+O(\Lambda_{B}^{3}).
		\end{split}
	\end{align}
	Combining \eqref{eq:Phi_c_rep3} and \eqref{eq:approximation of subordination system}, we get
	\beq\label{eq:caZ expansion}
	\caZ_{A}\deq (F_{B}'(\omega_{B})-1)\Phi_{A}^{c}+\Phi_{B}^{c}=\caS \Lambda_{A}+\caT_{A}\Lambda_{A}^{2}+O((\Phi_{A}^{c})^{2})+O(\Phi_{A}^{c}\Lambda_{A})+O(\Lambda_{\iota_{A}}^{3}),
	\eeq
	and the same expansion with $A$ and $B$ interchanged. Note that the leading terms on the  right-hand side of \eqref{eq:caZ expansion} matches the left-hand side of \eqref{eq:opt lambda estimate}. Since $\Phi_{A}^{c}$ and $\Phi_{B}^{c}$ are respectively linear combinations of $Q_{aa}$, $L_{aa}$ and $\caQ_{aa}$, $\caL_{aa}$, we have accomplished the first goal.
	
	Along the proof of Proposition \ref{prop:lambda estimate}, we often need to apply Proposition \ref{prop:FA-1} with $\wh{\Pi}$ chosen to be the square root of the right hand side of \eqref{eq:opt lambda estimate}, i.e.,
	\beq\label{eq:hatPi}
	\wh{\Pi}^{2} =\frac{\sqrt{(\im \wh{m}+\wh{\Lambda})(|\caS|+\wh{\Lambda})}}{N\eta}+\frac{1}{(N\eta)^{2}}.
	\eeq
	Thus we need to prove that $\wh{\Pi}$ satisfies the assumptions in Proposition \ref{prop:FA-1}. To this end, we claim that $|m_{H_{t}}-\wh{m}|\prec\wh{\Lambda}+\Psi^{2}$ when $\Lambda \prec\wh{\Lambda}\prec N^{-\epsilon/4}$. First, observe from the definition of approximate subordination functions \eqref{eq:apxsubor_def} that
	\beq
	\absv{F_{H_{t}}(z)-F_{\wh{\mu}_{t}}(z)}=\absv{\omega_{A}^{c}(z)-\omega_{A}(z)+\omega_{B}^{c}(z)-\omega_{B}(z)+t\frac{1}{Nm_{H_{t}}(z)}\sum_{a}L_{aa}}\prec \Lambda+\frac{\Psi^{2}}{\absv{m_{H_{t}}(z)}},
	\eeq
	where we applied Proposition \ref{prop:RFA of Laa}. On the other hand, recall from the definition of $F_{t}$ that
	\beq 
	|F_{H_{t}}(z)- F_{\wh{m}}(z)|
	=|m_{t}-\wh{m}|\left|\frac{1}{m_{t}\wh{m}}-t\right|.
	\eeq
	Due to Lemmas \ref{lem:subor_bdd} and \ref{lem:subor_diff}, there exists $C>0$ such that $|\wh{m}(z)|<C$ for any $z\in\caD_{\sigma}$. On the other hand, Cauchy-Schwarz inequality and compactness of $\caD_{\sigma}$ imply that there exists $c>0$ satisfying
	\beq
	\int_{\R}\frac{1}{|x-z|^{2}} \dd \wh{\mu}(x) -\Absv{\int_{\R} \frac{1}{x-z}\dd \wh{\mu}(x)}^{2}>c
	\eeq
	uniformly on $\caD_{\sigma}$, so that for another constant $c'>0$ we get $c'<1-t\absv{\wh{m}(z)}^{2}$.
	Thus we have
	\beq
	c'\absv{m_{H_{t}}-\wh{m}}-t\absv{m_{t}-\wh{m}}^{2}\leq \absv{\wh{m}m_{H_{t}}}\absv{F_{\mu_{H_{t}}}-F_{\wh{\mu}}}
	\prec \Lambda+\Psi^{2}+\absv{m_{H_{t}}-\wh{m}}\Lambda.
	\eeq
	Due to the assumption $\Lambda_{d}\prec N^{-\sigma/4}$, we can solve this quadratic inequality to obtain $|m_{H_{t}}-\wh{m}|\prec \Lambda+\Psi^{2}$. Along with Proposition \ref{prop:stab}, we can see the validity of $\wh{\Pi}$ for Proposition \ref{prop:FA-1}.
	
	Next, we prove Proposition \ref{prop:lambda estimate} assuming the validity of the following estimate for $\caZ_{\iota}$.
	\begin{prop}\label{prop:FA-ZAZB}
		Fix $z\in\caD_{\sigma}$. Suppose that the assumptions of Proposition \ref{prop:local law 1} hold and that $\Lambda(z)\prec \wh{\Lambda}(z)$ for some deterministic and positive function $\wh\Lambda(z)\leq N^{-\epsilon/4}$. Choose $\hat{\Pi}(z)$ as in \eqref{eq:hatPi} . Then,
		\begin{align}
			|\caZ_{A}|\prec \wh{\Pi}^{2}, && |\caZ_{B}|\prec \wh{\Pi}^{2}.
		\end{align}
	\end{prop}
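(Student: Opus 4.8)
\textbf{Proof proposal for Proposition \ref{prop:FA-ZAZB}.}

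The plan is to prove the two claimed bounds $|\caZ_A|\prec\wh\Pi^2$ and $|\caZ_B|\prec\wh\Pi^2$ by combining the representation \eqref{eq:Phi_c_rep3} of $\Phi_A^c,\Phi_B^c$ as weighted averages of $Q_{aa},L_{aa}$ (resp.\ $\caQ_{aa},\caL_{aa}$) with the fluctuation-averaging machinery of Section \ref{sec:FA}, now run with the specific profile $\wh\Pi$ of \eqref{eq:hatPi}. The point of forming the linear combination $\caZ_A=(F_B'(\omega_B)-1)\Phi_A^c+\Phi_B^c$ is that, although $\Phi_A^c$ and $\Phi_B^c$ individually are only $O(\Psi\wh\Pi)$ by Propositions \ref{prop:RFA of Laa}, \ref{prop:RFA of Ja} and \ref{prop:FA-1}, the diagonal weights entering $\caZ_A$ are precisely the ones for which a ``second-order'' cancellation occurs, giving the improved size $\wh\Pi^2$. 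So the argument will closely follow \cite[Proposition 7.1]{Bao-Erdos-Schnelli2020}, with the new feature being the extra terms carrying $\sqrt t\,L_{aa}$ and $t\,J_a$ coming from the Brownian part, which must be shown not to spoil the cancellation.

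Concretely, the steps I would carry out are: (1) Expand $\expct{\caZ_A^p\ol{\caZ_A}^p}$ via Stein's lemma applied to the Gaussian vectors $\bsg_i$ (for the $Q_{aa}$ contribution) and to $W$ (for the $L_{aa}$ and $J_a$ contributions), exactly as in Section \ref{sec:FA}. The derivative formulas of Lemma \ref{lem:dgbd} and Appendix \ref{sec:local law} produce a list of terms; one isolates the ``main'' term in which the $g_{ik}$- or $W_{ik}$-derivative hits the resolvent $G$ inside the definition of $\caZ_A$, and all other terms are error. (2) Show that the main term telescopes: after summing over the index $a$, the leading contributions from $\Phi_A^c$ and from $(F_B'(\omega_B)-1)^{-1}\Phi_B^c$ cancel up to $O(\wh\Pi^2)$ thanks to the subordination relations \eqref{eq:subor} and the fact, established in the paragraph before Proposition \ref{prop:FA-ZAZB}, that $|m_{H_t}-\wh m|\prec\wh\Lambda+\Psi^2$ and that $\wh\Pi$ is an admissible profile for Proposition \ref{prop:FA-1}. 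This is where one uses that the weights $\frd_{A,a},\frd_{A,a}^W$ in \eqref{eq:Phi_c_rep3} are essentially $(\fra_a-\omega_A^c)^{-1}$ times bounded factors, so summation against $Q_{aa}$ reproduces a copy of $\caS\Lambda_A+\caT_A\Lambda_A^2$ with the correct coefficient. (3) Bound every error term: here one invokes Lemma \ref{lem:A.6} (including its Brownian-motion addenda \eqref{eq:rec_PK_err}, \eqref{eq:PiaW estimate}) together with Lemma \ref{lem:rec_err_aver_L}, Lemma \ref{lem:rec_err_aver_J}, and the a priori inputs $\Lambda\prec\wh\Lambda$, $\Lambda_L\prec N^{-\sigma/4}$, $\Lambda_T\prec1$ from Proposition \ref{prop:local law 1}, to see that each error is $O_\prec(\wh\Pi^2)$ or smaller (in particular $O_\prec(\Psi^2\wh\Pi^2+\Psi\wh\Pi^3)$, which is absorbed). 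One also uses $\Upsilon\prec\Psi\wh\Pi$ from \eqref{eq:Upsilon bound} to control the $\Upsilon$-dependent corrections relating $Q_{aa}$ and the auxiliary $P_{aa},K_{aa}$. (4) Assemble the recursive moment inequality $\expct{|\caZ_A|^{2p}}\le \sum$ (error profiles)$\cdot\expct{|\caZ_A|^{2p-j}}$ of the same shape as \eqref{eq:recursive moment estimate for dQ-2}, and close it by Young's inequality to get $\expct{|\caZ_A|^{2p}}\le N^\epsilon\wh\Pi^{2\cdot2p}$, hence $|\caZ_A|\prec\wh\Pi^2$. The bound for $\caZ_B$ is identical after interchanging $(A,B)$, $(U,U\adj)$, $(W,\caW)$.

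The main obstacle I anticipate is controlling the genuinely new error terms created by the identity \eqref{eq:rotate identity}, $\wt BG=zG-AG-\sqrt t\,WG+I$: whenever a derivative acts on a trace or entry of $\wt BG$, the piece $\partial(\sqrt t\,WG)$ generates quantities such as $\tr(QWG\cdots)$, $(WGQWG)_{aa}$, and $(WG^2W)_{aa}$ that are absent in \cite{Bao-Erdos-Schnelli2020}. These must be estimated using $\|W\|\prec1$, the Ward identity $\sum_k|G_{ik}|^2=\eta^{-1}\im G_{ii}$, the bound $(WGW)_{ii}\le N(\Pi_i^W)^2$, and crucially the averaging gain \eqref{eq:PiaW estimate}, $\frac t{N^2}\sum_a(\Pi_a^W)^2\prec t\Pi^2$, which is what keeps them at the level $O_\prec(\Psi^2\wh\Pi^2)$ after summation over $a$; the bookkeeping to verify that this gain is always available (rather than only $N(\Pi_i^W)^2$ for a single index) is the delicate point. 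A secondary but routine difficulty is tracking the $t\,J_a$ contribution through step (3), which requires Proposition \ref{prop:RFA of Ja} and Lemma \ref{lem:rec_err_aver_J} in place of the analogous statements for $L_{aa}$; since $t\le t_0\ll1$ on our domain, these terms are in fact comfortably small, so I expect them to be harmless once the algebra is organized correctly.
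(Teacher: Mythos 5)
There is a genuine gap in your step (2)/(3): you do not identify the mechanism that actually upgrades the rough bound $\Psi\wh\Pi$ (from Propositions \ref{prop:RFA of Laa} and \ref{prop:FA-1} applied to $\Phi_A^c,\Phi_B^c$) to the optimal $\wh\Pi^2$. In the recursive moment expansion of $\expct{\caZ_A^{p}\ol{\caZ_A}^{p}}$, the "diagonal" terms where the derivative acts within the same copy of $Q_{aa}$, $L_{aa}$ (or on their weights) give $\caO(\wh\Pi^{2})\frL^{(p-1,p)}$ — exactly as in the rough FA, with no new cancellation needed. The improvement must instead come from the off-diagonal terms where the derivative hits another factor $\caZ_A$ or $\ol{\caZ_A}$ in $\frL^{(p-1,p)}$: in the rough FA these are only $\caO(\Psi^{2}\wh\Pi^{2})\frL^{(p-2,p)}$, and solving the quadratic inequality then produces $\Psi\wh\Pi$, not $\wh\Pi^2$. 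The paper's proof obtains the better size $\caO(\wh\Pi^{4})\frL^{(p-2,p)}$ from the chain-rule identity \eqref{eq:opt_FA_gain},
\[
\frac{\partial \caZ_{A}}{\partial g_{ik}}=(\caS_{AB}+O(\Lambda))\,\frac{\partial\omega_{A}^{c}}{\partial g_{ik}}+O(\Lambda)\,\frac{\partial\omega_{B}^{c}}{\partial g_{ik}},
\]
and its analogue with $g_{ik}$ replaced by $W_{ik}$: the linear combination defining $\caZ_A$ is precisely tuned so that, by the definition of $\Phi_{AB}$, the coefficient of $\partial\omega_B^c$ vanishes to leading order and the coefficient of $\partial\omega_A^c$ is $\caS_{AB}$, which is small near the edge. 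This single factorization is the only genuinely new content of the paper's proof and is what supplies the extra factor $|\caS|+\wh\Lambda$ that converts $\Psi^2\wh\Pi^2$ into $\wh\Pi^4$. Your step (2) attributes the gain to a "telescoping" in the sum over $a$ between $\Phi_A^c$ and $(F_B'(\omega_B)-1)^{-1}\Phi_B^c$, but the summation over $a$ has already been fully exploited in the rough FA and cannot by itself improve on $\Psi\wh\Pi$; without \eqref{eq:opt_FA_gain} the recursion you would obtain closes to the same $\Psi\wh\Pi$ bound and the proposition does not follow.

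A secondary imprecision: $\caZ_A$ is written in \eqref{eq:Phi_c_rep3} purely in terms of $Q_{aa}$, $L_{aa}$, $\caQ_{aa}$, $\caL_{aa}$; the quantities $J_a$ do not appear directly in $\caZ_A$ but only show up when $\partial_{W_{ik}}$ lands on $(WG)_{ii}$-type entries inside the recursion, so the appeal to Proposition \ref{prop:RFA of Ja} in your step (3) is for those induced terms rather than for a "$t J_a$ contribution to $\caZ_A$" as stated. This is a presentation issue rather than a mathematical gap, but you should make the role of $J_a$ precise once \eqref{eq:opt_FA_gain} is in place.
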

	\begin{proof}[Proof of Proposition \ref{prop:lambda estimate}] By definition of $\omega_{\iota}^{c}$, chain rule, and \eqref{eq:ansatz_conseq}, it is easy to check that $\frd_{\iota,i}$ and $\frd_{\iota,i}^{W}$, satisfy the assumptions (for weights $d_{i}$'s) in Proposition \ref{prop:FA-1} and \ref{prop:RFA of Laa} respectively for $\iota=A,B$ and $i=1,2,\dots,N$. Hence we have that 
		\beq
		|\Phi_{\iota}^{c}|\prec \Psi\wh{\Pi} \quad \iota=A,B.
		\eeq
		Combining this estimate, equation \eqref{eq:caZ expansion} and Proposition \ref{prop:FA-ZAZB} gives that
		\beq\label{eq:FAFA}
		|\caS\Lambda_{\iota}+\caT_{\iota}\Lambda_{\iota}^{2}+O(\Lambda_{A}^{3})|\prec \wh{\Pi}^{2}+\Psi\wh{\Pi}\wh{\Lambda} \quad \iota=A,B.
		\eeq
		The definition of $\wh{\Pi}$ implies $\Psi\wh{\Lambda}\prec \wh{\Pi}$ so that the second term on the right hand side of \eqref{eq:FAFA} can be absorbed into the first term. Thus we have Proposition \ref{prop:lambda estimate}.
	\end{proof}
	
	As in the previous proofs, the proof of Proposition \ref{prop:FA-ZAZB} reduces to the following recursive moment.
	\begin{lem}\label{lem:recursive-3}
		Fix a $z\in\caD_{\sigma}$. Suppose that the assumptions of Proposition \ref{prop:FA-ZAZB} hold. For any fixed $p\geq 1$, we have
		\begin{align}
			\expct{\frL^{(p,p)}}=\expct{\caO(\wh{\Pi}^{2})\frL^{(p-1,p)}}+\expct{\caO(\wh{\Pi}^{4})\frL^{(p-2,p)}}+\expct{\caO(\wh{\Pi}^{4})\frL^{(p-1,p-1)}},
		\end{align}
		where we denote
		\beq
		\frL^{(k,l)}\equiv \frL^{(k,l)}(z)\deq \caZ_{A}^{k}\ol{\caZ_{A}}^{l}, \quad k,l\in\bbN.
		\eeq
		with conventions $\frL_{i}^{(0,0)}=1$ and $\frL_{i}^{(-1,1)}=0$. 
	\end{lem}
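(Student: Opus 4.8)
The plan is to follow the proof of \cite[Proposition 7.1]{Bao-Erdos-Schnelli2020} and its underlying recursive moment estimate, now carrying the extra contributions generated by the Brownian part $\sqrt{t}W$ of $H_{t}$. I would start from
$$\expct{\frL^{(p,p)}}=\expct{\caZ_{A}\,\frL^{(p-1,p)}},$$
and substitute the linear representation of $\caZ_{A}$. Combining \eqref{eq:Phi_c_rep3} with $\caZ_{A}=(F_{B}'(\omega_{B})-1)\Phi_{A}^{c}+\Phi_{B}^{c}$ one gets
$$\caZ_{A}=\frac{1}{N}\sum_{a}\big(\frc_{a}Q_{aa}+\sqrt{t}\,\frc_{a}^{W}L_{aa}\big)+\frac{1}{N}\sum_{a}\big(\mathfrak{e}_{a}\caQ_{aa}+\sqrt{t}\,\mathfrak{e}_{a}^{\caW}\caL_{aa}\big),$$
with $\frc_{a}=(F_{B}'(\omega_{B})-1)\frd_{A,a}$, $\frc_{a}^{W}=(F_{B}'(\omega_{B})-1)\frd_{A,a}^{W}$, $\mathfrak{e}_{a}=\frd_{B,a}$, $\mathfrak{e}_{a}^{\caW}=\frd_{B,a}^{\caW}$. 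Exactly as in the proof of Proposition \ref{prop:lambda estimate}, using the chain rule together with \eqref{eq:ansatz_conseq}, one checks that these weights are $\caO(1)$ and satisfy the differentiation bounds \eqref{eq:weak depned}, \eqref{eq:weak depned-2}, \eqref{eq:weak depned-3} needed to invoke Propositions \ref{prop:FA-1}, \ref{prop:RFA of Laa}, \ref{prop:RFA of Ja}.

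Next I would expand the four sums one at a time. For $\frac{1}{N}\sum_{a}\frc_{a}Q_{aa}$ (and the analogous $\caQ$-sum) I unfold $Q_{aa}$ through the partial randomness decomposition of Section \ref{sec:prd}, writing $\bsh_{a}\adj\wt{B}^{\angi}(\cdots)$ as $\sum_{c}(\ol{g}_{ac}/\norm{\bsg_{a}})\,\bse_{c}\adj\wt{B}^{\angi}(\cdots)$, and apply Stein's lemma with respect to $g_{ac}$. The derivatives acting on the $G$'s inside the same $Q_{aa}$ reproduce, after passing to the auxiliary quantities $P_{aa},K_{aa}$ and using $\absv{\Upsilon}\prec\Psi\wh{\Pi}$ and Lemma \ref{lem:A.6}, precisely $-\caZ_{A}\cdot\frL^{(p-1,p)}$ up to errors of size $O_{\prec}(\wh{\Pi}^{2})\frL^{(p-1,p)}$; this cancellation against the left-hand side is where the subordination identities $\Phi_{AB}(\omega_{A},\omega_{B},z)=0$ and the specific combination $(F_{B}'(\omega_{B})-1)\Phi_{A}^{c}+\Phi_{B}^{c}$ are used, and it is what upgrades the rough bound $\Psi\wh{\Pi}$ to $\wh{\Pi}^{2}$. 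The derivatives acting on the remaining factor $\frL^{(p-1,p)}=\caZ_{A}^{p-1}\ol{\caZ_{A}}^{\,p}$ produce, by the chain rule and the weighted-average bounds, a gain of $\caO(\wh{\Pi}^{2})$ while dropping one power of $\caZ_{A}$ or $\ol{\caZ_{A}}$, yielding the $\caO(\wh{\Pi}^{4})\frL^{(p-2,p)}$ and $\caO(\wh{\Pi}^{4})\frL^{(p-1,p-1)}$ terms. For the $\frac{\sqrt{t}}{N}\sum_{a}\frc_{a}^{W}L_{aa}$ (and $\caL$-) sums I instead apply Stein's lemma with respect to the GUE entries $W_{ad}$; here the identity \eqref{eq:rotate identity} lets one express $\wt{B}G$ in terms of $WG$, and Lemmas \ref{lem:rec_err_aver_L}, \ref{lem:rec_err_aver_J}, \ref{lem:rec_W_err}, \ref{lem:etrll_L_err}, \ref{lem:etrll_L}, \ref{lem:etrll_J} supply the analogous bookkeeping, while \eqref{eq:PiaW estimate} and $\sqrt{t}\le1$ keep every such contribution of the claimed order. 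Collecting all terms gives the stated three-term recursion.

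The main obstacle I expect is precisely this cancellation step: one must check that the ``diagonal'' leading term produced by Stein's lemma on $\frac{1}{N}\sum_{a}\frc_{a}Q_{aa}$, together with the corresponding term from $\frac{1}{N}\sum_{a}\mathfrak{e}_{a}\caQ_{aa}$ and from the two $\sqrt{t}$ sums, recombines into $-\caZ_{A}$ up to $O_{\prec}(\wh{\Pi}^{2})$ rather than merely $O_{\prec}(\Psi\wh{\Pi})$. This amounts to threading the new $t\tr G$ terms in $\omega_{A}^{c},\omega_{B}^{c}$ and the $L_{aa}$-sums through the algebraic identity that underlies \cite[Section~7]{Bao-Erdos-Schnelli2020} without destroying it; once that is done, the remaining error terms are controlled routinely via Lemmas \ref{lem:A.6} and \ref{lem:etrll_PK}, the a priori bound $\Lambda\prec\wh{\Lambda}\le N^{-\epsilon/4}$, and $\absv{\Upsilon}\prec\Psi\wh{\Pi}$. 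The statement for $\caZ_{B}$ follows by interchanging the roles of $A$ and $B$.
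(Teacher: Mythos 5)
Your overall strategy — writing $\expct{\frL^{(p,p)}}=\expct{\caZ_{A}\frL^{(p-1,p)}}$, substituting the linear decomposition of $\caZ_{A}$ into weighted averages of $Q_{aa},\caQ_{aa},L_{aa},\caL_{aa}$, checking that those weights satisfy the differentiation hypotheses, applying Stein's lemma in $g_{ik}$ for the $Q$-sums and in $W_{ab}$ for the $L$-sums, and closing the recursion — is exactly the skeleton of the paper's argument, which it describes as following Lemma~7.3 of \cite{Bao-Erdos-Schnelli2020}. The gap is in where you place the crucial improvement.

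You claim the upgrade from the rough bound $\Psi\wh{\Pi}$ to $\wh{\Pi}^{2}$ comes from the ``cancellation against the left-hand side'' that the diagonal Stein terms produce when they reassemble into $-\caZ_{A}$, and you attribute the $\caO(\wh{\Pi}^{4})$ coefficients of $\frL^{(p-2,p)}$ and $\frL^{(p-1,p-1)}$ to generic bookkeeping (``chain rule and weighted-average bounds''). This misidentifies the mechanism. Compare with the rough fluctuation averaging (Lemma~\ref{lem:recursive-2}): there the $\frL^{(p-1,p)}$-term already has coefficient $\caO(\wh{\Pi}^{2})$ after iterating out $\wh{\Upsilon}$, so the first-term cancellation is not what separates the two lemmas — it happens identically in both. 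What limits the rough FA to $\Psi\wh{\Pi}$ is the coefficient $\caO(\Psi^{2}\wh{\Pi}^{2})$ on the $\rmm^{(p-2,p)}$ and $\rmm^{(p-1,p-1)}$ terms, i.e.\ the contribution where Stein's lemma hits one of the other factors in the product. The improvement you need, from $\Psi^{2}\wh{\Pi}^{2}$ to $\wh{\Pi}^{4}$, has to come from a better estimate on $\partial_{g_{ik}}\caZ_{A}$ (and $\partial_{W_{ab}}\caZ_{A}$) when they act on $\frL^{(p-1,p)}$, and that estimate is not a consequence of the generic lemmas you cite — those would only give back $\Psi^{2}$.

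The missing ingredient is the structural identity \eqref{eq:opt_FA_gain}: because $\caZ_{A}=(F_{B}'(\omega_{B})-1)\Phi_{A}^{c}+\Phi_{B}^{c}$ is precisely the combination that linearizes the subordination system $\Phi_{AB}=0$ along the degenerate direction, its derivative takes the form
\[
\frac{\partial \caZ_{A}}{\partial g_{ik}}=(\caS_{AB}+O(\Lambda))\frac{\partial\omega_{A}^{c}}{\partial g_{ik}}+O(\Lambda)\frac{\partial\omega_{B}^{c}}{\partial g_{ik}},
\]
so the derivative carries the extra small prefactor $\absv{\caS_{AB}}+\Lambda\lesssim\absv{\caS}+\wh{\Lambda}$. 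Together with the $\Psi^{2}$-type bound on $\partial\omega_{\iota}^{c}/\partial g_{ik}$ this is exactly what turns $\Psi^{2}$ into $\wh{\Pi}^{2}\sim\Psi^{2}\sqrt{(\im\wh m+\wh\Lambda)(\absv{\caS}+\wh\Lambda)}$ as a coefficient on $\frL^{(p-2,p)}$ and $\frL^{(p-1,p-1)}$; the $W$-derivative version is identical after replacing $g_{ik}$ by $W_{ab}$. Your write-up would need to surface this identity explicitly; without it the argument collapses to the rough FA and only yields $\Psi\wh{\Pi}$, not $\wh{\Pi}^{2}$.
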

	\begin{proof}[Proof of Lemma \ref{lem:recursive-3}]
		Recall that
		\begin{align}\label{eq:optimal-1}\begin{split}
				&\expct{\frL^{(p,p)}}=\expct{\caZ_{A}\frL^{(p-1,p)}}	\\
				=&\frac{F_{B}'(\omega_{B})-1}{N}\sum_{a}\expct{\frd_{a,A}Q_{aa}\frL^{(p-1,p)}}
				+\frac{1}{N}\sum_{a}\expct{\frd_{a,B}\caQ_{aa}\frL^{(p-1,p)}}\\
				&+\frac{\sqrt{t} (F_{B}'(\omega_{B})-1)}{N}\sum_{a}\expct{\frd_{A,a}^{W}L_{aa}\frL^{(p-1,p)}}
				+\frac{\sqrt{t}}{N}\sum_{a}\expct{\frd_{B,a}^{\caW}\caL_{aa}\frL^{(p-1,p)}}.
			\end{split}
		\end{align}
		We follow the strategy of proof of Lemma 7.3 in \cite{Bao-Erdos-Schnelli2020} so that the main task of the proof is estimating the terms including the derivatives of $\caZ_{A}$ or $\ol{\caZ_{A}}$ (cf. Lemma 7.4 in \cite{Bao-Erdos-Schnelli2020}). Due to similarity, we only mention that the extra gain for the estimate comes from the fact that 
		\beq\label{eq:opt_FA_gain}
		\frac{\partial \caZ_{A}}{\partial g_{ik}}=(\caS_{AB}+O(\Lambda))\frac{\partial\omega_{A}^{c}}{\partial g_{ik}}+O(\Lambda)\frac{\partial\omega_{B}^{c}}{\partial g_{ik}},
		\eeq
		which directly follows from the definition of $\Phi_{AB}$. Replacing derivatives with respect to $g_{ik}$ by $W_{ab}$ in \eqref{eq:opt_FA_gain} proves the estimate for the third and fourth terms.
	\end{proof}
	%%%%%%%%%%%%%%%%%%%%%%%%%%%%%%%%%%%%%%%%%%%%%%%%%%%%%%%%%%%%%%%%%%%%%%%%%%%%%%%%%%%%%%%
	\subsection{Proof of Theorem \ref{thm:ll}}\label{sec:LL}
	In this section we prove Theorem \ref{thm:ll}. The proof consists of two parts, weak local law and strong local law. The former implies that the assumptions in Proposition \ref{prop:local law 1} hold uniformly true on $\caD$ and the latter proves Theorem \ref{thm:ll}. 
	
	We state the weak law in the following theorem.
	\begin{thm}\label{thm:weak local law} Suppose that Assumption \ref{assump:ABconv} holds. Then, for all $i,j\in \llbra 1,N \rrbra$, we have
		\begin{align*}
			&|P_{ij}(z)|\prec \Pi_{i}(z)+\Pi_{j}(z), & 
			&|K_{ij}(z)|\prec \Pi_{i}(z)+\Pi_{j}(z), &
			&\Lambda_{\rme}^{c}(z)\prec\Psi(z), \\
			&\Lambda_{L}(z)\prec\Psi(z), &
			& \Lambda_{T}\prec \Psi(z), &
			& |\Upsilon(z)|\prec \Pi,  
		\end{align*}
		uniformly in $z\in\caD$. In addition, we have
		\begin{align}
			\Lambda_{\rme}\prec \frac{1}{(N\eta)^{1/3}}, && \Lambda(z)\prec \frac{1}{(N\eta)^{1/3}},
		\end{align} 
		uniformly in $z\in\caD$.
		The same statements hold for analogous quantities with roles of $(A,B)$, $(U,U\adj)$, and $(W,\caW)$ interchanged.
	\end{thm}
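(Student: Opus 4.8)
\textbf{Proof plan for Theorem \ref{thm:weak local law}.}

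The plan is to follow the bootstrapping scheme of \cite[Section 8]{Bao-Erdos-Schnelli2020}, but with the extra $W$-terms from the Dyson flow incorporated into each step via the decoupling estimates already established. The starting point is the regime of large $\eta$, say $\eta = 1$, where all the control parameters $\Lambda_\rme, \Lambda_L, \Lambda_T$ are trivially $O(1)$ and in fact $O_\prec(N^{-\sigma/4})$ after a crude bound, so that the a priori hypotheses \eqref{eq:ansatz} of Proposition \ref{prop:local law 1} are met. At this scale one runs the entrywise subordination input (Proposition \ref{prop:local law 1}) together with the rough fluctuation averaging (Propositions \ref{prop:FA-1}, \ref{prop:RFA of Laa}, \ref{prop:RFA of Ja}) to conclude $\Lambda, \Lambda_\rme \prec (N\eta)^{-1/3}$ at $\eta = 1$. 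The heart of the argument is a continuity/self-improving argument in $\eta$: one decreases $\eta$ in small multiplicative steps, at each step using the bound from the previous (larger) $\eta$ together with the Lipschitz continuity of $G$, $\omega^c$, and $m$ in $z$ (the derivative bounds $\|G'\|\le \eta^{-2}$ etc.) to verify that the a priori hypotheses still hold at the new $\eta$, and then re-deriving the improved bound. Concretely, one first shows $\Lambda_\rme^c \prec \Psi$, $\Lambda_L \prec \Psi$, $\Lambda_T \prec \Psi$, and $|\Upsilon| \prec \Psi$ by Proposition \ref{prop:local law 1}; then the rough fluctuation averaging gives $|\frac1N\sum_a d_a Q_{aa}| \prec \Psi\widehat\Pi$ and $|\frac{\sqrt t}{N}\sum_a d_a L_{aa}| \prec \Psi\widehat\Pi$, which through \eqref{eq:Lambdac<Q+L} and the stability of the subordinate system (Proposition \ref{prop:stab}, in particular $|\caS_{AB}| \sim \sqrt{\kappa+\eta}$) upgrades $\Lambda \prec (N\eta)^{-1/3}$; finally one feeds this back, noting $\Lambda \prec (N\eta)^{-1/3} \Rightarrow |m_{H_t} - \wh m| \prec (N\eta)^{-1/3}$ and $\im m_{H_t} \prec \sqrt{\kappa+\eta} + (N\eta)^{-1/3}$, so $\Pi_a \prec \Psi$ uniformly and the bounds $|P_{ij}|, |K_{ij}| \prec \Pi_i + \Pi_j$ follow. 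The new terms relative to \cite{Bao-Erdos-Schnelli2020} — those arising from \eqref{eq:rotate identity}, i.e. from replacing $\wt B G$ by $zG - AG - \sqrt t WG + I$ — are all absorbed using Lemma \ref{lem:A.6}, Lemma \ref{lem:rec_err_aver_L}, Lemma \ref{lem:rec_err_aver_J}, and the bound $\frac{t}{N^2}\sum_a (\Pi_a^W)^2 \prec t\Pi^2$ in \eqref{eq:PiaW estimate}, since $U$ and $W$ are independent.

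The main obstacle is the uniformity over $t\in[0,1]$ and the book-keeping of the $W$-contributions in the self-improving step: unlike in \cite{Bao-Erdos-Schnelli2020}, at each stage one must simultaneously track the three families of quantities $Q_{aa}$, $L_{aa}$, $J_a$, since an average of $\Upsilon$ expands into all three (see \eqref{eq:Upsilon bound}), and the bounds on the latter two rest on Propositions \ref{prop:RFA of Laa}, \ref{prop:RFA of Ja} whose hypotheses \eqref{eq:weak depned-2}, \eqref{eq:weak depned-3} must be checked for the specific $H$-dependent weights that appear (e.g. $\frd_{A,a}$, $\frd_{A,a}^W$). Checking these weak-dependence conditions — that the relevant $\partial/\partial W$-derivatives of the weights are $O_\prec(\Psi^2\Pi_i^2)$ — is routine via the chain rule and the entrywise bounds, but must be done carefully and uniformly. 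A secondary technical point is that, because the edge $E_{+,t}$ and scale $\gamma_t$ move with $t$, the spectral domain $\caD = \caD_{\tau,t}(N^{-1+\sigma},1)$ is itself $t$-dependent; here one invokes the $t$-uniform stability estimates of Lemma \ref{lem:stab} and Proposition \ref{prop:stab} to ensure all constants are independent of $t$. Once the weak law is in place, the strong law (Theorem \ref{thm:ll}) follows by one more pass through Proposition \ref{prop:local law 1} and the optimal fluctuation averaging Proposition \ref{prop:lambda estimate}, now with the a priori hypotheses known to hold on all of $\caD$, exactly as in \cite[Section 8]{Bao-Erdos-Schnelli2020}; and the rigidity Lemma \ref{lem:rigidity} is deduced from the averaged local law \eqref{eq:averll} and the square-root behavior of $\rho_t$ near $E_{+,t}$ (Lemma \ref{lem:stab}(v)) by the standard Helffer–Sj\"ostrand argument.
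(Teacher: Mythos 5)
Your overall scheme---start at large $\eta$, verify the a priori hypotheses of Proposition~\ref{prop:local law 1}, run entrywise subordination plus rough fluctuation averaging, solve the quadratic for $\Lambda$ via the stability of $\caS_{AB}$, and iterate by a continuity argument in $\eta$---matches the paper's structure, and you correctly identify the book-keeping issue of tracking $Q_{aa},L_{aa},J_a$ simultaneously and checking the weak-dependence hypotheses for the $H$-dependent weights.

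However, there is a genuine gap at the very first step. You write that at $\eta=1$ the control parameters are ``trivially $O(1)$ and in fact $O_\prec(N^{-\sigma/4})$ after a crude bound.'' This is not true. The deterministic resolvent bound at $\eta\sim 1$ gives only $|G_{ij}|\le C$ and $\Lambda_\rme\le C$; it does \emph{not} give a bound vanishing in $N$. The hypothesis \eqref{eq:ansatz} requires $\Lambda_\rme\prec N^{-\sigma/4}$, a quantity that decays with $N$, and establishing this at large $\eta$ is precisely where the main probabilistic input enters. The paper obtains it through two concentration arguments: for $Q_{ij}$ one views it as a Lipschitz function of $U$ on the unitary group with Lipschitz constant $O(\eta^{-2})$ and applies Gromov--Milman concentration, combined with the Haar-invariance identity $\E[\wt BG\otimes G - G\otimes\wt BG\,|\,W]=0$ which yields $\E[Q_{ij}\,|\,W]=0$, giving $|Q_{ij}|\prec N^{-1/2}$ for $\eta\ge\eta_M$; for $L_{ij}$ one applies a Gaussian concentration inequality in the GUE variable $W$. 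These give $\Lambda_\rme^c,\Lambda_L\prec N^{-1/2}$ at $\eta=\eta_M$, and then the macroscopic stability Lemma~\ref{lem:stab_macro} (applied to the perturbed subordination system $\Phi_{AB}(\omega_A^c,\omega_B^c,z)=O_\prec(N^{-1/2})$) upgrades this to $\Lambda\prec N^{-1/2}$, so that one lands inside the event $\Theta_{>}$ and the bootstrap of Lemma~\ref{lem:bootstrap} can start. Without this step your continuity argument has no anchor, so the plan as written would not close.

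A secondary (smaller) omission: the paper's bootstrap is not merely a continuity argument decreasing $\eta$, but a two-level scheme: Lemma~\ref{lem:bootstrap} is a self-improvement \emph{at fixed $z$} (improving the $N^{3\epsilon}$ prefactor to $N^{5\epsilon/2}$ on the event $\Theta$), with an explicit dichotomy between $\caD_{>}$ and $\caD_\le$ according to whether $\sqrt{\kappa+\eta}$ dominates $\widehat\Lambda$; one then propagates along a fine lattice in $\eta$ using Lipschitz continuity of the resolvent entries. You have the right intuition but should state this structure explicitly, since it is what controls the restart of the iteration after each step down in $\eta$.
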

	Before we prove the theorem, we collect two major inputs as lemmas. The first one, Lemma \ref{lem:quad_solve}, enables us to convert the bound in Proposition \ref{prop:lambda estimate} to bounds for $\Lambda$. We omit its proof for it is exactly the same as \cite[Lemma 8.2]{Bao-Erdos-Schnelli2020}.
	\begin{lem}\label{lem:quad_solve}
		Fix $z\in\caD$. Let $\epsilon\in(0,\frac{\sigma}{100})$ and $k\in(0,1]$. Let $\wh{\Lambda}\equiv \wh{\Lambda}(z)$ be some deterministic control parameter satisfying $ \wh{\Lambda}\leq N^{-\sigma/4}$. Suppose that $\Lambda(z) \leq \wh{\Lambda}(z)$ and
		\beqs
		|\caS \Lambda_{C}+\caT_{C} \Lambda_{C}^{2}+O(\Lambda_{C}^{3})|\leq N^{\epsilon}\frac{|\caS|+\wh{\Lambda}}{(N\eta)^{k}}, \qquad C=A,B
		\eeqs
		hold on some event $\Omega(z)$. Then there exists a constant $C>0$ such that for sufficiently large $N$, the following hold:
		\begin{itemize}
			\item [(i)] If $\sqrt{\kappa+\eta}>N^{-\epsilon}\wh{\Lambda}$, there is a sufficiently large constant $K_{0}>0$ independent of $z$, such that
			\begin{align}\label{eq:lemma f.2-(i)}
				\lone\left(\Lambda\leq \frac{|\caS|}{K_{0}}\right)|\Lambda_{A}|\leq \left(N^{-2\epsilon}\wh{\Lambda}+\frac{N^{2\epsilon}}{(N\eta)^{k}}\right), &&
				\lone\left(\Lambda\leq \frac{|\caS|}{K_{0}}\right)|\Lambda_{B}|\leq \left(N^{-2\epsilon}\wh{\Lambda}+\frac{N^{2\epsilon}}{(N\eta)^{k}}\right) && \text{on } \Omega(z),
			\end{align}
			where $\lone$ denotes the indicator function.
			\item [(ii)] If $\sqrt{\kappa+\eta}\leq N^{-\epsilon}\wh{\Lambda}$, we have
			\begin{align*}
				|\Lambda_{A}|\leq\left(N^{-\epsilon}\wh{\Lambda}+\frac{N^{2\epsilon}}{(N\eta)^{k}}\right), && 
				|\Lambda_{B}|\leq \left(N^{-\epsilon}\wh{\Lambda}+\frac{N^{2\epsilon}}{(N\eta)^{k}}\right) && \text{on } \Omega(z).
			\end{align*}
		\end{itemize}
	\end{lem}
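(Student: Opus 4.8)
The hypothesis is a perturbed quadratic equation for $\Lambda_{C}$ ($C=A,B$), and the proof is the elementary algebraic analysis of such an equation, carried out separately according to the size of $\caS\equiv\caS_{AB}(z)$ relative to $\wh{\Lambda}(z)$. I would first record the deterministic inputs supplied by Proposition~\ref{prop:stab}(iii): uniformly on $\caD$ one has $|\caS|\sim\sqrt{\kappa+\eta}$ (so $|\caS|$ is also bounded above) and $|\caT_{A}(z)|,|\caT_{B}(z)|\le C$, and moreover $|\caT_{A}(z)|,|\caT_{B}(z)|\ge c$ as soon as $\kappa=|z-E_{+}|\le\delta$. I would also note that on $\caD$ the constant implicit in the remainder $O(\Lambda_{C}^{3})$ of \eqref{eq:caZ expansion} is a deterministic $C_{0}$ (controlled through Cauchy estimates on $F_{A},F_{B}$, which are analytic away from the supports by Proposition~\ref{prop:stab}(i)), so on $\Omega(z)$ one has $|O(\Lambda_{C}^{3})|\le C_{0}|\Lambda_{C}|^{3}$; and that $\Lambda\le\wh{\Lambda}\le N^{-\sigma/4}$ forces $|\Lambda_{A}|,|\Lambda_{B}|\le\wh{\Lambda}\le N^{-\sigma/4}$ on $\Omega(z)$.

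\textbf{The regime where $\caS$ dominates (case (i)).} When $\sqrt{\kappa+\eta}>N^{-\epsilon}\wh{\Lambda}$ one has $|\caS|\ge c\sqrt{\kappa+\eta}>cN^{-\epsilon}\wh{\Lambda}$. On the event $\{\Lambda\le|\caS|/K_{0}\}$, using $|\Lambda_{C}|\le|\caS|/K_{0}$ twice, the super-linear terms are swallowed by the linear one: $|\caT_{C}\Lambda_{C}^{2}|+C_{0}|\Lambda_{C}|^{3}\le\big(CK_{0}^{-1}+C_{0}C'K_{0}^{-2}\big)|\caS|\,|\Lambda_{C}|\le\tfrac12|\caS|\,|\Lambda_{C}|$ once $K_{0}$ is fixed large in terms of $C,C',C_{0}$ only (here $C'$ is the uniform upper bound on $|\caS|$). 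Feeding this into the triangle inequality applied to the hypothesis gives $\tfrac12|\caS|\,|\Lambda_{C}|\le N^{\epsilon}(|\caS|+\wh{\Lambda})(N\eta)^{-k}$, hence $|\Lambda_{C}|\le2N^{\epsilon}(N\eta)^{-k}+2N^{\epsilon}\wh{\Lambda}\big(|\caS|(N\eta)^{k}\big)^{-1}$, and bounding the last factor via $|\caS|>cN^{-\epsilon}\wh{\Lambda}$ yields $|\Lambda_{C}|\le CN^{2\epsilon}(N\eta)^{-k}$ on $\Omega(z)\cap\{\Lambda\le|\caS|/K_{0}\}$, which is of the asserted form.

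\textbf{The regime near the edge (case (ii)).} When $\sqrt{\kappa+\eta}\le N^{-\epsilon}\wh{\Lambda}$, since $\wh{\Lambda}\le N^{-\sigma/4}$ we have $\kappa\le\kappa+\eta\le N^{-2\epsilon}\wh{\Lambda}^{2}\le N^{-\sigma/2}\le\delta$ for $N$ large, so $|\caT_{C}|\ge c$; also $|\caS|\le C\sqrt{\kappa+\eta}\le CN^{-\epsilon}\wh{\Lambda}$. Since $C_{0}|\Lambda_{C}|\le C_{0}\wh{\Lambda}\le\tfrac12$ for $N$ large, the quadratic term now governs: $|\caT_{C}\Lambda_{C}^{2}+O(\Lambda_{C}^{3})|\ge|\caT_{C}|\,|\Lambda_{C}|^{2}-C_{0}|\Lambda_{C}|^{3}\ge\tfrac{c}{2}|\Lambda_{C}|^{2}$. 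Combining with the hypothesis and $|\caS|\le C\wh{\Lambda}$ gives, on $\Omega(z)$, the scalar inequality $\tfrac{c}{2}|\Lambda_{C}|^{2}\le|\caS|\,|\Lambda_{C}|+CN^{\epsilon}\wh{\Lambda}(N\eta)^{-k}$; solving this as a quadratic in $|\Lambda_{C}|$ yields $|\Lambda_{C}|\le C|\caS|+C\big(N^{\epsilon}\wh{\Lambda}(N\eta)^{-k}\big)^{1/2}\le CN^{-\epsilon}\wh{\Lambda}+C\big(N^{\epsilon}\wh{\Lambda}(N\eta)^{-k}\big)^{1/2}$, and the weighted AM--GM bound $\big(N^{\epsilon}\wh{\Lambda}(N\eta)^{-k}\big)^{1/2}\le\tfrac12N^{-\epsilon}\wh{\Lambda}+\tfrac12N^{2\epsilon}(N\eta)^{-k}$ produces the asserted form.

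\textbf{Main obstacle.} There is no deep difficulty; the points demanding care are the choice of the regime split and of the absorbing constant $K_{0}$ (and of the AM--GM weight), which encode the fact that $\caS$ degenerates like $\sqrt{\kappa+\eta}$ at the edge, so that near $E_{+}$ it is the quadratic term $\caT_{C}\Lambda_{C}^{2}$, not the linear one, that pins down $\Lambda_{C}$, and the cut-off $\lone(\Lambda\le|\caS|/K_{0})$ in case (i) is exactly what makes the absorption of the super-linear terms legitimate. As all of this is identical to \cite[Lemma~8.2]{Bao-Erdos-Schnelli2020}, the only thing genuinely to verify is that the structural bounds on $\caS,\caT_{A},\caT_{B}$ used there survive in the present time-dependent setting, which is immediate from Proposition~\ref{prop:stab}.
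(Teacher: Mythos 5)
Your proof is correct and follows the same standard dichotomy argument (linear term dominates when $|\caS|\sim\sqrt{\kappa+\eta}$ is large; quadratic term with $|\caT_{C}|\ge c$ dominates near the edge, followed by AM--GM) that underlies \cite[Lemma 8.2]{Bao-Erdos-Schnelli2020}, which the paper simply cites and does not reproduce. The structural inputs you draw from Proposition~\ref{prop:stab}(iii) are exactly what is needed to transport that proof to the time-dependent setting, so your write-up is a faithful reconstruction of the omitted argument.
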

	
	In order to state the second input, we introduce a few more notations. For $z\in\caD$ and $\delta,\delta'\in [0,1]$, we define the event
	\beqs
	\Theta(z,\delta,\delta')\deq \left\{\Lambda_{d}\leq \delta,\, \wt{\Lambda}_{d}(z)\leq \delta,\, \Lambda(z)\leq \delta,\, \Lambda_{o}(z)\leq \delta',\, \Lambda_{T}(z)\leq \delta',\, \wt{\Lambda}_{T}(z)\leq \delta' \right\}.
	\eeqs
	In addition, we decompose the domain $\caD$ into the following disjoint parts:
	\begin{align*}
		\caD_{>}\deq \left\{ z\in\caD:\sqrt{\kappa+\eta}>\frac{N^{2\epsilon}}{(N\eta)^{1/3}} \right\}, &&
		\caD_{\leq} \deq \left\{ z\in\caD:\sqrt{\kappa+\eta}\leq \frac{N^{2\epsilon}}{(N\eta)^{1/3}} \right\}. 
	\end{align*}
	For $z\in\caD_{>}$, $\delta,\delta'\in[0,1]$ and $\epsilon' \in[0,1]$, we define the event $\Theta_{>}(z,\delta,\delta',\epsilon')\subset \Theta(z,\delta,\delta')$ as
	\begin{align*}
		\Theta_{>}(z,\delta,\delta',\epsilon')\deq
		\left\{\Lambda_{d}(z)\leq \delta,\, \wt{\Lambda}_{d}(z)\leq \delta,\,
		\Lambda(z)\leq \min\{\delta,N^{-\epsilon'}|\caS|\},\, 
		\Lambda_{o}\leq \delta' ,\,
		\Lambda_{T}(z)\leq \delta',\, \wt{\Lambda}_{T}(z)\leq \delta'
		\right\}.
	\end{align*}
	Now we are ready to state the second input, which lets us use a bootstrapping argument;
	\begin{lem}\label{lem:bootstrap}
		For any fixed $z\in\caD$, any $\epsilon\in(0,\frac{\sigma}{100})$ and any $D>0$, there exists a positive integer $N_{1}(D,\epsilon)$ and an event $\Omega(z)\deq \Omega(z,D,\epsilon)$ with
		\beqs
		\bbP(\Omega(z))\geq 1-N^{-D}, \quad \forall N\geq N_{1}(D,\epsilon),
		\eeqs
		such that the following hold:
		\begin{itemize}
			\item [(i)] If $z\in\caD_{>}$, we have
			\beqs
			\Theta_{>}\left(z,\frac{N^{3\epsilon}}{(N\eta)^{1/3}},\frac{N^{3\epsilon}}{\sqrt{N\eta}},\frac{\epsilon}{10}\right)\cap \Omega(z) \subset \Theta_{>}\left(z,\frac{N^{\frac{5}{2}\epsilon}}{(N\eta)^{1/3}},\frac{N^{\frac{5}{2}\epsilon}}{\sqrt{N\eta}},\frac{\epsilon}{2}\right).
			\eeqs
			\item [(ii)] If $z\in\caD_{\leq}$, we have
			\beqs
			\Theta\left(z,\frac{N^{3\epsilon}}{(N\eta)^{1/3}},\frac{N^{3\epsilon}}{\sqrt{N\eta}}\right)\cap \Omega(z) \subset \Theta\left(z,\frac{N^{\frac{5}{2}\epsilon}}{(N\eta)^{1/3}},\frac{N^{\frac{5}{2}\epsilon}}{\sqrt{N\eta}}\right).
			\eeqs
		\end{itemize}
	\end{lem}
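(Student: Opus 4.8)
\textbf{Proof proposal for Lemma \ref{lem:bootstrap}.}

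The plan is to follow the bootstrapping strategy of \cite[Lemma 8.5]{Bao-Erdos-Schnelli2020}, feeding in the estimates from Proposition \ref{prop:local law 1}, Proposition \ref{prop:lambda estimate}, and Lemma \ref{lem:quad_solve}, which have already been adapted to the Dyson-matrix-flow setting in the preceding sections. The event $\Omega(z)$ will be the intersection of the (high-probability) events on which all the $\prec$-estimates invoked below hold with their polynomial error factors $N^{\epsilon/100}$ absorbed; since only finitely many such estimates enter and each holds with probability $\geq 1-N^{-D'}$ for every $D'$, a union bound gives $\bbP(\Omega(z))\geq 1-N^{-D}$ for $N\geq N_1(D,\epsilon)$. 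Throughout we work deterministically on $\Omega(z)$.

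First I would verify that on either of the input events $\Theta_{>}(z,N^{3\epsilon}(N\eta)^{-1/3},N^{3\epsilon}(N\eta)^{-1/2},\epsilon/10)$ or $\Theta(z,N^{3\epsilon}(N\eta)^{-1/3},N^{3\epsilon}(N\eta)^{-1/2})$ the a priori hypotheses \eqref{eq:ansatz} of Proposition \ref{prop:local law 1} are met: indeed $(N\eta)^{-1/3}\leq N^{-\sigma/3}$ on $\caD$ since $\eta\geq N^{-1+\sigma}$, so $\Lambda_\rme, \Lambda_L\prec N^{-\sigma/4}$ and $\Lambda_T\prec 1$, and likewise for the tilded quantities. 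Proposition \ref{prop:local law 1} then upgrades these to the entrywise bounds $|L_{ij}|,|P_{ij}|,|K_{ij}|,|\Upsilon|\prec\Psi$ and $\Lambda_\rme^c,\Lambda_T\prec\Psi$ (and the analogues), which in particular already gives the off-diagonal and $\Lambda_T$ parts of the improved conclusion since $N^{5\epsilon/2}/\sqrt{N\eta}\gg\Psi=1/\sqrt{N\eta}$. It remains to improve the \emph{diagonal} bound $\Lambda$, i.e. to improve the control on $\Lambda_A=\omega_A^c-\omega_A$ and $\Lambda_B$.

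Next I would apply Proposition \ref{prop:lambda estimate} with $\wh\Lambda$ taken to be the current bound $N^{3\epsilon}(N\eta)^{-1/3}$ on $\Lambda$, obtaining on $\Omega(z)$ the quadratic relation
\beqs
|\caS\Lambda_\iota+\caT_\iota\Lambda_\iota^2+O(\Lambda_\iota^3)|\prec\frac{\sqrt{(\im\wh m+\wh\Lambda)(|\caS|+\wh\Lambda)}}{N\eta}+\frac{1}{(N\eta)^2},\qquad \iota=A,B.
\eeqs
Using Proposition \ref{prop:stab}(ii)--(iii), namely $\im\wh m\lesssim\sqrt{\kappa+\eta}$ and $|\caS_{AB}|\sim\sqrt{\kappa+\eta}$, together with $\wh\Lambda\leq N^{-\sigma/4}$, the right-hand side is bounded by $N^{\epsilon}(|\caS|+\wh\Lambda)(N\eta)^{-1/3}$ (after possibly shrinking $\epsilon$, and crudely bounding $(N\eta)^{-2}\leq(N\eta)^{-1/3}$), so the hypothesis of Lemma \ref{lem:quad_solve} holds with $k=1/3$. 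On $\caD_{>}$ we are in case (i): the input event forces $\Lambda\leq N^{-\epsilon/10}|\caS|$, which for $N$ large is $\leq|\caS|/K_0$, so \eqref{eq:lemma f.2-(i)} yields $|\Lambda_A|,|\Lambda_B|\leq N^{-2\epsilon}\wh\Lambda+N^{2\epsilon}(N\eta)^{-1/3}\leq N^{5\epsilon/2}(N\eta)^{-1/3}$, and moreover since $z\in\caD_{>}$ means $|\caS|\gtrsim\sqrt{\kappa+\eta}> N^{2\epsilon}(N\eta)^{-1/3}$ we get $\Lambda\leq N^{-\epsilon/2}|\caS|$, i.e. membership in $\Theta_{>}(z,N^{5\epsilon/2}(N\eta)^{-1/3},N^{5\epsilon/2}/\sqrt{N\eta},\epsilon/2)$. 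On $\caD_{\leq}$ we do not know $\sqrt{\kappa+\eta}>N^{-\epsilon}\wh\Lambda$ a priori, but either that inequality holds and case (i) of Lemma \ref{lem:quad_solve} applies as above (the indicator $\lone(\Lambda\leq|\caS|/K_0)$ being $1$ on the input event after checking $\Lambda\leq N^{3\epsilon}(N\eta)^{-1/3}\ll|\caS|/K_0$ is \emph{not} automatic here, so one instead uses that $\Lambda\leq\delta$ with $\delta$ small forces it), or it fails and case (ii) applies directly giving $|\Lambda_A|,|\Lambda_B|\leq N^{-\epsilon}\wh\Lambda+N^{2\epsilon}(N\eta)^{-1/3}$; in both sub-cases $\Lambda\leq N^{5\epsilon/2}(N\eta)^{-1/3}$, which is the conclusion of (ii). Finally $\Lambda_\rme\leq\Lambda_\rme^c+\Lambda\prec\Psi+N^{5\epsilon/2}(N\eta)^{-1/3}\leq N^{5\epsilon/2}(N\eta)^{-1/3}$ handles the diagonal $\Lambda_\rme$ bound, and similarly $\Upsilon\prec\Pi$ follows from \eqref{eq:Upsilon bound} of Lemma \ref{lem:recursive-2} once the a priori bounds are in force.

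The main obstacle I anticipate is the bookkeeping around the indicator function in Lemma \ref{lem:quad_solve}(i) on the regime $\caD_\leq$: there $|\caS|$ can be as small as $N^{2\epsilon}(N\eta)^{-1/3}$, comparable to the a priori bound on $\Lambda$, so one must argue carefully (as in \cite{Bao-Erdos-Schnelli2020}) that the self-improving estimate still closes — the correct reading is that either $\Lambda$ is already below $|\caS|/K_0$ and case (i) gives the gain, or $\Lambda$ exceeds $|\caS|/K_0$ in which case $\sqrt{\kappa+\eta}\lesssim\Lambda\leq N^{3\epsilon}(N\eta)^{-1/3}$ puts us into case (ii) with $\wh\Lambda$ replaced appropriately. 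Matching the exponents $3\epsilon\to 5\epsilon/2\to 2\epsilon$ through the $N^{\pm2\epsilon}$, $N^{\pm\epsilon}$ factors in Lemma \ref{lem:quad_solve} is routine but must be done with the same choices of constants as in \cite[Lemma 8.5]{Bao-Erdos-Schnelli2020}. Since every quantitative input is already in place, I expect no genuinely new difficulty beyond this case analysis, and I would present it by quoting the structure of \cite[Lemma 8.5]{Bao-Erdos-Schnelli2020} and indicating only the points where the extra term $\sqrt{t}\,L_{aa}$, resp. $t\,J_a$, modifies the estimates (these have already been absorbed into Proposition \ref{prop:lambda estimate}).
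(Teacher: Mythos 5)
Your overall architecture — entrywise bounds via Proposition \ref{prop:local law 1}, the quadratic relation via Proposition \ref{prop:lambda estimate}, solving via Lemma \ref{lem:quad_solve}, and the $\caD_>$/$\caD_\leq$ case analysis with exponents $3\epsilon\to 5\epsilon/2$ handled as in \cite{Bao-Erdos-Schnelli2020} — is correct and matches the paper. The genuine gap is in the construction of $\Omega(z)$: you propose to take ``the intersection of the high-probability events on which all the $\prec$-estimates invoked below hold,'' but Propositions \ref{prop:local law 1} and \ref{prop:lambda estimate} are \emph{conditional} stochastic-domination statements whose hypotheses \eqref{eq:ansatz} are themselves $\prec$-bounds. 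Deterministic membership in $\Theta$ does not ``verify'' a $\prec$-hypothesis, and those propositions do not on their own produce an unconditional high-probability event to intersect; the event on which their conclusions hold is exactly the event the bootstrap is trying to produce, so your construction is circular as stated.

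The paper resolves this by going back to the \emph{cutoff moment estimates} of Lemmas \ref{lem:etrll_J}, \ref{lem:etrll_L}, and \ref{lem:etrll_PK}, which give unconditional $L^{2p}$-bounds such as $\expct{|J_i\varphi(\Gamma_{i0})|^{2p}}\le N^{\epsilon_1}\Psi^{2p}$; Markov's inequality then yields a genuine high-probability event $\Omega_1$, and on $\Theta\cap\Omega_1$ the cutoff satisfies $\varphi=1$, so the pointwise bounds on $J_i,L_{ij},P_{ij},K_{ij}$ follow. The same has to be done for the optimal fluctuation averaging input: the paper introduces, inside the proof of Lemma \ref{lem:bootstrap}, new cutoff quantities $\wt{\frl}^{(p,q)}$ and $\wt{\frm}^{(p,q)}$ carrying an additional cutoff $\varphi(\Gamma)$ with a freshly defined control variable $\Gamma$ built from $\Lambda_A,\Lambda_B,\Upsilon$, and $T_{ij}$, in order to obtain the unconditional bound $\expct{\wt\frm^{(p,p)}}\le N^{\epsilon_1}\wh\Pi^{2p}$ and hence the event $\Omega_2$ on which $|\Phi_\iota^c|\le N^{\epsilon/3}\wh\Pi$. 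This is exactly what the paper flags as ``quantitative versions of each of moment estimates in previous sections ... which do not depend on the probabilistic input \eqref{eq:ansatz}.'' Your proposal omits this entirely; once it is added, the rest of your argument (solving the quadratic inequality via Lemma \ref{lem:quad_solve} with $k=1/3$ and the $\caD_>/\caD_\leq$ split) goes through as you describe.
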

	
	\begin{proof}
		As in \cite[Lemma 8.3]{Bao-Erdos-Schnelli2020}, the proof of Lemma \ref{lem:bootstrap} requires quantitative versions of each of moment estimates in previous sections. To be specific, we need their counterparts which do not depend on the probabilistic input \eqref{eq:ansatz}. In this sense the estimates for entrywise local laws, Lemmas \ref{lem:etrll_J}, \ref{lem:etrll_L}, and \ref{lem:etrll_PK}, are already quantitative; recall that for all $\epsilon_{1}>0$ and $p\in\N$, there exists an $N_{0}\equiv N_{0}(\epsilon_{1},p)\in\N$ such that
		\beq\begin{aligned}\label{eq:JLPK_cutoff}
			&\expct{\absv{J_{i}\varphi(\Gamma_{aa0})}^{2p}}\leq N^{\epsilon_{1}}\Psi^{2p},	&
			&\expct{\absv{L_{ij}\varphi(\Gamma_{ij0})}^{2p}}\leq N^{\epsilon_{1}}\Psi^{2p},		\\
			&\expct{\absv{P_{ij}\varphi(\Gamma_{ij})}^{2p}}\leq N^{\epsilon_{1}}\Psi^{2p},	&
			&\expct{\absv{K_{ij}\varphi(\Gamma_{ij})}^{2p}}\leq N^{\epsilon_{1}}\Psi^{2p},
		\end{aligned}\eeq
		whenever $N\geq N_{0}$, $i,j\in\braN$, $z\in\caD$, and $t\in[0,1]$. Before moving on to the analogue of Lemma \ref{lem:recursive-3}, we first derive consequences of \eqref{eq:JLPK_cutoff}. Since
		\beqs
		\varphi(\Gamma_{ij0})=1=\varphi(\Gamma_{ij})\qquad \text{on }\Theta\left(z,\frac{N^{3\epsilon}}{(N\eta)^{1/3}},\frac{N^{3\epsilon}}{\sqrt{N\eta}}\right),
		\eeqs
		we may apply Markov's inequality to \eqref{eq:JLPK_cutoff} to find an event $\Omega_{1}\equiv \Omega_{1}(z,D,\epsilon)$ so that
		\begin{align}\label{eq:JLPK_markov}
			&\absv{J_{i}}\leq N^{\sigma/4}\Psi,&
			&\absv{L_{ij}}\leq N^{\sigma/4}\Psi, &
			&\absv{P_{ij}}\leq N^{\sigma/4}\Psi, &
			&\absv{K_{ij}}\leq N^{\sigma/4}\Psi	
		\end{align}
		are all true on the event $\Theta\left(z,\frac{N^{3\epsilon}}{(N\eta)^{1/3}},\frac{N^{3\epsilon}}{\sqrt{N\eta}}\right)\cap\Omega_{1}$, and that
		\beqs
		\prob{\Omega_{1}^{c}}\leq \frac{1}{10}N^{-D}.
		\eeqs
		Then we follow the proof of Proposition \ref{prop:local law 1} to obtain that
		\begin{align}\label{eq:prior bound cutoff}
			&\Lambda_{d}^{c}(z)\leq \frac{N^{\epsilon/2}}\Psi, &&
			\Lambda_{o} \leq N^{\epsilon/2}\Psi, &&
			\Lambda_{T}(z)\leq N^{\epsilon/2}\Psi, &&
			\wt{\Lambda}_{T}(z)\leq N^{\epsilon/2}\Psi, &&
			|\Upsilon(z)|\leq N^{\epsilon/2}\Psi
		\end{align}
		hold on $\Theta(z,\frac{N^{3\epsilon}}{(N\eta)^{1/3}},\frac{N^{3\epsilon}}{\sqrt{N\eta}})\cap \Omega_{1}(z)$.
		
		Now we present the quantitative versions of estimates in Section \ref{sec:opt_FA} with weaker bounds. Specifically, we consider the following two quantities;
		\begin{align*}
			\wt{\frl}^{(p,q)}&\deq \left(\frac{1}{N}\sum_{i}\frd_{A,i}^{W}L_{ii}\varphi(\Gamma_{ii0})\right)^{p}\left(\frac{1}{N}\sum_{i}\ol{\frd_{A,i}^{W}L_{ii}}\varphi(\Gamma_{ii0})\right)^{q},\\
			\wt{\frm}^{(p,q)}&\deq \left(\frac{1}{N}\sum_{i}\frd_{A,i}Q_{ii}\varphi(\Gamma_{ii})\varphi(\Gamma)\right)^{p}
			\left(\frac{1}{N}\sum_{i}\ol{\frd_{A,i}Q_{ii}}\varphi(\Gamma_{ii})\varphi(\Gamma)\right)^{q},
		\end{align*}
		where $\Gamma_{ij0}$ and $\Gamma_{ij}$ are as in \eqref{eq:Gamma0_def}, \eqref{eq:Gamma_def} and $\Gamma$ is defined as 
		\begin{align*}
			\Gamma\deq (c\im \wh{m}+\wh{\Lambda})^{-2}(|\Lambda_{A}|^{2}+|\Lambda_{B}|^{2})
			+\left(\frac{N^{5\epsilon}}{(N\eta)^{1/3}}\right)^{-2}|\Upsilon|^{2}
			+\left(\frac{N^{5\epsilon}}{\sqrt{N\eta}}\right)^{-1}\frac{1}{N}\sum_{a}(|T_{ij}|^{2}+N^{-1})^{1/2},
		\end{align*}
		for some sufficiently small constant $c>0$. In the rest of the proof, we choose
		\beqs
		\wh{\Lambda}(z)=\frac{N^{3\epsilon}}{(N\eta)^{1/3}},
		\eeqs
		and define $\wh{\Pi}$ as in \eqref{eq:hatPi}.
		
		Following the same calculations as in Proposition \ref{prop:RFA of Laa} but taking Lemma \ref{lem:etrll_L_err} as an additional input, we find that 
		\beqs
		\expct{\wt{\frl}^{(p,p)}}\leq N^{\epsilon_{1}}\Psi^{4p}.
		\eeqs
		In addition, due to our choice of $\wh{\Lambda}$, we have
		\beqs
		\Psi^{2}=\frac{1}{N\eta}\leq \frac{1}{(N\eta)^{-2/3}}\leq \wh{\Pi}.
		\eeqs
		Now for the average of $Q_{ii}$'s, we follow the exact same argument as in Section C of \cite{Bao-Erdos-Schnelli2020} to prove that 
		\beqs
		\expct{\wt{\frm}^{(p,p)}}\leq N^{\epsilon_{1}}\wh{\Pi}^{2p}.
		\eeqs
		Repeating the same arguments with averages of $\caL_{aa}$ and $\caQ_{aa}$ and applying Markov's inequality, since $\varphi(\Gamma)=\varphi(\Gamma_{ii})=\varphi(\Gamma_{ii0})=1$ on the event $\Theta\left(z\frac{N^{3\epsilon}}{(N\eta)^{1/3}},\frac{N^{3\epsilon}}{\sqrt{N\eta}}\right)$, there exists an event $\Omega_{2}\equiv \Omega_{2}(z,D,\epsilon)$ such that
		\beqs
		\Phi_{\iota}^{c}\leq N^{\epsilon/3}\wh{\Pi},\quad \iota =A,B, \qquad \text{on }\Theta\left(z\frac{N^{3\epsilon}}{(N\eta)^{1/3}},\frac{N^{3\epsilon}}{\sqrt{N\eta}}\right)\cap\Omega_{2}(z)
		\eeqs
		and $\prob{\Omega_{2}(z)}\geq 1-\frac{N^{-D}}{10}$. On this intersection of events, we further have from \eqref{eq:opt lambda estimate} that
		\beqs
		\absv{\caS\Lambda_{\iota}+\caT_{\iota}\Lambda_{\iota}^{2}+O(\absv{\Lambda_{\iota}}^{3})}\leq C N^{\epsilon/3}\wh{\Pi}\leq N^{\epsilon}\frac{\absv{\caS}+\wh{\Lambda}}{(N\eta)^{1/3}},\quad \iota=A,B,
		\eeqs
		where we used the definition of $\wh{\Lambda}$ in the last inequality.
		
		Taking $\Omega\equiv \Omega(z,D,\epsilon)=\Omega_{1}\cap\Omega_{2}$, we find that the assumptions of Lemma \ref{lem:quad_solve} are satisfied with $k=1/3$. The rest of the proof of Lemma \ref{lem:bootstrap} are identical to that of Lemma 8.3 in \cite{Bao-Erdos-Schnelli2020}.
	\end{proof}
	
	With Lemma \ref{lem:bootstrap}, we now prove Theorem \ref{thm:weak local law} by using a continuity argument.
	\begin{proof}[Proof of Theorem \ref{thm:weak local law}]
		To prove Theorem \ref{thm:weak local law}, we modify that of Theorem 8.1 in \cite{Bao-Erdos-Schnelli2020}. Specifically, we first prove the result when $\eta=\eta_{M}\sim 1$, and then use Lemma \ref{lem:bootstrap} and a lattice continuity argument to gradually decrease $\eta$ until we reach the optimal regime $\eta=N^{-1+\sigma}$. Since the bootstrapping part of the proof is identical to that of Theorem 8.1 in \cite{Bao-Erdos-Schnelli2020}, we focus on the first part. That is, we prove that there exists a sufficiently large constant $\eta_{M}>0$ so that the event
		\beqs
		\Theta_{>}\left(z,\frac{N^{3\epsilon}}{N^{1/3}},\frac{N^{3\epsilon}}{\sqrt{N}},\frac{\epsilon}{10}\right)
		\eeqs has high-probability uniformly over $z\in\caD$ with $\im z=\eta_{M}$.
		
		Following (8.35) in \cite{Bao-Erdos-Schnelli2020}, we can prove that $Q_{ij}$ as a function of $U$ is Lipschitz continuous with respect to the Hilbert-Schmidt norm $\norm{X}_{2}=\sqrt{\Tr XX\adj}$ with the Lipschitz constant bounded by $C\eta^{-2}$. Since the constant $C$ here can be chosen independent of $W$, applying Gromov-Milman concentration inequality (see Corollary 4.4.28 of \cite{Anderson-Guionnet-Zeitouni2010}) to $Q_{ij}$ yields
		\beq\label{eq:Qab concentration}
		\Absv{Q_{ij}(E+\ii\eta)-\expct{Q_{ij}(E+\ii\eta)|W}}\prec \frac{1}{\sqrt{N\eta^{2}}}
		\eeq
		whenever $\eta\geq \eta_{M}$. Furthermore, using the invariance of the Haar measure we can check the identity
		\beqs
		\expct{\wt{B}G \otimes G- G\otimes \wt{B}G\vert W}=0.
		\eeqs
		Taking the $(i,j)$-th entry for the first component in the tensor product and the normalized trace for the second component, we have
		\beq\label{eq:EQab}
		\expct{Q_{ij}\vert W}=\expct{(\wt{B}G)_{ij}\tr G-G_{ij}\tr \wt{B}G\vert W}=0.
		\eeq
		Following the same proof as that of (8.38) in \cite{Bao-Erdos-Schnelli2020}, we can extend the bound to the whole domain $\eta\geq\eta_{M}$ (enlarging $\eta_{M}$ if necessary); 
		\beq\label{eq:Qbound}
		\sup_{z:\im z \geq \eta_{M}} |Q_{ij}(z)|\prec\frac{1}{\sqrt{N}},
		\quad \forall i,j\in\llbra 1, N\rrbra.
		\eeq
		Similarly, applying Proposition 2.3.3 in \cite{Anderson-Guionnet-Zeitouni2010} to $L_{ab}$ as a function of $W$, we get
		\beqs
		\sup_{z:\im z\geq\eta_{M}}\absv{L_{ij}(z)}\prec \frac{1}{\sqrt{N}},\quad \forall i,j\in\braN.
		\eeqs
		
		In addition, using that $\norm{H_{t}}\leq \norm{A}+\norm{B}+t\norm{W}\leq C$ with high probability and that $\tr\wt{B}=\tr B=0$, we have, for $z=E+\ii\eta$ with fixed $E$ and any $\eta\geq \eta_{M}$, the expansions
		\begin{align*}
			\tr G(z)=-\frac{1}{z}+O(\frac{1}{|z|^{2}})=\frac{\ii}{\wt{\eta}_{M}}+O(\frac{1}{\eta^{2}}),
			&&
			\tr\wt{B}G(z)=-\frac{\tr \wt{B}}{z}+O(\frac{1}{|z|^{2}})=O(\frac{1}{\eta^{2}}),
		\end{align*}
		where we used $\tr B=0$ in the second equality. Hence, by the definition of $\omega_{A}^{c}$, we see that
		\beq\label{eq:omega expansion}
		\omega_{A}^{c}=z-\frac{\tr(\wt{B}G+\sqrt{t}WG)}{\tr(G)}=z+O(\frac{1}{\wt{\eta}_{M}}), \quad z=E+\ii\wt{\eta}_{M}.
		\eeq
		Using the identity $(\wt{B}G)_{ij}=\delta_{ij}-(\fra_{i}-z)G-\sqrt{t}(WG)_{ij}$, we can rewrite \eqref{eq:Qbound} as
		\begin{align*}
			(\delta_{ij}-(\fra_{i}-\omega_{A}^{c})G_{ij})\tr G +\sqrt{t}\tr(WG)G_{ij}-\sqrt{t}(WG)_{ij}\tr(G)=\caO(\frac{1}{\sqrt{N}}),&& z=E+\ii\wt{\eta}_{M}.
		\end{align*}
		Thus we have
		\beq\label{eq:prior estimate for big eta}
		\Lambda_{\rme}^{c}(z)\prec\frac{1}{\sqrt{N}},\quad \Lambda_{L}(z)\prec\frac{1}{\sqrt{N}},\qquad  z=E+\ii\wt{\eta}_{M}.
		\eeq
		Taking the average of diagonal terms in \eqref{eq:prior estimate for big eta} yields
		\begin{align*}
			\sup_{z:\im z\geq \eta_{M}}|m_{H_{t}}(z)-m_{A}(\omega_{A}^{c}(z))|\prec\frac{1}{\sqrt{N}}, &&
			\sup_{z:\im z\geq \eta_{M}}|m_{H_{t}}(z)-m_{B}(\omega_{B}^{c}(z))|\prec\frac{1}{\sqrt{N}}
		\end{align*}
		where in the large $z$ regime these bounds even hold deterministically. This gives the system
		\begin{align}\label{eq:weak bound for subordinate system}
			\sup_{z:\im z\geq \eta_{M}} |\Phi_{A}(\omega_{A}^{c}(z),\omega_{B}^{c}(z),z)|\prec\frac{1}{\sqrt{N}}, &&
			\sup_{z:\im z\geq \eta_{M}} |\Phi_{B}(\omega_{A}^{c}(z),\omega_{B}^{c}(z),z)|\prec\frac{1}{\sqrt{N}}.
		\end{align}
		We regard \eqref{eq:weak bound for subordinate system} as a perturbation of $\Phi_{AB}(\omega_{A}(z),\omega_{B}(z),z)=0$, whose stability in the macroscopic regime is provided in Lemma \ref{lem:stab_macro}. Since \eqref{eq:weak bound for subordinate system} and \eqref{eq:omega expansion} hold for sufficiently large $\wt{\eta}_{M}$, Lemma \ref{lem:stab_macro} implies that
		\beqs
		|\Lambda_{\iota}|=|\omega_{\iota}^{c}(z)-\omega_{\iota}(z)|\prec\frac{1}{\sqrt{N}}, \quad \iota=A,B, \,\, z=E+\ii\eta_{M},
		\eeqs
		taking larger $\eta_{M}>1$ if necessary. Thus we have
		\beq\label{eq:lambda estimate for big eta}
		|\Lambda_{d}(E+\ii\eta_{M})|\leq |\Lambda_{d}^{c}(E+\ii\eta_{M})|+|\Lambda_{A}(E+\ii\eta_{M})|\prec\frac{1}{\sqrt{N}},
		\eeq
		for any fixed $E\in\bbR$. Using the bound $\norm{G}\leq \frac{1}{\eta}$ and the inequality $|\bfx\adj G\bfy|\leq \norm{G}\norm{\bfx}\norm{\bfy}$, we also get
		\beqs
		\Lambda_{T}(E+\ii\eta_{M})\leq \frac{1}{\eta_{M}},
		\eeqs
		for any fixed $E\in\bbR$. Hence we observe that the assumptions in Proposition \ref{prop:local law 1} are satisfied so that we have, for any fixed $E\in\bbR$, that
		\begin{align}\label{eq:T estimate for big eta}
			|\Lambda_{T}(E+\ii\eta_{M})|\prec \frac{1}{\sqrt{N}},&&
			|\wt{\Lambda}_{T}(E+\ii\eta_{M})|\prec \frac{1}{\sqrt{N}}.
		\end{align}
		Also, note that $E+\ii\eta_{M}\in\caD_{>}$ and $|\caS(E+\tt\eta_{M})|\gtrsim 1$ for any fixed $E$. Hence $\Lambda({E+\ii\eta_{M}})\prec N^{-\epsilon}|\caS(E+\ii\eta_{M})|$. From \eqref{eq:lambda estimate for big eta}, we have
		\beq\label{eq:lambda estimate for big eta-2}
		\Lambda(E+\ii\eta_{M})\prec\frac{1}{\sqrt{N}}.
		\eeq
		Combining \eqref{eq:lambda estimate for big eta}, \eqref{eq:prior estimate for big eta}, \eqref{eq:T estimate for big eta} and \eqref{eq:lambda estimate for big eta-2} with the fact $\Lambda\prec N^{-\epsilon}|\caS(E+\ii\eta_{M})|$, we see that
		\beq\label{eq:initial Theta}
		\Theta_{>}\left(E+\ii\eta_{M},\frac{N^{3\epsilon}}{N^{1/3}},\frac{N^{3\epsilon}}{\sqrt{N}},\frac{\epsilon}{10}\right)\geq 1-N^{-D},
		\eeq
		for all $E$ and $N\geq N_{0}(D,\epsilon)$ with some sufficiently large $N_{0}(D,\epsilon)$. This concludes the proof of Theorem \ref{thm:weak local law}.
	\end{proof}
	
	Now we are ready to prove the strong local law, Theorem \ref{thm:ll}.
	\begin{proof}[Proof of Theorem \ref{thm:ll}]
		We first prove the bound 
		\beq\label{eq:lambda_ll}
		\Lambda(z)\prec \frac{1}{N\eta}.
		\eeq
		Now that we have the weak local law Theorem \ref{thm:weak local law}, the probabilistic assumptions in \ref{prop:local law 1} hold uniformly on the domain $\caD$. Thus the conclusion of Proposition \ref{prop:lambda estimate} holds true uniformly on $\caD$. That is, 
		\beq\label{eq:strong bound for lambda}
		\left| \caS\Lambda_{\iota}+\caT_{\iota}\Lambda_{\iota}^{2}+O(\Lambda_{\iota}^{3})\right|\prec
		\frac{\sqrt{(\im \wh{m}+\wh{\Lambda})(|\caS|+\wh{\Lambda})}}{N\eta}+\frac{1}{(N\eta)^{2}}, \quad
		\iota =A,B
		\eeq
		holds uniformly in $\caD$. Furthermore, since $\im \wh{m}\leq C|\caS|$, we find that the assumptions of Lemma \ref{lem:bootstrap} hold true for the choice $k=1$ as long as $\wh{\Lambda}\geq (N\eta)^{-1}$ and $\Lambda(z)\prec \wh{\Lambda}$.
		
		As in the proof of Theorem 2.5 in \cite{Bao-Erdos-Schnelli2020}, we use the bootstrapping argument for $\wh{\Lambda}$ applying Lemma \ref{lem:bootstrap} with $k=1$. The initial choice is $\wh{\Lambda}(z)=N^{3\epsilon}(N\eta)^{-1/3}$, which is guaranteed by Theorem \ref{thm:weak local law}, and we use the same argument as in \cite{Bao-Erdos-Schnelli2020} to iteratively improve the bound until we have \eqref{eq:lambda_ll}.
		
		Next, we prove Theorem \ref{thm:ll}. Firstly, the averaged local law \eqref{eq:averll} is a consequence of Proposition \ref{prop:FA-1} and \eqref{eq:lambda_ll}. Secondly for the entrywise local law \eqref{eq:etrll}, note that \eqref{eq:lambda_ll} implies
		\beq\label{eq:etrll_improv}
		\Pi_{i}(z)\prec \sqrt{\frac{\im \wh{m}(z)+\Lambda_{\rme}(z)}{N\eta}}+\frac{1}{N\eta}.
		\eeq
		Similarly, we have 
		\beq\label{eq:etrll_improv1}
		\Pi_{i}^{W}(z)=\sqrt{\frac{\im (WGW)_{ii}}{N\eta}}
		\prec \sqrt{\frac{\absv{J_{i}}+\absv{L_{ii}}+\im \wh{m}(z)+\Lambda_{\rme}(z)}{N\eta}}+\frac{1}{N\eta}.
		\eeq
		Then we repeat the proof of Lemma \ref{lem:etrll_PK}, but using Theorem \ref{thm:weak local law}, \eqref{eq:etrll_improv} and \eqref{eq:etrll_improv1} as an input in \eqref{eq:rec_PK_err_coeff}. This gives 
		\beq\label{eq:P_opt}
		P_{ij}\prec \sqrt{\frac{\im \wh{m}(z)+\Lambda_{\rme}(z)}{N\eta}}+\frac{1}{N\eta}.
		\eeq
		Using $\Upsilon\prec \Psi^{2}$, we find that $Q_{ij}$ also satisfies the same estimate as in \eqref{eq:P_opt}. Applying the same procedure to the proof of Lemma \ref{lem:etrll_L}, we have
		\beq\label{eq:QL_opt}
		\absv{Q_{ij}}+\absv{L_{ij}}\prec\sqrt{\frac{\im \wh{m}(z)+\Lambda_{\rme}(z)}{N\eta}}+\frac{1}{N\eta}.
		\eeq
		Combining with \eqref{eq:Lambdac<Q+L} and $\absv{\omega_{A}^{c}(z)-\omega_{\alpha}(z)}\prec \Psi^{2}$, we finally arrive at
		\beq\label{eq:etrll_improv2}
		\Lambda_{\rme}(z)\prec \sqrt{\frac{\im \wh{m}(z)+\Lambda_{\rme}(z)}{N\eta}}+\frac{1}{N\eta}.
		\eeq
		This proves $\Lambda_{\rme}(z)\prec \Pi$, which immediately implies \eqref{eq:etrll}.
	\end{proof}
	\section{Proofs of Proposition \ref{prop:ll} and Lemma \ref{lem:BGU}}\label{sec:rigidity}
	Note the discrepancy in definitions of $G$: In this supplementary material we defined $G\deq(H_{t}-z)^{-1}$, whereas in the main manuscript we used the same alphabet $G$ to denote $(\gamma_{t} H_{t}-z)^{-1}$. In this section and the next, we exclusively use the former notation. Hence, for example, \eqref{eq:ll_etr} is equivalent to
	\beq
	\max_{a,b\in\llbra 1,N\rrbra}\Absv{G_{ab}(z)-\frac{\delta_{ab}}{\fra_{a}-\omega_{\alpha,t}(E_{+,t})}}+\absv{(U\adj G(z))_{ab}}\prec N^{-1/3+\epsilon},
	\eeq
	uniformly over $z=E_{+,t}+(E+\ii\eta_{0})/\gamma_{t}$ with $E\in[E_{1},E_{2}]$. Note however that the difference is merely cosmetic since $\gamma_{t}\sim 1$.
	
	\begin{proof}[Proof of Proposition \ref{prop:ll}]
		To prove \eqref{eq:ll_aver}, we combine Theorem \ref{thm:ll}, Corollary~\ref{cor:sqrt_lim}, and Lemma \ref{lem:subor_diff}. To be specific, we write
		\begin{align}\label{eq:whm=m}
			\begin{split}
				&|\tr G-m_{\mu_{t}}(E_{+,t})|\\		
				\leq&  \left|\frac{1}{N}\sum_{i=1}^{N}\left(G_{ii}(z) -\frac{1}{\fra_{a}-\omega_{A,t}(z)}\right)\right|	\\
				&+\left|\frac{1}{N}\sum_{i=1}^{N} \frac{1}{\fra_{a}-\omega_{A,t}(z)} -\int\frac{1}{x-\omega_{\alpha,t}(z)}\dd \mu_{\alpha}(x) \right|\\
				&+|m_{\mu_{t}}(z)-m_{\mu_{t}}(E_{+,t})|.
			\end{split}
		\end{align}
		Now we prove each term on the right-hand side of \eqref{eq:whm=m} is $N^{-1/3+\epsilon}$: For the first term, we use Theorem \ref{thm:ll} so that it is $O(\Psi^{2})$: For the second, we expand $1/(\fra_{a}-\omega_{A,t})$ around $\omega_{\alpha,t}$ and apply Lemma \ref{lem:subor_diff} to prove that it is $O(\eta_{0}^{-1/2}\bsd)$: The last term is $O(\sqrt{z-E_{+,t}})$ by Proposition \ref{prop:sqrt_lim}. 
		
		The estimate \eqref{eq:ll_etr} can be proved in a similar way by using \eqref{eq:etrll} instead of \eqref{eq:averll}. For \eqref{eq:ll_etr_BG}, we use \eqref{eq:QL_opt} so that
		\beq
		(\wt{B}G)_{ij}=\frac{\tr \wt{B}G}{\tr G}G_{ij}-Q_{ij}=\frac{\tr\wt{B}G}{\tr G}+O_{\prec}(N^{-1/3+\epsilon}).
		\eeq
		Then \eqref{eq:ll_etr_BG} immediately follows from \eqref{eq:whm=m} together with its variant
		\beq\begin{aligned}
			\tr \wt{B}G(z)=\tr B\caG(z)=&\int_{\R}\frac{x}{x-\omega_{\beta,t}(E_{+,t})}\dd\mu_{\beta}(x)+O_{\prec}(N^{-1/3+\epsilon})	\\
			=&(\omega_{\beta}(E_{+,t})m_{\mu_{t}}(E_{+,t})+1)+O_{\prec}(N^{-1/3+\epsilon}).
		\end{aligned}\eeq
	\end{proof}
	
	We next present the proof of Lemma \ref{lem:BGU}, a minor technical consequence of the proof of local law.
	\begin{proof}[Proof of Lemma \ref{lem:BGU}]
		We first consider $(\wt{B}GU)_{ab}$. By the identity $GH_{t}=zG+I=H_{t}G$, we have
		\beq
		(\wt{B}GU)_{ab}=-\fra_{i}(GU)_{ab}-\sqrt{t}(WGU)_{ab}+z(GU)_{ab}+U_{ab}.
		\eeq
		As $(GU)_{ab}=\ol{(U\adj G(\ol{z}))}_{ab}\prec N^{-1/3+\epsilon}$ by \eqref{eq:etrll} and $U_{ab}\prec N^{-1/2}$, it suffices to prove $(WGU)_{ab}\prec N^{-1/3+\epsilon}$.
		
		We follow the proof of Lemma \ref{lem:etrll_L}, that is, we take a high-moment of $(WGU)_{ab}$ and apply Stein's lemma to $W$. More precisely,
		\beq\begin{aligned}\label{eq:WGU}
			&\E\absv{(WGU)_{ab}}^{2p}=\E (WGU)_{ab} (WGU)_{ab}^{p-1}\ol{(WGU)}_{ab}^{p}	\\
			=&-\sqrt{t}\E \tr m (GU)_{ab}(WGU)_{ab}^{p-1}\ol{(WGU)}_{ab}^{p}	\\
			&-\frac{(p-1)}{N}\E \left(\sqrt{t}(WG^{2}U)_{ab}(GU)_{ab}\right)(WGU)_{ii}^{p-2}\ol{(WGU)}_{ii}^{p}	\\
			&+\frac{p}{N}\E\left((U\adj G\adj GU)_{bb}-\sqrt{t}(U\adj G\adj GU)_{bb}(G\adj W)_{aa}\right)\absv{(WGU)_{ii}}^{2(p-1)}.
		\end{aligned}\eeq
		By Theorem \ref{thm:ll} and Ward identity, we can estimate all coefficients of powers of $(WGU)_{ab}$ except $(WG^{2}U)_{ab}$, which can be handled as
		\beqs
		\frac{\absv{(WG^{2}U)_{ab}}}{N}\leq \frac{\im (WGW)_{aa}+\im (U\adj GU)_{bb}}{N\eta}\prec 1.
		\eeqs
		This gives
		\beq\begin{aligned}\label{eq:WGU1}
			\E\absv{(WGU)_{ab}}^{2p}=&\sqrt{t}\E O_{\prec}(N^{-1/3+\epsilon})\absv{(WGU)_{ab}}^{2p-1}	\\
			&+\E O_{\prec}(N^{-1})\absv{(WGU)_{ab}}^{2p},
		\end{aligned}\eeq
		and applying Young's inequality to \eqref{eq:WGU1} proves $(WGU)_{ab}\prec N^{-1/3+\epsilon}$ as desired.
		
		The estimates for $(\wt{B}G\wt{B})_{ab}\prec 1$ is even easier, since the identity $GH_{t}=zG+I=H_{t}G$ and the fact that $A$ is diagonal imply that it suffices to prove
		\beq\begin{aligned}\label{eq:BGB}
			\absv{(WGW)_{ab}}+\absv{(WG)_{ab}}+\absv{(GW)_{ab}}\prec& \delta_{ab}+N^{-1/3+\epsilon},	\\
			\qquad \im(WGW)_{aa}+\im (WG)_{aa}+\im (GW)_{aa}\prec& N^{-1/3+\epsilon}.
		\end{aligned}\eeq
		Note that combining \eqref{eq:etrll_improv1} and \eqref{eq:etrll_improv2} proves 
		\beqs
		\Pi_{a}^{W}\prec\Pi.
		\eeqs
		Feeding this into \eqref{eq:rec_WGW_1} and \eqref{eq:rec_WG}--\eqref{eq:rec_WG_1}, respectively, proves $J_{a}\prec\Pi$ and $L_{ab}\prec\Pi$. Then \eqref{eq:BGB} follows immediately from Theorem \ref{thm:ll}.
	\end{proof}
	\section{Derivatives}\label{sec:der}
	We calculate $\partial R_{a}/(\partial g_{ac})$ in the following self-explanatory lemma:
	\begin{lem}\label{lem:type1}
		For $a\neq c\in\llbra 1, N\rrbra$, we have the following;
		\begin{align}
			\frac{\partial \bsh_{a}}{\partial g_{ac}}
			=&\frac{1}{\norm{\bsg_{a}}}\bse_{c}-\frac{\ol{h}_{ac}}{2\norm{\bsg_{a}}}\bsh_{a}, &
			\frac{\partial \bsh_{a}\adj}{\partial g_{ac}}=&-\frac{\ol{h}_{ac}}{2\norm{\bsg_{a}}}\bsh_{a}\adj,\\
			\frac{\partial\ell_{a}^{2}}{\partial g_{ac}}=&\frac{\partial}{\partial g_{ac}}\frac{1}{1+h_{aa}}=-\ell_{a}^{4}g_{aa}\frac{\partial\norm{\bsg_{a}}^{-1}}{\partial g_{ac}}=\frac{\ell_{a}^{4}}{2\norm{\bsg_{a}}}h_{aa}\ol{h}_{ac}.
		\end{align}
		Consequently, we have
		\beq\begin{aligned}\label{eq:Rder}
			\frac{\partial R_{a}}{\partial g_{ac}}=&-\frac{\partial\ell_{a}^{2}}{\partial g_{ac}}(\bse_{a}+\bsh_{a})(\bse_{a}+\bsh_{a})\adj
			-\ell_{a}^{2}\left(\frac{\partial \bsh_{a}}{\partial g_{ac}}(\bse_{a}+\bsh_{a})\adj+(\bse_{a}+\bsh_{c})\frac{\partial \bsh_{a}\adj}{\partial g_{ac}}\right)	\\
			=&-\frac{\ell_{a}^{4}}{2\norm{\bsg_{a}}}h_{aa}\ol{h}_{ac}(\bse_{a}+\bsh_{a})(\bse_{a}+\bsh_{a})\adj	\\
			&-\frac{\ell_{a}^{2}}{\norm{\bsg_{a}}}\left(\bse_{c}(\bse_{a}+\bsh_{a})\adj-\frac{\ol{h}_{ac}}{2}(\bsh_{a}\bse_{a}\adj+\bse_{a}\bsh_{a}\adj+2\bsh_{a}\bsh_{a}\adj)\right).
		\end{aligned}\eeq
		By symmetry, using $R_{a}\adj=R_{a}$, we have
		\beq\begin{aligned}\label{eq:Rder*}
			\frac{\partial R_{a}}{\ol{\partial}g_{ac}}=\left(\frac{\partial R_{a}}{\partial g_{ac}}\right)\adj=&-\frac{\ell_{a}^{4}}{2\norm{\bsg_{a}}}h_{aa}h_{ac}(\bse_{a}+\bsh_{a})(\bse_{a}+\bsh_{a})\adj	\\
			&-\frac{\ell_{a}^{2}}{\norm{\bsg_{a}}}\left((\bse_{a}+\bsh_{a})\bse_{c}\adj-\frac{h_{ac}}{2}(\bsh_{a}\bse_{a}\adj+\bse_{a}\bsh_{a}\adj+2\bsh_{a}\bsh_{a}\adj)\right).
		\end{aligned}\eeq
	\end{lem}
	Using the result above, we can expand the derivative of $G$ in terms of $h_{b}$;
	\begin{lem}\label{lem:dgbd_comput}
		For $a\neq c\in\llbra 1, N\rrbra$ , we have
		\beq\begin{aligned}
			\frac{\partial G}{\partial g_{ac}}=&-\frac{\ell_{a}^{2}}{\norm{\bsg_{a}}}G[\bse_{c}(\bse_{a}+\bsh_{a})\adj,\wt{B}]G
			+\frac{\ell_{a}^{2}}{2\norm{\bsg_{a}}}\ol{h}_{ac}G[(\bse_{a}+2\bsh_{a})\bse_{a}\adj,\wt{B}]G\\
			&-\frac{\ell_{a}^{4}}{2\norm{\bsg_{a}}}h_{aa}\ol{h}_{ac}G[\bse_{a}\bse_{a}\adj+\bse_{a}\bsh_{a}\adj+\bsh_{a}\bse_{a}\adj,\wt{B}]G.
		\end{aligned}\eeq
		Similarly, we have
		\beq\begin{aligned}
			\frac{\partial G}{\ol{\partial}g_{ac}}=&-\frac{\ell_{a}^2}{\norm{\bsg_{a}}}G [\wt{B},(\bse_{a}+\bsh_{a})\bse_{c}\adj]G
			+\frac{\ell_{a}^{2}}{2\norm{\bsg_{a}}}h_{ac}G[\wt{B},\bse_{a}(\bse_{a}+2\bsh_{a})\adj]G\\
			&-\frac{\ell_{a}^{4}}{2\norm{\bsg_{a}}}h_{aa}h_{ac}G[\wt{B},\bse_{a}\bse_{a}\adj+\bse_{a}\bsh_{a}\adj+\bsh_{a}\bse_{a}\adj]G.
		\end{aligned}\eeq 
	\end{lem}
	\begin{proof}
		From the definition of $G$, we have
		\beqs
		\frac{\partial G}{\partial g_{ac}}=-G\frac{\partial \wt{B}}{\partial g_{ac}}G=-\gamma G\frac{\partial (R_{a}\wt{B}^{\anga}R_{a})}{\partial g_{ac}}G
		=- G\left(\frac{\partial R_{a}}{\partial g_{ac}}\wt{B}^{\anga}R_{a}+R_{a}\wt{B}^{\anga}\frac{\partial R_{a}}{\partial g_{ac}}\right)G,
		\eeqs
		where we used the fact that $U^{\anga}$, and hence $\wt{B}^{\anga}$, are independent of $\bsg_{a}$. Since $R_{a}^{2}=I$, we have
		\beqs
		-R_{a}\frac{\partial R_{a}}{\partial g_{ac}}=\frac{\partial R_{a}}{\partial g_{ac}}R_{a},
		\eeqs
		so that
		\beqs
		\frac{\partial G}{\partial g_{ac}}=-G\left(\frac{\partial R_{a}}{\partial g_{ac}}\wt{B}^{\anga}R_{a}+R_{a}\wt{B}^{\anga}\frac{\partial R_{a}}{\partial g_{ac}}\right)G
		=-G\left[\frac{\partial R_{a}}{\partial g_{ac}}R_{a},\wt{B}\right]G,
		\eeqs
		where $[A,B]=AB-BA$ denotes the commutator. Now using \eqref{eq:Rder} we have
		\begin{align*}
			\frac{\partial R_{a}}{\partial g_{ac}}R_{a}
			&=-\frac{\ell_{a}^{2}}{\norm{\bsg_{a}}}\left(\frac{h_{aa}\ol{h}_{ac}}{2}(I-R_{a})+\bse_{c}(\bse_{a}+\bsh_{a})\adj -\frac{\ol{h}_{ac}}{2}(\bsh_{a}(\bse_{a}+\bsh_{a})\adj+(\bse_{a}+\bsh_{a})\bsh_{a}\adj)\right)R_{a}	\\
			&=\frac{\ell_{a}^{2}}{\norm{\bsg_{a}}}\left(\bse_{c}(\bse_{a}+\bsh_{a})\adj -\frac{\ol{h}_{ac}}{2}((\bse_{a}+2\bsh_{a})\bse_{a}\adj+\bsh_{a}\bsh_{a}\adj)+\frac{h_{aa}\ol{h}_{ac}}{2}(I-R_{a})\right),
		\end{align*}
		where we used the fact that $R_{a}\bse_{a}=-\bsh_{a}$ and $R_{a}\bsh_{a}=-\bse_{a}$.
		
		Therefore we have
		\begin{align*}
			\frac{\partial G}{\partial g_{ac}}
			=&-\frac{\ell_{a}^{2}}{\norm{\bsg_{a}}}G[\bse_{c}(\bse_{a}+\bsh_{a})\adj,\wt{B}]G	
			+\frac{\ell_{a}^{2}}{2\norm{\bsg_{a}}}\ol{h}_{ac}G[(\bse_{a}+2\bsh_{a})\bse_{a}\adj+\bsh_{a}\bsh_{a}\adj,\wt{B}]G	\\
			&-\frac{\ell_{a}^{2}}{2\norm{\bsg_{a}}}h_{aa}\ol{h}_{ac}G[(I-R_{a}),\wt{B}]G	\\
			=&- \frac{\ell_{a}^{2}}{\norm{\bsg_{a}}}G[\bse_{c}(\bse_{a}+\bsh_{a})\adj,\wt{B}]G	
			+\frac{\ell_{a}^{2}}{2\norm{\bsg_{a}}}\ol{h}_{ac}G[(\bse_{a}+2\bsh_{a})\bse_{a}\adj,\wt{B}]G	\\
			&-\frac{\ell_{a}^{4}}{2\norm{\bsg_{a}}}h_{aa}\ol{h}_{ac}G[\bse_{a}\bse_{a}\adj+\bse_{a}\bsh_{a}\adj+\bsh_{a}\bse_{a}\adj,\wt{B}]G,
		\end{align*}
		where we used the fact that $\bsh_{a}\bsh_{a}\adj$ commutes with $\wt{B}$.
		
		The second identity follows immediately from $G\adj=G(\ol{z})$, $\wt{B}\adj=\wt{B}$, and
		\beq
		\frac{\partial G}{\ol{\partial}g_{ac}}=\left(\frac{\partial G\adj}{\partial g_{ac}}\right)\adj.
		\eeq
	\end{proof}

\end{document}